\pdfoutput=1
\RequirePackage{ifpdf}
\ifpdf 
\documentclass[pdftex]{sigma}
\else
\documentclass{sigma}
\fi

\numberwithin{equation}{section}

\newtheorem{Theorem}{Theorem}[section]
\newtheorem{Corollary}[Theorem]{Corollary}

\newtheorem{Proposition}[Theorem]{Proposition}
 { \theoremstyle{definition}
\newtheorem{Definition}[Theorem]{Definition}

\newtheorem{Example}[Theorem]{Example}
\newtheorem{Remark}[Theorem]{Remark} }

\newcommand{\abss}[1]{\left\vert #1 \right\vert}
\newcommand{\rcap}{\tikz[scale = 0.3] \draw[->] (0,0) arc(180:0:1);}

\newcommand{\lcap}{\tikz[scale = 0.3] \draw[<-] (0,0) arc(180:0:1);}
\newcommand{\lcup}{\tikz[scale = 0.3] \draw[<-] (0,0) arc(-180:0:1);}

\usepackage{tikz}
\usetikzlibrary{arrows,decorations.pathmorphing,positioning}
\usepackage{mathrsfs}
\usepackage{bm}

\newcommand{\Kaufhoriz}{\begin{tikzpicture}[scale = 0.5, baseline = -3pt]
\draw[dashed] (0,0) circle(1);
\draw[thick] (45:1) arc (-45:-135:1);
\draw[thick] (-45:1) arc (45:135:1);
\end{tikzpicture}}
\newcommand{\Kaufvert}{\begin{tikzpicture}[scale = 0.5, baseline = -3pt]
\draw[dashed] (0,0) circle(1);
\draw[thick] (45:1) arc (135:225:1);
\draw[thick] (135:1) arc (45:-45:1);
\end{tikzpicture}}
\newcommand{\Kauftresse}{\begin{tikzpicture}[scale = 0.5, baseline = -3pt]
\draw[dashed] (0,0) circle(1);
\draw[thick] (-45:1) -- (135:1) node[fill, color= white, pos = 0.5, circle, scale = 0.45]{};
\draw[thick] (45:1) -- (-135:1);
\end{tikzpicture}}
\newcommand{\Kaufcirc}{\begin{tikzpicture}[scale = 0.5, baseline = -3pt]
\draw[dashed] (0,0) circle(1);
\draw[thick] (0,0) circle(0.7);
\end{tikzpicture}}
\newcommand{\Kaufvide}{\begin{tikzpicture}[scale = 0.5, baseline = -3pt]
\draw[dashed] (0,0) circle(1);
\end{tikzpicture}}

\newcommand{\unortwist}{\begin{tikzpicture}[scale = 0.35, baseline = 5pt]
\draw[thick] (1,0.1) .. controls (1,2) and (0,2) .. (0,1) node[fill, color= white, pos = 0.21, circle, scale = 0.55]{};
\draw[thick] (1,1.9) .. controls (1,0) and (0,0) .. (0,1);
\end{tikzpicture}}

\newcommand{\halftwist}{\begin{tikzpicture}[yscale = 0.3, xscale = 0.4, baseline = 2pt]
\draw[thick] (1,0) .. controls (1,1) and (0,1) .. (0,2) node[fill, color= white, pos = 0.5, circle, scale = 0.5]{};
\fill[gray!20] (0,0) .. controls (0,1) and (1,1) ..(1,2)-- (0,2) .. controls (0,1) and (1,1) .. (1,0) -- (0,0);
\draw (0,0) .. controls (0,1) and (1,1) .. (1,2);
\end{tikzpicture}\ }

\newcommand{\St}{\operatorname{St}}
\newcommand{\halft}{{\operatorname{ht}}}

\newcommand{\Free}{\operatorname{Free}}
\newcommand{\colim}{\operatorname{colim}}
\newcommand{\Hom}{\operatorname{Hom}}

\newcommand{\End}{\operatorname{End}}
\newcommand{\undHom}{\underline{\operatorname{Hom}}}
\newcommand{\undEnd}{\underline{\operatorname{End}}}

\newcommand{\bmrhd}{\bm{\triangleright}}
\newcommand{\bmotimes}{\bm{\otimes}}
\newcommand{\Oqcomfin}{\Oq\text{-}{\rm comod}^{\rm fin}}
\newcommand{\Oqcom}{\Oq\text{-}{\rm comod}}
\newcommand{\Oq}{\mathcal{O}_{q^2}({\rm SL}_2)}
\newcommand{\Uq}{\mathcal{U}_{q^2}(\mathfrak{sl}_2)}

\newcommand{\Uqmod}{\Uq\text{-}{\rm mod}}

\tikzset{leftymissy/.style={path picture={\draw[black](path picture bounding box.north west) -- (path picture bounding box.north east) -- (path picture bounding box.south east) -- (path picture bounding box.south west);}}}
\tikzset{rightymissy/.style={path picture={\draw[black](path picture bounding box.south east) -- (path picture bounding box.south west) -- (path picture bounding box.north west) -- (path picture bounding box.north east);}}}
\tikzset{topydowny/.style={path picture={\draw[black](path picture bounding box.north west) -- (path picture bounding box.north east)
(path picture bounding box.south east) -- (path picture bounding box.south west);}}}

\begin{document}
\allowdisplaybreaks

\newcommand{\arXivNumber}{2104.13848}

\renewcommand{\PaperNumber}{042}

\FirstPageHeading

\ShortArticleName{Relating Stated Skein Algebras and Internal Skein Algebras}

\ArticleName{Relating Stated Skein Algebras\\ and Internal Skein Algebras}

\Author{Benjamin HA\"IOUN}
\AuthorNameForHeading{B.~Ha\"ioun}
\Address{Institut de Math\'ematiques de Toulouse, France}
\Email{\href{mailto:benjamin.haioun@ens-lyon.fr}{benjamin.haioun@ens-lyon.fr}}
\URLaddress{\url{https://perso.math.univ-toulouse.fr/bhaioun/}}

\ArticleDates{Received October 07, 2021, in final form May 25, 2022; Published online June 11, 2022}

\Abstract{We give an explicit correspondence between stated skein algebras, which are defined via explicit relations on stated tangles in [Costantino~F., L\^e~T.T.Q., arXiv:1907.11400], and internal skein algebras, which are defined as internal endomorphism algebras in free cocompletions of skein categories in [Ben-Zvi~D., Brochier A., Jordan D., \textit{J.~Topol.} \textbf{11} (2018), 874--917, arXiv:1501.04652] or in [Gunningham~S., Jordan~D., Safronov~P., arXiv:1908.05233]. Stated skein algebras are defined on surfaces with multiple boundary edges and we generalise internal skein algebras in this context. Now, one needs to distinguish between left and right boundary edges, and we explain this phenomenon on stated skein algebras using a half-twist. We prove excision properties of multi-edges internal skein algebras using excision properties of skein categories, and agreeing with excision properties of stated skein algebras when $\mathcal{V} = \mathcal{U}_{q^2}(\mathfrak{sl}_2)\text{-}{\rm mod}^{\rm fin}$. Our proofs are mostly based on skein theory and we do not require the reader to be familiar with the formalism of higher categories.}

\Keywords{quantum invariants; skein theory; category theory}

\Classification{57K16; 18M15}

\section{Introduction}

The goal of this paper is to draw a comprehensive link between stated skein algebras of \cite{Cos} and internal skein algebras of \cite{Gunningham} and \cite{BBJ}, including their structures and properties. The interest of such a link is to benefit from the nice features of both sides. Stated skein algebras are defined very explicitly, and have numerous applications. Internal skein algebras are much more theoretical, they are defined for all ribbon categories of coefficients, and their basic (in particular, excision) properties derive formally.

\subsection*{Skein categories} Skein algebras, introduced by Przytycki and Turaev, are a generalisation of the Kauffman bracket polynomial to surfaces. The Kauffman bracket $\langle-\rangle$ is the quotient map from the vector space freely generated by isotopy classes of framed links in $\mathbb{R}^3$ to the quotient of this vector space by the local Kauffman-bracket relations:
\[ \Kauftresse \ = q \Kaufvert\ +q^{-1} \Kaufhoriz \qquad \text{and} \qquad \Kaufcirc \ = \big({-}q^2-q^{-2}\big) \Kaufvide\ . \]
Every link in $\mathbb{R}^3$ can be reduced in an essentially unique way to the empty link via some Kauffman-bracket relations, thus $\langle-\rangle$ takes values in the ground field $k$. The skein algebra of an oriented surface~$\Sigma$ is the quotient of the vector space freely generated by isotopy classes of framed links in $\Sigma\times (0,1)$ by the Kauffman-bracket relations. Its algebra structure is given by stacking in the $(0,1)$-coordinate.

It is interesting to extend these constructions to form a category, so one can cut and glue pieces of links together. One considers the category of framed tangles instead of the vector space of framed links. Its objects are finite sequences of framed points (in $\mathbb{R}^2$, or in $\Sigma$) and its morphisms are isotopy classes of framed tangles linking them (in $\mathbb{R}^2\times [0,1]$, or in $\Sigma\times [0,1]$). Then the skein algebra is the quotient of the endomorphism algebra of the empty set by the Kauffman-bracket relations.

Skein categories, introduced in \cite{Walker} and \cite{JohnsonFreyd} (see also \cite{Cooke}), are a generalisation of the Reshe\-ti\-khin--Turaev construction to surfaces, as well as a categorical extension of skein algebras. Given a $k$-linear ribbon category~$\mathcal{V}$, in \cite{RT} (see also~\cite{Turaev}) one constructs a functor ${\rm RT}$ from the category ${\rm Tan}_\mathcal{V}^{\rm fr}$ of framed oriented coloured tangles in $\mathbb{R}^2\times [0,1]$ to~$\mathcal{V}$. If one adds coupons to framed tangles, this functor is surjective on morphisms. One can quotient by the relations which are true in~$\mathcal{V}$ after ${\rm RT}$, which gives a category equivalent to~$\mathcal{V}$. The skein category ${\rm Sk}_\mathcal{V}(\Sigma)$ of an oriented surface $\Sigma$ with coefficients in~$\mathcal{V}$ is the quotient of the category of framed tangles with coupons in $\Sigma\times [0,1]$ by the local relations which are true in~$\mathcal{V}$ after ${\rm RT}$ on a little disk on $\Sigma$. We are particularly interested in the case where~$\mathcal{V}$ is the category of type~1 finite-dimensional modules over the ribbon Hopf alge\-bra~$\Uq$ for gene\-ric~$q$, which is equivalent to the category of finite-dimensional right comodules over the coribbon Hopf algebra $\Oq$. In this case every coupon is a linear combination of tangles so there is no need for coupons and the local relations which are true in~$\mathcal{V}$ after ${\rm RT}$ are exactly the Kauffman-bracket relations. Then the endomorphism algebra of the empty set is exactly the skein algebra.

This categorical extension comes with much more information and structure than skein algebras. A~boundary edge $c$ of a surface $\Sigma$ provides ${\rm Sk}_\mathcal{V}(\Sigma)$ with a structure of ${\rm Sk}_\mathcal{V}(c\times(0,1))$-module category, and the skein category of a gluing of two surfaces along a boundary edge is equivalent to the Tambara relative tensor product of the two module categories. We will recall these definitions and constructions in Section~\ref{SectSkCat}. The skein category construction moreover satisfies nice functoriality properties with respect to embeddings and their isotopies.

One would like to reduce skein categories to an algebra without losing these extra information and structure. This is the point of both stated skein algebras and internal skein algebras, which happen to be isomorphic.

\subsection*{Stated skein algebras} Without going through this categorical extension/algebraic reduction story, one would like to have nice formulas to compute skein algebras of a gluing of surfaces. Skein algebras are not suited for this kind of operation because one cannot cut a link, and most links on a gluing cannot be described by some links on each part. Therefore, it is natural to consider an extension of skein algebras that allows links to be cut, namely considering tangles, with endpoints above the boundary of the surface (in $\partial \Sigma \times (0,1)$), ready to be glued to other tangles. Instead of oriented surfaces one now works with marked surfaces that have boundary intervals above which one allows tangles to end. Now any tangle $\alpha$ on a gluing of two surfaces $\Sigma_1 \cup_c \Sigma_2$ can be cut on the gluing arc $c$ to give a pair of tangles $(\alpha \cap \Sigma_1, \alpha \cap \Sigma_2)$, one on each surface. This procedure induces a morphism from the vector space of tangles in the gluing to the tensor product of the vector spaces of tangles in both surfaces. This cutting however seems to highly depend on the choice of the representative $\alpha$ in its isotopy class, thus one has to quotient the vector space of tangles by some boundary relations to make this well-defined on the level of isotopy classes. Stated skein algebras were introduced in \cite{BonahonWong} and some variants in \cite{Muller}. They were later refined in \cite{TTQLe} and further studied in \cite{Cos}. The authors define the stated skein algebra $\mathscr{S}(\mathfrak{S})$ of a marked surface~$\mathfrak{S}$ as the algebra of unoriented framed tangles in $\mathfrak{S} \times (0,1)$ with states $+$ or $-$ on the endpoints, modulo the Kauffman-bracket relations and two additional boundary relations. It makes the cutting construction well-defined, namely one has an algebra morphism $\rho_c\colon \mathscr{S}(\Sigma_1 \cup_c \Sigma_2) \to \mathscr{S}(\Sigma_1) \otimes \mathscr{S}(\Sigma_2)$ in the situation above, see Theorem~\ref{splitmorph}.

For a boundary edge $c$, we want an action on the boundary similar to the ${\rm Sk}_\mathcal{V}(c\times(0,1))$-module category structure on ${\rm Sk}_\mathcal{V}(\mathfrak{S})$. This is done by cutting out a bigon
\[
B= \begin{tikzpicture}[scale = 0.8, baseline = -5pt]
\fill [gray!10] (0,0) circle (0.5);
\draw (0,0) circle (0.5);
\node at (0,-0.5){\small $\bullet$};
\node at (0,0.5){\small $\bullet$};
\end{tikzpicture}
\]
 from the surface near this boundary edge. The stated skein algebra of the bigon $\mathscr{S}(B)$ has a structure of Hopf algebra given by the cutting construction, and actually of coribbon Hopf algebra by very geometric means. It is isomorphic to the Hopf algebra $\Oq$. The stated skein algebra $\mathscr{S}(\mathfrak{S})$ is then naturally a comodule over $\mathscr{S}(B)$. It is either a right or a left $\mathscr{S}(B)$-comodule depending on whether one sees the boundary edge at the right or at the left.

For a gluing of two surfaces, stated skein algebras satisfy very simple excision properties expressed by their $\mathscr{S}(B)$-comodule structures.

\subsection*{Internal skein algebras} Sticking to the skein category, one could ask if it can be completely described by an algebra, namely such that ${\rm Sk}_\mathcal{V}(\Sigma)$ is equivalent to a category of modules over this algebra. When one has a boundary edge on a connected surface $\Sigma$, one has a ${\rm Sk}_\mathcal{V}\big(\mathbb{R}^2\big)$-action $\rhd$ on ${\rm Sk}_\mathcal{V}(\Sigma)$ by this boundary edge and it is essentially surjective on objects. The internal $\Hom$ object with respect to this action captures the idea of being described by a single object. Namely, one asks whether there exists an object $A_\Sigma$ such that $\Hom_{{\rm Sk}_\mathcal{V}(\Sigma)}(V\rhd\varnothing,\varnothing) \simeq \Hom_{\mathcal{V}}(V, A_\Sigma)$ naturally in $V \in \mathcal{V}$. Here $V$ is seen as an object of $\mathcal{V}$ in the right hand side, and as an object of ${\rm Sk}_\mathcal{V}(\mathbb{R}^2)$, a single point coloured by $V$, in the left hand side. Actually by rigidity of $\mathcal{V}$ this implies that $\Hom_{{\rm Sk}_\mathcal{V}(\Sigma)}(V\rhd\varnothing,W \rhd \varnothing) \simeq \Hom_{\mathcal{V}}(V,W \otimes A_\Sigma)$ hence that all morphisms in ${\rm Sk}_\mathcal{V}(\Sigma)$ are described by morphisms in $\mathcal{V}$ involving this object $A_\Sigma$. It turns out that this object does exist, though not necessarily as an object of $\mathcal{V}$ but rather of its ``free cocompletion''~$\mathcal{E}$, where one formally adds all colimits. One now has an equivalence of categories between the free cocompletion of ${\rm Sk}_\mathcal{V}(\Sigma)$ and the category of right modules over $A_\Sigma$ in $\mathcal{E}$, see Theorem~\ref{thASigmaMods}. In the case where $\mathcal{V}$ is the semisimple category $\Oqcomfin$, in the free cocompletion one merely needs to add infinite direct sums. It means that $A_\Sigma$ is a possibly infinite-dimensional $\Oq$-comodule, and indeed stated skein algebras usually are. The structure of internal $\Hom$ object comes with its own composition and endows $A_\Sigma$ with an algebra structure.

Note that we use the term internal skein algebra from \cite{Gunningham} though we use the definition as internal endomorphism algebra from \cite{BBJ}, referred there as moduli algebras. This name refers to Alekseev--Grosse--Schomerus moduli algebras which, though isomorphic, have a very different definition, and the first denomination seemed more appropriate to our context. The notion of a free cocompletion is only defined up to canonical equivalence, but in~\cite{Gunningham} the authors chose to work over a standard choice, the category of presheaves $[\mathcal{V}^{\rm op},{\rm Vect}]$. Similarly, the internal endomorphism algebra of an object is only defined up to unique isomorphism, but in~\cite{Gunningham} the authors chose a standard choice, namely the one of Proposition~\ref{propIntHomFree}. In the case of $\mathcal{V} = \Oqcomfin$, there is a canonical equivalence
\[
R\colon\  \begin{array}{ccc}
 \Oqcom &\tilde{\to}& [\mathcal{V}^{\rm op},{\rm Vect}],
 \\
 X &\mapsto& \Hom_{\Oqcom}(-,X),
\end{array}
\]
and we will compare the stated skein algebras which are objects of $\Oqcom$ to internal skein algebras which are objects of $[\mathcal{V}^{\rm op},{\rm Vect}]$ using this equivalence. Actually, because internal endomorphism algebras from \cite{BBJ} are only defined up to unique isomorphism, and in any free cocompletion, we~can say that the stated skein algebra \emph{is} the internal endomorphism algebra.

Note also that in \cite{Gunningham} and \cite{BBJ} one considers a right ${\rm Sk}_\mathcal{V}\big(\mathbb{R}^2\big)$-action $\lhd$ on ${\rm Sk}_\mathcal{V}(\Sigma)$ which induces a~braided opposite algebra structure compared to the left action. We will rather use the left action as the algebra structure then coincides with the one on stated skein algebras (namely $\alpha . \beta$ is $\alpha$ above $\beta$).

\subsection*{Relating stated skein algebras and internal skein algebras} When $\mathcal{V} = \Oqcomfin$, we show that these two algebras are isomorphic as $\Oq$-comodule-algebras. This result was already known as folklore and stated in a slightly weaker form (only as algebras in $\rm Vect$) in \cite[Theorem~4.4]{LeYu}, \cite[Theorem~9.1]{LeSikora} and \cite[Remark 2.21]{Gunningham}. Note that in their context the isomorphism a priori cannot be improved to an $\Oq$-comodule-algebra isomorphism because of the product inversion coming from seeing the boundary edge at the right. On the other hand, both algebras are known to be isomorphic to the Alekseev--Grosse--Schomerus moduli algebras, which are deformation quantizations of the representation variety of a punctured surface in the direction of the Fock--Rosly Poisson structure. In \cite[Theorem~5.3]{Faitg} and \cite{Korinman} the authors provide an $\Oq$-comodule algebra isomorphism between these AGS algebras and stated skein algebras. In \cite[Section~7]{BBJ} the authors provide an isomorphism between AGS algebras and internal skein algebras. Note though that one does not see the braided opposite algebra structure through these isomorphisms, and it might be that one deforms the representation variety in the direction of the opposite Poisson structure. Section~\ref{SectRelation} gives a more direct proof of the isomorphism using only skein theory.
\begin{Theorem}[Theorem~\ref{thmRelation}]
For $\mathcal{V} = \Oqcomfin$ and $\Sigma$ a compact oriented surface with a~boundary edge $e$ seen at the right for stated skein algebras, and at the left for internal skein algebras, one has an isomorphism of $\Oq$-comodule-algebras $A_\Sigma \simeq \mathscr{S}(\Sigma)$.
\end{Theorem}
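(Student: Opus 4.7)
The plan is to prove that the presheaf $R(\mathscr{S}(\Sigma)) = \Hom_{\Oqcom}(-,\mathscr{S}(\Sigma))$ in $[\mathcal{V}^{\rm op},{\rm Vect}]$ realises the universal property of $A_\Sigma$ recalled in Proposition~\ref{propIntHomFree}, that is, to construct a natural isomorphism
\[
\eta_V\colon\ \Hom_{{\rm Sk}_\mathcal{V}(\Sigma)}(V \rhd \varnothing,\varnothing) \xrightarrow{\ \sim\ } \Hom_{\Oqcom}(V,\mathscr{S}(\Sigma)), \qquad V \in \mathcal{V},
\]
and then to verify that $\eta$ transports the composition on $A_\Sigma$ to the stacking product on $\mathscr{S}(\Sigma)$ and the boundary-edge action to the $\Oq$-coaction. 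Uniqueness of internal endomorphism algebras up to canonical isomorphism will then conclude.

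To define $\eta_V$ I would first treat the generating case $V = V_\square^{\otimes n}$, where $V_\square$ denotes the fundamental $2$-dimensional $\Uq$-module with basis $(e_+,e_-)$. A morphism $\alpha\colon V_\square^{\otimes n} \rhd \varnothing \to \varnothing$ in ${\rm Sk}_\mathcal{V}(\Sigma)$ is a linear combination of framed tangles in $\Sigma \times [0,1]$ with $n$ endpoints on the boundary edge $e$, modulo the interior Kauffman relations. To such an $\alpha$ I assign the map $\phi_\alpha(e_s) = (\alpha,s)$ for each state $s \in \{+,-\}^n$, where $(\alpha,s)$ is the associated stated tangle class in $\mathscr{S}(\Sigma)$. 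Equivariance of $\phi_\alpha$ is geometric: cutting out a bigon neighbourhood of the endpoints on $e$ produces from $(\alpha,s)$ precisely the matrix coefficients $u_{ts} \in \mathscr{S}(B) \simeq \Oq$ that govern the coaction on $V_\square^{\otimes n}$. Extending $\eta$ to arbitrary $V \in \Oqcomfin$ then uses semisimplicity and tensor-generation by $V_\square$; the sidedness convention of the theorem (right boundary for states, left boundary for $\rhd$) forces the appearance of the half-twist mentioned in the abstract, which mediates between the two rigidity structures on $V_\square$.

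The core of the proof is the bijectivity of $\eta_V$ on each $V_\square^{\otimes n}$, where both sides become supported on tangles with $n$ endpoints on $e$. Injectivity follows because the interior Kauffman relations are imposed on both sides and the $(e_s)_s$ form a basis of $V_\square^{\otimes n}$, so two tangle classes with equal stated evaluations in $\mathscr{S}(\Sigma)$ must already coincide in the skein category. Surjectivity requires matching the two boundary relations of Costantino--L\^e with the local skein relations imposed on a disk abutting $e$, which arise from the cup and cap of $V_\square$ via the ${\rm RT}$ functor; this identification, once the half-twist is accounted for, is the main obstacle and is carried out by explicit tangle calculus near $e$. Multiplicativity of $\eta$ reduces to the observation that composition of internal endomorphisms in a free cocompletion is computed by stacking morphisms of ${\rm Sk}_\mathcal{V}(\Sigma)$ through the action $\rhd$, which geometrically is the $[0,1]$-stacking product of stated tangles, and comodule compatibility is the analogous statement for cutting out a bigon near $e$, so both additional structures are intertwined by $\eta$ once it is known to be a natural isomorphism.
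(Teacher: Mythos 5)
Your overall plan — define $\St$ (your $\eta$) on $W = V^{\otimes n} \in {\rm TL}$ by feeding states into the boundary points, check that it is natural by testing on the generating morphisms $\cap$ and $\cup$, show it intertwines the $\Oq$-coactions, extend by cocontinuity to $\mathcal{E}=\Oqcom$, and conclude via uniqueness of internal $\Hom$ objects — is precisely the route taken in Section~\ref{SectRelation} of the paper (Propositions~\ref{propStOqmorph}, \ref{propStNat}, Theorem~\ref{thmRelation}, Proposition~\ref{propProdsOnSS}). The step you identify as ``the main obstacle'' (matching the boundary relations of Costantino--L\^e with the evaluation under ${\rm RT}$ of caps and cups near $e$, using the self-duality isomorphism $\varphi\colon V \to V^*$) is indeed the core computation of Proposition~\ref{propStNat}.

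There is however a genuine gap in your bijectivity argument. You assert that injectivity is automatic ``because the interior Kauffman relations are imposed on both sides,'' but $\mathscr{S}(\Sigma)$ is a quotient by the interior relations \emph{and} the boundary relations, while $\Hom_{{\rm Sk}_\mathcal{V}(\Sigma)}(V^{\otimes n}\rhd\varnothing,\varnothing)$ is a quotient by interior relations \emph{and} the relations coming from sliding coupons near the boundary edge, i.e., from precomposing by morphisms in ${\rm TL}$. That these two collections of extra relations coincide is exactly what Proposition~\ref{propStNat} establishes, so injectivity is not a formal consequence of ``Kauffman on both sides''; it has to be derived from the naturality statement. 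The paper does this by explicitly constructing an inverse $\St_W^{-1}$ — decompose $W$ into simples $W_i$, pick a highest-weight vector $w_i$, choose a stated tangle representing $f_i(w_i)$, and extend by the $\Uq$-action — and then shows well-definedness of this construction precisely by invoking the naturality just proved. Without this (or an equivalent basis-theorem argument, e.g., the $\mathcal{B}^+$ basis from \cite[Theorem~4.6(b)]{Cos}), your injectivity claim is unsupported. A smaller inaccuracy: the half twist $\halft$ does not appear in this proof. The sidedness mismatch between ``states on the right'' and ``action on the left'' is handled by the global isotopy $\psi_{hv}$ pushing boundary points up over the edge, and the two duality structures on $V$ are reconciled by the fixed isomorphism $\varphi$, not by $\halft$; the half twist only enters in Section~\ref{SectMultiEdges} when comparing left and right boundary actions.
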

We explicitly give the natural isomorphisms $\St_W\colon\Hom_{{\rm Sk}_\mathcal{V}(\Sigma)}(W\rhd\varnothing,\varnothing) \tilde{\to} \Hom_{\mathcal{V}}(W,\mathscr{S}(\Sigma))$ exhi\-bi\-ting $\mathscr{S}(\Sigma)$ as the internal endomorphism algebra of the empty set in ${\rm Sk}_\mathcal{V}(\Sigma)$, without relying on excision properties. We only give $\St_W$ for $W\in {\rm Sk}_\mathcal{V}(\mathbb{R}^2)$ a well-ordered set of points coloured by the standard corepresentation $V$ of basis $v_+$,~$v_-$~-- equivalently, an object of the Temperley--Lieb full subcategory ${\rm TL}$~-- which is sufficient as these objects generate the whole category under colimits. This natural isomorphism is the expected one. Any morphism in the left hand side can be described by a~(linear combination of) tangle $\alpha$ with $n$ boundary points coloured by $V$. Then $\St_{V^{\otimes n}}(\alpha)\colon V^{\otimes n} \to \mathscr{S}(\Sigma)$ maps a simple tensor $v_{\varepsilon_1}\otimes\cdots \otimes v_{\varepsilon_n}$ to the tangle~$\alpha$ with states $\varepsilon_1,\dots,\varepsilon_n$ from top to bottom.

\subsection*{Multi-edges} Internal skein algebras are usually defined on connected surfaces with a single boundary edge, and we extend the definition to marked surfaces $\mathfrak{S}$ with multiple boundary edges, so multiple actions. It is simply the internal endomorphism object of the empty set in ${\rm Sk}_\mathcal{V}(\mathfrak{S})$ with respect to the ${\rm Sk}_\mathcal{V}\big(\mathbb{R}^2\big)^{\otimes n}$-module structure. The multi-edge internal skein algebra $A_\mathfrak{S}$ thus obtained is an algebra object in $\Free(\mathcal{V}^{\otimes n})\simeq \mathcal{E}^{\boxtimes n}$. Moreover, we now allow boundary edges to be seen either as left or as right edges. The corresponding left or right ${\rm Sk}_\mathcal{V}\big(\mathbb{R}^2\big)$-actions differ by rotating the picture by 180 degrees, which is homotopic to the identity by the half twist \halftwist. We give an explicit way to compare internal skein algebras obtained from right and left edges using the half twist, which induces a braided opposite algebra structure:

\begin{Proposition}[Proposition~\ref{propASigRightBrOpp}]
Let $\Sigma$ be a surface with a boundary edge $e$ and $A_\Sigma$ the internal skein algebra of $\Sigma$ with $e$ seen at the left. Pre-composition with the half twist gives a~natural isomorphism~$\sigma^R$ exhibiting $A_\Sigma$ as the internal skein algebra with $e$ seen at the right. The right algebra structure $m^R\colon A_\Sigma \otimes A_\Sigma \to A_\Sigma$ differs from the left one $m\colon A_\Sigma \otimes A_\Sigma \to A_\Sigma$ by a~braiding: $m^R = m \circ c_{A_\Sigma \otimes A_\Sigma}$.
Namely, the ``right'' internal skein algebras introduced in~{\rm \cite{Gunningham}} and~{\rm \cite{BBJ}} are the braided opposites of the ``left'' ones introduced in Section~$\ref{SectASigma}$.
\end{Proposition}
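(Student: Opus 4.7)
The plan is to realise $\sigma^R$ explicitly as pre-composition with a boundary half twist, check naturality, and then compute the induced multiplication via one graphical identity.

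First, fix a collar $e\times[0,1]\hookrightarrow\Sigma$ of the boundary edge. The left and right actions of ${\rm Sk}_\mathcal{V}\big(\mathbb{R}^2\big)$ place the framed coloured endpoints of $V$ in this collar via the two possible identifications of $\mathbb{R}^2$, differing by a $180^\circ$ rotation. Inserting the half twist \halftwist\ in the collar realises this rotation as an isomorphism $h_V$ between the two placements of $V\rhd\varnothing$; its naturality in $V$ reduces to the standard fact that the ribbon (half) twist is a natural transformation in any ribbon category. Setting
\[
\sigma^R_V(f) \;=\; \sigma^L_V(f\circ h_V),
\]
where $\sigma^L_V\colon \Hom_{{\rm Sk}_\mathcal{V}(\Sigma)}(V\rhd\varnothing,\varnothing)\xrightarrow{\sim} \Hom_\mathcal{V}(V,A_\Sigma)$ is the left internal skein algebra isomorphism, defines the natural isomorphism $\sigma^R$. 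By the defining universal property this exhibits $A_\Sigma$ as the internal endomorphism algebra of $\varnothing$ for the right action and transports to $A_\Sigma$ the right internal skein algebra multiplication $m^R$ of \cite{Gunningham,BBJ}.

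To compare $m^R$ with $m$, unwind the definition of the internal-endomorphism product. For $\phi\colon V\to A_\Sigma$ and $\psi\colon W\to A_\Sigma$, the product $m^R\circ(\phi\otimes\psi)$ is encoded by the diagram that uses the half twist $h_{V\otimes W}$ on the joint collar, while $m\circ(\phi\otimes\psi)$ uses $h_V\otimes h_W$ on the two sub-collars independently. The key geometric identity, visible by rotating a thickened rectangle containing two parallel strands by $\pi$ (the rotation exchanges the strands' endpoints, introducing a crossing, and simultaneously twists each individually), is
\[
h_{V\otimes W} \;=\; (h_V\otimes h_W)\circ c_{V,W}
\]
in ${\rm Sk}_\mathcal{V}\big(\mathbb{R}^2\big)$. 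Substituting this into the diagram defining $m^R$ and applying naturality of $\sigma^L$ collapses the computation to $m^R\circ(\phi\otimes\psi) = m\circ c_{A_\Sigma,A_\Sigma}\circ(\phi\otimes\psi)$, whence $m^R = m\circ c_{A_\Sigma,A_\Sigma}$.

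The main obstacle is to organise the left/right boundary conventions carefully enough that the above rotation identity is transparent, and in particular that the braiding $c_{V,W}$ rather than its inverse appears on the right-hand side. Once the half-twist orientation is fixed consistently with the chosen ribbon structure on $\mathcal{V}$ and with the chosen side of the collar, the rest of the argument is formal: it depends only on this single topological identity together with naturality of the internal-Hom adjunction.
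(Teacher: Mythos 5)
Your proposal is correct and follows essentially the same route as the paper: define $\sigma^R_V(f)=\sigma_V(f\circ(\halft_V\rhd{\rm Id}_\varnothing))$, verify naturality from naturality of $\sigma$ and of $\halft\rhd-$, and reduce the comparison of products to the single geometric identity relating $\halft_{V\otimes W}$ to $\halft_V$, $\halft_W$ and the braiding. One small slip: your displayed identity $h_{V\otimes W}=(h_V\otimes h_W)\circ c_{V,W}$ is ill-typed because the half twist is anti-monoidal, so the correct form is $\halft_{V\otimes W}=(\halft_W\otimes\halft_V)\circ c_{V,W}$; this does not affect your conclusion since the identity is applied with $V=W=A_\Sigma$.
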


The skein category can still be rebuilt as the category of modules over the internal skein algebra as soon as there is at least one boundary edge per connected component, i.e., when all objects ``come from the action'':
\begin{Theorem}[Theorem~\ref{thmReducedSkModASig}]
The free cocompletion of the full subcategory of ${\rm Sk}_\mathcal{V}(\mathfrak{S})$ generated by points near a boundary edge is equivalent to the category of right $A_\mathfrak{S}$-modules in $\mathcal{E}^{\boxtimes n}$, where~$\mathcal{E}^{\boxtimes n}$ has opposite monoidal structure on coordinates associated with right edges.
\end{Theorem}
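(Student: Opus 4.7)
The plan is to adapt the argument of Theorem~\ref{thASigmaMods} to the combined multi-action associated with all $n$ boundary edges simultaneously. That single-edge theorem is itself an instance of a general recognition principle: whenever an action of a monoidal category on a category is essentially surjective onto a full subcategory when applied to a fixed object $m$, the free cocompletion of that subcategory is equivalent to right modules over the internal endomorphism algebra $\underline{\End}(m)$ in the free cocompletion of the acting category. I~would apply this principle with acting category ${\rm Sk}_\mathcal{V}\big(\mathbb{R}^2\big)^{\otimes n}$, with the $n$-fold action given by the $n$ independent boundary actions, and with distinguished object $\varnothing$.

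The first step is to identify the full subcategory $\mathcal{C} \subseteq {\rm Sk}_\mathcal{V}(\mathfrak{S})$ generated by points near the boundary edges with the essential image of the combined action functor evaluated on $\varnothing$. Any generator of $\mathcal{C}$ is by definition a single point coloured by some $V \in \mathcal{V}$ lying near one of the boundary edges, say the $i$-th, and such a point is precisely the image of $(\varnothing,\ldots,V,\ldots,\varnothing)$ under the combined action; more general objects of $\mathcal{C}$ arise by monoidal combination of such points along the edges. Essential surjectivity of the induced functor ${\rm Sk}_\mathcal{V}\big(\mathbb{R}^2\big)^{\otimes n} \to \mathcal{C}$ follows immediately.

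The second step is to invoke the general internal-Hom recognition theorem that underlies Theorem~\ref{thASigmaMods}. The internal endomorphism object $A_\mathfrak{S}=\underline{\End}(\varnothing)$ exists in $\Free\big({\rm Sk}_\mathcal{V}\big(\mathbb{R}^2\big)^{\otimes n}\big)\simeq \mathcal{E}^{\boxtimes n}$ because $\mathcal{V}$ is rigid and so each coordinate action has a right adjoint, and the right adjoint of the combined action is obtained by composing these coordinatewise adjoints. The recognition theorem then yields the desired equivalence between $\Free(\mathcal{C})$ and the category of right $A_\mathfrak{S}$-modules in $\mathcal{E}^{\boxtimes n}$.

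Finally, for the distinction between left and right boundary edges I would invoke Proposition~\ref{propASigRightBrOpp} separately on each coordinate: seeing the $i$-th edge on the right replaces the $i$-th left action by its precomposition with the half-twist, which on the algebra side induces the braided-opposite multiplication on the $i$-th tensor factor of $A_\mathfrak{S}$. Equivalently, one views right edges as left actions by ${\rm Sk}_\mathcal{V}\big(\mathbb{R}^2\big)$ equipped with the opposite monoidal structure, which accounts exactly for the ``opposite monoidal structure on coordinates associated with right edges'' appearing in the statement. The main obstacle I expect is the bookkeeping of the several simultaneous actions: one has to check that associators, units, internal-Hom adjunctions and the $n$ independent actions remain mutually compatible, and that the half-twist conversion for each right edge is coherent with the whole multi-edge structure. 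Once the $\big(\mathcal{V}^{\otimes n}\big)$-enrichment of ${\rm Sk}_\mathcal{V}(\mathfrak{S})$ through the $n$ actions is set up correctly, the remaining verifications amount to applying the single-edge argument in parallel on each coordinate.
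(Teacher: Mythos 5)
Your proposal follows essentially the same approach as the paper: one identifies the reduced skein category as the essential image of the combined $n$-fold boundary action on $\varnothing$, verifies that $\varnothing$ is a compact projective generator, and then applies the Barr--Beck-style reconstruction theorem of \cite[Theorem~4.6]{BBJ} (this is what underlies Theorem~\ref{thASigmaMods}, whose proof the paper says to follow). The two verifications go exactly as you describe: projectivity because $-\bmrhd\varnothing$ lands in compact projectives, and the generator property because $-\bmrhd\varnothing$ is dominant onto the reduced skein category, which generates its free cocompletion under colimits.

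One small point worth flagging: your final paragraph proposes invoking Proposition~\ref{propASigRightBrOpp} coordinate-by-coordinate to handle right edges. This is not needed and, if taken literally, would muddy the argument. Proposition~\ref{propASigRightBrOpp} compares two \emph{different} internal skein algebras on the same $A_\Sigma$ (one with the $\rhd$-product, one with the braided-opposite $\lhd$-product) and relates them by a half twist. In the present theorem there is nothing to compare: the opposite monoidal structure on the right-edge coordinates is already built into the definitions of $\bmrhd$ and $\bmotimes$ (Definition~\ref{defMultiEdgeASigma}), so $A_\mathfrak{S}$ and the module category are formed once and for all with respect to this single mixed-handedness action. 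Your second formulation — ``one views right edges as left actions by ${\rm Sk}_\mathcal{V}\big(\mathbb{R}^2\big)$ equipped with the opposite monoidal structure'' — is the correct one and is exactly what the paper does; the half-twist digression is unnecessary overhead. Similarly, the existence of the internal endomorphism object does not require rigidity (it always exists in the free cocompletion, by Proposition~\ref{propIntHomFree}); rigidity is instead used, via Remark~\ref{rmkTopPtsMultiEdge}, to identify $\undHom(\varnothing,\vec V\bmrhd\varnothing)\simeq \vec V\bmotimes A_\mathfrak{S}$, which is what shows the equivalence hits all right modules.
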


Having multiple edges now, one can state excision for a gluing of two surfaces. This follows from the excision properties of skein categories.
\begin{Theorem}[Theorem~\ref{thmExcisionAFS}]
Let $\mathfrak{S}_1$ be a marked surface with $n_1+k$ boundary edges with a~sequence of $k$ right boundary edges $\vec{c}_1$ numbered $\vec{k}_1$ and $\mathfrak{S}_2$ a marked surface with $n_2+k$ boundary edges with a sequence of $k$ left boundary edges $\vec{c}_2$ numbered $\vec{k}_2$. One has two thick embeddings $\mathfrak{S}_1 \hookleftarrow C \hookrightarrow \mathfrak{S}_2$ where $C = \sqcup^k (0,1)$. Let $\mathfrak{S} = \mathfrak{S}_1 \cup_{\vec{c}_1=\vec{c}_2}\mathfrak{S}_2$ be their collar gluing, one has an isomorphism $A_\mathfrak{S} \simeq (A_{\mathfrak{S}_1}, A_{\mathfrak{S}_2})^{{\rm inv}_{\vec{k}_1,\vec{k}_2}}$ in $\mathcal{E}^{\boxtimes n_1+n_2}$ between the internal skein algebra of the gluing and the invariants of the tensor product of the internal skein algebras of the two surfaces.
\end{Theorem}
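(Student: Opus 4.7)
The plan is to deduce this excision theorem from the excision theorem for skein categories recalled in Section~\ref{SectSkCat}, combined with the universal property of the internal endomorphism algebra. The only real subtlety is to identify ``balanced'' morphisms in a relative Deligne tensor product with invariants under the gluing coactions.

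First I would apply excision of skein categories to obtain an equivalence ${\rm Sk}_\mathcal{V}(\mathfrak{S}) \simeq {\rm Sk}_\mathcal{V}(\mathfrak{S}_1) \boxtimes_{{\rm Sk}_\mathcal{V}(C)} {\rm Sk}_\mathcal{V}(\mathfrak{S}_2)$ of ${\rm Sk}_\mathcal{V}(\mathbb{R}^2)^{\boxtimes n_1+n_2}$-module categories, the action being given by the $n_1+n_2$ non-glued boundary edges. Using ${\rm Sk}_\mathcal{V}(C) \simeq \mathcal{V}^{\boxtimes k}$ together with the fact that $\Free$ preserves relative tensor products of module categories (being a left adjoint), this upgrades to $\Free({\rm Sk}_\mathcal{V}(\mathfrak{S})) \simeq \Free({\rm Sk}_\mathcal{V}(\mathfrak{S}_1)) \boxtimes_{\mathcal{E}^{\boxtimes k}} \Free({\rm Sk}_\mathcal{V}(\mathfrak{S}_2))$, an equivalence of $\mathcal{E}^{\boxtimes n_1+n_2}$-module categories.

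Second, I would compute the internal endomorphism object of the empty set $\varnothing_1 \boxtimes \varnothing_2$ in this relative tensor product. By the universal property of $\boxtimes$, a morphism $W \rhd (\varnothing_1 \boxtimes \varnothing_2) \to \varnothing_1 \boxtimes \varnothing_2$, for $W \in \mathcal{V}^{\boxtimes n_1+n_2}$, is precisely a morphism $W \to A_{\mathfrak{S}_1} \boxtimes A_{\mathfrak{S}_2}$ which is balanced over the gluing action: the two ways in which an object of $\mathcal{V}^{\boxtimes k}$ can act---through the $\vec{k}_1$-th right edge of $\mathfrak{S}_1$ or through the $\vec{k}_2$-th left edge of $\mathfrak{S}_2$---must agree after applying the respective unit and counit of adjunction. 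Unwinding this balancedness under the presheaf equivalence translates it into an equaliser of the two coactions on the tensor product, which is exactly the invariants $(A_{\mathfrak{S}_1}, A_{\mathfrak{S}_2})^{{\rm inv}_{\vec{k}_1,\vec{k}_2}}$. Hence $A_\mathfrak{S}$, representing the same $\Hom$ functor, is identified with this invariants object; since composition in both categories is inherited factorwise from composition in each ${\rm Sk}_\mathcal{V}(\mathfrak{S}_i)$, the isomorphism is automatically one of algebras and of $\mathcal{E}^{\boxtimes n_1+n_2}$-objects.

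The main obstacle will be the orientation bookkeeping: on $\mathfrak{S}_1$ the glued edges are \emph{right} edges while on $\mathfrak{S}_2$ they are \emph{left} edges, so by Proposition~\ref{propASigRightBrOpp} one side carries a half-twisted, braided-opposite coaction relative to the other. One must check that this is precisely the twist needed so that, after the comparison $\sigma^R$, the right coaction of $A_{\mathfrak{S}_1}$ pairs with the left coaction of $A_{\mathfrak{S}_2}$ to reproduce the standard invariants construction; equivalently, that the half-twist inserted on the gluing collar is absorbed into the identification of the two gluing actions. Once this bookkeeping is in place, the remainder of the argument is a direct application of the universal property, with no further skein-theoretic computation required.
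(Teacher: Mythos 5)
Your strategy is, at bottom, the same as the paper's: invoke excision of skein categories (Theorem~\ref{thExcSkCat}, via Corollary~\ref{corrMorphSkRecoll}), translate the resulting decomposition through the defining natural isomorphisms of $A_{\mathfrak{S}_1}$ and $A_{\mathfrak{S}_2}$, and recognise the target as the invariants object. However, the way you phrase the middle step hides two real obstacles that the paper's proof works around explicitly.

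First, the claim that $\Free$ carries the Tambara relative tensor product ${\rm Sk}_\mathcal{V}(\mathfrak{S}_1)\otimes_{{\rm Sk}_\mathcal{V}(C)}{\rm Sk}_\mathcal{V}(\mathfrak{S}_2)$ to the cocomplete relative tensor product $\Free({\rm Sk}_\mathcal{V}(\mathfrak{S}_1))\boxtimes_{\mathcal{E}^{\boxtimes k}}\Free({\rm Sk}_\mathcal{V}(\mathfrak{S}_2))$ is not a formal consequence of $\Free$ being a left adjoint: the Tambara product is a 2-colimit computed by formally adjoining balancing isomorphisms, not a colimit of underlying categories, and comparing it to the Kelly balanced tensor product in ${\rm Cocomp}_k$ is a nontrivial statement. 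Moreover, even granting this, the universal property of a relative tensor product characterises \emph{functors out of} it, not its $\Hom$-spaces; to read off $\Hom$-spaces you already need the explicit Tambara description, i.e.\ Corollary~\ref{corrMorphSkRecoll}. The paper deliberately sidesteps these points: it uses Corollary~\ref{corrMorphSkRecoll} to decompose $\alpha\in\Hom_{{\rm Sk}_\mathcal{V}(\mathfrak{S})}(\vec{V}\bmrhd\varnothing,\varnothing)$ on one side, and on the other side uses that $\eta_{\vec{k}_1}(\vec{V})$ is compact projective in $\mathcal{E}^{\boxtimes n_1+n_2+k}$ to factor any $f$ through a finite stage $\vec{A}_{1,i}\otimes_{\vec{k}_1,\vec{k}_2}\vec{A}_{2,j}$ of the defining colimits, where everything splits coordinatewise; the two decompositions are then matched and their ambiguities (balancing vs.\ colimit relations) are checked to coincide. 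This is the same idea as yours, but it never needs to produce a relative tensor product in ${\rm Cocomp}_k$.

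Second, two of your concerns are slightly off-target. You propose to identify balancedness with ``an equaliser of the two coactions''; that Hopf-algebraic reading is specific to $\mathcal{V}=\Oqcomfin$ (and appears only in the closing remark, via Proposition~\ref{propInvHHO}), whereas Theorem~\ref{thmExcisionAFS} is stated for a general ribbon $\mathcal{V}$, and the invariants object of Definition~\ref{defInvRibCat} is defined by the representable presheaf $\Hom_{\mathcal{E}^{\boxtimes n-k}}(\eta_{\vec{k}_1}(-),\otimes_{\vec{k}_1,\vec{k}_2}(-))$, not by an equaliser. And your worry about the half-twist needing to be tracked through the gluing is a red herring for this particular proof: the distinction between left and right edges is already absorbed into the skew monoidal structure $\bmotimes$ of Definition~\ref{defMultiEdgeASigma} and into the way $\lhd_{\vec{k}_1}$/$\rhd_{\vec{k}_2}$ are used when invoking Remark~\ref{rmkTopPtsMultiEdge}; no half-twist appears in the paper's proof of this theorem (it only enters later, in Theorem~\ref{thmRelationMultiAndRight}, when comparing to the stated skein algebra coactions). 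If you replace ``$\Free$ preserves relative tensor products'' with the compactness/colimit argument and drop the Hopf-algebraic reformulation, your outline becomes essentially the paper's proof.
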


Hence one has a good candidate for the algebraic reduction of the skein category.

When $\mathcal{V} = \Oqcomfin$ and all edges are seen at the left, one has an equivalence of categories $\Free(\mathcal{V}^{\otimes n}) \simeq \Oq^{\otimes n}\text{-}$comod and multi-edge internal skein algebras are isomorphic to stated skein algebras as $\Oq^{\otimes n}$-comodule algebras by the same arguments as in Section~\ref{SectRelation}. We explain more explicitly what happens when one considers right edges in this context, and use a geometric half-coribbon structure on $\mathscr{S}(B)$ which is needed to fully describe the relation with stated skein algebras.
\begin{Theorem}[Theorem~\ref{thmRelationMultiAndRight}]
Let $\mathfrak{S}$ be a marked surface with $n$ boundary edges labelled either as left or as right edges. There is an isomorphism of $\big(\Oq^{\otimes k},\Oq^{\otimes n-k}\big)$-bicomodule algebras $A_{\mathfrak{S}}^{L_{k+1},\dots,L_n} \simeq \mathscr{S}(\mathfrak{S})$, between $\mathscr{S}(\mathfrak{S})$ seen as a left $\Oq$-comodule on right edges, and $A_\mathfrak{S}$ with the comodule structures of the right edges switched at the left using the antipode.
\end{Theorem}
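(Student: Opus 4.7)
The plan is to bootstrap from Theorem~\ref{thmRelation} by combining a multi-edge generalization of the explicit natural isomorphism $\St_W$ with Proposition~\ref{propASigRightBrOpp} to account for the right edges. I proceed in three stages, handling first the purely left-edge case, then the effect of the half-twist on each side, and finally the identification between the two.

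First I would establish the isomorphism in the special case where all $n$ edges are seen as left edges. Since ${\rm Sk}_\mathcal{V}(\mathfrak{S})$ carries a $\bigl({\rm Sk}_\mathcal{V}\bigl(\mathbb{R}^2\bigr)\bigr)^{\otimes n}$-action, one copy per boundary edge, the multi-variable internal $\Hom$ of the empty set lands in $\mathcal{E}^{\boxtimes n}\simeq \Oq^{\otimes n}\text{-}{\rm comod}$. The explicit formula of Section~\ref{SectRelation} extends without change by assigning, to a stated tangle $\alpha$ with $n_i$ endpoints coloured by $V$ on the $i$-th edge, the multilinear map that sends a pure tensor $\bigotimes_{i,j} v_{\varepsilon^{(i)}_j}$ to the tangle with those states. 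Naturality, well-definedness under the boundary relations, and compatibility with composition and with each of the boundary coactions all follow edge by edge from the computations of Section~\ref{SectRelation}. This yields an isomorphism of $\Oq^{\otimes n}$-comodule algebras in the all-left case.

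Next I would incorporate the right edges using Proposition~\ref{propASigRightBrOpp}. Switching a single edge $e$ from left to right on the internal skein side amounts, by that proposition, to pre-composing $\St$ at $e$ with the half-twist \halftwist. Geometrically this insertion takes place inside a bigon collar at $e$, and the geometric half-coribbon structure on $\mathscr{S}(B)\simeq\Oq$ recalled just before the theorem lets me identify this half-twist, on the stated skein side, with the half-coribbon element acting on the corresponding factor of the coaction of $\mathscr{S}(\mathfrak{S})$. A standard coribbon-Hopf-algebra computation then shows that such a half-coribbon twist sends a right $\Oq$-coaction to the left $\Oq$-coaction obtained via the antipode. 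Applying this edge by edge to all right edges converts the $\Oq^{\otimes n}$-comodule algebra isomorphism of the previous stage into the claimed $\bigl(\Oq^{\otimes k},\Oq^{\otimes n-k}\bigr)$-bicomodule algebra isomorphism.

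The main obstacle I anticipate is the diagrammatic identification in the second stage: I must show carefully that sliding \halftwist\ through a stated boundary arc is encoded by the half-coribbon element of $\Oq$ acting through the boundary coaction, and that this action is precisely the standard antipode conversion from right to left coactions. This is a local calculation on stated tangles in the bigon collar of a single right edge, relying on the bigon boundary relations and the geometric ribbon structure on $\mathscr{S}(B)$. Once this matching is in place, the full theorem is assembled by applying the all-left case and then invoking the half-twist identification once per right edge.
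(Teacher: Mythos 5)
Your proposal follows essentially the same route as the paper: establish the all-left case (the paper's Theorem~\ref{thmRelationMulti}), then absorb right edges via the half twist, using the geometric half-coribbon structure on $\mathscr{S}(B)$ and the comparison between the two ways of converting right $\Oq$-coactions into left ones. The ingredients you list are exactly the ones the paper combines, and the paper in fact uses the Remark~\ref{rmkASigmaRnaive} description $A_\Sigma^R \simeq A_\Sigma^\halft$ rather than Proposition~\ref{propASigRightBrOpp}, but Remark~\ref{rmkLinkTwoAsigmaR} shows the two are interchangeable, so this is not a substantive difference.

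One point in your third paragraph is, however, misstated in a way that would derail the argument if taken literally. You write that the half-coribbon twist ``is precisely the standard antipode conversion from right to left coactions.'' It is not: the half twist $(-)^\halft$ is an endofunctor of right $\Oq$-comodules and never produces a left coaction. What is true, and what you actually need (this is the paper's Proposition~\ref{prophtDiffLl}), is that the half twist measures the \emph{discrepancy} between two distinct right-to-left conversions: the antipode-based one $(-)^L$, which is the only option available to the internal skein algebra, and the $\operatorname{rot}_*$-based one $(-)^l$, which is what the stated skein algebra uses geometrically at a right edge. The identity $(-)^l = (-)^L \circ (-)^\halft$ is the hinge of the whole comparison. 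You should also track the algebra structure explicitly: passing to $A^\halft$ replaces $m$ by $m^{\rm op}$ on the right-edge coordinate, and applying $L$ (which is anti-monoidal) applies ${\rm op}$ again, so the two opposites cancel and you recover $m$, matching the stated skein product. Without both of these pieces spelled out, the second stage does not close.
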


We check that the excision properties expressed under this isomorphism are indeed the same.

\section{Preliminaries}\label{SectPreliminaries}
We recall basic facts about the quantum groups $\Oq$ and $\Uq$, and the needed constructions and properties of skein categories in~\cite{Cooke} and of stated skein algebras in \cite{Cos}. We finish with a slight modification of the construction of internal skein algebras of \cite{Gunningham}. Let $k$ be either $\mathbb{Q}\big(q^{\frac{1}{2}}\big)$ or $\mathbb{C}$ with $q^{\frac{1}{2}} \in \mathbb{C}^\times$ generic, i.e., not a root of unity. We work in the symmetric monoidal (2,1)-category ${\rm Cat}_k$ of small $k$-linear categories, $k$-linear functors and natural isomorphisms, where the tensor product $\mathcal{C} \otimes \mathcal{D}$ has objects $\operatorname{Ob}(\mathcal{C}) \times \operatorname{Ob}(\mathcal{D})$ and morphisms $\Hom_{\mathcal{C} \otimes \mathcal{D}}((c,d),(c',d')) := \Hom_\mathcal{C}(c,c') \otimes_k \Hom_\mathcal{D}(d,d')$.

\subsection[The coribbon Hopf algebra O\_\{q\textasciicircum{}2\}(SL\_2)]
{The coribbon Hopf algebra $\boldsymbol{\Oq}$}\label{SectRibCat}

We recall the definition of the coribbon Hopf algebra $\Oq$, its categories of right comodules and their links with left $\Uq$-modules.
\begin{Definition}
The coribbon Hopf algebra $\Oq$ is the free $k$-algebra with
(one should read the matrix equations component-wise, and the tensor product of matrices should be computed as a usual matrix product with tensor products of coefficients instead of products)
\begin{alignat*}{3}
& \text{generators}\colon&& a,\ b,\ c,\ d,&
\\
& \text{relations}\colon&& ca = q^2ac,\quad db = q^2bd,\quad ba = q^2ab,\quad dc = q^2cd,&
\\
&&& bc = cb,\quad ad - q^{-2}bc = 1\quad \text{and}\quad da - q^2cb = 1,&
\\
&\text{coproduct}\colon&&\Delta\begin{pmatrix} a & b \\[-.5ex] c & d \end{pmatrix}
= \begin{pmatrix} a & b \\[-.5ex] c & d \end{pmatrix} \otimes \begin{pmatrix} a & b \\[-.5ex] c & d \end{pmatrix}\!,&
\\
& \text{counit}\colon&& \varepsilon \begin{pmatrix} a & b \\[-.5ex] c & d \end{pmatrix}
= \begin{pmatrix} 1& 0 \\[-.5ex] 0 & 1\end{pmatrix}\!,&
\\
& \text{antipode}\colon&& S \begin{pmatrix} a & b \\[-.5ex] c & d \end{pmatrix}
= \begin{pmatrix} d & -q^2b \\[-.5ex] -q^{-2}c & a \end{pmatrix}\!,&
\\
&\text{co-}R\text{-matrix}\colon&& R \begin{pmatrix}
a \otimes a & a \otimes b & b \otimes a & b \otimes b
\\
a \otimes c & a \otimes d & b \otimes c & b\otimes d\\ c\otimes a & c\otimes b & d \otimes a & d\otimes b\\ c\otimes c & b\otimes d & d\otimes c & d\otimes d \end{pmatrix}
=\begin{pmatrix} q& 0 & 0 & 0\\ 0 & q^{-1} & q-q^{-3} & 0\\ 0 & 0 & q^{-1}&0\\ 0 & 0 & 0 & q \end{pmatrix}\!,&
\\
& \text{coribbon functional}\colon \quad && \theta \begin{pmatrix} a & b \\[-.5ex] c & d \end{pmatrix}
= \begin{pmatrix} -q^3 & 0 \\[-.5ex] 0 & -q^3 \end{pmatrix}\!.&
\end{alignat*}
\end{Definition}

\begin{Remark}
We took the coribbon functional so that the twist on a comodule $V$ is given by $\theta_V\colon V \overset{\Delta_V}{\to} V \otimes H \overset{{\rm Id}_V \otimes \theta}{\to} V$. In the literature (see \cite{Wakui} for the coribbon case, or \cite{Kassel} for the ribbon case), the coribbon functional is defined to be $\theta^{-1}$ in our definition.
Moreover, we use here an unusual coribbon functional, which arises naturally on the stated skein algebra of the bigon in Section~\ref{SectStatedSkein}. It is studied in \cite{Tingley} where the author proves that it gives precisely the Kauffman-bracket relation under the Reshetikhin--Turaev functor, whereas the usual coribbon functional gives relations which differ by a sign, and give the Jones polynomial after writhe renormalisation.\looseness=-1
\end{Remark}

\begin{Definition}\label{defOqcomods}
The quantum plane $\mathbb{A}_{q^2}^2$ is the free $k$-algebra generated by $x$ and $y$ modulo the relation $yx = q^2xy$. It is a right $\Oq$-comodule algebra with $\Delta \left(\begin{smallmatrix} x&y \end{smallmatrix}\right) = \left(\begin{smallmatrix} x&y \end{smallmatrix}\right) \otimes \left( \begin{smallmatrix} a & b \\ c & d \end{smallmatrix}\right)$ on generators.

Its subspace of homogeneous polynomials of degree $n$ forms a sub-comodule which we denote by~$V_n$.
In particular the generators span the comodule $V_1$ which we abbreviate $V$ and call the standard co-representation of $\Oq$. It is more conventional to call its basis $v_+ = x$ and $v_- = y$. This comodule is self dual. The left dual $V^*$ of $V$ has basis $v_+^*$ and $v_-^*$, and one has an isomorphism of $\Oq\text{-comodules}$
\begin{gather*}
\varphi \colon \
\begin{cases}
V \to V^*,
\\
v_+ \mapsto -q^{\frac{5}{2}} v_-^*,
\\[.5ex]
v_- \mapsto q^{\frac{1}{2}}v_+^*.
\end{cases}
\end{gather*}
\end{Definition}

{\sloppy\begin{Proposition}[{\cite[Section~4.2.1]{Klimyk}}]\label{propOqSemiSimple}
The $\Oq$-comodules $V_n,\ n \in \mathbb{N}$, are all the simple $\Oq$-comodules. Moreover, the categories $\Oq\text{-}{\rm comod}$ and $\Oq\text{-}{\rm comod}^{\rm fin}$ are semi-simple.
\end{Proposition}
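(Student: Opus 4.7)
The plan is to reduce to the representation theory of $\Uq$, which is a better-understood setting, and then invoke (or sketch) the standard classification and complete reducibility at generic $q$.

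First, I would use the non-degenerate Hopf pairing $\langle -,-\rangle\colon \Uq \otimes \Oq \to k$ (standard; see Klimyk--Schmüdgen, Chapter~4). This pairing yields an equivalence between the category of finite-dimensional right $\Oq$-comodules and the category of finite-dimensional type~1 left $\Uq$-modules: a comodule structure $\Delta_V\colon V\to V\otimes \Oq$ transposes to a $\Uq$-action via $u\cdot v = (\operatorname{Id}_V\otimes \langle u,-\rangle)\Delta_V(v)$. Under this equivalence, the standard corepresentation $V$ from Definition~\ref{defOqcomods} becomes the standard two-dimensional $\Uq$-module, and by a direct check (or by preservation of the symmetric algebra structure) $V_n$ becomes the $(n+1)$-dimensional simple highest-weight $\Uq$-module $L(n)$. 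It therefore suffices to prove the corresponding statement for finite-dimensional type~1 $\Uq$-modules and then transport it back.

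Next I would classify simple objects: on any finite-dimensional type~1 $\Uq$-module the group-like $K$ acts diagonalisably with eigenvalues in $q^{2\mathbb{Z}}$, so a standard highest-weight argument (find a vector annihilated by $E$ of maximal weight, act by $F$, use the relations $[E,F]=\frac{K-K^{-1}}{q^2-q^{-2}}$ and the generic-$q$ non-vanishing of the relevant $q$-integers $[k]$) produces a surjection from $L(n)$ for some $n\in\mathbb{N}$ and forces equality in the simple case. This gives the classification $\{L(n)\}_{n\in\mathbb{N}}\leftrightarrow \{V_n\}_{n\in\mathbb{N}}$ as the complete list of simples.

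For semi-simplicity I would invoke the quantum Casimir $C = FE + \frac{q^{-2}K + q^{2}K^{-1}}{(q^2-q^{-2})^2}\in Z(\Uq)$, which acts as the scalar $\frac{q^{2(n+1)}+q^{-2(n+1)}}{(q^2-q^{-2})^2}$ on $V_n$; since $q$ is not a root of unity these scalars are pairwise distinct, so generalised eigenspace decomposition reduces the question to modules with a single Casimir eigenvalue, and on such a module a highest-weight vector generates a copy of some $V_n$ that splits off by a standard $E,F$-weight argument. Iterating (or arguing by induction on dimension on the cokernel) proves every finite-dimensional module is a direct sum of $V_n$'s; equivalently, every short exact sequence splits. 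Finally, to pass from $\Oqcomfin$ to $\Oqcom$, I would use the fundamental theorem of coalgebras: every $\Oq$-comodule is the filtered union of its finite-dimensional subcomodules, so semi-simplicity of $\Oqcomfin$ forces every comodule to be a direct sum of $V_n$'s, hence $\Oqcom$ is semi-simple as well.

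The main obstacle is the semi-simplicity step: the classification is essentially a translation exercise, but genuine complete reducibility requires either the Casimir argument above (whose distinct-eigenvalues input crucially uses that $q$ is generic) or a deformation argument from the classical cosemisimple Hopf algebra $\mathcal{O}(\mathrm{SL}_2)$. Everything else --- the Hopf pairing, the identification of $V_n$ with the symmetric powers, and the coalgebra fundamental theorem --- is formal.
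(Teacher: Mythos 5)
The paper does not prove this proposition; it cites it to Klimyk--Schm\"udgen, so there is no internal proof to compare against. Your sketch is the standard argument and it is sound: transport along the Hopf pairing to $\Uq$ (this is the content of Proposition~\ref{propOqcomodUqmod}, which the paper records separately), classify simples by highest-weight theory, prove complete reducibility via the centre, and pass to $\Oqcom$ by the fundamental theorem of coalgebras. The only quibble is a transcription slip in the Casimir: with the paper's conventions $KE=q^4EK$ and $[E,F]=\frac{K-K^{-1}}{q^2-q^{-2}}$, the central element is $C = FE + \frac{q^{2}K + q^{-2}K^{-1}}{(q^2-q^{-2})^2} = EF + \frac{q^{-2}K + q^{2}K^{-1}}{(q^2-q^{-2})^2}$; you wrote $FE$ with the $EF$-version's correction term, which is not central. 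The eigenvalue $\frac{q^{2(n+1)}+q^{-2(n+1)}}{(q^2-q^{-2})^2}$ you give is correct, so this is evidently just a typo. One further point worth being explicit about in a full write-up: the Casimir eigenvalue separation only isolates blocks all of whose composition factors are a fixed $V_n$; to finish you still need $\mathrm{Ext}^1_{\Uq}(V_n,V_n)=0$, which you get from the observation that in an extension $0\to V_n\to M\to V_n\to 0$ a lift of the highest-weight vector of the quotient to the ($2$-dimensional) top weight space of $M$ is automatically killed by $E$ and, at generic $q$, generates a copy of $V_n$ (not a Verma), giving the splitting. You gesture at this; it deserves to be spelled out since it is the actual content of the semi-simplicity claim.
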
}

\begin{Definition}
The Hopf algebra $\Uq$ is the free $k$-algebra with:
\begin{alignat*}{3}
& \text{generators}\colon\quad  &&E,\,F,\,K,&
\\
&\text{relations}\colon && KE=q^4EK, \quad KF=q^{-4}FK,\quad  EF-FE = \frac{K-K^{-1}}{q^2-q^{-2}},&
\\
&\text{coproduct}\colon &&\Delta (K) = K \otimes K,\quad \Delta (E) = 1\otimes E+E\otimes K,\quad \Delta (F) = K^{-1}\otimes F + F\otimes 1,&
\\
& \text{counit}\colon && \varepsilon(K)=1,\quad\varepsilon(E)=\varepsilon(F)=0,&
\\
& \text{antipode}\colon && S(K) = K^{-1},\quad S(E) = -EK^{-1},\quad S(F) =-KF.&
\end{alignat*}
\end{Definition}

\begin{Definition}
The left $\Uq$-module $V_{\pm,n}$ is the vector space of dimension $n+1$ with action:
\begin{gather*}
 K=\pm \setlength{\arraycolsep}{1.5pt}\begin{pmatrix}
 q^{2n} & 0& \cdots & 0 \\ 0& q^{2(n-2)} &\ddots &\vdots \\ \vdots &\ddots &\ddots&0 \\ 0 &\cdots&0& q^{-2n}\end{pmatrix}\!,\qquad
 E=\setlength{\arraycolsep}{2.5pt}\begin{pmatrix}
 0 & [n]_{q^2}& & 0 \\ & \ddots &\ddots & \\ \vdots & &\ddots&[1]_{q^2} \\ 0 & \cdots & & 0\end{pmatrix}\!,\\
 F=\setlength{\arraycolsep}{2.5pt}\pm \begin{pmatrix}
 0 & & \cdots & 0 \\ {[1]_{q^2}} & \ddots & & \vdots\\ & \ddots&\ddots& \\ 0 & & [n]_{q^2}& 0\end{pmatrix}\!,
 \end{gather*}
 where $ [n]_{q^2} = \frac{q^{2n}-q^{-2n}}{q^2-q^{-2}}$.

The modules of the form $V_{+,n}$ are called of type 1, and the others are discarded here. The mo\-du\-le~$V_{+,1}$ is denoted by $V$ and called the standard representation of $\Uq$.
A module $W$ is called locally finite (of type 1) if $\forall w \in W$, $\Uq \cdot w$ is finite-dimensional (and isomorphic to some $V_{+,n}$).
\end{Definition}

\begin{Proposition}[{\cite[Theorem~VII.2.2]{Kassel}}]
The $\Uq$-modules $V_{\pm,n},\ n \in \mathbb{N}$ are all the locally finite simple $\Uq$-modules. Moreover, the categories $\Uq\text{-}{\rm mod}^{\rm lf}$ and $\Uq\text{-}{\rm mod}^{\rm fin}$, of respectively locally finite and finite-dimensional $\Uq$-modules of type $1$, are semi-simple.
\end{Proposition}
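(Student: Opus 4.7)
The plan is to follow the standard quantum analog of the classification of finite-dimensional representations of $\mathfrak{sl}_2$.

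First I would reduce the problem to classifying simple finite-dimensional modules: if $W$ is a simple locally finite $\Uq$-module and $w \in W$ is nonzero, then $\Uq \cdot w$ is a nonzero finite-dimensional submodule, hence equals $W$ by simplicity. So every simple locally finite module is finite-dimensional. Next, on any finite-dimensional $\Uq$-module one shows that $K$ is diagonalizable, which uses genericity of $q$ in an essential way (the eigenvalues of $K$ must be compatible with the $q$-shifted ladder produced by $E$ and $F$, and if $q$ is not a root of unity, any nilpotent part of $K$ leads to an infinite chain contradicting finite-dimensionality). This gives a weight-space decomposition $W = \bigoplus_\lambda W_\lambda$, and the relations $KE = q^4 EK$, $KF = q^{-4}FK$ imply $E \colon W_\lambda \to W_{q^4\lambda}$ and $F \colon W_\lambda \to W_{q^{-4}\lambda}$.

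Since $W$ is finite-dimensional, there exists a highest weight vector $w_0 \in W_\lambda$ with $Ew_0 = 0$. Using the commutation relation $EF - FE = (K - K^{-1})/(q^2-q^{-2})$, one computes inductively
\[ EF^k w_0 = [k]_{q^2}\frac{q^{-2(k-1)}\lambda - q^{2(k-1)}\lambda^{-1}}{q^2-q^{-2}}\, F^{k-1}w_0. \]
Because $W$ is finite-dimensional, the chain $F^k w_0$ must terminate, which forces $\lambda = \varepsilon q^{2n}$ for some $n \in \mathbb{N}$ and sign $\varepsilon \in \{\pm 1\}$. By simplicity $W$ is spanned by $w_0, Fw_0, \dots, F^n w_0$, and matching the eigenvalues of $K$, $E$, $F$ on this basis against the given matrices identifies $W$ with $V_{\varepsilon,n}$. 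A direct check shows conversely that each $V_{\pm,n}$ is indeed simple.

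For semi-simplicity of $\Uqmodfin$ and $\Uq\text{-}{\rm mod}^{\rm lf}$, I would introduce the quantum Casimir element
\[ C = FE + \frac{q^2 K + q^{-2} K^{-1}}{(q^2-q^{-2})^2}, \]
verify it is central, and compute its scalar action on $V_{\pm,n}$, obtaining distinct scalars $\varepsilon (q^{2n+2} + q^{-2n-2})/(q^2-q^{-2})^2$ for the pairs $(\varepsilon,n)$. Complete reducibility then follows by the standard argument: any short exact sequence $0 \to V_{\varepsilon,m} \to W \to V_{\varepsilon',n} \to 0$ in $\Uqmodfin$ can be analyzed by Casimir eigenspaces; when the Casimir eigenvalues differ, the sequence splits via the generalized eigenspace decomposition, and when they coincide (forcing $(\varepsilon,m) = (\varepsilon',n)$), one produces a splitting explicitly using a highest weight vector lift in the generalized weight space. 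Passing from $\Uqmodfin$ to $\Uq\text{-}{\rm mod}^{\rm lf}$ is then formal since every locally finite module is a sum of finite-dimensional submodules.

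The main obstacle is the semi-simplicity step: the Weyl unitary trick of the classical theory does not transfer directly, and one must either carry out the Casimir and extension analysis above by hand or invoke a quantum Harish-Chandra-type argument. Everything else is a quantum mechanical bookkeeping of the $\mathfrak{sl}_2$ representation theory with $q$-integers replacing ordinary integers.
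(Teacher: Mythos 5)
The paper does not prove this Proposition; it cites it directly from Kassel [Theorem~VII.2.2], so there is no internal proof to compare against. Your proposal is a correct sketch of the standard argument, which is essentially Kassel's: reduce locally finite simples to finite-dimensional ones, diagonalize $K$ using genericity of $q$, run the highest-weight analysis with the inductive $EF^k w_0$ formula (which I checked is correct for the $q^2$-normalization used here), and deduce semi-simplicity from the quantum Casimir plus an explicit splitting argument for self-extensions, then pass to $\Uq\text{-}{\rm mod}^{\rm lf}$ by taking sums of finite-dimensional submodules.

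One small overclaim worth flagging: you assert the Casimir eigenvalues $\varepsilon(q^{2n+2}+q^{-2n-2})/(q^2-q^{-2})^2$ are distinct for all pairs $(\varepsilon,n)$. Distinctness within a fixed $\varepsilon$ is clear for non-root-of-unity $q$ (it reduces to $q^{2(n-m)}=1$ or $q^{2(n+m+2)}=1$), but distinguishing $V_{+,n}$ from $V_{-,m}$ would require $q^{2n+2}+q^{-2n-2}\neq -(q^{2m+2}+q^{-2m-2})$, which is an extra algebraic condition on $q$ that you do not verify and that is not obviously automatic from ``not a root of unity.'' Fortunately, this gap does not affect the Proposition as stated, since both categories $\Uq\text{-}{\rm mod}^{\rm fin}$ and $\Uq\text{-}{\rm mod}^{\rm lf}$ are restricted to type~1 modules, so only the $V_{+,n}$'s appear and the Casimir does separate them. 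If you want to handle extensions mixing $V_{+,n}$ and $V_{-,m}$ in full generality, the cleaner route is to note $V_{-,m}\simeq V_{-,0}\otimes V_{+,m}$ with $V_{-,0}$ a one-dimensional ``sign'' character, or to run the highest-weight splitting argument directly without appeal to the Casimir.
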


\begin{Definition}
We can define a dual pairing, namely a non-degenerate bilinear form
\[
\langle \cdot,\cdot\rangle \colon \  \Uq \otimes \Oq \to k
\] satisfying $\langle x, y.y'\rangle = \langle x_{(1)},y\rangle \langle x_{(2)},y'\rangle$, $\langle x.x', y\rangle = \langle x,y_{(1)}\rangle \langle x',y_{(2)}\rangle$, $\langle x,1\rangle = \varepsilon(x)$, $\langle 1,y\rangle = \varepsilon(y)$ and $\langle S(x),y\rangle = \langle x,S(y)\rangle$. It is given on generators by
\begin{gather*}
 \left\langle K,\begin{pmatrix}a&b\\c&d \end{pmatrix} \right\rangle =
 \begin{pmatrix}q^2&0\\0&q^{-2} \end{pmatrix}\!, \qquad \!
 \left\langle E,
 \begin{pmatrix}a&b\\c&d \end{pmatrix} \right\rangle =
 \begin{pmatrix}0&1\\0&0 \end{pmatrix}\!,\qquad \!
 \left\langle F,\begin{pmatrix} a&b\\c&d \end{pmatrix}\right\rangle =
 \begin{pmatrix}0&0\\1&0 \end{pmatrix}\!.
 \end{gather*}
 A right $\Oq$-comodule structure on some vector space $W$ induces a left $\Uq$-module structure by $x\cdot w = w_{(1)}. \langle x,w_{(2)} \rangle$, $x \in \Uq$, $w\in W$.
\end{Definition}

\begin{Proposition}[{\cite[equation~(3.3), p.~126]{Abe}, \cite[Theorem~7.9]{Takeuchi}}]\label{propOqcomodUqmod}
This correspondence induces equivalences of categories $\Oq\text{-}{\rm comod}^{\rm fin}\simeq \Uq\text{-}{\rm mod}^{\rm fin}$ and $\Oq\text{-}{\rm comod} \simeq \Uq\text{-}{\rm mod}^{\rm lf}$.

The simple comodules $V_n$ are mapped on the simple modules $V_{+,n}$.
\end{Proposition}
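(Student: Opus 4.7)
My plan is to construct the functor in one direction from the pairing and then invert it via matrix coefficients. Given a right $\Oq$-comodule $(W,\Delta_W)$ with $\Delta_W(w) = w_{(1)} \otimes w_{(2)}$, the formula $x \cdot w := w_{(1)} \langle x, w_{(2)} \rangle$ defines a left $\Uq$-action: compatibility with the product follows from $\langle xx', y\rangle = \langle x, y_{(1)}\rangle \langle x', y_{(2)}\rangle$ together with coassociativity of $\Delta_W$, and $\langle 1, y\rangle = \varepsilon(y)$ gives the unit axiom. This extends to a functor $F\colon \Oq\text{-}{\rm comod} \to \Uq\text{-}{\rm mod}$ since any comodule morphism intertwines the induced actions.

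Next I show $F$ lands in locally finite type-1 modules. Local finiteness is the usual fundamental theorem of comodules: any $w\in W$ sits inside the finite-dimensional subcomodule spanned by the finitely many $w_{(1)}$ appearing in $\Delta_W(w)$, and $x\cdot w$ is a linear combination of these $w_{(1)}$, so $\Uq\cdot w$ is finite-dimensional. Type 1 is checked on the standard comodule $V$ of Definition~\ref{defOqcomods}: from $\Delta(v_+) = v_+\otimes a + v_-\otimes c$ and $\Delta(v_-) = v_+\otimes b + v_-\otimes d$ together with the tabulated pairings, one computes $K\cdot v_+ = q^2 v_+$, $K\cdot v_-=q^{-2}v_-$, $E\cdot v_- = v_+$, $F\cdot v_+=v_-$, and the other generators act by zero, so $F(V)\simeq V_{+,1}$. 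Since the pairing is multiplicative, $F$ is monoidal, so $F\big(V^{\otimes n}\big)\simeq V^{\otimes n}_{+,1}$. By Proposition~\ref{propOqSemiSimple} every $V_n$ is a direct summand of $V^{\otimes n}$, and matching highest weights identifies $F(V_n)\simeq V_{+,n}$, which by semisimplicity yields the claim for simples.

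For the inverse I would use matrix coefficients. For $W\in\Uq\text{-}{\rm mod}^{\rm fin}$ of type 1 with basis $(w_i)$, define $c_{ij}\in\Uq^*$ by $x\cdot w_j = \sum_i w_i\, c_{ij}(x)$; these satisfy $\Delta(c_{ij}) = \sum_k c_{ik}\otimes c_{kj}$ and so lie in the Hopf dual $\Uq^\circ$. The main obstacle, and the heart of the argument, is to prove that the $c_{ij}$ actually lie in $\Oq\subset \Uq^\circ$ (where the inclusion is the injection given by non-degeneracy of the pairing), so that $\Delta_W(w_j):=\sum_i w_i\otimes c_{ij}$ is a genuine $\Oq$-comodule structure. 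By semisimplicity it suffices to check this on each $V_{+,n}$; realising $V_{+,n}$ as the direct summand of $V^{\otimes n}$ cut out by the Jones--Wenzl projector and observing that the matrix coefficients of $V = V_{+,1}$ are precisely the generators $a,b,c,d$ of $\Oq$, one concludes that the matrix coefficients of every $V_{+,n}$ lie in the subalgebra generated by $a,b,c,d$, i.e., in $\Oq$. Local finiteness lets us extend this to infinite-dimensional modules by taking colimits of finite-dimensional submodules, producing a functor $G\colon \Uq\text{-}{\rm mod}^{\rm lf}\to\Oq\text{-}{\rm comod}$ (restricting to $\Uq\text{-}{\rm mod}^{\rm fin}\to\Oq\text{-}{\rm comod}^{\rm fin}$). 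That $F$ and $G$ are mutually inverse is a direct unwinding of the definitions, using non-degeneracy of the pairing to identify elements of $\Oq$ with their associated linear forms on $\Uq$.
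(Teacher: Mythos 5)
The paper does not supply a proof for this proposition -- it simply cites \cite{Abe} and \cite{Takeuchi} -- so there is nothing internal to compare against; your job here is essentially to reconstruct the argument behind those citations, and you have done so correctly and along the standard lines. The pairing formula gives $F$; the formula $x\cdot w=\sum_i w_i\langle x,h_i\rangle$ with $\Delta_W(w)=\sum w_i\otimes h_i$ immediately bounds $\Uq\cdot w$ inside a fixed finite-dimensional subspace, which is all that local finiteness needs (you do not actually need the span of the $w_i$'s to be a subcomodule for this step, which is good, since that span need not be $\Delta$-stable without a little extra argument); the computation on the generating object $V$ correctly identifies $F(V)\simeq V_{+,1}$ (one checks $c_{11}=\langle-,a\rangle$, $c_{12}=\langle-,b\rangle$, $c_{21}=\langle-,c\rangle$, $c_{22}=\langle-,d\rangle$ from the tabulated pairings), monoidality follows from $\langle x,yy'\rangle=\langle x_{(1)},y\rangle\langle x_{(2)},y'\rangle$, and semisimplicity on both sides propagates the identification to all simples. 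The heart of the inverse -- that matrix coefficients of type-1 modules lie in $\Oq\subset\Uq^\circ$ -- is handled correctly: since the matrix coefficients of $V$ are precisely $a,b,c,d$ and a direct summand of $V^{\otimes n}$ cut out by a $k$-linear idempotent has coefficients that are $k$-linear combinations of products of those of $V$, everything lands in the subalgebra generated by $a,b,c,d$. The only thing I would ask you to make explicit is the verification that $F$ and $G$ are mutually quasi-inverse: this requires identifying $\Oq$ with its image in $\Uq^\circ$ via the (assumed) non-degeneracy, after which both round trips are tautological; and that the matrix-coefficient coaction is independent of the choice of basis and compatible with inclusions of finite-dimensional submodules, so that the colimit extension to $\Uq\text{-}{\rm mod}^{\rm lf}$ is well defined. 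These are routine, and your proposal is essentially complete.
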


Note that this correspondence also preserves the monoidal (and actually ribbon, though using an unusual twist on $\Uqmod$) structure. In the following, we will mostly adopt the point of view of comodules over $\Oq$, because there are fewer conditions to ask, the braided and ribbon structure of $\Oq$ is algebraic and not ``topological'', and it has a geometric description using stated skein algebras.

\subsection{Skein categories}\label{SectSkCat}
Tangle invariants obtained from a $k$-linear ribbon category $\mathcal{V}$ are defined in \cite[Section~I.2]{Turaev}. They can be extended to give tangle invariants on any oriented surface $\Sigma$ by local relations, and produce the skein category of the surface ${\rm Sk}_\mathcal{V}(\Sigma)$, see \cite{Walker}. We will use and recall their boundary structure and excision properties, see \cite{Cooke}.

\begin{Definition}
Let $[n]$ denote the set of $n$ points $\{1,\ldots,n\} \times \{0\}$ in $\mathbb{R}^2$, equipped with blackboard framing (coming out of the page when we draw ribbon graphs as below) and ${\rm Ribbon}_\mathcal{V}$ the category whose objects are framed oriented $\mathcal{V}$-coloured points of $\mathbb{R}^2$ of the form $[n]$, and morphisms ribbon graphs (i.e., $\mathcal{V}$-coloured oriented framed tangles with coupons coloured by morphisms) between them in $\mathbb{R}^2\times[0,1]$.
\end{Definition}
$$
\begin{tikzpicture}[baseline = 15pt, yscale = 0.7, xscale = 1.2]
\draw (0,2) -- (3,2) node[pos = 0.2]{\small $\bullet$} node[pos = 0.2, above]{\tiny $(X,+)$} node[pos = 0.4]{\small $\bullet$} node[pos = 0.4, above]{\tiny $(Y,+)$} node[pos = 0.6]{\small $\bullet$} node[pos = 0.6, above]{\tiny $(Z,-)$} node[pos = 0.8]{\small $\bullet$} node[pos = 0.8, above]{\tiny $(Y,-)$};
\draw (0,0) -- (3,0) node[pos = 0.3]{\small $\bullet$} node[pos = 0.3, below]{\tiny $(T,+)$} node[pos = 0.6]{\small $\bullet$} node[pos = 0.6, below]{\tiny $(Z,-)$};
\draw (0.9,0) .. controls (0,1) and (1.2,1.2) .. (0.6,2) node[pos=0.2, sloped]{$<$} node [pos = 0.6, rectangle, draw, fill=white]{$f$};
\draw (1.2,2) .. controls (1.8,1) .. (2.4,2) node[fill, color= white, midway, circle, scale = 0.3]{} node[pos = 0.2, sloped]{$<$};
\draw (1.8,0) .. controls (1.3,1) and (2.2,1.4) .. (1.8,2) node[pos = 0.3, sloped]{$<$};
\end{tikzpicture}
$$

\begin{Theorem}[Reshetikhin--Turaev]\label{thmturaev} Given $\mathcal{V}$ a strict ribbon category, there exists a unique strictly monoidal functor ${\rm RT}\colon {\rm Ribbon}_\mathcal{V} \to \mathcal{V}$, called the Reshetikhin--Turaev functor, mapping positive single points to their colour, preserving ribbon structures and mapping coupons to their colour.
\end{Theorem}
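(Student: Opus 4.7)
The plan is to exhibit $\mathrm{RT}$ by presenting $\mathrm{Ribbon}_\mathcal{V}$ by generators and relations, assigning values to the generators using the ribbon structure of $\mathcal{V}$, and verifying that the relations are automatically satisfied by the ribbon axioms. First I would choose a generic height function on $\mathbb{R}^2 \times [0,1]$ so that, for a given ribbon graph $T$, every critical point of a strand and every coupon occurs at a pairwise distinct height. This cuts $T$ into a vertical stacking of \emph{elementary slices}, each containing exactly one non-trivial piece: an oriented coloured cup or cap, a positive or negative crossing of two ribbons, a full framing twist of a single ribbon, or a single coupon with its adjacent strands. Horizontal juxtaposition of slices encodes the tensor product and vertical stacking encodes composition, so these elementary slices generate $\mathrm{Ribbon}_\mathcal{V}$ as a strict monoidal category. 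On objects I set $\mathrm{RT}([n]) = X_1^{\varepsilon_1} \otimes \cdots \otimes X_n^{\varepsilon_n}$ with $X^+ := X$, $X^- := X^*$, and on generators: a cup or cap goes to the coevaluation or evaluation $\mathrm{coev}_X$, $\mathrm{ev}_X$ in $\mathcal{V}$; a positive crossing of strands coloured $X,Y$ to $c_{X,Y}$ (and a negative crossing to $c_{Y,X}^{-1}$); a full twist to $\theta_X$ (or $\theta_X^{-1}$); and a coupon labelled $f$ to $f$ itself. The conditions of the theorem (positive points go to their colour, coupons to their label, ribbon structure preserved) force each of these values, so uniqueness of $\mathrm{RT}$ on the free strict monoidal category generated by the elementary slices is automatic.

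For the descent to isotopy classes, I would show that any two Morse decompositions of isotopic ribbon graphs are related by a finite sequence of local moves whose $\mathrm{RT}$-images already agree in $\mathcal{V}$. A complete list of such moves is provided by the framed Reidemeister theorem for ribbon graphs with coupons (see \cite[Section~I.2]{Turaev}): the zigzag identities for $(\mathrm{ev}_X, \mathrm{coev}_X)$, the two hexagon axioms and naturality of $c_{-,-}$, the ribbon compatibility $\theta_{X \otimes Y} = c_{Y,X} \circ c_{X,Y} \circ (\theta_X \otimes \theta_Y)$ together with the self-duality $\theta_{X^*} = (\theta_X)^*$, and the naturality relations allowing a coupon to slide past an adjacent crossing, twist, or cup/cap. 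Each is precisely an axiom of a strict ribbon category, so the assignment descends and yields the desired functor.

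The main obstacle is the combinatorial bookkeeping behind the completeness of the above list of local moves, which is the content of Yetter's and Turaev's classical theorems on tangle categories; rather than reproduce the case analysis, I would cite \cite[Section~I.2]{Turaev}. The extension from tangles to ribbon graphs with coupons is mild, since a coupon behaves as a ``generalised vertex'' whose only non-trivial relations are naturality moves, and these hold tautologically once a coupon labelled $f$ is sent to $f$.
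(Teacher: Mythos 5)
The paper does not actually prove Theorem~\ref{thmturaev}: it is stated as a classical result of Reshetikhin--Turaev and cited to \cite{RT} and \cite[Section~I.2]{Turaev}, with no argument given in the text. Your outline is a faithful reconstruction of the standard proof in those references: generic Morse position cuts a ribbon graph into elementary slices (extrema, crossings, twists, coupons), these generate the strict monoidal category, the ribbon-category axioms force the images of the generators, and well-definedness on isotopy classes reduces to checking the Turaev moves for ribbon graphs with coupons. Your catalogue of local moves and the corresponding ribbon axioms (zigzag identities, hexagons and naturality of the braiding, $\theta_{X\otimes Y} = c_{Y,X}\circ c_{X,Y}\circ(\theta_X\otimes\theta_Y)$, $\theta_{X^*}=(\theta_X)^*$, coupon-sliding naturality) is the right list, and deferring the completeness of the move list to \cite[Section~I.2]{Turaev} is exactly what the paper itself does. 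So the proposal is correct and in line with the source the paper relies on; it simply spells out more of the argument than the paper chose to reproduce.

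One small caveat worth keeping in mind for later use in this paper: the theorem as stated here applies to $\mathrm{Ribbon}_\mathcal{V}$, whose objects are the standardly placed, blackboard-framed points $[n]$, and gives a strictly monoidal $\mathrm{RT}$. Remark~\ref{rmkEqCatSkR2VV} then explains that extending $\mathrm{RT}$ to all of $\mathrm{Ribbon}_\mathcal{V}(\mathbb{R}^2)$ (arbitrary positions and framings) requires a choice of quasi-inverse and costs strictness/uniqueness, becoming only essential uniqueness. Your proof correctly targets the first, strict version; just be aware that the half-twist discussion in Section~\ref{SectMultiEdges} lives precisely in the gap between these two versions.
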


\begin{Remark}\label{rmkEqCatSkR2VV}
For skein categories, we would prefer to consider all points with all possible framings instead of only those of the form $[n]$ as in ${\rm Ribbon}_\mathcal{V}$. We denote this bigger category by ${\rm Ribbon}_\mathcal{V}\big(\mathbb{R}^2\big)$. The two categories are equivalent as the inclusion of the first in the second is obviously fully faithful and essentially surjective. A quasi-inverse is however not so natural to define. One has to choose an isomorphism from each object of ${\rm Ribbon}_\mathcal{V}\big(\mathbb{R}^2\big)$ to one of the form~$[n]$. This can be done for example by giving lexicographical order on $\mathbb{R}^2$ and pushing all points in good position while preserving this order, then turning the framing clockwise until it is vertical.

Working with ${\rm Ribbon}_\mathcal{V}\big(\mathbb{R}^2\big)$, and allowing non-strict ribbon categories $\mathcal{V}$, the above theorem still holds, but the functor ${\rm RT}$ is only essentially unique. We now assume a choice of quasi-inverse as above has been made, so that the functor ${\rm RT}$ is defined on ${\rm Ribbon}_\mathcal{V}\big(\mathbb{R}^2\big)$.
\end{Remark}

The above functor ${\rm RT}$ describes a way to obtain invariants of coloured tangles, and actually coloured ribbon graphs, from a ribbon category $\mathcal{V}$. We want to study this invariant, and say that two ribbon graphs are identified if they give the same invariant, namely the same morphism in~$\mathcal{V}$ after evaluation under~${\rm RT}$. Skein categories generalize this construction for coloured ribbon graphs on a thickened surface. The idea is to take the relations between ribbon graph which are true locally, on an embedded cube.

\begin{Definition}
Let $\Sigma$ be a compact oriented surface possibly with boundary and $\mathcal{V}$ a ribbon category. The $k$-linear category ${\rm Ribbon}_\mathcal{V}(\Sigma)$ has objects finite sets of $\mathcal{V}$-coloured oriented framed points of $\Sigma$ and morphisms $k$-linear combinations of isotopy classes of ribbon graphs.

The skein category ${\rm Sk}_\mathcal{V}(\Sigma)$ with coefficients in $\mathcal{V}$ is the quotient of ${\rm Ribbon}_\mathcal{V}(\Sigma)$ by the local relation $\sum \lambda_i F_i = 0$ if there exists an orientation preserving embedding of a cube $\phi\colon [0,1]^3 \hookrightarrow \Sigma \times [0,1]$ such that all of the $F_i$'s coincide outside this little cube, intersect $\phi\big(\partial [0,1]^3\big)$ on either the top or the bottom face, transversally, and give the zero morphism in $\mathcal{V}$ after evaluation of the functor ${\rm RT}$ on this little cube, namely $\sum \lambda_i \mathop{\rm RT}\big(\phi^{-1}(F_i\vert_{{\rm im}\ \phi})\big) = 0$.

Put differently, let $F$ be a ribbon graph and $\phi$ a little cube of $\Sigma \times (0,1)$ such that $G := F\vert_{{\rm im}\ \phi}$ is a (possibly complicated) ribbon graph. Then $\mathop{\rm RT}(G)$ is a morphism in $\mathcal{V}$, and we allow ourselves to replace $G$ with a single coupon coloured by this morphism.
\end{Definition}

\begin{Remark}\label{rmkInclVtoSkMonoid}
When $\Sigma = \mathbb{R}^2$, since ${\rm RT}$ is full, via coupons, it induces an equivalence of categories ${\rm Sk}_\mathcal{V}\big(\mathbb{R}^2\big)\to \mathcal{V}$. Its quasi-inverse is given by the inclusion of the full subcategory $\mathcal{V} \subseteq {\rm Sk}_\mathcal{V}\big(\mathbb{R}^2\big)$ mapping an object $V \in \mathcal{V}$ to the framed point [1] coloured by $V$, and a morphism to a coupon coloured by this morphism.

Note that this inclusion is monoidal but not strictly monoidal, namely one has an isomorphism from the two framed points [2] respectively coloured by $V$ and $W$ to the framed point [1] coloured by $V \otimes W$, this isomorphism is the coupon ${\rm Id}_{V \otimes W}$ with two incoming strands coloured respectively by $V$ and $W$ and one outgoing strand coloured by $V \otimes W$.
$$
\begin{tikzpicture}[yscale = 0.7, baseline = 0pt]
\draw (0,0) -- (1,0) node[pos = 0.25,below]{\small $V$} node[pos = 0.33]{\small $\bullet$} node[pos = 0.67]{\small $\bullet$} node[pos = 0.75,below]{\small $W$};
\draw (0,2) -- (1,2) node[pos = 0.5]{\small $\bullet$} node[midway,above]{\small $V \otimes W$};
\draw (0.33,0) -- ++(0,1) node[pos = 0.4, sloped]{\footnotesize $>$};
\draw (0.67,0) -- ++(0,1) node[pos = 0.4, sloped]{\footnotesize $>$};
\draw (0.5,1) -- ++(0,1) node[pos = 0.7, sloped]{\footnotesize $>$};
\node[draw, rectangle, fill=white] at (0.5,1) {\small ${\rm Id}_{V \otimes W}$};
\end{tikzpicture}
$$
\end{Remark}

\begin{Remark}[{\cite[Remark 1.7]{Cooke}}]\label{Skmonoidal}
For a general surface $\Sigma$, the categories defined above are not monoidal because there is no notion of horizontal juxtaposition, which we used in $\mathbb{R}^2$. However, if $A = C \times [0,1]$ for a 1-manifold $C$, the category ${\rm Sk}_\mathcal{V}(A)$ is monoidal with tensor product induced by $A \sqcup A \overset{[0,\frac{1}{3}]\sqcup[\frac{2}{3},1]}\hookrightarrow A$.
\end{Remark}

\begin{Remark}[{\cite[Remarks 1.6 and 1.18]{Cooke}}]\label{rmkFunctEmbIsotOfSkCat} An orientation-preserving smooth embedding $f\colon\Sigma_1 \allowbreak\to \Sigma_2$ induces a functor ${\rm Sk}_\mathcal{V}(f)\colon{\rm Sk}_\mathcal{V}(\Sigma_1)\to {\rm Sk}_\mathcal{V}(\Sigma_2)$. It maps an object $s$, which is a bunch of coloured points in $\Sigma_1$, to $f(s)$, and a ribbon graph $T \subseteq \Sigma_1 \times [0,1]$ to $(f\times {\rm Id})(T)$. This defines a symmetric monoidal functor ${\rm Sk}_\mathcal{V}\colon ({\rm Mfld}_2^{\rm or},\sqcup) \to ({\rm Cat}_k,\otimes_k)$.

An isotopy of smooth embeddings $\lambda \colon \Sigma_1 \times [0,1] \to \Sigma_2$ between $f=\lambda_0$ and $g=\lambda_1$ induces a~natural isomorphism ${\rm rib}_\lambda\colon {\rm Sk}_\mathcal{V}(f) \Rightarrow {\rm Sk}_\mathcal{V}(g)$, where ${\rm rib}_{\lambda,s}\colon f(s) \to g(s)$ is the braid in $\Sigma_2 \times [0,1]$ drawn by $\{(\lambda_t(s),t), t \in [0,1]\}$. Homotopic isotopies give isotopic ribbon graphs, and this extends to a symmetric monoidal $\infty$-functor ${\rm Sk}_\mathcal{V}\colon ({\rm Mfld}_2^{\rm or},\sqcup) \to ({\rm Cat}_k,\otimes_k)$. It is shown in~\cite{Cooke} that this functor is the factorisation homology with coefficients in $\mathcal{V}$.
\end{Remark}

\begin{Proposition}[{\cite[Section~1.3]{Cooke}}]\label{Skmodcat}
For $C\subset \partial \Sigma$ a boundary component $($actually, a thick left boundary component, i.e., equipped with an embedding $C \times [0,2) \hookrightarrow \Sigma)$ the category ${\rm Sk}_\mathcal{V}(\Sigma)$ inherits a~structure of left ${\rm Sk}_\mathcal{V}(A)$-module category, where $A = C\times [0,1]$. Namely, one has a~functor $\rhd\colon {\rm Sk}_\mathcal{V}(A) \otimes {\rm Sk}_\mathcal{V}(\Sigma) \to {\rm Sk}_\mathcal{V}(\Sigma)$ compatible with the monoidal structure of ${\rm Sk}_\mathcal{V}(A)$. It is given by pushing points or ribbon graphs in $A$ inside $\Sigma$.
Similarly, a thick right boundary component $C\times (-1,1]\hookrightarrow \Sigma$ induces a right ${\rm Sk}_\mathcal{V}(A)$-module structure on ${\rm Sk}_\mathcal{V}(\Sigma)$.
\end{Proposition}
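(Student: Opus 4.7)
The plan is to construct $\rhd$ from a pair of disjoint embeddings of $A$ and $\Sigma$ into $\Sigma$, and to derive the module category coherences from the fact (Remark \ref{rmkFunctEmbIsotOfSkCat}) that ${\rm Sk}_\mathcal{V}$ is a symmetric monoidal $\infty$-functor out of oriented surfaces.

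Concretely, I would proceed in three steps. First, the thick left boundary $\iota\colon C \times [0,2) \hookrightarrow \Sigma$ gives an embedding of $A = C \times [0,1]$ into $\Sigma$ by restriction. To make room alongside for a disjoint copy of $\Sigma$ itself, I pick a diffeomorphism $\psi\colon \Sigma \to \Sigma$ that is the identity outside a neighbourhood of the collar and that reparameterises $C \times [0,2)$ onto $C \times [1,2)$. This produces an orientation-preserving embedding $\iota\vert_{C\times[0,1]} \sqcup \psi\colon A \sqcup \Sigma \hookrightarrow \Sigma$. Applying ${\rm Sk}_\mathcal{V}$ and using the monoidality constraint ${\rm Sk}_\mathcal{V}(A \sqcup \Sigma) \simeq {\rm Sk}_\mathcal{V}(A) \otimes {\rm Sk}_\mathcal{V}(\Sigma)$ defines $\rhd$. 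Geometrically, $x \rhd y$ places the ribbon graph $x$ in the collar $C \times [0,1]$ and the ribbon graph $y$ in the pushed-away copy of $\Sigma$, so the two are automatically disjoint.

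Second, I would verify compatibility with the monoidal product on ${\rm Sk}_\mathcal{V}(A)$ from Remark \ref{Skmonoidal}. The two functors $(x \otimes x') \rhd y$ and $x \rhd (x' \rhd y)$ correspond to two embeddings $A \sqcup A \sqcup \Sigma \hookrightarrow \Sigma$ obtained by nesting collars inside $C \times [0,2)$ at different scales; these embeddings are visibly smoothly isotopic, and Remark \ref{rmkFunctEmbIsotOfSkCat} supplies a natural isomorphism between the two composites, which serves as the associator for the module structure. For the unit, the action by the empty object of $A$ is the functor ${\rm Sk}_\mathcal{V}(\psi)$, and since $\psi$ is isotopic to ${\rm Id}_\Sigma$ through embeddings supported in the collar, the same remark produces a canonical natural isomorphism $\varnothing \rhd - \Rightarrow {\rm Id}_{{\rm Sk}_\mathcal{V}(\Sigma)}$.

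The main obstacle will be the coherence of these natural isomorphisms, namely the pentagon and triangle axioms of a module category. This is where the $\infty$-functoriality of ${\rm Sk}_\mathcal{V}$ is essential: any two nested-collar embeddings $A^{\sqcup n} \sqcup \Sigma \hookrightarrow \Sigma$ lie in a contractible space of configurations, so any two chains of isotopies between them induce the same natural isomorphism after applying ${\rm Sk}_\mathcal{V}$. Put abstractly, the thick collar exhibits $A$ as a monoid object and $\Sigma$ as a left $A$-module in $({\rm Mfld}_2^{\rm or},\sqcup)$, and the symmetric monoidal $\infty$-functor ${\rm Sk}_\mathcal{V}$ transports this to an honest module category structure on ${\rm Sk}_\mathcal{V}(\Sigma)$. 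The right module case is entirely symmetric, using a thick right collar $C\times(-1,1]\hookrightarrow \Sigma$ and a pushing diffeomorphism sending $C\times(-1,1]$ onto $C\times(-1,0]$.
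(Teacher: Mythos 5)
Your approach matches what the paper intends (and what the cited reference, Cooke's Section~1.3, actually does): the thick collar exhibits $A$ as an $E_1$-algebra and $\Sigma$ as an $A$-module in $({\rm Mfld}_2^{\rm or},\sqcup)$, and Remark~\ref{rmkFunctEmbIsotOfSkCat} transports this structure along the symmetric monoidal $\infty$-functor ${\rm Sk}_\mathcal{V}$; the contractibility argument for the module-category coherences is exactly the intended one. One slip worth fixing: $\psi$ cannot be a \emph{diffeomorphism} of $\Sigma$ -- a self-map that is the identity outside the collar and carries $C\times[0,2)$ onto $C\times[1,2)$ misses $C\times[0,1)$ and so fails surjectivity -- it is a smooth \emph{embedding}. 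You should also choose the intervals so that $\iota|_A$ and $\psi$ have genuinely disjoint images: with $\psi(C\times[0,2))=C\times[1,2)$ as written, the two images meet along $C\times\{1\}$, so $\iota|_A\sqcup\psi$ is not quite an embedding of $A\sqcup\Sigma$; embed $A$ as a smaller closed sub-collar, or push $\psi$ a little further inward.
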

Skein categories satisfy a form of excision, namely the skein category of a gluing is obtained as a~relative tensor product of the skein categories of the initial surfaces.

\begin{Theorem}[see {\cite[Theorem~1.22]{Cooke}} for details]\label{thExcSkCat}
Let $\Sigma_1$ and $\Sigma_2$ be two surfaces and $C$ a thick right boundary component of $\Sigma_1$ and a thick left boundary component of $\Sigma_2$. Let $\Sigma_1\cup_{A}\Sigma_2$ be the collar gluing of the surfaces along the two thick embeddings. The skein category ${\rm Sk}_\mathcal{V}(\Sigma_1\cup_{A} \Sigma_2)$ is the Tambara relative tensor product of the right ${\rm Sk}_\mathcal{V}({A})$-module ${\rm Sk}_\mathcal{V}(\Sigma_1)$ and the left ${\rm Sk}_\mathcal{V}({A})$-module ${\rm Sk}_\mathcal{V}(\Sigma_2)$.

Namely, the canonical functor ${\rm Sk}_\mathcal{V}(\Sigma_1) \otimes {\rm Sk}_\mathcal{V}(\Sigma_2) \to {\rm Sk}_\mathcal{V}(\Sigma_1\cup_{A} \Sigma_2)$ induces an equivalence of categories between $k$-linear functors out of ${\rm Sk}_\mathcal{V}(\Sigma_1\cup_{A} \Sigma_2)$ and ${\rm Sk}_\mathcal{V}({A})$-balanced functors out of ${\rm Sk}_\mathcal{V}(\Sigma_1) \otimes {\rm Sk}_\mathcal{V}(\Sigma_2)$ $($i.e., equipped with a natural isomorphism $\iota\colon (-\lhd-,-)\tilde{\Rightarrow}(-,-\rhd-)$ between the actions on~$\Sigma_1$ and on $\Sigma_2)$.
\end{Theorem}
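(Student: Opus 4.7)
The plan is to verify the universal property of the Tambara relative tensor product directly, using the skein-theoretic description of ${\rm Sk}_\mathcal{V}$. An alternative route would invoke Cooke's identification of ${\rm Sk}_\mathcal{V}$ with factorisation homology and the $\otimes$-excision axiom for $\mathbb{E}_1$-algebras in $({\rm Cat}_k,\otimes_k)$, but a direct argument is more informative for our later applications and avoids the $\infty$-categorical machinery.

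First I would define the canonical functor $F\colon {\rm Sk}_\mathcal{V}(\Sigma_1) \otimes {\rm Sk}_\mathcal{V}(\Sigma_2) \to {\rm Sk}_\mathcal{V}(\Sigma_1 \cup_A \Sigma_2)$ from the inclusions $i_j\colon \Sigma_j \hookrightarrow \Sigma_1\cup_A\Sigma_2$ by $F(x_1,x_2) := {\rm Sk}_\mathcal{V}(i_1)(x_1) \sqcup {\rm Sk}_\mathcal{V}(i_2)(x_2)$, the two images lying in disjoint open subsurfaces so there is no ordering issue. Then I would equip $F$ with a balancing natural isomorphism $\iota\colon F(-\lhd-,-)\tilde{\Rightarrow}F(-,-\rhd-)$ built by sliding the middle argument $a\in {\rm Sk}_\mathcal{V}(A)$ across $C$ through the common collar $C\times(-1,1) \hookrightarrow \Sigma_1\cup_A\Sigma_2$; naturality and the coherence conditions are instances of the isotopy invariance from Remark~\ref{rmkFunctEmbIsotOfSkCat} applied to obvious isotopies of embeddings.

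For the universal property, given any ${\rm Sk}_\mathcal{V}(A)$-balanced functor $G\colon {\rm Sk}_\mathcal{V}(\Sigma_1) \otimes {\rm Sk}_\mathcal{V}(\Sigma_2) \to \mathcal{D}$ with balancing $\iota^G$, I would construct a quasi-inverse $\tilde G\colon {\rm Sk}_\mathcal{V}(\Sigma_1\cup_A\Sigma_2) \to \mathcal{D}$ by cutting. On objects: isotope every coloured point off $C$, obtaining $s = s_1\sqcup s_2$, and set $\tilde G(s) = G(s_1,s_2)$. On morphisms: isotope a representative ribbon graph $T\subset(\Sigma_1\cup_A\Sigma_2)\times[0,1]$ to be transverse to the cylinder $C\times[0,1]$; the slice determines a coloured framed configuration, i.e., an object $a\in {\rm Sk}_\mathcal{V}(A)$, and the two halves give morphisms $T_j$ in ${\rm Sk}_\mathcal{V}(\Sigma_j)$ with additional endpoints on $a$. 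Declare $\tilde G(T)$ to be the composite of $G$ applied to the two halves, glued through $\iota^G_a$ on the middle argument.

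The main obstacle is independence from these choices: the same skein class admits many transverse representatives and many horizontal cuts, and one has to show that any two cutting procedures yield the same morphism in $\mathcal{D}$. A generic isotopy between two such representatives decomposes into finitely many local moves near $C\times[0,1]$ — dragging a coloured point from one side of $C$ to the other, resolving a transverse double point between a strand and $C\times[0,1]$, sliding a coupon across $C$, and applying a local skein relation supported in a cube straddling $C$ — and each of these corresponds to either a naturality square of $\iota^G$, a pentagon/hexagon coherence of the module action, or the balancing axiom itself. A short check then yields $\tilde G\circ F \simeq G$ and essential uniqueness of $\tilde G$, establishing the equivalence of functor categories claimed.
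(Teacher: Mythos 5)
Your proposal and the paper's ``idea of proof'' rest on the same technical core — cutting ribbon graphs transversally along the collar cylinder $C\times[0,1]$ and checking independence of the cut — but they differ in how the universal property is packaged. The paper exhibits the explicit Tambara model of the relative tensor product (pairs $(s_1,s_2)$ with a formally added natural isomorphism $\iota$ between the two actions, subject to the Tambara coherences), then claims an equivalence of this concrete category with ${\rm Sk}_\mathcal{V}(\Sigma_1\cup_A\Sigma_2)$ by sending $\iota_{s_1,a,s_2}$ to the slanted ribbon picture. You instead verify the universal property head-on: for every balanced $(G,\iota^G)$ you produce $\tilde G$ by cutting and show well-definedness. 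The two are logically interchangeable (once one knows the Tambara construction is a model), and the paper itself defers the details to Cooke; your version avoids invoking the explicit Tambara category, at the cost of proving functoriality and uniqueness of $\tilde G$ for an abstract target $\mathcal{D}$, which is slightly heavier bookkeeping but closer to what one actually needs downstream (e.g., Corollary~\ref{corrMorphSkRecoll} is essentially your cutting step for morphisms to $\varnothing$).

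Two places in your sketch deserve a closer look before you trust it. First, when you say the slice of $T$ with $C\times[0,1]$ ``determines a coloured framed configuration, i.e., an object $a\in{\rm Sk}_\mathcal{V}(A)$'', there is a framing matching to spell out: the ribbon framing along a strand is a normal vector field, while the framing of a point in $A$ is a tangent vector in $A$, and transversality only gives an identification after an auxiliary choice (which must be shown immaterial up to isotopy). Second, your list of local moves is roughly right, but the case ``a local skein relation supported in a cube straddling $C$'' does not invoke the balancing axiom directly; rather, one first slides the cube entirely to one side of $C$ using the collar — this is legal since skein relations are local and isotopy invariant — and then the relation is absorbed by $G$ being defined on ${\rm Sk}_\mathcal{V}(\Sigma_j)$, with the comparison of the two cut data handled by naturality of $\iota^G$ in $a$. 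These are exactly the points Cooke's proof is careful about, and neither is a fatal gap, but they should not be waved through as ``a short check''.
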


\begin{proof}[Idea of proof]
One has an explicit description of the Tambara relative tensor product of two module categories, by formally adding a natural isomorphism $\iota$ to the morphisms of the two categories.

$$
\begin{tikzpicture}[baseline = 15pt, xscale = 1.5, yscale = 1.3]
\fill[gray!20] (4.5,0.5) -- (3.5,0.5)-- (3.5,1.5) -- (4.5,1.5)--cycle;
\node[gray!60] at (4.1,0.7){\small $\mathbf{\Sigma_1 \quad\quad C \times [0,1]\quad \quad\quad \Sigma_2}$};
\draw[gray!60, ->, thick] (5.6,0.4)-- ++(0,1.2) node[midway, right]{$\times [0,1]$};
\draw (4.5,0.5) -- (3.5,0.5)	.. controls (3,0.5 ) and (3,0.4 ) .. (2.5,0.3) coordinate[pos = 0.9] (EEM) coordinate[pos = 0.3] (EMA) ;
\draw (4.5,0.5)	.. controls (5,0.5 ) and (5,0.4 ) .. (5.5,0.3)coordinate[pos = 0.5] (EN);
\draw (4.5,1.5) -- (3.5,1.5)	.. controls (3,1.5 ) and (3,1.6 ) .. (2.5,1.7) coordinate[pos = 0.5] (EM) ;
\draw (5.5,1.7)	.. controls (5,1.6 ) and (5,1.5 ) .. (4.5,1.5)coordinate[pos = 0.7] (ENA) coordinate[pos = 0.1] (EEN);
\draw (EM) node{\small $\bullet$} node[above = -1pt]{\small $s_1$} .. controls ++(0,-0.5) and ++(-0.2,0.5) .. (EEM) node{\small $\bullet$} node[below right = -1pt]{\small $s_1\lhd a$};
\draw (EN) node{\small $\bullet$} node[below = -1pt]{\small $s_2$} .. controls ++(0,0.5) and ++(0.2,-0.5) .. (EEN) node{\small $\bullet$} node[above left = -1pt]{\small $a \rhd s_2$} node[pos = 0.3,right]{};
\draw (EMA) node{\small $\bullet$} .. controls ++(0,0.3) and ++(0.25,-0.3) .. (ENA) node[pos = 0.25,above]{} node{\small $\bullet$} node[pos = 0.75,below]{};
\end{tikzpicture}
$$
It~is equivalent to ${\rm Sk}_\mathcal{V}(\Sigma_1\cup_{A} \Sigma_2)$ by sending, for $s_1$, $a$, $s_2$ some sets of coloured points respectively in~$\Sigma_1$, $A$, $\Sigma_2$, the balancing isomorphism $\iota_{s_1,a,s_2} \colon (s_1\lhd a,s_2) \to (s_1,a \rhd s_2)$ to the morphism depicted hereby.
\end{proof}

\begin{Corollary}\label{corrMorphSkRecoll}
Let $s_1\in {\rm Sk}_\mathcal{V}(\Sigma_1)$ and $s_2\in {\rm Sk}_\mathcal{V}(\Sigma_2)$. Then any morphism
\[
\alpha\in\Hom_{{\rm Sk}_\mathcal{V}(\Sigma_1\cup_A\Sigma_2)}((s_1,s_2),\varnothing)
\]
can be decomposed in a $($linear combination of$)$ pair$($s$)$
\[
\alpha_1 \in \Hom_{{\rm Sk}_\mathcal{V}(\Sigma_1)}(s_1,\varnothing\lhd a),\qquad
\alpha_2\in\Hom_{{\rm Sk}_\mathcal{V}(\Sigma_2)}(a\rhd s_2,\varnothing)
\]
for some $a \in {\rm Sk}_\mathcal{V}(A)$, with $\alpha = ({\rm Id}_\varnothing , \alpha_2) \circ \iota_{\varnothing,a,s_2} \circ (\alpha_1 , {\rm Id}_{s_2})$.

This decomposition is unique up to balancing, namely if $\alpha_2$ can be written $\beta_2 \circ (\gamma\rhd {\rm Id}_{s_2})$, with $\beta_2\in\Hom_{{\rm Sk}_\mathcal{V}(\Sigma_2)}(b\rhd s_2,\varnothing)$ and $\gamma\in\Hom_{{\rm Sk}_\mathcal{V}(A)}(a,b)$, for some $b\in {\rm Sk}_\mathcal{V}(A)$, then $(\alpha_1,\beta_2\circ (\gamma\rhd {\rm Id}_{s_2})) \sim (({\rm Id}_\varnothing\lhd\gamma) \circ \alpha_1, \beta_2)$.
\end{Corollary}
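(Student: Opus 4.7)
The plan is to read the corollary off Theorem~\ref{thExcSkCat}. By that theorem, the morphism space $\Hom_{{\rm Sk}_\mathcal{V}(\Sigma_1\cup_A\Sigma_2)}((s_1,s_2),\varnothing)$ is computed in the Tambara relative tensor product ${\rm Sk}_\mathcal{V}(\Sigma_1)\otimes_{{\rm Sk}_\mathcal{V}(A)}{\rm Sk}_\mathcal{V}(\Sigma_2)$, whose morphisms are equivalence classes of composable strings of morphisms of the two factors interspersed with balancing isomorphisms $\iota$, modulo the balancing relation. Thus I must show (i) every morphism $\alpha$ has a representative with a \emph{single} balancing isomorphism $\iota_{\varnothing,a,s_2}$ as in the statement, and (ii) two such representatives agree in the glued category if and only if they differ by the balancing move of the statement.

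\textbf{Existence.} Represent $\alpha$ by a ribbon graph $F \subset (\Sigma_1\cup_A\Sigma_2)\times[0,1]$ and let $W:=C\times\{\tfrac12\}\times[0,1]$ be the ``wall'' separating $\Sigma_1\times[0,1]$ from $\Sigma_2\times[0,1]$. An ambient isotopy (invisible to the skein relations) makes $F$ transverse to $W$; a further isotopy supported in the collar $A\times[0,1]$ slides all intersections vertically so that $F\cap W$ lies in a single horizontal slice $C\times\{\tfrac12\}\times\{t_0\}$. The resulting framed, coloured, ordered set of points yields an object $a\in{\rm Sk}_\mathcal{V}(A)$, and the restrictions $\alpha_1:=F\cap(\Sigma_1\times[0,1])$ and $\alpha_2:=F\cap(\Sigma_2\times[0,1])$ give morphisms $s_1\to\varnothing\lhd a$ and $a\rhd s_2\to\varnothing$ respectively. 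Comparing with the geometric description of $\iota_{\varnothing,a,s_2}$ from the proof of Theorem~\ref{thExcSkCat} (the braid sliding $a$ through the collar) identifies $F$ with $({\rm Id}_\varnothing,\alpha_2)\circ\iota_{\varnothing,a,s_2}\circ(\alpha_1,{\rm Id}_{s_2})$ up to ambient isotopy. The only non-trivial step here is the reduction to a single slice $t_0$, which uses that collar isotopies are absorbed by the skein relations and act on the boundary precisely through the ${\rm Sk}_\mathcal{V}(A)$-module structure of Proposition~\ref{Skmodcat}; this is the main obstacle, but it is essentially built into the setup.

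\textbf{Uniqueness.} If $\alpha_2=\beta_2\circ(\gamma\rhd{\rm Id}_{s_2})$ for some $\gamma\colon a\to b$ in ${\rm Sk}_\mathcal{V}(A)$, naturality of $\iota$ in its middle argument gives
\[
({\rm Id}_\varnothing,\gamma\rhd{\rm Id}_{s_2})\circ\iota_{\varnothing,a,s_2}=\iota_{\varnothing,b,s_2}\circ({\rm Id}_\varnothing\lhd\gamma,{\rm Id}_{s_2}),
\]
so that $(\alpha_1,\beta_2\circ(\gamma\rhd{\rm Id}_{s_2}))$ and $(({\rm Id}_\varnothing\lhd\gamma)\circ\alpha_1,\beta_2)$ are sent to the same morphism in the glued category. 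Conversely, by the universal property of the Tambara tensor product any two one-step representatives of the same morphism are related by a finite chain of such moves; geometrically these correspond to different choices of the cutting height $t_0$ or of the ordering of the cut points $a$, both of which are realised by morphisms in ${\rm Sk}_\mathcal{V}(A)$. This completes the proof.
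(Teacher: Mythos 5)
Your proof is correct and takes essentially the same approach as the paper: a geometric/isotopy argument reduces $\alpha$ to a single crossing of the collar region (existence), and the Tambara description of the relative tensor product in Theorem~\ref{thExcSkCat} together with naturality of the balancing $\iota$ gives uniqueness up to the stated move. Your transversality phrasing for existence and your explicit naturality square for $\iota$ are just more detailed spellings-out of the paper's ``push the middle region into one side'' and ``slide $\gamma$ along the $\iota$ lines.''
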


\begin{proof}
On a drawing one wants to decompose $\alpha$ as
$$
\begin{tikzpicture}[baseline = 2cm, xscale = 1.7, yscale = 2.3]
\fill[gray!20] (4.5,0.5) -- (3.5,0.5)-- (3.5,1.5) -- (4.5,1.5)--cycle;
\draw (4.5,0.5) -- (3.5,0.5)	.. controls (3,0.5 ) and (3,0.4 ) .. (2.5,0.3) coordinate[pos = 0.5] (EM);
\draw (4.5,0.5)	.. controls (5,0.5 ) and (5,0.4 ) .. (5.5,0.3)coordinate[pos = 0.5] (ENbel);
\draw (4.5,1.5) -- (3.5,1.5)	.. controls (3,1.5 ) and (3,1.6 ) .. (2.5,1.7);
\draw (5.5,1.7)	.. controls (5,1.6 ) and (5,1.5 ) .. (4.5,1.5);
\node[minimum width=4cm, topydowny, outer sep = 0pt] (a) at (4,1){$\alpha$};
\draw (EM) node{\small $\bullet$} node[below = -1pt]{\small $s_1$} -- (a);
\draw (ENbel) node{\small $\bullet$} node[below = -1pt]{\small $s_2$} -- (a);
\end{tikzpicture} \hspace{.6cm}=\hspace{1cm}
\begin{tikzpicture}[baseline = 2cm, xscale = 1.7, yscale = 2.3]
\fill[gray!20] (4.5,0.5) -- (3.5,0.5)-- (3.5,1.5) -- (4.5,1.5)--cycle;
\draw (4.5,0.5) -- (3.5,0.5)	.. controls (3,0.5 ) and (3,0.4 ) .. (2.5,0.3) coordinate[pos = 0.5] (EM);
\draw (4.5,0.5)	.. controls (5,0.5 ) and (5,0.4 ) .. (5.5,0.3)coordinate[pos = 0.5] (ENbel);
\draw (4.5,1.5) -- (3.5,1.5)	.. controls (3,1.5 ) and (3,1.6 ) .. (2.5,1.7);
\draw (5.5,1.7)	.. controls (5,1.6 ) and (5,1.5 ) .. (4.5,1.5);
\draw[dashed] (2.5,0.88) -- (5.5,0.88) coordinate[pos = 0.33] (LA) coordinate[pos = 0.833] (ENmidbel);
\draw[dashed] (2.5,1.12) -- (5.5,1.12) coordinate[pos = 0.66] (RA) coordinate[pos = 0.95] (ENmidtop);
\node[minimum width=1.5cm, leftymissy, outer sep = 0pt] (a1) at (3,0.65){$\alpha_1$};
\node[minimum width=1.5cm, rightymissy, outer sep = 0pt] (a2) at (5,1.35){$\alpha_2$};
\draw (EM) node{\small $\bullet$} node[below = -1pt]{\small $s_1$} -- (a1);
\draw (ENbel) node{\small $\bullet$} node[below = -1pt]{\small $s_2$} -- (ENmidbel) -- (ENmidtop) node[left = 5pt]{\small $a\rhd s_2$} -- (a2);
\draw (a1)-- (LA) node[left = 3pt]{\small $\varnothing \lhd a$} -- (RA) node[pos = 0.4, above left = -2pt]{\small $\iota$} -- (a2);
\end{tikzpicture}
$$
which is easily done by pushing the ribbon graph happening in the middle region inside say $N$ leaving only straight lines (namely, $\iota$'s) behind. The relation $(\alpha_1,\beta_2\circ (\gamma\rhd {\rm Id}_{s_2})) \sim (({\rm Id}_\varnothing\lhd\gamma) \circ \alpha_1, \beta_2)$ is true by sliding $\gamma$ along the straight lines of $\iota$, and this is the only relation by the above theorem.
\end{proof}

Note that the asymmetry in this description is purely artificial, and one could have chosen a~cup or a~cap instead of a~slanted line to link the left and right actions.
\begin{Example}
We are particularly interested in the case $\mathcal{V} = \Oq\text{-}{\rm comod}^{\rm fin}$, for which the relations the Reshetikhin--Turaev functor imposes (between framed tangles) are precisely the Kauffman-bracket relations.
\end{Example}

\begin{Remark}\label{rmkUnorTanOq}
In any skein category, the identity coupon ${\rm Id}_{X^*}\colon X^* \to X^*$ with entry a downward oriented $X$-coloured ribbon and output an upward oriented $X^*$-coloured ribbon depicted below gives an identification $(X,-) \simeq (X^*,+)$.
$$
\begin{tikzpicture}[yscale = 0.7, baseline = 0pt]
\draw (0,0) -- (1,0) node[midway,below]{\small $(X,-)$};
\draw (0,2) -- (1,2) node[midway,above]{\small $(X^*,+)$};
\node[draw, rectangle] (I) at (0.5,1) {\small ${\rm Id}_{X^*}$};
\draw[gray!40, line width = 3pt] (0.5,0) -- (I) node[midway, sloped, black]{$<$} -- (0.5,2) node[midway, sloped, black]{$>$};
\end{tikzpicture}
$$
In the case $\mathcal{V} = \Oq\text{-}{\rm comod}^{\rm fin}$ and $X = V$ is the standard corepresentation, the object~$V^*$ is already isomorphic to $V$ in $\mathcal{V}$ by $\varphi$ from Definition~\ref{defOqcomods}. Thus one gets an identification $(V,+) \simeq (V,-)$ and sliding the coupon coloured by it along a strand changes the orientation of the strand. Consequently, one can switch signs of points and orientations of strands.
\end{Remark}

 In the following, we stop mentioning them and talk about unoriented framed tangles. The Reshetikin--Turaev functor is still well-defined on unoriented framed tangles, see \cite[Theorem~4.2]{Tingley}. An unoriented tangle gives a ribbon graph by choosing an arbitrary orientation, colouring every strand by $V$ and replacing $V^*$'s imposed on the boundary points by $V$'s using $\varphi$ or $\varphi^{-1}$. Concretely, for the unoriented cap $\cap$ for example, one can orient it either left or right, so one has to check that $\mathop{\rm RT}(\lcap) \circ (\varphi \otimes {\rm Id}_V) = \mathop{\rm RT}(\rcap) \circ ({\rm Id}_V \otimes \varphi)$. Note that this would not hold with the usual coribbon element in $\Oq$.

{\sloppy\begin{Theorem}[{\cite[Theorem~12.3.10]{CP}} or {\cite[Theorem~3.3.4]{Carter}} for endomorphisms in $\mathbb{R}^2$]\label{thmSkAsTangles}
Let~$n.V$ and~$m.V$ be two objects of ${\rm Sk}_\mathcal{V}(\Sigma)$ given respectively by $n$ and $m$ points coloured by $V$. Then any morphism in $\Hom_{{\rm Sk}_\mathcal{V}(\Sigma)}(n.V,m.V)$ can be represented by a linear combination of unoriented framed tangles $($without coupons$)$. Moreover, two linear combinations of unoriented framed tangles represent the same morphism in $\Hom_{{\rm Sk}_\mathcal{V}(\Sigma)}(n.V,m.V)$ if and only if one can get from one to the other by a~sequence of isotopies and Kauffman-bracket relations.
\end{Theorem}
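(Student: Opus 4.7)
The plan is to reduce the claim to the case $\Sigma = \mathbb{R}^2$ covered by the cited classical references, and then transport the result locally through cubes $\phi\colon [0,1]^3 \hookrightarrow \Sigma\times[0,1]$, exploiting the fact that skein relations are by definition local.

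For \textbf{existence}, a morphism in $\Hom_{{\rm Sk}_\mathcal{V}(\Sigma)}(n.V,m.V)$ is by definition a linear combination of isotopy classes of ribbon graphs with coupons, whose edges are coloured by arbitrary objects of $\mathcal{V}=\Oqcomfin$ and whose coupons are coloured by morphisms in $\mathcal{V}$. I first eliminate non-$V$ colourings. By Proposition~\ref{propOqSemiSimple}, every object of $\mathcal{V}$ decomposes as a direct sum of simples $V_n$, each of which is a direct summand of $V^{\otimes n}$ through idempotent morphisms in $\mathcal{V}$. Using these inclusion/projection morphisms as auxiliary coupons in a small cube around each edge, I may assume all strands are coloured by $V$ and each remaining coupon is coloured by some $f\colon V^{\otimes k}\to V^{\otimes l}$ in $\mathcal{V}$.

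Next I eliminate such coupons. By the classical Temperley--Lieb description of $\Hom_\mathcal{V}\bigl(V^{\otimes k},V^{\otimes l}\bigr)$ cited from~\cite[Theorem~12.3.10]{CP} or \cite[Theorem~3.3.4]{Carter}, $f$ is a linear combination of framed oriented $(k,l)$-tangles in a cube, modulo the Kauffman-bracket relations. Inserting representative tangles inside the small cube where the coupon lives replaces $f$ by an honest linear combination of framed tangles. After doing this for every coupon, the morphism is represented by a linear combination of framed oriented tangles without coupons. Remark~\ref{rmkUnorTanOq}, together with Tingley's Theorem~\cite[Theorem~4.2]{Tingley} ensuring compatibility with the chosen coribbon functional, lets me forget orientations and obtain unoriented framed tangles.

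For \textbf{uniqueness}, two linear combinations of unoriented framed tangles $T$ and $T'$ represent the same morphism in ${\rm Sk}_\mathcal{V}(\Sigma)$ iff, by definition of the skein category, they are related by a finite sequence of local relations, each supported inside a small embedded cube $\phi\colon[0,1]^3\hookrightarrow\Sigma\times[0,1]$ and asserting the equality of two (coupon-free) tangle configurations that agree on the boundary of the cube and have equal images under ${\rm RT}$. Isotopies are automatically such relations (via the natural isomorphisms ${\rm rib}_\lambda$ of Remark~\ref{rmkFunctEmbIsotOfSkCat}). Inside the cube, which is canonically identified with $\mathbb{R}^2\times[0,1]$, the same classical results assert that two tangle linear combinations with equal ${\rm RT}$-images are related by isotopies and Kauffman-bracket relations, so every local relation is generated by those. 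The only genuine subtlety, which is the main point of vigilance rather than the main obstacle, is the orientation/sign bookkeeping induced by the unusual coribbon functional chosen in Section~\ref{SectRibCat}: one must invoke Tingley's result to confirm that the Kauffman-bracket relations (and not the sign-corrected Jones relations) are indeed what descend from $\mathcal{V}$.
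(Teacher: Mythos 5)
Your existence (fullness) argument is sound and is the expected one: use semisimplicity and the fact that every simple is a summand of a tensor power of $V$ to reduce all strand colours to $V$ at the cost of extra coupons, then replace each coupon $f\colon V^{\otimes k}\to V^{\otimes l}$ by a linear combination of planar tangles with the same ${\rm RT}$-image, which is a legitimate local relation. The appeal to Remark~\ref{rmkUnorTanOq} and Tingley's result to drop orientations is exactly right and is the sign-subtlety worth flagging.

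The uniqueness (faithfulness) half has a genuine gap. You characterise the local relations of ${\rm Sk}_\mathcal{V}(\Sigma)$ as ``asserting the equality of two \emph{coupon-free} tangle configurations'' inside a cube, but that is not what the definition gives you: the $F_i$ in a local relation $\sum\lambda_iF_i=0$ are arbitrary ribbon graphs, and the strands meeting $\partial\phi$ may carry arbitrary colours. So if $T$ and $T'$ are coupon-free $V$-tangles with $T=T'$ in the skein category, $T-T'$ lies in the span of local relations whose terms typically \emph{do} involve coupons and non-$V$ colours; the classical local result, which is stated for $V$-coloured tangle diagrams in a cube, does not directly tell you that $T-T'$ is a combination of isotopies and Kauffman-bracket moves. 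To close this, one needs a retraction argument: show that your coupon/colour-elimination procedure defines a linear map from ribbon graphs (modulo isotopy) to coupon-free $V$-tangles modulo isotopy and Kauffman bracket, that this map is well-defined (this is exactly where the classical result — uniqueness of the Temperley--Lieb expansion of a coupon up to KB moves — enters), that it restricts to the identity on coupon-free $V$-tangles, and that it annihilates every local relation (because the reduction preserves ${\rm RT}$ on the cube, so the reduced cube parts still satisfy ${\rm RT}=0$, and now the classical result applies). You have all the ingredients from the fullness step, but the ``so every local relation is generated by those'' sentence silently skips this reduction, and as stated it is not correct.
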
}

\begin{Corollary}
The algebra $\Hom_{{\rm Sk}_\mathcal{V}(\mathbb{R}^2)}(n.V,n.V)$ is isomorphic to the Temperley--Lieb algebra~${\rm TL}_n$, defined in {\rm \cite[Section~XII.3]{Turaev}}. The full subcategory of ${\rm Sk}_\mathcal{V}\big(\mathbb{R}^2\big)$ of objects of the form~$[n]$ with every point coloured by $V$ is equivalent to the category ${\rm TL}$, defined in {\rm\cite[Section~XII.2]{Turaev}}. In the following, we will call this full subcategory ${\rm TL}$.
\end{Corollary}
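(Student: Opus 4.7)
The plan is to deduce this corollary almost immediately from Theorem~\ref{thmSkAsTangles} applied to $\Sigma = \mathbb{R}^2$, since the Temperley--Lieb category is, by its very definition in \cite[Section~XII.2]{Turaev}, the category whose objects are finite sequences of points on a line and whose morphisms are $k$-linear combinations of isotopy classes of unoriented framed tangles in $\mathbb{R}^2 \times [0,1]$ modulo the Kauffman-bracket relations.

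First I would define a candidate functor $\Phi \colon {\rm TL} \to {\rm Sk}_\mathcal{V}\big(\mathbb{R}^2\big)$ landing in the full subcategory on the objects $n.V$. On objects, $\Phi$ sends the sequence of $n$ points of ${\rm TL}$ to the object $[n]$ with every point coloured by $V$ (using an arbitrary orientation, which by Remark~\ref{rmkUnorTanOq} is irrelevant since $V \simeq V^*$ via $\varphi$). On morphisms, a linear combination of unoriented framed tangles is sent to the same linear combination viewed as a morphism in the skein category, again by choosing arbitrary orientations on each component and colouring by $V$; this is well-defined on isotopy classes by construction of ${\rm Sk}_\mathcal{V}\big(\mathbb{R}^2\big)$, and well-defined modulo Kauffman-bracket relations because the Kauffman-bracket relations hold in ${\rm Sk}_\mathcal{V}\big(\mathbb{R}^2\big)$ (they are precisely the local relations true after ${\rm RT}$ in $\mathcal{V} = \Oqcomfin$, see the Example preceding Remark~\ref{rmkUnorTanOq}).

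Next I would check that $\Phi$ is fully faithful. This is exactly the content of Theorem~\ref{thmSkAsTangles}: morphisms $\Hom_{{\rm Sk}_\mathcal{V}(\mathbb{R}^2)}(n.V,m.V)$ are represented by linear combinations of unoriented framed tangles, with two representing the same morphism iff they are related by isotopies and Kauffman-bracket relations --- which is precisely the definition of $\Hom_{{\rm TL}}(n,m)$. Essential surjectivity onto the full subcategory spanned by the $n.V$ is immediate since that is how $\Phi$ is defined on objects. This establishes the equivalence of categories, and taking $m=n$ specialises the fully faithful statement to an algebra isomorphism $\Hom_{{\rm Sk}_\mathcal{V}(\mathbb{R}^2)}(n.V,n.V) \simeq {\rm TL}_n$, since composition on both sides is given by vertical stacking of tangles.

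There is no real obstacle here; the only point requiring minor care is the compatibility with orientations, handled by Remark~\ref{rmkUnorTanOq} (using the particular coribbon functional so that $\mathop{\rm RT}(\lcap) \circ (\varphi \otimes {\rm Id}_V) = \mathop{\rm RT}(\rcap) \circ ({\rm Id}_V \otimes \varphi)$), which ensures that the map on morphisms does not depend on the chosen orientations.
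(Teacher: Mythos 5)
Your proposal is correct and is exactly the argument the paper intends: the corollary is stated without proof because it is an immediate specialisation of Theorem~\ref{thmSkAsTangles} to $\Sigma = \mathbb{R}^2$ (giving fullness and faithfulness of the functor) together with Remark~\ref{rmkUnorTanOq} (handling orientation-independence via $\varphi$ and the choice of coribbon functional). Your explicit identification of the Kauffman-bracket relations as the local relations imposed by ${\rm RT}$, and of composition as vertical stacking, fills in the remaining routine checks correctly.
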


The category ${\rm TL}$ is a ribbon full subcategory of $\mathcal{V}$, and the above theorem proves that for any surface $\Sigma$ the category ${\rm Sk}_{{\rm TL}}(\Sigma)$ is a full subcategory of ${\rm Sk}_\mathcal{V}(\Sigma)$.

\subsection{Stated skein algebras}\label{SectStatedSkein} Stated skein algebras generalise skein algebras for tangles on a marked surface with boundary, see \cite{BonahonWong,TTQLe,Muller}. These tangles can be cut and stated skein algebras satisfy nice excision properties. We will recall here the approach of \cite{Cos}. Stated skein algebras can be defined over integral rings of coefficients, but we will work over a field $k$ being either $\mathbb{Q}\big(q^{\frac{1}{2}}\big)$ or $\mathbb{C}$ with $q^{\frac{1}{2}} \in \mathbb{C}^\times$ generic, as our proofs only hold in this context.

\begin{Definition}
Let $\mathfrak{S}$ be an oriented surface. The skein algebra $\mathring{\mathscr{S}}(\mathfrak{S})$ is the $k$-vector space generated by isotopy classes of unoriented framed links in $\mathfrak{S}\times (0,1)$ modulo the skein relations:
\[ \Kauftresse \ = q \Kaufvert\ +q^{-1} \Kaufhoriz \qquad \text{and} \qquad \Kaufcirc \ = \big({-}q^2-q^{-2}\big) \Kaufvide \]
in a little embedded cube $\phi\colon \mathbb{D}^3 \hookrightarrow \mathfrak{S}\times (0,1)$.

It is an algebra with product given by vertical superposition $\mathfrak{S}\times (0,1) \sqcup \mathfrak{S}\times (0,1) \overset{\left(\frac{1}{2},1\right)\sqcup\left(0,\frac{1}{2}\right)}\longrightarrow \mathfrak{S} \times (0,1)$ and unit the empty link.
\end{Definition}

Note that by Theorem~\ref{thmSkAsTangles}, one has an algebra isomorphism
\[
\mathring{\mathscr{S}}(\mathfrak{S}) \simeq \End_{{\rm Sk}_{\Oqcomfin}(\Sigma)}(\varnothing).
\]

\begin{Definition}
A marked surface is a compact oriented surface with boundary $\overline{\mathfrak{S}}$ with a finite set $P\subseteq \partial\overline{\mathfrak{S}}$ of boundary points, called marked points. We write $\mathfrak{S} = \overline{\mathfrak{S}} \smallsetminus P$ and call this the marked surface. We write $\partial_P\overline{\mathfrak{S}}$ the boundary components of $\overline{\mathfrak{S}}$ that contain a point of $P$ and $\partial \mathfrak{S} := \partial_P\overline{\mathfrak{S}} \smallsetminus P$. Namely we only consider boundary components of $\mathfrak{S}$ that contain a marked point, which we remove in $\mathfrak{S}$, so all components of $\partial \mathfrak{S}$ are intervals. The circular boundary components are discarded and give punctures in $\mathfrak{S}$. The boundary structure of stated skein algebras will depend on a way to cut out a bigon out of a~boundary edge, and that of internal skein algebras on a way to insert one from a~boundary edge. To avoid choices, we suppose that marked surfaces come equipped with a thickening of their boundary edges inside the surface.

A stated tangle $\alpha$ on $\mathfrak{S}$ is an unoriented, framed, compact, properly embedded 1-submanifold of $\mathfrak{S} \times (0,1)$ whose boundary $\partial \alpha \subseteq \partial\mathfrak{S} \times (0,1)$ has positively vertical framing and comes equipped with a~state ${\rm st}\colon \partial \alpha \to \{+,-\}$. We call height the $(0,1)$-coordinate of a point, and require that all boundary points of $\alpha$ lying over a same component of $\partial\mathfrak{S}$ have distinct heights. An isotopy of stated tangles is an isotopy with values in stated tangles, in particular preserving the height order over a same boundary component. A stated tangle in $\mathfrak{S} \times (0,1)$ can always be represented as a diagram with blackboard framing in $\mathfrak{S}$, with its under/over crossing information, such that the height order of boundary points corresponds to a given orientation on the boundary edges, see \cite[Section~3.5]{BonahonWong}.
\end{Definition}

\begin{Definition}[{\cite[Section~2.5]{Cos}}]
The stated skein algebra $\mathscr{S}(\mathfrak{S})$ of a marked surface $\mathfrak{S}$ is the $k$-vector space generated by isotopy classes of stated tangles on $\mathfrak{S}$ modulo usual skein relations:
\[
\Kauftresse \ = q \Kaufvert\ + q^{-1} \Kaufhoriz\,, \qquad
\Kaufcirc \ = \big({-}q^2-q^{-2}\big)\, \Kaufvide\
\]
and the boundary skein relations:
\[
\begin{tikzpicture}[scale = 0.5, baseline = -3pt]
\draw[dashed] (70:1) arc(70:290:1);
\draw (-70:1) -- (70:1) node[pos = 0.85,sloped]{\small $>$};
\draw[thick] (70:1)++ (0,-0.6) node[right]{\small $+$} arc(90:270:0.8 and 0.4) node[right]{\small $-$};
\end{tikzpicture}
= q^{-\frac{1}{2}} \begin{tikzpicture}[scale = 0.5, baseline = -3pt]
\draw[dashed] (70:1) arc(70:290:1);
\draw (-70:1) -- (70:1) node[pos = 0.85,sloped]{\small $>$};
\end{tikzpicture}\!, \qquad
\begin{tikzpicture}[scale = 0.5, baseline = -3pt]
\draw[dashed] (70:1) arc(70:290:1);
\draw (-70:1) -- (70:1) node[pos = 0.85,sloped]{\small $>$};
\draw[thick] (70:1)++ (0,-0.6) node[right]{\small $+$} arc(90:270:0.8 and 0.4) node[right]{\small $+$};
\end{tikzpicture} = \begin{tikzpicture}[scale = 0.5, baseline = -3pt]
\draw[dashed] (70:1) arc(70:290:1);
\draw (-70:1) -- (70:1) node[pos = 0.85,sloped]{\small $>$};
\draw[thick] (70:1)++ (0,-0.6) node[right]{\small $-$} arc(90:270:0.8 and 0.4) node[right]{\small $-$};
\end{tikzpicture} = 0,
\qquad
\begin{tikzpicture}[scale = 0.5, baseline = -3pt]
\draw[dashed] (70:1) arc(70:290:1);
\draw (-70:1) -- (70:1) node[pos = 0.85,sloped]{\small $>$};
\draw[thick] (70:1)++ (0,-0.6) node[right]{\small $-$} -- ++(-1.3,0);
\draw[thick] (-70:1)++ (0,0.6) node[right]{\small $+$} -- ++(-1.3,0);
\end{tikzpicture}
=q^2 \begin{tikzpicture}[scale = 0.5, baseline = -3pt]
\draw[dashed] (70:1) arc(70:290:1);
\draw (-70:1) -- (70:1) node[pos = 0.85,sloped]{\small $>$};
\draw[thick] (70:1)++ (0,-0.6) node[right]{\small $+$} -- ++(-1.3,0);
\draw[thick] (-70:1)++ (0,0.6) node[right]{\small $-$} -- ++(-1.3,0);
\end{tikzpicture}
+q^{-\frac{1}{2}} \begin{tikzpicture}[scale = 0.5, baseline = -3pt]
\draw[dashed] (70:1) arc(70:290:1);
\draw (-70:1) -- (70:1) node[pos = 0.85,sloped]{\small $>$};
\draw[thick] (70:1)++ (-1.3,-0.6) arc(90:-90:0.8 and 0.4);
\end{tikzpicture}\!,
\]
where the arrows on the boundary edges represent the relative height order of the two points.
It is an algebra with product given by vertical superposition and unit the empty link.

We denote by $C^\mu_\nu$ the coefficient such that
\[
\begin{tikzpicture}[scale = 0.5, baseline = -3pt]
\draw[dashed] (70:1) arc(70:290:1);
\draw (-70:1) -- (70:1) node[pos = 0.85,sloped]{\small $>$};
\draw[thick] (70:1)++ (0,-0.6) node[right]{\small $\mu$} arc(90:270:0.8 and 0.4) node[right]{\small $\nu$};
\end{tikzpicture} = C^\mu_\nu \begin{tikzpicture}[scale = 0.5, baseline = -3pt]
\draw[dashed] (70:1) arc(70:290:1);
\draw (-70:1) -- (70:1) node[pos = 0.85,sloped]{\small $>$};
\end{tikzpicture},
\]
namely $C_+^+ = C_-^- = 0$, $C^+_-=q^{-\frac{1}{2}}$ and one can compute $C^-_+ = -q^{-\frac{5}{2}}$, see \cite[Lemma~2.3(13)]{Cos}. We~also write $C(\nu) := C_\nu^{-\nu}$.
\end{Definition}

\begin{Proposition}[{\cite[Lemmas 2.3 and 2.4]{TTQLe}}] \label{propLeftStSkRel}
These relations express equivalently with the boundary at the left, namely:
\[
\begin{tikzpicture}[scale = 0.5, baseline = -3pt, rotate = 180]
\draw[dashed] (70:1) arc(70:290:1);
\draw (-70:1) -- (70:1) node[pos = 0.15,sloped]{\small $>$};
\draw[thick] (70:1)++ (0,-0.6) node[left]{\small $\nu$} arc(90:270:0.8 and 0.4) node[left]{\small $\mu$};
\end{tikzpicture} = {}^{\mu}_{\nu}\reflectbox{$C$} \begin{tikzpicture}[scale = 0.5, baseline = -3pt, rotate = 180]
\draw[dashed] (70:1) arc(70:290:1);
\draw (-70:1) -- (70:1) node[pos = 0.15,sloped]{\small $>$};
\end{tikzpicture},
\]
where ${}^+_+ \reflectbox{$C$} = {}^-_-\reflectbox{$C$}=0$, ${}^+_-\reflectbox{$C$}= -q^{\frac{5}{2}}$ and ${}^-_+\reflectbox{$C$}= q^{\frac{1}{2}}$, and
\[
\begin{tikzpicture}[scale = 0.5, baseline = -3pt, rotate = 180]
\draw[dashed] (70:1) arc(70:290:1);
\draw (-70:1) -- (70:1) node[pos = 0.15,sloped]{\small $>$};
\draw[thick] (70:1)++ (-1.3,-0.6) arc(90:-90:0.8 and 0.4);
\end{tikzpicture} = q^{-\frac{1}{2}} \begin{tikzpicture}[scale = 0.5, baseline = -3pt, rotate = 180]
\draw[dashed] (70:1) arc(70:290:1);
\draw (-70:1) -- (70:1) node[pos = 0.15,sloped]{\small $>$};
\draw[thick] (70:1)++ (0,-0.6) node[left]{\small $-$} -- ++(-1.3,0);
\draw[thick] (-70:1)++ (0,0.6) node[left]{\small $+$} -- ++(-1.3,0);
\end{tikzpicture} -q^{-\frac{5}{2}} \begin{tikzpicture}[scale = 0.5, baseline = -3pt, rotate = 180]
\draw[dashed] (70:1) arc(70:290:1);
\draw (-70:1) -- (70:1) node[pos = 0.15,sloped]{\small $>$};
\draw[thick] (70:1)++ (0,-0.6) node[left]{\small $+$} -- ++(-1.3,0);
\draw[thick] (-70:1)++ (0,0.6) node[left]{\small $-$} -- ++(-1.3,0);
\end{tikzpicture}.
\]
\end{Proposition}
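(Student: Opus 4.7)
The plan is to derive the two left-side identities from the right-side boundary relations given in the definition, together with the interior Kauffman-bracket skein relations. The key observation is that any configuration on a left boundary edge can be reduced, via isotopies in the interior and Kauffman-bracket manipulations, to a linear combination of configurations on which the right-side relations already apply.

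For the first identity, I would take the left-boundary cap with top state $\nu$ and bottom state $\mu$ (noting that with the arrow pointing downward the bottom endpoint has the higher height) and augment it with a pair of auxiliary crossings placed in the interior, arranged so that the relative heights of the two boundary endpoints can be swapped. Applying the Kauffman-bracket smoothing
\[
\Kauftresse \;=\; q\,\Kaufvert \;+\; q^{-1}\,\Kaufhoriz
\]
to each crossing produces a linear combination of simpler diagrams. Each resulting term is either trivial, or reduces after an interior isotopy to a right-boundary cap, to which the relation $C^\mu_\nu$ applies. Collecting the factors together with the known values $C^+_- = q^{-\frac{1}{2}}$, $C^-_+ = -q^{-\frac{5}{2}}$, $C^+_+ = C^-_- = 0$ then yields the claimed coefficients ${}^+_-\reflectbox{$C$} = -q^{\frac{5}{2}}$, ${}^-_+\reflectbox{$C$} = q^{\frac{1}{2}}$, ${}^+_+\reflectbox{$C$} = {}^-_-\reflectbox{$C$} = 0$.

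The second identity (horizontal crossing entering the left boundary) is handled analogously: the two parallel strands terminating at the left boundary are augmented by an auxiliary crossing which, after Kauffman-bracket smoothing and an interior isotopy, turns the configuration into an instance of the right-side horizontal-crossing relation together with correction terms coming from the smoothing. The $q^{-\frac{1}{2}}$ and $-q^{-\frac{5}{2}}$ coefficients then emerge from the interaction of the $q$ and $q^{-1}$ factors of the smoothing with the right-side crossing coefficients.

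The principal technical difficulty is careful bookkeeping of framings and of the relative heights of boundary endpoints through each manipulation, since the exact powers of $q$ and the signs in the coefficients are very sensitive to over/under and height conventions. An alternative, more conceptual route is to identify the stated skein algebra of the bigon with $\Oq$ and read off the left-boundary relations from the Hopf-algebra antipode and the coribbon functional, but that identification (developed later in the paper) itself relies on the present proposition, so such an argument would be logically circular here.
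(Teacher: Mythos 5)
The paper itself does not prove this proposition: it is cited directly from L\^e's paper, \cite[Lemmas~2.3 and 2.4]{TTQLe}, so there is no proof in the paper against which to compare yours. Your general strategy---introduce auxiliary crossings (or a kink) near the boundary, resolve with the Kauffman-bracket relations, apply the original right-side boundary relations, and collect coefficients---is indeed the strategy L\^e uses, and your observation that the alternative route via $\mathscr{S}(B)\simeq\Oq$ would be circular is correct and worth making.

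However, the sketch as written has a genuine gap and a conceptual wrinkle. The gap is that the coefficient verification is entirely deferred: you acknowledge it is ``very sensitive to over/under and height conventions,'' and that sensitivity is in fact the entire content of the proposition. Without carrying out at least one case, the proof is not there. The conceptual wrinkle is the phrase ``arranged so that the relative heights of the two boundary endpoints can be swapped.'' The height order of boundary endpoints on an edge is a discrete invariant of the stated-tangle isotopy class; interior crossings do not change it, so one cannot ``swap'' the heights. What actually happens, and what you should say, is that the left picture and the right picture are related by a planar $180^\circ$ rotation of the local model which is \emph{not} an isotopy rel boundary; redrawing the left cap in the right-boundary convention produces a diagram that differs from a right cap by a kink (a Reidemeister~I move), and it is the Kauffman-bracket value of that kink---together with the height-order convention relating $\mu,\nu$ labels to superscript/subscript---that produces the precise powers $-q^{5/2}$, $q^{1/2}$ from $q^{-1/2}$, $-q^{-5/2}$. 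Stating the mechanism this way makes the bookkeeping tractable, and the second identity then follows from the same manipulation applied to the two parallel strands.
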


\begin{Remark}
It is easy to check that $\mathscr{S}(\mathfrak{S} \sqcup \mathfrak{S}') \simeq \mathscr{S}(\mathfrak{S}) \otimes_k \mathscr{S}(\mathfrak{S}')$ since all relations happen in a~connected disk.
\end{Remark}

\begin{Definition} [see {\cite[Section~3.1]{TTQLe}} for details]
Let $\mathfrak{S}$ be a marked surface and $c$ an ideal arc on~$\mathfrak{S}$, joining two marked points in $\overline{\mathfrak{S}}$. Denote by ${\rm Cut}_c(\mathfrak{S})$ the marked surface obtained by cutting $\mathfrak{S}$ along~$c$.

Given a stated tangle $\alpha$ on $\mathfrak{S}$, one can cut it along $c$ and get a tangle ${\rm Cut}_c(\alpha)$ on ${\rm Cut}_c(\mathfrak{S})$. This tangle has new boundary points, two lifts per points of $\alpha \cap c$, and we may give any state to these points. The obtained stated tangle is called a lift of $\alpha$ if the two lifts of a point of $\alpha \cap c$ have same state.
\end{Definition}

This definition is not perfectly innocent and it seems that one could have chosen different state-matching patterns for lifts, see Remark \ref{rmkChoiceCutting}.

\begin{Theorem}[{\cite[Theorem~3.1]{TTQLe}}]\label{splitmorph} Let $\mathfrak{S}$ be a marked surface and $c$ an ideal arc. The map $\rho_c\colon \mathscr{S}(\mathfrak{S}) \to \mathscr{S}({\rm Cut}_c(\mathfrak{S}))$, $\alpha \mapsto \sum_{\rm lifts} \tilde{\alpha}$ is well-defined $($it only depends on the isotopy class of~$\alpha)$ and is an injective algebra morphism. Moreover, the splitting morphisms $\rho_c$ and $\rho_{c'}$ associated to two disjoint ideal arcs $c$ and $c'$ commute.
\end{Theorem}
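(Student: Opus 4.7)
The plan is to verify well-definedness on isotopy classes, compatibility with the skein and boundary relations, the algebra morphism property, and the commutation of disjoint cuts (all diagrammatic checks), and then to prove injectivity, which is the main obstacle.

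For well-definedness, a generic isotopy of a stated tangle $\alpha$ on $\mathfrak{S}$ can be perturbed to be transverse to $c\times(0,1)$ away from finitely many critical instants at which a local ``bump'' across $c$ is created or destroyed (a small arc becomes tangent to $c$ and then meets it in two points, or the reverse). Between critical instants, cutting yields an ambient isotopy of the cut tangle, so $\rho_c(\alpha)$ is unchanged. At a bump creation, the cut tangle acquires on one side of $c$ two new boundary-parallel short arcs with adjacent heights, and on the other side a small cap near the new boundary edge; summing over the four matched state lifts of the two new intersection points recovers the original tangle via the ``cup on the interior equals a linear combination of parallel strands'' boundary relation displayed just before Proposition~\ref{propLeftStSkRel}, the vanishings $C_+^+=C_-^-=0$ eliminating the forbidden matchings. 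Bumps on the opposite side of $c$ and local moves near the marked endpoints of $c$ are handled by the same three relations, transported to the left as in Proposition~\ref{propLeftStSkRel} where needed.

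Invariance under the interior Kauffman relations is immediate, as the defining local cube can be isotoped to miss $c$, reducing to the fact that $\rho_c$ is linear on tangles away from a fixed region. Invariance under the boundary skein relations follows similarly since an ideal arc $c$ meets $\partial\overline{\mathfrak{S}}$ only at marked points and is therefore disjoint from every boundary edge of $\mathfrak{S}$. The algebra morphism property follows because the product is vertical superposition whereas $c$ lies horizontally: cutting commutes with superposition, and the state sums factor as a tensor product since states at new endpoints coming from the top factor are independent from those of the bottom factor. For the last assertion, two disjoint ideal arcs can be made simultaneously transverse to any representative tangle, so their cutting operations commute at the tangle level, while the two state sums range over disjoint sets of matched endpoints and hence commute.

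The remaining obstacle is injectivity. I would argue by exhibiting a basis of $\mathscr{S}(\mathfrak{S})$ consisting of stated diagrams in a normal form: no crossings, no contractible loops, no boundary-retractable components, and on each boundary edge all $-$ states below all $+$ states (say). This extends the classical Przytycki basis of the unmarked skein module. One then shows that $\rho_c$ sends distinct basis elements to linearly independent elements of $\mathscr{S}(\mathrm{Cut}_c(\mathfrak{S}))$ by tracking the coefficient of a ``leading'' normal-form lift: a state-sign analysis shows that the lift whose new states match the normal form on the new edges appears with nonzero coefficient, and cannot be produced by any other basis element because the intersection pattern with the original boundary edges is preserved by the cut. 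This combinatorial heart of the argument is the content of \cite[Section~3]{TTQLe}.
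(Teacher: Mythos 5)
This theorem is quoted from L\^e's paper (it appears in the source as \cite[Theorem~3.1]{TTQLe} with no proof), so there is no proof of the paper to compare against; I am judging your sketch on its own merits and against the strategy in the cited reference.

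Your outline is the right one in broad strokes (transversality to $c$, invariance under local moves, compatibility with the defining relations, factorisation of the state sum for the product, and a basis argument for injectivity), but there is a concrete gap in the well-definedness step. You enumerate the critical instants of a generic isotopy as bump creations/annihilations across $c$ and local moves near the marked endpoints of $c$, and you assert that ``Between critical instants, cutting yields an ambient isotopy of the cut tangle.'' This is not correct: there is a third type of critical instant, the \emph{height exchange}, where two points of $\alpha\cap(c\times(0,1))$ swap their $(0,1)$-coordinates while the tangle remains transverse to $c\times(0,1)$. Since isotopies of stated tangles are required to preserve the height order of boundary points over a common edge, and the newly created boundary edges of ${\rm Cut}_c(\mathfrak{S})$ inherit the height order from $c$, a height exchange changes the isotopy class of the cut tangle on each side. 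Invariance of $\rho_c$ under this move is not a formal consequence of the moves you list; it is a genuine computation mixing the Kauffman-bracket crossing relation with the boundary relations (the point where the co-$R$-matrix coefficients enter), and it is one of the key lemmas in L\^e's proof. As a result, the transversality argument as stated does not establish well-definedness.

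The rest is essentially correct in outline. The bump creation/annihilation check you describe is indeed reduced to the relation expressing parallel boundary strands in terms of each other and the boundary cup, with $C_+^+=C_-^-=0$ killing the off-diagonal lifts; the algebra morphism property follows as you say because the product is superposition in the $(0,1)$-direction and cutting is a horizontal operation, so the state sums over the two factors are independent; and disjoint arcs give commuting splittings for the reason you give. For injectivity you correctly identify the approach taken in the reference: exhibit a normal-form basis of stated simple diagrams (the extension of the Przytycki multicurve basis with ordered boundary states) and track a leading lift. You are candid that you are deferring the combinatorial core to \cite[Section~3]{TTQLe}, which is reasonable, but note that the claim ``cannot be produced by any other basis element because the intersection pattern with the original boundary edges is preserved by the cut'' is not by itself a complete argument: one must also control which lifts remain in normal form after cutting, which is where the filtration/ordering in L\^e's proof does real work.
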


\begin{Example}
The bigon $B$ is the marked surface $(\mathbb{D},\{\pm i\})$, the disk with two marked points. The algebra $\mathscr{S}(B)$ is generated as an algebra by the
\[
{}_\mu\beta_\nu = \begin{tikzpicture}[scale = 0.8, baseline = -5pt]
\fill [gray!10] (0,0) circle (0.5);
\draw (0,0) circle (0.5);
\node at (0,-0.5){\small $\bullet$};
\node at (0,0.5){\small $\bullet$};
\draw [thick] (-0.5,0) node[left]{$\mu$} -- (0.5,0) node[right]{$\nu$};
\end{tikzpicture}\!\!, \qquad
\mu,\nu \in \{\pm\},
\]
and has basis the
\[_{\vec{\mu}}\beta_{\vec{\nu}} = \begin{tikzpicture}[scale = 0.8, baseline = -5pt]
\fill [gray!10] (0,0) circle (0.5);
\draw (0,0) circle (0.5);
\node[rotate = 45] at (135:0.5) {\small $>$};
\node[rotate = -45] at (45:0.5) {\small $<$};
\node at (0,-0.5){\small $\bullet$};
\node at (0,0.5){\small $\bullet$};
\draw [thick] (-0.5,0) node[rotate = 90, above]{\footnotesize $\cdots$} -- (0.5,0) node[rotate = 90, below]{\footnotesize $\cdots$};
\draw [thick] (-0.45,-0.2) node[below left = -2pt]{\small $\mu_n$} -- (0.45,-0.2) node[below right = -2pt]{\small $\nu_n$};
\draw [thick] (-0.45,0.2) node[above left = -2pt]{\small $\mu_1$} -- (0.45,0.2) node[above right = -2pt]{\small $\nu_1$};
\end{tikzpicture}\!,
\]
where $\vec{\mu} = (\mu_1,\ldots,\mu_n)$ and $\vec{\nu} = (\nu_1,\ldots,\nu_n)$ are decreasing sequences of signs.

It is a bialgebra with coproduct given by cutting along the ``unique'' arc joining the two marked points \[
\begin{tikzpicture}[scale = 0.8, baseline = -5pt]
\fill [gray!10] (0,0) circle (0.5);
\draw (0,0) circle (0.5);
\node at (0,-0.5){\small $\bullet$};
\node at (0,0.5){\small $\bullet$};
\draw [dashed] (0,-0.5) -- (0,0.5) node[midway, right = -2pt]{$c$};
\end{tikzpicture}, \qquad
\Delta=\rho_c\colon\ \mathscr{S}(B) \to \mathscr{S}(B \sqcup B) \simeq \mathscr{S}(B) \otimes \mathscr{S}(B).
\]
Coassociativity comes from the second part of Theorem~\ref{splitmorph}. The counit $\varepsilon\colon \mathscr{S}(B) \to k$ is defined on the basis by $\varepsilon(_{\vec{\mu}}\beta_{\vec{\nu}}) = \delta_{\vec{\mu},\vec{\nu}}$.
It is a Hopf algebra with antipode
\[
S\Bigg(\begin{tikzpicture}[scale = 1, baseline = -5pt]
\fill [gray!10] (0,0) circle (0.5);
\draw (0,0) circle (0.5);
\node[rotate = 45] at (135:0.5) {\small $>$};
\node[rotate = -45] at (45:0.5) {\small $<$};
\node at (0,-0.5){\small $\bullet$};
\node at (0,0.5){\small $\bullet$};
\draw [thick] (-0.5,0) node[rotate = 90, above]{\footnotesize $\cdots$} -- (0.5,0) node[rotate = 90, below]{\footnotesize $\cdots$};
\draw [thick] (-0.45,-0.2) node[below left = -2pt]{\small $\mu_m$} -- (0.45,-0.2) node[below right = -2pt]{\small $\nu_n$};
\draw [thick] (-0.45,0.2) node[above left = -2pt]{\small $\mu_1$} -- (0.45,0.2) node[above right = -2pt]{\small $\nu_1$};
\node[circle, fill = white, draw = black, inner sep = 1.5pt] at (0,0) {$\beta$};
\end{tikzpicture}\!\!\Bigg)=
\begin{tikzpicture}[scale = 1, baseline = -5pt]
\fill [gray!10] (0,0) circle (0.5);
\draw (0,0) circle (0.5);
\node[rotate = 45] at (135:0.5) {\small $>$};
\node[rotate = -45] at (45:0.5) {\small $<$};
\node at (0,-0.5){\small $\bullet$};
\node at (0,0.5){\small $\bullet$};
\draw [thick] (-0.5,0) node[rotate = 90, above]{\footnotesize $\cdots$} -- (0.5,0) node[rotate = 90, below]{\footnotesize $\cdots$};
\draw [thick] (-0.45,-0.2) node[below left = -2pt]{\small $-\nu_1$} -- (0.45,-0.2) node[below right = -2pt]{\small $-\mu_1$};
\draw [thick] (-0.45,0.2) node[above left = -2pt]{\small $-\nu_n$} -- (0.45,0.2) node[above right = -2pt]{\small $-\mu_m$};
\node[circle, rotate = 180, fill = white, draw = black, inner sep = 1.5pt] at (0,0) {$\beta$};
\end{tikzpicture}
 .\ \frac{C(\vec{\nu})}{C(\vec{\mu})},\qquad
\text{where}\qquad C(\vec{\nu}):= \prod_{i=1}^n C(\nu_i).
 \]
It is coquasitriangular with co-$R$-matrix
\[
R(\alpha \otimes \beta) =\varepsilon
\raisebox{0pt}{$\left(\rule{-3pt}{22pt}\right.$}\!
\begin{tikzpicture}[scale = 0.7, baseline = 0pt, rotate = 180]
\draw (0,-1) node{\small $\bullet$} arc(-90:90:1) node[near start, sloped]{\small $>$} node{\small $\bullet$};
\draw (0,-1) arc(-90:-270:1) node[near start, sloped]{\small $<$};
\draw[thick] (-0.9,-0.45) -- (-0.4,-0.45) .. controls (0.3,-0.45) and (0.3,0.3).. (0.9,0.3);
\draw[thick] (-0.94,-0.3) -- (-0.4,-0.3) .. controls (0.3,-0.3) and (0.3,0.45).. (0.94,0.45);
\draw[line width = 3pt, white] (-0.94,0.3) -- (-0.4,0.3) .. controls (0.3,0.3) and (0.3,-0.45).. (0.94,-0.45);
\draw[thick] (-0.94,0.3) -- (-0.4,0.3) .. controls (0.3,0.3) and (0.3,-0.45).. (0.94,-0.45);
\draw[line width = 3pt, white] (-0.9,0.45) -- (-0.4,0.45) .. controls (0.3,0.45) and (0.3,-0.3).. (0.9,-0.3);
\draw[thick] (-0.9,0.45) -- (-0.4,0.45) .. controls (0.3,0.45) and (0.3,-0.3).. (0.9,-0.3);
\node[circle, fill=white, draw = black, scale = 0.8, inner sep = 1.5pt] (X) at (-0.5,0.37){$\beta$};
\node[circle, fill=white, draw = black, scale = 1, inner sep = 1.5pt] (X) at (-0.5,-0.37){$\alpha$};
\end{tikzpicture}\!
\raisebox{0pt}{$\left.\rule{-3pt}{22pt}\right)$}\!,
\]
see \cite[Theorem~3.5]{Cos}.
It is coribbon with coribbon functional
\[
\theta(\alpha) = \varepsilon
\raisebox{0pt}{$\left(\rule{-3pt}{22pt}\right.$}\!
\begin{tikzpicture}[scale = 0.7, baseline = 0pt]
\draw (0,-1) node{\small $\bullet$} arc(-90:90:1) node[near end, sloped]{\small $<$} node{\small $\bullet$};
\draw (0,-1) arc(-90:-270:1) node[near end, sloped]{\small $>$};
\node[circle, draw = black, scale = 1, inner sep = 1.5pt] (X) at (-0.3,-0.3){$\alpha$};
\draw[thick, double] (-0.94,-0.3) -- (X) -- (0.2,-0.3) .. controls (0.5,-0.3) and (0.8,0.4).. (0.5,0.4);
\draw[line width = 4pt, white] (0.5,0.4).. controls (0.2,0.4) and (0.5,-0.3) .. (0.8,-0.3) -- (0.94,-0.3);
\draw[thick, double] (0.5,0.4).. controls (0.2,0.4) and (0.5,-0.3) .. (0.8,-0.3) -- (0.94,-0.3);
\end{tikzpicture}\!
\raisebox{0pt}{$\left.\rule{-3pt}{22pt}\right)$}\!.
\]
\end{Example}

\begin{Proposition}[{\cite[Theorem~4.1]{TTQLe}, \cite[Section~2.2]{KorQue}}, {\cite[Theorem~3.4]{Cos}}]
One has an isomorphism of coribbon Hopf algebras $\mathscr{S}(B)\simeq \Oq$ given on the generators by $_+\beta_+\mapsto a$, ${}_-\beta_{-}\mapsto d$,  ${}_+\beta_{-}\mapsto b$ and ${}_-\beta_{+}\mapsto c$.
\end{Proposition}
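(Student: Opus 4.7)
The plan is to define $\phi\colon\mathscr{S}(B)\to\Oq$ on the four generators ${}_\mu\beta_\nu$ as prescribed and to verify each piece of coribbon Hopf algebra structure separately. Because every structure on $\mathscr{S}(B)$ is given by an explicit diagrammatic construction (stacking, cutting, capping, twisting, braiding), each compatibility reduces to a finite skein calculation using the boundary skein relations of Section~\ref{SectStatedSkein} and Proposition~\ref{propLeftStSkRel}.

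First I would verify $\phi$ is a well-defined algebra morphism. The defining relations of $\Oq$ (the six $q$-commutation relations and the two quantum-determinant relations) each compare two horizontal arcs stacked in the two possible orders. Exchanging the order on the left (resp.\ right) boundary produces a linear combination via the corresponding height-exchange relation; the resulting cap term evaluates through the coefficients $C^\mu_\nu$ and ${}^\mu_\nu\reflectbox{$C$}$, after which one reads off precisely the prescribed $q$-commutations and the quantum determinant relations. Surjectivity is immediate as $a,b,c,d$ generate $\Oq$, and injectivity follows by comparison of bases: the decreasing-sign-sequence basis ${}_{\vec\mu}\beta_{\vec\nu}$ of $\mathscr{S}(B)$ corresponds term-by-term to the reduced monomial basis of $\Oq$ forced by the same $q$-commutations.

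For the Hopf structure, the coproduct defined geometrically via cutting along the central arc and summing over matching signs on the new endpoints gives $\Delta({}_\mu\beta_\nu)=\sum_\lambda {}_\mu\beta_\lambda\otimes{}_\lambda\beta_\nu$, which under $+\leftrightarrow 1$, $-\leftrightarrow 2$ is the matrix coproduct of $\Oq$. The counit $\varepsilon({}_\mu\beta_\nu)=\delta_{\mu\nu}$ matches the identity-matrix counit. For the antipode, I would evaluate the given half-twist picture on each of the four generators: the resulting loop simplifies via the four cap boundary relations and yields exactly the factor $C(\nu)/C(\mu)$ times a flipped generator, reassembling into the matrix $\bigl(\begin{smallmatrix} d & -q^2 b\\ -q^{-2}c & a\end{smallmatrix}\bigr)$.

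For the coribbon structure I would evaluate $\varepsilon$ on the diagrammatic formulas for $R$ and $\theta$. In the $R$-diagram the two arcs labeled $\alpha$ and $\beta$ cross once and are capped off on the boundary; expanding the crossing via the Kauffman relation and resolving each cap configuration with the coefficients $C^\mu_\nu$ yields the prescribed $4\times 4$ matrix. Likewise, $\theta({}_\mu\beta_\nu)$ is computed by resolving the full twist via one Kauffman crossing resolution followed by two cap evaluations, producing $-q^3\delta_{\mu\nu}$. The main obstacle will be the careful bookkeeping of signs and $q$-powers in the $R$ and antipode calculations, where the right-edge coefficients $C^\mu_\nu$ and the left-edge coefficients ${}^\mu_\nu\reflectbox{$C$}$ (which differ in sign) interact with Kauffman crossing resolutions; the unusual coribbon functional was chosen precisely so that these signs align with the unoriented Kauffman bracket, so all cancellations should be automatic once the left/right conventions are applied consistently.
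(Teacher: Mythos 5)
The paper cites this proposition from three external references (\cite[Theorem~4.1]{TTQLe}, \cite[Section~2.2]{KorQue}, \cite[Theorem~3.4]{Cos}) without giving its own proof, so there is no in-paper argument to compare against; I can only assess your outline on its own merits and against the cited sources. Your strategy --- check that the $\Oq$-relations hold diagrammatically via height-exchange and cap evaluations, match bases for injectivity, then verify that the geometric coproduct, counit, antipode, co-$R$-matrix, and coribbon functional evaluate to the algebraic ones --- is indeed the standard approach and is what \cite{TTQLe} and \cite{Cos} carry out. The injectivity argument via comparing the decreasing-sign-sequence basis $_{\vec\mu}\beta_{\vec\nu}$ to a monomial basis of $\Oq$ is sound; for each degree $n$ the count is $(n+1)^2$ on both sides. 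Your remark that the unusual coribbon functional was designed to make the Kauffman-bracket signs align with the unoriented calculus is exactly the point made by Tingley and recorded in the paper's Remark after the definition of $\Oq$.

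Two small imprecisions worth flagging. First, you describe the antipode as ``the given half-twist picture,'' but the antipode formula in the paper is an in-plane $180^\circ$ rotation of the tangle together with state negation and the scalar $C(\vec\nu)/C(\vec\mu)$; this is not the ribbon half-twist $\halft$ of Section~4 (which twists framing), and conflating the two could lead to sign errors if you tried to compute via an actual half-twisted strand. Evaluating the paper's rotation formula directly on the four generators gives $S(a)=d$, $S(b)=-q^2b$, $S(c)=-q^{-2}c$, $S(d)=a$ as required. Second, the listed boundary relations only show the height-exchange for the $(-,+)$ sign pattern explicitly; the relations you need for same-sign pairs (and the full table of $C^\mu_\nu$, ${}^\mu_\nu\reflectbox{$C$}$) are consequences derived in \cite[Lemmas~2.3--2.4]{TTQLe}, which you implicitly use but do not cite as a separate step. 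Neither gap is fatal, but both would need to be filled in a complete write-up.
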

Stated skein algebras provide great examples of (possibly infinite-dimensional) $\Oq$ or $\mathscr{S}(B)$-comodules.

Given a marked surface $\mathfrak{S}$ and a boundary edge $e$ of $\mathfrak{S}$, we can consider an ideal arc $c$ going along~$e$ but inside $\mathring{\mathfrak{S}}$. The piece between $e$ and $c$ is a bigon, and the rest is homeomorphic to the original surface. The splitting morphism along $c$, $\Delta = \rho_c\colon \mathscr{S}(\mathfrak{S}) \to \mathscr{S}(\mathfrak{S}\sqcup B)\simeq \mathscr{S}(\mathfrak{S})\otimes \mathscr{S}(B)$ endows $\mathscr{S}(\mathfrak{S})$ with right $\Oq$-comodule structure.
$$
\begin{tikzpicture}[xscale = 0.75, yscale = 0.75,baseline = 10pt]
\fill[gray!10] (3.5,0.2) coordinate (B)	.. controls (2.5,0.2) and (1.5,0 ) .. (1,0)	.. controls (0.5,0 ) and (0,0.5 ) .. (0,1)	.. controls (0,1.5 ) and (0.5,2 ) .. (1,2)	.. controls (1.5,2 ) and (2.5,1.8) .. (3.5,1.8) coordinate (H) -- cycle ;
\fill[white] (0.6,0.9) .. controls (0.7,0.7) and (1.8,0.7) .. (1.9,0.9) .. controls (1.7,1.2) and (0.8,1.2) .. (0.6,0.9);
\draw (0.5,1) .. controls (0.7,0.7) and (1.8,0.7) .. (2,1);
\draw (0.6,0.9) .. controls (0.8,1.2) and (1.7,1.2) .. (1.9,0.9);
\draw (3.5,0.2) node{\small $\bullet$}	.. controls (2.5,0.2) and (1.5,0 ) .. (1,0)	.. controls (0.5,0 ) and (0,0.5 ) .. (0,1)	.. controls (0,1.5 ) and (0.5,2 ) .. (1,2)	.. controls (1.5,2 ) and (2.5,1.8) .. (3.5,1.8) node{\small $\bullet$};
\draw[thick] (B) -- (H) node[midway,right]{$e$};
\path[dashed, bend left = 150pt, in = 135, out = 45] (B) edge node[midway,left]{$c$} (H);
\end{tikzpicture}
$$
It is compatible with its algebra structure, namely $\mathscr{S}(\mathfrak{S})$ is an $\Oq$-comodule-algebra, see \cite[Proposition~4.1]{Cos}.

\begin{Definition}
Let $\operatorname{rot}\colon B \to B$ be the homeomorphism of marked surfaces given by the planar 180 degree rotation. It induces $\operatorname{rot}_*\colon \mathscr{S}(B) \to \mathscr{S}(B)$, with
\[
\operatorname{rot}_*\Bigg(
\begin{tikzpicture}[scale = 1, baseline = -5pt]
\fill [gray!10] (0,0) circle (0.5);
\draw (0,0) circle (0.5);
\node[rotate = 45] at (135:0.5) {\small $>$};
\node[rotate = -45] at (45:0.5) {\small $<$};
\node at (0,-0.5){\small $\bullet$};
\node at (0,0.5){\small $\bullet$};
\draw [thick] (-0.5,0) node[rotate = 90, above]{\footnotesize $\cdots$} -- (0.5,0) node[rotate = 90, below]{\footnotesize $\cdots$};
\draw [thick] (-0.45,-0.2) node[below left = -2pt]{\small $\mu_m$} -- (0.45,-0.2) node[below right = -2pt]{\small $\nu_n$};
\draw [thick] (-0.45,0.2) node[above left = -2pt]{\small $\mu_1$} -- (0.45,0.2) node[above right = -2pt]{\small $\nu_1$};
\node[circle, fill = white, draw = black, inner sep = 1.5pt] at (0,0) {$\beta$};
\end{tikzpicture}
\Bigg) =
\begin{tikzpicture}[scale = 1, baseline = -5pt]
\fill [gray!10] (0,0) circle (0.5);
\draw (0,0) circle (0.5);
\node[rotate = 135] at (-135:0.5) {\small $<$};
\node[rotate = -135] at (-45:0.5) {\small $>$};
\node at (0,-0.5){\small $\bullet$};
\node at (0,0.5){\small $\bullet$};
\draw [thick] (-0.5,0) node[rotate = 90, above]{\footnotesize $\cdots$} -- (0.5,0) node[rotate = 90, below]{\footnotesize $\cdots$};
\draw [thick] (-0.45,-0.2) node[below left = -2pt]{\small $\nu_1$} -- (0.45,-0.2) node[below right = -2pt]{\small $\mu_1$};
\draw [thick] (-0.45,0.2) node[above left = -2pt]{\small $\nu_n$} -- (0.45,0.2) node[above right = -2pt]{\small $\mu_m$};
\node[circle, rotate = 180, fill = white, draw = black, inner sep = 1.5pt] at (0,0) {$\beta$};
\end{tikzpicture}
\]
which is an algebra isomorphism. It reverses the coproduct, namely
\[
\Delta\circ \operatorname{rot}_*(\beta) =
 \begin{tikzpicture}[scale = 1, baseline = -5pt]
\fill [gray!10] (0,0) circle (0.5);
\draw (0,0) circle (0.5);
\node[rotate = 135] at (-135:0.5) {\small $<$};
\node[rotate = -135] at (-45:0.5) {\small $>$};
\node at (0,-0.5){\small $\bullet$};
\node at (0,0.5){\small $\bullet$};
\draw [thick] (-0.5,0) -- (0.5,0);
\draw [thick] (-0.45,-0.2) -- (0.45,-0.2) ;
\draw [thick] (-0.45,0.2) -- (0.45,0.2);
\node[circle, rotate = 180, fill = white, draw = black, inner sep = 1pt] at (0,0) {\footnotesize $\beta_{(2)}$};
\end{tikzpicture} \otimes \begin{tikzpicture}[scale = 1, baseline = -5pt]
\fill [gray!10] (0,0) circle (0.5);
\draw (0,0) circle (0.5);
\node[rotate = 135] at (-135:0.5) {\small $<$};
\node[rotate = -135] at (-45:0.5) {\small $>$};
\node at (0,-0.5){\small $\bullet$};
\node at (0,0.5){\small $\bullet$};
\draw [thick] (-0.5,0) -- (0.5,0);
\draw [thick] (-0.45,-0.2) -- (0.45,-0.2) ;
\draw [thick] (-0.45,0.2) -- (0.45,0.2);
\node[circle, rotate = 180, fill = white, draw = black, inner sep = 1pt] at (0,0) {\footnotesize $\beta_{(1)}$};
\end{tikzpicture} = (\operatorname{rot}_*\otimes \operatorname{rot}_*) \circ \Delta^{\rm op} (\beta),
\]
and preserves the counit, because $\varepsilon(\operatorname{rot}_*(_{\vec{\mu}}\beta_{\vec{\nu}})) = \varepsilon(_{\vec{\nu}}\beta_{\vec{\mu}}) = \delta_{\vec{\nu},\vec{\mu}} = \delta_{\vec{\mu},\vec{\nu}}$.
On $\Oq$, it is given by $r\left(\begin{smallmatrix}a&b\\c&d\end{smallmatrix}\right) = \left( \begin{smallmatrix}a&c\\b&d\end{smallmatrix}\right)$.
\end{Definition}

\begin{Remark}If one sees the edge $e$ at the left instead of the right of the surface, one gets a~structure of left $\Oq$-comodule. One can easily get from one to another by rotating the whole picture. Namely, the left coaction $\Delta_{\rm l}$ is obtained from the right coaction $\Delta_{\rm r}$ by rotating the bigon by 180 degrees. In \cite[Proposition~4.1]{Cos} one gets $\Delta_{\rm l} = \operatorname{fl} \circ ({\rm Id}_{\mathscr{S}(\mathfrak{S})} \otimes \operatorname{rot}_*) \circ \Delta_{\rm r}$, where $\operatorname{fl}$ denotes the flip of tensors.
$$
\begin{tikzpicture}[xscale = 0.75, yscale = 0.5,baseline = 10pt]
\fill[gray!10] (3.5,0.2) coordinate (B)	.. controls (2.5,0.2) and (1.5,0 ) .. (1,0)	.. controls (0.5,0 ) and (0,0.5 ) .. (0,1)	.. controls (0,1.5 ) and (0.5,2 ) .. (1,2)	.. controls (1.5,2 ) and (2.5,1.8) .. (3.5,1.8) coordinate (H) to[bend right = 150pt, in = -135, out = -45] (B) ;
\fill[white] (0.6,0.9) .. controls (0.7,0.7) and (1.8,0.7) .. (1.9,0.9) .. controls (1.7,1.2) and (0.8,1.2) .. (0.6,0.9);
\draw (0.5,1) .. controls (0.7,0.7) and (1.8,0.7) .. (2,1);
\draw (0.6,0.9) .. controls (0.8,1.2) and (1.7,1.2) .. (1.9,0.9);
\draw (3.5,0.2)node{\small $\bullet$}	.. controls (2.5,0.2) and (1.5,0 ) .. (1,0)	.. controls (0.5,0 ) and (0,0.5 ) .. (0,1)	.. controls (0,1.5 ) and (0.5,2 ) .. (1,2)	.. controls (1.5,2 ) and (2.5,1.8) .. (3.5,1.8) node{\small $\bullet$};
\path[dashed, bend left = 150pt, in = 135, out = 45] (B) edge node[midway,left]{$c$} (H);
\node [right= 8pt of B, inner sep = 0pt] (B') {};
\node [right= 8pt of H, inner sep = 0pt] (H') {};
\fill[gray!10] (B'.center)to[bend left = 150pt, in = 135, out = 45] (H'.center) to[bend left = 150pt, in = 135, out = 45] (B');
\draw[dashed] (B'.center) to[bend left = 150pt, in = 135, out = 45] node[pos = 0]{\small $\bullet$} node[pos = 1]{\small $\bullet$} node[midway]{\small $W$} (H'.center);
\draw[dashed] (H'.center) to[bend left = 150pt, in = 135, out = 45] node[midway]{\small $E$} node[pos = 1,below]{\small $S$} node[pos = 0,above]{\small $N$} (B'.center);
\draw[->] (0,0) to[bend right] ++(0,-1.5);
\begin{scope}[xshift = 4.5cm, yshift = -2cm, rotate = 180]
\fill[gray!10] (3.5,0.2) coordinate (B)	.. controls (2.5,0.2) and (1.5,0 ) .. (1,0)	.. controls (0.5,0 ) and (0,0.5 ) .. (0,1)	.. controls (0,1.5 ) and (0.5,2 ) .. (1,2)	.. controls (1.5,2 ) and (2.5,1.8) .. (3.5,1.8) coordinate (H) to[bend right = 150pt, in = -135, out = -45] (B) ;
\fill[white] (0.6,0.9) .. controls (0.7,0.7) and (1.8,0.7) .. (1.9,0.9) .. controls (1.7,1.2) and (0.8,1.2) .. (0.6,0.9);
\draw (0.5,1) .. controls (0.7,0.7) and (1.8,0.7) .. (2,1);
\draw (0.6,0.9) .. controls (0.8,1.2) and (1.7,1.2) .. (1.9,0.9);
\draw (3.5,0.2)node{\small $\bullet$}	.. controls (2.5,0.2) and (1.5,0 ) .. (1,0)	.. controls (0.5,0 ) and (0,0.5 ) .. (0,1)	.. controls (0,1.5 ) and (0.5,2 ) .. (1,2)	.. controls (1.5,2 ) and (2.5,1.8) .. (3.5,1.8) node{\small $\bullet$};
\path[dashed, bend left = 150pt, in = 135, out = 45] (B) edge node[midway,right]{$c$} (H);
\node [left= 8pt of B, inner sep = 0pt] (B') {};
\node [left= 8pt of H, inner sep = 0pt] (H') {};
\fill[gray!10] (B'.center)to[bend left = 150pt, in = 135, out = 45] (H'.center) to[bend left = 150pt, in = 135, out = 45] (B');
\draw[dashed] (B'.center) to[bend left = 150pt, in = 135, out = 45] node[pos = 0]{\small $\bullet$} node[pos = 1]{\small $\bullet$} node[midway]{\small $W$} (H'.center);
\draw[dashed] (H'.center) to[bend left = 150pt, in = 135, out = 45] node[midway]{\small $E$} node[pos = 1,above]{\small $S$} node[pos = 0,below]{\small $N$} (B'.center);
\draw[->] (0,0) to[bend right] ++(0,-1.5);
\end{scope}
\end{tikzpicture}
$$
Actually, one gets such a structure for each boundary edge of $\mathfrak{S}$, and if $\mathfrak{S}$ has $n$ right boundary edges and $m$ left, $\mathscr{S}(\mathfrak{S})$ is an $\big(\Oq^{\otimes n},\Oq^{\otimes m}\big)$-bicomodule algebra.
\end{Remark}

\begin{Remark}\label{rmkFormActOnSS}
The structure forms on $\mathscr{S}(B)$, such as the co-$R$-matrix or the coribbon functional, are often defined using the counit on some transformation of the tangle. This has a direct interpretation on how this form then acts on comodules.

Let $\varphi\colon\mathscr{S}(B) \to k$ be given by some
\[
\varphi\raisebox{-3pt}{$\left(\rule{0pt}{22pt}\right.$}\!\!
\begin{tikzpicture}[scale = 0.7, baseline = 0pt]
\draw (0,-1) node{\small $\bullet$} arc(-90:90:1) node[near end, sloped]{\small $<$} node{\small $\bullet$};
\draw (0,-1) arc(-90:-270:1) node[near end, sloped]{\small $>$};
\draw[thick] (-0.94,-0.3)node[below left]{\small $\varepsilon_n$} -- (0.94,-0.3)node[below right]{\small $\eta_m$};
\draw[thick] (-1,0) node[yshift=2pt,left]{\footnotesize $\vdots$} -- (1,0)node[yshift=2pt,right]{\footnotesize $\vdots$};
\draw[thick] (-0.94,0.3) node[above left]{\small $\varepsilon_1$} -- (0.94,0.3)node[above right]{\small $\eta_1$};
\node[circle, fill=white, draw = black, scale = 1.5, inner sep = 1.5pt] (X) at (0,0){$\alpha$};
\end{tikzpicture}\!\!
\raisebox{-3pt}{$\left.\rule{0pt}{22pt}\right)$}
 = \varepsilon
\raisebox{-3pt}{$\left(\rule{0pt}{32pt}\right.$}\!\!\!
\begin{tikzpicture}[scale = 0.7, baseline = 0pt]
\draw (0,-1) node{\small $\bullet$} arc(-90:90:1) node[near end, sloped]{\small $<$} node{\small $\bullet$};
\draw (0,-1) arc(-90:-270:1) node[near end, sloped]{\small $>$};
\draw[thick] (-0.94,-0.3)node[below left]{\small $\varepsilon_n$} -- (0.94,-0.3);
\draw[thick] (-1,0) node[yshift=2pt,left]{\footnotesize $\vdots$} -- (1,0)node[xshift = 5pt, yshift = 0pt, rotate = -90]{\small $s_m(\eta_1 \dots \eta_m)$};
\draw[thick] (-0.94,0.3) node[above left]{\small $\varepsilon_1$} -- (0.94,0.3);
\node[circle, minimum width=0.55cm, fill=white, draw = black, scale = 1, inner sep = 1.5pt] (X) at (-0.45,0){$\alpha$};
\node[circle, minimum width=0.5cm,fill=white, draw = black, scale = 1, inner sep = 1.5pt] at (0.45,0){\small $T_m$};
\end{tikzpicture}\!
\raisebox{-3pt}{$\left.\rule{0pt}{32pt}\right)$},
\]
where $T_m,\ m \in \mathbb{N},$ is a family of tangles with $m$ right and $m$ left boundary points and $s_m\colon \{\pm\}^m\to {\rm vect}_ k(\{\pm\}^m)$. Here, we allowed $s_m$ to have values in formal linear combination of $m$-tuples of states because in the definition of the co-$R$-matrix for example one needs coefficients depending on the states. What we mean by a tangle $\alpha$ with state a formal linear combination of states $\sum_i \lambda_i \vec{\eta}_i$ is the linear sum of stated tangles $\alpha_{\sum_i \lambda_i \vec{\eta}_i} := \sum_i \lambda_i \alpha_{\vec{\eta}_i}$.
Note that the $T_m$'s and the $s_m$'s should satisfy extra conditions for this to be well-defined on $\mathscr{S}(B)$. Then for a marked surface $\mathfrak{S}$ with a right edge $e$, the map $\Phi_{\mathscr{S}(\mathfrak{S})}\colon \mathscr{S}(\mathfrak{S}) \overset{\Delta}{\to} \mathscr{S}(\mathfrak{S}) \otimes \mathscr{S}(B) \overset{{\rm Id} \otimes \varphi}{\longrightarrow} \mathscr{S}(\mathfrak{S})$ is given by
\[
\begin{array}{ll}
\begin{tikzpicture}[scale = 0.7, baseline = 0pt]
\draw (0,-1) node{\small $\bullet$} arc(-90:90:1) node[near end, sloped]{\small $<$} node{\small $\bullet$};
\draw (0,-1) -- (-1,-1);
\draw (0,1) -- (-1,1);
\draw[thick] (-1,-0.3) -- (0.94,-0.3)node[below right]{\small $\eta_m$};
\draw[thick] (-1,0) -- (1,0)node[yshift=2pt,right]{\footnotesize $\vdots$};
\draw[thick] (-1,0.3) -- (0.94,0.3)node[above right]{\small $\eta_1$};
\node[fill=white, minimum width = 1cm, minimum height = 0.7cm, leftymissy, inner sep = 1.5pt] (X) at (-0.5,0){$\alpha$};
\end{tikzpicture}
&\overset{\Delta}{\longmapsto} \sum_{\vec{\nu}}
\raisebox{-3pt}{$\left(\rule{0pt}{26pt}\right.$}
\begin{tikzpicture}[scale = 0.7, baseline = 0pt]
\draw (0,-1) node{\small $\bullet$} arc(-90:90:1) node[near end, sloped]{\small $<$} node{\small $\bullet$};
\draw (0,-1) -- (-1,-1);
\draw (0,1) -- (-1,1);
\draw[thick] (-1,-0.3) -- (0.94,-0.3)node[below right]{\small $\nu_m$};
\draw[thick] (-1,0) -- (1,0)node[yshift=2pt,right]{\footnotesize $\vdots$};
\draw[thick] (-1,0.3) -- (0.94,0.3)node[above right]{\small $\nu_1$};
\node[fill=white, minimum width = 1cm, minimum height = 0.7cm, leftymissy, inner sep = 1.5pt] (X) at (-0.5,0){$\alpha$};
\end{tikzpicture} \otimes
\begin{tikzpicture}[scale = 0.7, baseline = 0pt]
\draw (0,-1) node{\small $\bullet$} arc(-90:90:1) node[near end, sloped]{\small $<$} node{\small $\bullet$};
\draw (0,-1) arc(-90:-270:1) node[near end, sloped]{\small $>$};
\draw[thick] (-0.94,-0.3)node[below left]{\small $\nu_m$} -- (0.94,-0.3)node[below right]{\small $\eta_m$};
\draw[thick] (-1,0) node[yshift=2pt,left]{\footnotesize $\vdots$} -- (1,0)node[yshift=2pt,right]{\footnotesize $\vdots$};
\draw[thick] (-0.94,0.3) node[above left]{\small $\nu_1$} -- (0.94,0.3)node[above right]{\small $\eta_1$};
\end{tikzpicture}\!\!\!
\raisebox{-3pt}{$\left.\rule{0pt}{26pt}\right)$}
\\
& \overset{{\rm Id}\otimes \varphi}{\longmapsto} ({\rm Id} \otimes \varepsilon)
\raisebox{-3pt}{$\left(\rule{0pt}{32pt}\right.$}
\begin{tikzpicture}[scale = 0.7, baseline = 0pt]
\draw (0,-1) node{\small $\bullet$} arc(-90:90:1) node[near end, sloped]{\small $<$} node{\small $\bullet$};
\draw (0,-1) -- (-1,-1);
\draw (0,1) -- (-1,1);
\draw[thick] (-1,-0.3) -- (0.94,-0.3)node[below right]{\small $\nu_m$};
\draw[thick] (-1,0) -- (1,0)node[yshift=2pt,right]{\footnotesize $\vdots$};
\draw[thick] (-1,0.3) -- (0.94,0.3)node[above right]{\small $\nu_1$};
\node[fill=white, minimum width = 1cm, minimum height = 0.7cm, leftymissy, inner sep = 1.5pt] (X) at (-0.5,0){$\alpha$};
\end{tikzpicture} \otimes
\begin{tikzpicture}[scale = 0.7, baseline = 0pt]
\draw (0,-1) node{\small $\bullet$} arc(-90:90:1) node[near end, sloped]{\small $<$} node{\small $\bullet$};
\draw (0,-1) arc(-90:-270:1) node[near end, sloped]{\small $>$};
\draw[thick] (-0.94,-0.3)node[below left]{\small $\nu_m$} -- (0.94,-0.3);
\draw[thick] (-1,0) node[yshift=2pt,left]{\footnotesize $\vdots$} -- (1,0)node[xshift = 5pt, yshift = 0pt, rotate = -90]{\small $s_m(\eta_1 \dots \eta_m)$};
\draw[thick] (-0.94,0.3) node[above left]{\small $\nu_1$} -- (0.94,0.3);
\node[circle, minimum width=0.5cm,fill=white, draw = black, scale = 1, inner sep = 1.5pt] at (0,0){\small $T_m$};
\end{tikzpicture}
\raisebox{-3pt}{$\left.\rule{0pt}{32pt}\right)$}
\\
& \overset{\rm splitting}{\underset{\rm well\ def}{=}} ({\rm Id} \otimes \varepsilon)
\raisebox{-3pt}{$\left(\rule{0pt}{32pt}\right.$}
\begin{tikzpicture}[scale = 0.7, baseline = 0pt]
\draw (0,-1) node{\small $\bullet$} arc(-90:90:1) node[near end, sloped]{\small $<$} node{\small $\bullet$};
\draw (0,-1) -- (-1,-1);
\draw (0,1) -- (-1,1);
\draw[thick] (-1,-0.3) -- (0.94,-0.3)node[below right]{\small $\nu_m$};
\draw[thick] (-1,0) -- (1,0)node[yshift=2pt,right]{\footnotesize $\vdots$};
\draw[thick] (-1,0.3) -- (0.94,0.3)node[above right]{\small $\nu_1$};
\node[fill=white, minimum width = 0.7cm, minimum height = 0.7cm, leftymissy, inner sep = 1.5pt] (X) at (-0.65,0){$\alpha$};
\node[circle, minimum width=0.5cm,fill=white, draw = black, scale = 1, inner sep = 1.5pt] at (0.4,0){\small $T_m$};
\end{tikzpicture} \otimes
\begin{tikzpicture}[scale = 0.7, baseline = 0pt]
\draw (0,-1) node{\small $\bullet$} arc(-90:90:1) node[near end, sloped]{\small $<$} node{\small $\bullet$};
\draw (0,-1) arc(-90:-270:1) node[near end, sloped]{\small $>$};
\draw[thick] (-0.94,-0.3)node[below left]{\small $\nu_m$} -- (0.94,-0.3);
\draw[thick] (-1,0) node[yshift=2pt,left]{\footnotesize $\vdots$} -- (1,0)node[xshift = 5pt, yshift = 0pt, rotate = -90]{\small $s_m(\eta_1 \dots \eta_m)$};
\draw[thick] (-0.94,0.3) node[above left]{\small $\nu_1$} -- (0.94,0.3);
\end{tikzpicture}
\raisebox{-3pt}{$\left.\rule{0pt}{32pt}\right)$}
\overset{\rm counit}{=}
\begin{tikzpicture}[scale = 0.7, baseline = 0pt]
\draw (0,-1) node{\small $\bullet$} arc(-90:90:1) node[near end, sloped]{\small $<$} node{\small $\bullet$};
\draw (0,-1) -- (-1,-1);
\draw (0,1) -- (-1,1);
\draw[thick] (-1,-0.3) -- (0.94,-0.3);
\draw[thick] (-1,0) -- (1,0)node[xshift = 5pt, yshift = 0pt, rotate = -90]{\small $s_m(\eta_1 \dots \eta_m)$};
\draw[thick] (-1,0.3) -- (0.94,0.3);
\node[fill=white, minimum width = 0.7cm, minimum height = 0.7cm, leftymissy, inner sep = 1.5pt] (X) at (-0.65,0){$\alpha$};
\node[circle, minimum width=0.5cm,fill=white, draw = black, scale = 1, inner sep = 1.5pt] at (0.4,0){\small $T_m$};
\end{tikzpicture}.
\end{array}
\]
The co-$R$-matrix isn't exactly of this form, but the same kind of computation applies. Remember that the braiding on $\mathscr{S}(\mathfrak{S}) \otimes \mathscr{S}(\mathfrak{S})$ is defined using the coaction on each $\mathscr{S}(\mathfrak{S})$, the co-$R$-matrix on the $\mathscr{S}(B)^{\otimes 2}$ part thus obtained, and then flipping the two factors, namely $c_{\mathscr{S}(\mathfrak{S}),\mathscr{S}(\mathfrak{S})} = \operatorname{fl} \circ R_{24}\circ \big(\Delta_{\mathscr{S}(\mathfrak{S})}\otimes \Delta_{\mathscr{S}(\mathfrak{S})}\big)$. The braided opposite product on $\mathscr{S}(\mathfrak{S})$ is defined as $m^{\rm bop} := m \circ c_{\mathscr{S}(\mathfrak{S}),\mathscr{S}(\mathfrak{S})}$ and it has a nice geometric depiction, namely:
\[
\begin{array}{ll}
m^{\rm bop}(\alpha \otimes \beta) & = m \circ \operatorname{fl}
\raisebox{-3pt}{$\left(\rule{0pt}{26pt}\right.$}
\begin{tikzpicture}[scale = 0.7, baseline = 0pt]
\draw (0,-1) node{\small $\bullet$} arc(-90:90:1) node[near end, sloped]{\small $<$} node{\small $\bullet$};
\draw (0,-1) -- (-1,-1);
\draw (0,1) -- (-1,1);
\draw[thick] (-1,-0.3) -- (0.94,-0.3)node[below right]{\small $\nu_n$};
\draw[thick] (-1,0) -- (1,0)node[yshift=2pt,right]{\footnotesize $\vdots$};
\draw[thick] (-1,0.3) -- (0.94,0.3)node[above right]{\small $\nu_1$};
\node[fill=white, minimum width = 1cm, minimum height = 0.7cm, leftymissy, inner sep = 1.5pt] (X) at (-0.5,0){$\alpha_{(1)}$};
\end{tikzpicture} \otimes \begin{tikzpicture}[scale = 0.7, baseline = 0pt]
\draw (0,-1) node{\small $\bullet$} arc(-90:90:1) node[near end, sloped]{\small $<$} node{\small $\bullet$};
\draw (0,-1) -- (-1,-1);
\draw (0,1) -- (-1,1);
\draw[thick] (-1,-0.3) -- (0.94,-0.3)node[below right]{\small $\mu_m$};
\draw[thick] (-1,0) -- (1,0)node[yshift=2pt,right]{\footnotesize $\vdots$};
\draw[thick] (-1,0.3) -- (0.94,0.3)node[above right]{\small $\mu_1$};
\node[fill=white, minimum width = 1cm, minimum height = 0.7cm, leftymissy, inner sep = 1.5pt] (X) at (-0.5,0){$\beta_{(1)}$};
\end{tikzpicture}
\!.\
\varepsilon \raisebox{-3pt}{$\left(\rule{0pt}{26pt}\right.$}\!\!
\begin{tikzpicture}[scale = 0.7, baseline = 0pt, rotate = 180]
\draw (0,-1) node{\small $\bullet$} arc(-90:90:1) node[near start, sloped]{\small $>$} node{\small $\bullet$};
\draw (0,-1) arc(-90:-270:1) node[near start, sloped]{\small $<$};
\draw[thick] (-0.9,-0.45) -- (-0.4,-0.45) .. controls (0.3,-0.45) and (0.3,0.3).. (0.9,0.3);
\draw[thick] (-0.94,-0.3) -- (-0.4,-0.3) .. controls (0.3,-0.3) and (0.3,0.45).. (0.94,0.45)node[left]{\small $\vec{\nu}$};
\draw[line width = 3pt, white] (-0.94,0.3) -- (-0.4,0.3) .. controls (0.3,0.3) and (0.3,-0.45).. (0.94,-0.45);
\draw[thick] (-0.94,0.3) -- (-0.4,0.3) .. controls (0.3,0.3) and (0.3,-0.45).. (0.94,-0.45)node[left]{\small $\vec{\mu}$};
\draw[line width = 3pt, white] (-0.9,0.45) -- (-0.4,0.45) .. controls (0.3,0.45) and (0.3,-0.3).. (0.9,-0.3);
\draw[thick] (-0.9,0.45) -- (-0.4,0.45) .. controls (0.3,0.45) and (0.3,-0.3).. (0.9,-0.3);
\node[circle, fill=white, draw = black, scale = 0.6, inner sep = 1.5pt] (X) at (-0.5,0.37){$\beta_{(2)}$};
\node[circle, fill=white, draw = black, scale = 0.65, inner sep = 1.5pt] (X) at (-0.5,-0.37){$\alpha_{(2)}$};
\end{tikzpicture}\!\!
\raisebox{-3pt}{$\left.\rule{0pt}{26pt}\right)$}\!\raisebox{-3pt}{$\left.\rule{0pt}{26pt}\right)$}
\\
&=
 \begin{tikzpicture}[scale = 0.7, baseline = 0pt]
\draw (0,-1) node{\small $\bullet$} arc(-90:90:1) node[near end, sloped]{\small $<$} node{\small $\bullet$};
\draw (0,-1) -- (-1,-1);
\draw (0,1) -- (-1,1);
\draw[thick, double] (-1,-0.3) -- (0.94,-0.3)node[right=-2pt]{\small $\vec{\nu}$};
\draw[thick, double] (-1,0.3) -- (0.94,0.3)node[right=-2pt]{\small $\vec{\mu}$};
\node[scale = 0.8, fill=white, minimum width = 1cm, minimum height = 0.5cm, leftymissy, inner sep = 1.5pt] (X) at (-0.5,0.35){\small $\beta_{(1)}$};
\node[scale = 0.8, fill=white, minimum width = 1cm, minimum height = 0.5cm, leftymissy, inner sep = 1.5pt] (X) at (-0.5,-0.35){\small $\alpha_{(1)}$};
\end{tikzpicture}
.\
\varepsilon \raisebox{-3pt}{$\left(\rule{0pt}{26pt}\right.$}\!\!
\begin{tikzpicture}[scale = 0.7, baseline = 0pt, rotate = 180]
\draw (0,-1) node{\small $\bullet$} arc(-90:90:1) node[near start, sloped]{\small $>$} node{\small $\bullet$};
\draw (0,-1) arc(-90:-270:1) node[near start, sloped]{\small $<$};
\draw[thick] (-0.9,-0.45) -- (-0.4,-0.45) .. controls (0.3,-0.45) and (0.3,0.3).. (0.9,0.3);
\draw[thick] (-0.94,-0.3) -- (-0.4,-0.3) .. controls (0.3,-0.3) and (0.3,0.45).. (0.94,0.45)node[left]{\small $\vec{\nu}$};
\draw[line width = 3pt, white] (-0.94,0.3) -- (-0.4,0.3) .. controls (0.3,0.3) and (0.3,-0.45).. (0.94,-0.45);
\draw[thick] (-0.94,0.3) -- (-0.4,0.3) .. controls (0.3,0.3) and (0.3,-0.45).. (0.94,-0.45)node[left]{\small $\vec{\mu}$};
\draw[line width = 3pt, white] (-0.9,0.45) -- (-0.4,0.45) .. controls (0.3,0.45) and (0.3,-0.3).. (0.9,-0.3);
\draw[thick] (-0.9,0.45) -- (-0.4,0.45) .. controls (0.3,0.45) and (0.3,-0.3).. (0.9,-0.3);
\node[circle, fill=white, draw = black, scale = 0.6, inner sep = 1.5pt] (X) at (-0.5,0.37){$\beta_{(2)}$};
\node[circle, fill=white, draw = black, scale = 0.65, inner sep = 1.5pt] (X) at (-0.5,-0.37){$\alpha_{(2)}$};
\end{tikzpicture}\!\!\raisebox{-3pt}{$\left.\rule{0pt}{26pt}\right)$}
=
\begin{tikzpicture}[scale = 0.7, baseline = 0pt]
\draw (0,-1) node{\small $\bullet$} arc(-90:90:1) node[near end, sloped]{\small $<$} node{\small $\bullet$};
\draw (0,-1) -- (-1,-1);
\draw (0,1) -- (-1,1);
\draw[thick] (-0.9,-0.45) -- (-0.4,-0.45) .. controls (0.3,-0.45) and (0.3,0.3).. (0.9,0.3);
\draw[thick] (-0.94,-0.3) -- (-0.4,-0.3) .. controls (0.3,-0.3) and (0.3,0.45).. (0.94,0.45);
\draw[line width = 3pt, white] (-0.94,0.3) -- (-0.4,0.3) .. controls (0.3,0.3) and (0.3,-0.45).. (0.94,-0.45);
\draw[thick] (-0.94,0.3) -- (-0.4,0.3) .. controls (0.3,0.3) and (0.3,-0.45).. (0.94,-0.45);
\draw[line width = 3pt, white] (-0.9,0.45) -- (-0.4,0.45) .. controls (0.3,0.45) and (0.3,-0.3).. (0.9,-0.3);
\draw[thick] (-0.9,0.45) -- (-0.4,0.45) .. controls (0.3,0.45) and (0.3,-0.3).. (0.9,-0.3);
\node[scale = 0.8, fill=white, minimum width = 0.8cm, minimum height = 0.5cm, leftymissy, inner sep = 1.5pt] (X) at (-0.6,0.35){\small $\beta$};
\node[scale = 0.8, fill=white, minimum width = 0.8cm, minimum height = 0.5cm, leftymissy, inner sep = 1.5pt] (X) at (-0.6,-0.35){\small $\alpha$};
\end{tikzpicture}\!.
\end{array}
\]
\end{Remark}

Stated skein algebras satisfy a form of excision, namely the stated skein algebra of a gluing is obtained as a relative tensor product of the stated skein algebras of the two initial surfaces.
Let $\mathfrak{S}'$ be a marked surface and $c_1$ and $c_2$ respectively a right and a left boundary edges of $\mathfrak{S}'$. Then $\mathscr{S}(\mathfrak{S}')$ has a structure of $(\Oq,\Oq)$-bicomodule. We consider $\mathfrak{S} = \mathfrak{S}'/_{c_1=c_2}$ the marked surface obtained by gluing $c_1$ to $c_2$, and $c$ the ideal arc formed by $c_1=c_2$ in $\mathfrak{S}$. We have $\mathfrak{S}' = {\rm Cut}_c(\mathfrak{S})$, and Theorem~\ref{splitmorph} gives an injective algebra morphism $\rho_c\colon\mathscr{S}(\mathfrak{S}) \to \mathscr{S}(\mathfrak{S}')$.
$$
\begin{tikzpicture}
\draw [thick] (0,2) .. controls (1,1.75) .. (1.5,1.75);
\draw [thick] (0,0) .. controls (1,0.25) .. (1.5,0.25);
\draw [thick] (1.5, 1.75) node{\small $\bullet$} .. controls (1.3,1) and (1.7,0.7) .. (1.5,0.25) node{\small $\bullet$} node[pos = 0.1,left = -2pt]{$c_1$};
\begin{scope}[xshift = 1.2cm]
\draw [thick] (3,2) .. controls (2,1.75).. (1.5,1.75) node[pos = 0.1, above]{\small $\mathfrak{S}'$};
\draw [thick] (3,0) .. controls (2,0.25).. (1.5,0.25);
\draw [thick] (1.5, 1.75) node{\small $\bullet$} .. controls (1.3,1) and (1.7,0.7) .. (1.5,0.25) node{\small $\bullet$} node[pos = 0.1,right = -2pt]{$c_2$};
\end{scope}
\end{tikzpicture}
$$

\begin{Definition}
Let $H$ be a Hopf algebra and $M$ an $(H,H)$-bicomodule, with coproducts denoted by $\Delta_1\colon M \to M \otimes H$ and $\Delta_2\colon M \to H \otimes M$. The 0-th Hochschild cohomology of $M$ is the subalgebra of $M$ defined as $HH^0(M) := \{ x \in M\ /\ \Delta_1 (x) = \operatorname{fl}\circ\Delta_2 (x)\}$.
\end{Definition}
\begin{Theorem}[{\cite[Section~2.3]{KorQue}, see also \cite[Theorem~4.8]{Cos}}] \label{thmExcOfSS}
Let $\mathfrak{S} = \mathfrak{S}'/_{c_1 = c_2}$ be a gluing. The splitting morphism $\rho_c\colon \mathscr{S}(\mathfrak{S}) \to \mathscr{S}(\mathfrak{S}')$ maps isomorphically $\mathscr{S}(\mathfrak{S})$ on $HH^0(\mathscr{S}(\mathfrak{S}'))$.
\end{Theorem}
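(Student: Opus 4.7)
The plan is to establish the isomorphism in three steps: injectivity of $\rho_c$ is already granted by Theorem~\ref{splitmorph}, so it suffices to prove the two inclusions $\operatorname{Im}(\rho_c) \subseteq HH^0(\mathscr{S}(\mathfrak{S}'))$ and $HH^0(\mathscr{S}(\mathfrak{S}')) \subseteq \operatorname{Im}(\rho_c)$.

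For the first inclusion, I would argue geometrically using the commutativity of splittings along disjoint arcs. The right coaction $\Delta_1$ of $\mathscr{S}(B)$ on $\mathscr{S}(\mathfrak{S}')$ is the splitting morphism $\rho_{c_1'}$ along an ideal arc $c_1'$ parallel to $c_1$ inside $\mathfrak{S}'$, while the left coaction $\Delta_2$ is an analogous $\rho_{c_2'}$ precomposed with the twist $\operatorname{fl} \circ (\mathrm{Id} \otimes \operatorname{rot}_*)$ of the Remark on left coactions. The arcs $c_1'$ and $c_2'$ lift to two disjoint ideal arcs inside $\mathfrak{S}$, both isotopic to $c$ via a small pushoff on opposite sides. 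By Theorem~\ref{splitmorph}, the splittings along $c$ and along either $c_i'$ commute, so $\Delta_i \circ \rho_c$ factors as $\rho_c$ composed with an earlier splitting performed inside $\mathfrak{S}$. The outputs for $i=1$ and $i=2$ then differ only in the orientation and position of the cut-out bigon, and this difference is precisely what $\operatorname{fl} \circ (\mathrm{Id} \otimes \operatorname{rot}_*)$ implements on $\mathscr{S}(B)$. This yields the equality $\Delta_1(\rho_c(\alpha)) = \operatorname{fl} \circ \Delta_2(\rho_c(\alpha))$ for every stated tangle $\alpha$ on $\mathfrak{S}$.

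For the reverse inclusion, I would work in a basis of $\mathscr{S}(\mathfrak{S}')$ indexed by stated tangles sorted by their state-tuples $\vec{\mu}$ on $c_1$ and $\vec{\nu}$ on $c_2$. Writing $\beta = \sum_{\vec{\mu},\vec{\nu}} \lambda_{\vec{\mu},\vec{\nu}} \beta_{\vec{\mu},\vec{\nu}}$, and expanding $\Delta_1(\beta)$ and $\operatorname{fl} \circ \Delta_2(\beta)$ in the basis $_{\vec{\rho}}\beta_{\vec{\sigma}}$ of $\mathscr{S}(B)$, the $HH^0$ condition becomes a linear system. Linear independence of the $\mathscr{S}(B)^{\otimes 2}$-components forces the $\lambda_{\vec{\mu},\vec{\nu}}$ to be supported on the diagonal determined by the rotation twist, with coefficients rigidly related across this diagonal. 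Sums supported on this diagonal are exactly of the form $\sum_{\vec{\mu}} \tilde{\alpha}_{\vec{\mu},\vec{\mu}} = \rho_c(\alpha)$ by the definition of a lift in Theorem~\ref{splitmorph}, so $\beta \in \operatorname{Im}(\rho_c)$.

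The main obstacle is bookkeeping the rotation twist $\operatorname{rot}_*$ in $\Delta_2$ and matching it to the orientation reversal implicit in gluing $c_2$ to $c_1$ to form the arc $c \subset \mathfrak{S}$. The matching condition produced by $HH^0$-invariance is not a literal equality of state-tuples on $c_1$ and $c_2$, but equality up to an order reversal of the tuples together with sign factors coming from the pairings $C^{\mu}_\nu$ implicit in $\operatorname{rot}_*$. I would verify the precise compatibility with the lift convention (see Remark~\ref{rmkChoiceCutting}) by a direct calculation in the simplest case of a single tangle crossing $c$ transversely in one point, and then promote this to the general case by induction on the number of intersection points.
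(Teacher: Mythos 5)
The paper does not contain a proof of Theorem~\ref{thmExcOfSS}: it is imported from [KorQue, Section~2.3] and [Cos, Theorem~4.8]. The paper's own route to excision runs through skein categories (Theorem~\ref{thExcSkCat}), internal skein algebras (Theorem~\ref{thmExcisionAFS}), and the identification $A_\mathfrak{S}\simeq\mathscr{S}(\mathfrak{S})$; the Remark following Theorem~\ref{thmExcisionAFS} then \emph{checks} that this reproduces the stated-skein excision, rather than proving it from scratch. Your elementary, purely skein-theoretic attack is therefore a genuinely different route, and the two-inclusion scaffolding (injectivity already granted, then $\operatorname{Im}(\rho_c)\subseteq HH^0$ and $HH^0\subseteq \operatorname{Im}(\rho_c)$) is the right shape.

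That said, there are real gaps. The forward inclusion is plausible: commutativity of splittings along disjoint arcs (the second part of Theorem~\ref{splitmorph}) is the correct tool, and one can make the argument precise by pushing $c$ off to either side in $\mathfrak{S}$ and comparing the two resulting $\rho_c\circ\rho_{\tilde c_i}$. But the reverse inclusion, where all the weight sits, is not a proof as written. First, the ``basis of $\mathscr{S}(\mathfrak{S}')$ indexed by state-tuples'' is not cleanly defined: the actual basis (\cite[Theorem~4.6]{Cos}) is by $\mathfrak{o}$-ordered simple stated diagrams with state constraints, and the coactions $\Delta_1$, $\Delta_2$ do not permute any naive basis but act through coefficient matrices that have to be controlled; your expansion ``$\beta=\sum\lambda_{\vec\mu,\vec\nu}\beta_{\vec\mu,\vec\nu}$'' sweeps this under the rug. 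Second, ``linear independence of the $\mathscr{S}(B)^{\otimes 2}$-components'' is the wrong ambient space: the $HH^0$ equation lives in $\mathscr{S}(\mathfrak{S}')\otimes\mathscr{S}(B)$, and extracting a usable linear system from it is precisely the nontrivial part that remains unwritten. Third, even granting a ``diagonal support'' conclusion, membership in $\operatorname{Im}(\rho_c)$ requires both that the diagonal coefficients be \emph{constant} for each fixed underlying tangle and that the underlying tangles of the different diagonal terms agree and glue to a single tangle on $\mathfrak{S}$; you allude to ``rigidly related coefficients'' but neither claim is established. Fourth, a small but real error: $\operatorname{rot}_*$ carries no $C^\mu_\nu$ factors — it is a bare planar rotation swapping $\vec\mu$ and $\vec\nu$ and reversing their order; the $C^\mu_\nu$ coefficients belong to the antipode, not to $\operatorname{rot}_*$ — so the ``sign factors coming from the pairings $C^\mu_\nu$ implicit in $\operatorname{rot}_*$'' that you propose to track are not actually there. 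Finally, the ``induction on the number of intersection points with $c$'' is not obviously well-founded: reducing the geometric intersection number typically involves skein relations which mix tangles with different intersection numbers, and you would need to specify an invariant that genuinely decreases. In short, the outline points at the right picture but stops well short of a proof at exactly the step where the known proofs in the literature do their heavy lifting.
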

\begin{Remark}
If $\mathfrak{S}_1$ and $\mathfrak{S}_2$ are two marked surfaces, $c_1$ is a right boundary edge of $\mathfrak{S}_1$, $c_2$ a left boundary edge of $\mathfrak{S}_2$ and $\mathfrak{S}' = \mathfrak{S}_1 \sqcup \mathfrak{S}_2$. Then $\mathscr{S}(\mathfrak{S}') \simeq \mathscr{S}(\mathfrak{S}_1)\otimes_k \mathscr{S}(\mathfrak{S}_2)$, note that this is the tensor product as vector spaces and not as comodules, and the 0-th Hochschild cohomology of $\mathscr{S}(\mathfrak{S}')$ corresponds to the cotensor product of $\mathscr{S}(\mathfrak{S}_1)$ and $\mathscr{S}(\mathfrak{S}_2)$ over $\mathscr{S}(B)$, namely $\mathscr{S}(\mathfrak{S}_1) \square_H \mathscr{S}(\mathfrak{S}_2) := \{ x \in \mathscr{S}(\mathfrak{S}_1)\otimes \mathscr{S}(\mathfrak{S}_2)\ /\ \Delta_1 \otimes {\rm Id}_2 (x) = {\rm Id}_1 \otimes \Delta_2 (x)\}$.
\end{Remark}

\subsection{Internal skein algebras}\label{SectASigma}
When a surface has a boundary edge, one can push a little disk inside the surface from this boundary edge. This process induces an action of the skein category of the disk on the skein category of the surface. Namely, ${\rm Sk}_\mathcal{V}(\Sigma)$ is a ${\rm Sk}_\mathcal{V}\big(\mathbb{R}^2\big)$-module category, see \cite[Sections~3.2]{Cooke}. In this situation there is a notion of internal $\Hom$ objects that encode entirely in ${\rm Sk}_\mathcal{V}\big(\mathbb{R}^2\big)$ the behavior of objects of ${\rm Sk}_\mathcal{V}\big(\mathbb{R}^2\big)$ seen in ${\rm Sk}_\mathcal{V}(\Sigma)$ by the action. Namely for fixed $V,W \in {\rm Sk}_\mathcal{V}(\Sigma)$ one has $\Hom_{{\rm Sk}_\mathcal{V}(\Sigma)}(X \rhd V,W) \simeq \Hom_{{\rm Sk}_\mathcal{V}(\mathbb{R}^2)}(X, \underline{\Hom}(V,W))$ naturally in $X$, see \cite[Section~7.9]{EGNO}. However, such an object does not always exist in ${\rm Sk}_\mathcal{V}\big(\mathbb{R}^2\big)$, and actually lives in its free cocompletion. The internal skein algebra $A_\Sigma$ of the surface is the internal endomorphism algebra of the empty set $\underline{\Hom}(\varnothing,\varnothing)$, see~\cite{Gunningham} or~\cite{BBJ} together with \cite{Cooke}. This means one can understand ribbon graphs in ${\rm Sk}_\mathcal{V}(\Sigma)$ with boundary points on the bottom and near the boundary edge as morphisms in (the free cocompletion of) ${\rm Sk}_\mathcal{V}\big(\mathbb{R}^2\big)$ with target $A_\Sigma$.

Note that in \cite{Cooke}, \cite{BBJ} and \cite{Gunningham} one uses a right ${\rm Sk}_\mathcal{V}\big(\mathbb{R}^2\big)$-action and we use a left ${\rm Sk}_\mathcal{V}\big(\mathbb{R}^2\big)$-action here. See Section~\ref{SectMultiEdges} for more details.

\begin{Definition}\label{algStrOnIntHom}
Let $\mathcal{A}$ be a $k$-linear monoidal category and $\mathcal{M}$ a left $\mathcal{A}$-module category. Let~$M_1$ and~$M_2$ be two objects of~$\mathcal{M}$. The internal $\Hom$ of $M_1$ and~$M_2$ with respect to the $\mathcal{A}$-module structure is an object $\undHom(M_1,M_2)\in \mathcal{A}$
equipped with a natural isomorphism $\eta\colon \Hom_\mathcal{M}(-\rhd M_1,M_2) \tilde{\to} \Hom_\mathcal{A}(-,\undHom(M_1,M_2))$. It is unique up to canonical isomorphism when it exists.
When all involved internal $\Hom$ objects exist, one can define: the eva\-lua\-tion map ${\rm ev}_{M_1,M_2}\colon \undHom(M_1,M_2) \rhd M_1\to M_2$ is the image under $\eta^{-1}$ of ${\rm Id}_{\undHom(M_1,M_2)}$ and the composition map $c\colon \undHom(M_2,M_3) \otimes \undHom(M_1,M_2) \to \undHom(M_1,M_3)$ is the image under~$\eta$ of the morphism ${\rm ev}_{M_2,M_3} \circ ({\rm Id}_{\undHom(M_2,M_3)} \rhd {\rm ev}_{M_1,M_2})\colon (\undHom(M_2,M_3) \otimes \undHom(M_1,M_2))\rhd M_1 \to \undHom(M_2,M_3) \rhd$ $M_2\to M_3$.
In~particular, an internal endomorphism $\undEnd(M) := \undHom(M,M)$ is an algebra object in~$\mathcal{A}$, with unit the morphism $1_\mathcal{A} \to \undEnd(M)$ associated with ${\rm Id}_M$.
\end{Definition}

The functor $\Hom_\mathcal{M}(-\rhd M_1, M_2)\colon\mathcal{A}^{\rm op} \to {\rm Vect}_k$ cannot always be represented in $\mathcal{A}$. However, it is always an object of its free cocompletion.
\begin{Definition}
The 2-category ${\rm Cocomp}_k$ has objects essentially small $k$-linear cocomplete categories and morphisms $k$-linear cocontinuous functors between them, i.e., functors preserving colimits and natural transformations between functors.
It is symmetric monoidal with the Kelly tensor product $\boxtimes$ for cocomplete categories, which is characterized by
\[
\Hom_{{\rm Cocomp}_k}(\mathcal{A} \boxtimes \mathcal{B},\mathcal{C}) \simeq \Hom_{{\rm Cocomp}_k}(\mathcal{A},\Hom_{{\rm Cocomp}_k}(\mathcal{B},\mathcal{C})) \simeq {\rm Cocont}(\mathcal{A},\mathcal{B};\mathcal{C}),
\]
see \cite[Section~6.5]{Kelly2} and \cite[Theorem~2.45]{Ramos}.

The free cocompletion of a $k$-linear category $\mathcal{C}$ is a cocomplete category $\Free(\mathcal{C}) \in {\rm Cocomp}_k$ together with a functor $i\colon \mathcal{C} \to \Free(\mathcal{C})$ which is initial among functors to a $k$-linear cocomplete category. Namely, $i_*\colon \Hom_{{\rm Cat}_k}(\mathcal{C},\mathcal{D}) \to \Hom_{{\rm Cocomp}_k}(\Free(\mathcal{C}),\mathcal{D})$ is an equivalence of categories for any $\mathcal{D}\in {\rm Cocomp}_k$. The free cocompletion $\Free(\mathcal{C})$ is unique up to essentially unique equivalence.
\end{Definition}

\begin{Remark}\label{rmkFreeIsPresheaves}
A standard choice for the free cocompletion is the presheaf category $[\mathcal{C}^{\rm op},{\rm Vect}_k]$, in which $\mathcal{C}$ embeds by the Yoneda embedding. Moreover for any other choice of free cocompletion~$\hat{\mathcal{C}}$, the canonical equivalence is given by
\[
R\colon\ \begin{cases}
\hat{\mathcal{C}}  \to  [\mathcal{C}^{\rm op},{\rm Vect}_k],
\\
X \mapsto  \Hom_{\hat{\mathcal{C}}}(i(-),X).\end{cases}
\]
See \cite[Section~2.2]{Dugger} for more details. From now on, we assume that we made this standard choice and set $\Free(\mathcal{C}) := [\mathcal{C}^{\rm op},{\rm Vect}_k]$.
Note that every object $C$ of $\mathcal{C}$ seen as an object of $\Free(\mathcal{C})$ is compact projective, i.e., the functor $\Hom_{\Free(\mathcal{C})}(C,-)$ is cocontinuous.
\end{Remark}

It is then easy to verify that the free cocompletion $\Free\colon {\rm Cat}_k \to {\rm Cocomp}_k$ is a symmetric monoidal functor. Hence if $\mathcal{A} \in {\rm Cat}_k$ is a monoidal $k$-linear category and $\mathcal{M}$ an $\mathcal{A}$-module category, after free cocompletions $\Free(\mathcal{A})$ is again monoidal and $\Free(\mathcal{M})$ is a $\Free(\mathcal{A})$-module category.

\begin{Proposition}\label{propIntHomFree}
Let $\mathcal{A} \in {\rm Cat}_k$ be a monoidal $k$-linear category, $\mathcal{M}$ an $\mathcal{A}$-module category and~$M_1$, $M_2$ two objects of $\mathcal{M}$. The presheaf $F= \Hom_\mathcal{M}(-\rhd M_1,M_2) \in \Free(\mathcal{A})$ is the internal $\Hom$ object of $M_1$ and $M_2$ $($seen as objects of $\Free(\mathcal{M}))$ with respect to the $\Free(\mathcal{A})$-module structure.
\end{Proposition}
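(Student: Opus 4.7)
The plan is to establish the defining natural isomorphism
\[
\Hom_{\Free(\mathcal{M})}(X \rhd M_1, M_2) \cong \Hom_{\Free(\mathcal{A})}(X, F)
\]
for all $X \in \Free(\mathcal{A})$, first by checking it on representable presheaves and then extending by density to the whole free cocompletion.

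First I would verify the isomorphism on representables $X = i(A)$ with $A \in \mathcal{A}$. By the Yoneda lemma, the right-hand side evaluates to
\[
\Hom_{\Free(\mathcal{A})}(i(A), F) \cong F(A) = \Hom_\mathcal{M}(A \rhd M_1, M_2).
\]
Because $\Free\colon {\rm Cat}_k \to {\rm Cocomp}_k$ is a symmetric monoidal $2$-functor, the unit $i$ is compatible with module structures, giving a canonical isomorphism $i(A) \rhd i(M_1) \cong i(A \rhd M_1)$ in $\Free(\mathcal{M})$. Full faithfulness of $i$ then yields
\[
\Hom_{\Free(\mathcal{M})}\bigl(i(A) \rhd i(M_1),\, i(M_2)\bigr) \cong \Hom_{\mathcal{M}}(A \rhd M_1, M_2),
\]
naturally in $A$, which matches the right-hand side.

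Next I would extend to arbitrary $X$. Any $X \in \Free(\mathcal{A})$ is canonically a colimit of representables, $X \cong \colim_j i(A_j)$. Both functors in play convert this colimit into a limit: the right-hand side because $\Hom_{\Free(\mathcal{A})}(-, F)$ sends colimits to limits, and the left-hand side because $- \rhd M_1$ is cocontinuous as a $1$-morphism in ${\rm Cocomp}_k$ (by definition of the $\Free(\mathcal{A})$-module structure on $\Free(\mathcal{M})$), and $\Hom_{\Free(\mathcal{M})}(-, M_2)$ is continuous in its first variable. Hence the isomorphism on representables extends uniquely to a natural isomorphism on all of $\Free(\mathcal{A})$.

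The main technical point I expect to work out is the compatibility $i(A) \rhd i(M_1) \cong i(A \rhd M_1)$: this is essentially a bookkeeping verification that the $\Free(\mathcal{A})$-module structure built from the symmetric monoidal functor $\Free$ restricts, along the Yoneda embeddings of $\mathcal{A}$ and $\mathcal{M}$, to the original $\mathcal{A}$-action on $\mathcal{M}$. Once this is in hand, the argument reduces to the density of representables plus the cocontinuity of the action, and naturality in $M_1$, $M_2$ is automatic from the construction. The algebra structure claim on $\undEnd(\varnothing)$ (needed in the sequel) then falls out of the general framework of Definition \ref{algStrOnIntHom} applied inside $\Free(\mathcal{A})$.
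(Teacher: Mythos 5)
Your proof is correct and follows essentially the same route as the paper: Yoneda on representable objects, co-Yoneda to express a general $X$ as a colimit of representables, and cocontinuity of the action to finish. You make explicit the compatibility $i(A)\rhd i(M_1)\simeq i(A\rhd M_1)$ that the paper's proof leaves implicit, which is a helpful clarification but not a different argument.
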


\begin{proof}
For the ``small'' objects $A \in \mathcal{A} \hookrightarrow \Free(\mathcal{A})$, the isomorphism $\Hom_{\Free(\mathcal{A})}(A,F)\simeq F(A) := \Hom_{\mathcal{M}}(A\rhd M_1,M_2)$ is given by the Yoneda lemma.
Now any object $X\in\Free(\mathcal{A})$ is obtained as a~colimit of such small objects, $X = \colim_i A_i$, $A_i \in \mathcal{A}$, by the co-Yoneda lemma. Then it is straight\-forward to check that
\begin{gather*}
\Hom_{\Free(\mathcal{A})}(X,F) \simeq \lim_i \Hom_{\Free(\mathcal{A})}(A_i,F) \simeq \lim_i \Hom_{\Free(\mathcal{M})}(A_i \rhd M_1,M_2)
\\ \hphantom{\Hom_{\Free(\mathcal{A})}(X,F)}
{}\simeq \Hom_{\Free(\mathcal{M})}(\colim_i (A_i \rhd M_1),M_2) \simeq \Hom_{\Free(\mathcal{M})}(X \rhd M_1,M_2).
\end{gather*}
Here we kept the notation $\rhd$ for its essentially unique cocontinuous extention to free cocompletions.
\end{proof}

This means that when one works with free cocompletions of a module structure on some ``small'' categories, the internal $\Hom$ objects of the free cocompletions are completely described by what happens on the ``small'' subcategories.
Using the same argument one can rephrase the definition of the internal $\Hom$ object of $M_1$ and~$M_2$ in a free cocompletion $\hat{\mathcal{A}}$ of $\mathcal{A}$ as an object $X\in \hat{\mathcal{A}}$ together with an isomorphism of presheaves $\Hom_\mathcal{M}(-\rhd M_1,M_2) \tilde{\to} R(X)$, where $R$ is the equivalence from Remark~\ref{rmkFreeIsPresheaves}.

\begin{Proposition}\label{propOqFree}
The inclusion $\Oq\text{-}{\rm comod}^{\rm fin} \hookrightarrow \Oq\text{-}{\rm comod}$ is a free cocompletion.
\end{Proposition}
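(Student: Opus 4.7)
The plan is to verify the intrinsic characterisation of free cocompletions recalled just above the proposition: a cocomplete $k$-linear category $\mathcal{D}$ equipped with a fully faithful $k$-linear functor $i\colon \mathcal{C} \hookrightarrow \mathcal{D}$ is a free cocompletion of $\mathcal{C}$ as soon as every object of $i(\mathcal{C})$ is compact projective in $\mathcal{D}$ and every object of $\mathcal{D}$ is a colimit of objects of $i(\mathcal{C})$. Under these two conditions, the canonical functor $R\colon \mathcal{D} \to [\mathcal{C}^{\rm op},{\rm Vect}_k]$ of Remark~\ref{rmkFreeIsPresheaves} is an equivalence, with quasi-inverse sending a presheaf to the colimit of the corresponding diagram of representables. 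So I reduce the proof to three bullet points.

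First, $\Oq\text{-}{\rm comod}$ is cocomplete: it is the category of comodules over a coalgebra, so colimits are computed as colimits of underlying vector spaces and then equipped with the coaction induced by the fact that $- \otimes_k \Oq$ preserves colimits. Second, every $\Oq$-comodule is a colimit of finite-dimensional ones. By Proposition~\ref{propOqSemiSimple}, $\Oq\text{-}{\rm comod}$ is semisimple with simple objects the finite-dimensional $V_n$, so an arbitrary comodule $M$ decomposes as a coproduct $M \simeq \bigoplus_{n\in\mathbb{N}} V_n^{(I_n)}$ for some index sets $I_n$, exhibiting $M$ explicitly as a colimit (indeed a coproduct) of objects of $\Oq\text{-}{\rm comod}^{\rm fin}$.

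Third, I must show every finite-dimensional comodule is compact projective. Since compact projectives are closed under finite direct sums and every object of $\Oq\text{-}{\rm comod}^{\rm fin}$ is a finite sum of $V_n$'s by semisimplicity, it suffices to check this for each simple $V_n$. Exactness of $\Hom_{\Oq\text{-}{\rm comod}}(V_n,-)$ is free because semisimplicity forces every short exact sequence to split. Preservation of arbitrary coproducts follows by decomposing $\bigoplus_i M_i$ into isotypic components $\bigoplus_m V_m^{(J_m)}$ and applying Schur's lemma: $\Hom(V_n, \bigoplus_m V_m^{(J_m)}) = k^{(J_n)} = \bigoplus_i \Hom(V_n, M_i)$.

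Combining the three points, the canonical functor $R(X) = \Hom_{\Oq\text{-}{\rm comod}}(-, X)$ from $\Oq\text{-}{\rm comod}$ to the presheaf category $\big[\big(\Oq\text{-}{\rm comod}^{\rm fin}\big)^{\rm op}, {\rm Vect}_k\big]$ is fully faithful (by compact projectivity of the $V_n$'s combined with the semisimple decomposition of the target) and essentially surjective, with explicit inverse sending a presheaf $F$ to $\bigoplus_n V_n \otimes_k F(V_n)$. The only mildly delicate point is cardinality bookkeeping: the index sets $I_n$ may be arbitrarily large, so one must be explicit that semisimplicity permits decompositions into coproducts of unrestricted size, and that the equivalence $R$ respects these large coproducts. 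Beyond this, no real obstacle arises.
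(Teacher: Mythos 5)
Your proof is correct and takes essentially the same approach as the paper, which compresses the verification into a single sentence invoking semisimplicity; you expand it into the standard presheaf-category characterisation, which is the right thing to spell out. One minor imprecision in the third bullet: the identification $\Hom(V_n, \bigoplus_m V_m^{(J_m)}) = k^{(J_n)}$ rests not on Schur's lemma alone but on the finite-dimensionality of $V_n$ (a morphism out of a finite-dimensional comodule factors through a finite sub-coproduct, so the naive product $k^{J_n}$ collapses to the coproduct $k^{(J_n)}$), which is exactly the compactness being established and deserves to be named rather than folded into Schur.
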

\begin{proof}
At generic $q$, the category $\Oqcom$ is semisimple and its simples are finite-dimen\-sio\-nal. Hence any $\Oq$-comodule is a direct sum of these, and any colimit in \linebreak $\Oqcomfin$ is simply a direct sum.
\end{proof}

Note that the monoidal structure on $\Oqcom$ is the free cocompletion of the monoidal structure on $\Oqcomfin$ as it extends it and commutes with direct sums in both factors.
In the preceding proposition, one could replace $\Oqcomfin$ with its full subcategory ${\rm TL}$, as every $\Oq$-comodule is a quotient of direct sums of objects of ${\rm TL}$. Actually, the inclusion ${\rm Sk}_{{\rm TL}}(\Sigma) \hookrightarrow \Free\big({\rm Sk}_{\Oqcomfin}(\Sigma)\big)$ is also a free cocompletion by \cite[Theorems~2.28 and~3.27]{Cooke}, using factorisation homology.

We now apply the internal $\Hom$ object constructions to the case of skein categories. We choose a~$k$-linear ribbon category $\mathcal{V}$ and choose a free cocompletion $\mathcal{E}$.

\begin{Definition}\label{defLeftActSkR2}
We consider an oriented surface with boundary $\Sigma$, with a ``red'' arc chosen on its boundary, seen at the left. This arc can be thickened in the surface, which gives a thick left embedding $(0,1) \to\partial \Sigma$. In particular one has an embedding of surfaces $(0,1)\times[0,1]\sqcup\Sigma \hookrightarrow \Sigma$ depicted hereby. By Proposition~\ref{Skmodcat}, this gives a structure of left ${\rm Sk}_\mathcal{V}((0,1)\times[0,1]) \simeq {\rm Sk}_\mathcal{V}\big(\mathbb{R}^2\big)$-module category on~${\rm Sk}_\mathcal{V}(\Sigma)$. We denote the action functor by \mbox{$\rhd\colon{\rm Sk}_\mathcal{V}\big(\mathbb{R}^2\big) \otimes {\rm Sk}_\mathcal{V}(\Sigma) \to {\rm Sk}_\mathcal{V}(\Sigma)$}.
\end{Definition}
$$
\begin{tikzpicture}[yscale = 0.8, xscale = -0.8]
\fill[gray!10] (4.5,1) .. controls (4.5,0.5) and (4,0.2) .. (3.5,0.2)	.. controls (2.5,0.2) and (1.5,0 ) .. (1,0)	.. controls (0.5,0 ) and (0,0.5 ) .. (0,1)	.. controls (0,1.5 ) and (0.5,2 ) .. (1,2)	.. controls (1.5,2 ) and (2.5,1.8) .. (3.5,1.8) .. controls (4,1.8) and (4.5,1.5) .. (4.5,1) -- cycle ;
\fill[white] (0.6,0.9) .. controls (0.7,0.7) and (1.8,0.7) .. (1.9,0.9) .. controls (1.7,1.2) and (0.8,1.2) .. (0.6,0.9);
\fill[white] (3.5,1) circle(0.5);
\draw[dashed] (3.5,1) circle(0.5);
\draw[thick, red] (240:0.5)++(3.5,1) arc(240:120:0.5);
\draw (0.5,1) .. controls (0.7,0.7) and (1.8,0.7) .. (2,1);
\draw (0.6,0.9) .. controls (0.8,1.2) and (1.7,1.2) .. (1.9,0.9) node[midway, above right]{$\Sigma$};
\draw (4.5,1) .. controls (4.5,0.5) and (4,0.2) .. (3.5,0.2)	.. controls (2.5,0.2) and (1.5,0 ) .. (1,0)	.. controls (0.5,0 ) and (0,0.5 ) .. (0,1)	.. controls (0,1.5 ) and (0.5,2 ) .. (1,2)	.. controls (1.5,2 ) and (2.5,1.8) .. (3.5,1.8) .. controls (4,1.8) and (4.5,1.5) .. (4.5,1);
\node (sq) at (4.75,1){$\sqcup$}; \draw[->, thick] (4,0) -- ++(0,-1) node[midway, right]{};
\fill[blue!20](240:0.5)++(5.5,1) arc(240:120:0.5) -- ++(0.5,0) arc(60:-60:0.5) -- ++(-0.5,0);
\draw[thick] (240:0.5)++(5.5,1) arc(240:120:0.5);
\draw[thick] (-60:0.5)++(5.5,1) arc(-60:60:0.5);
\begin{scope}[yshift = -3cm, xshift = 1cm]
\fill[gray!10] (4.5,1) .. controls (4.5,0.5) and (4,0.2) .. (3.5,0.2)	.. controls (2.5,0.2) and (1.5,0 ) .. (1,0)	.. controls (0.5,0 ) and (0,0.5 ) .. (0,1)	.. controls (0,1.5 ) and (0.5,2 ) .. (1,2)	.. controls (1.5,2 ) and (2.5,1.8) .. (3.5,1.8) .. controls (4,1.8) and (4.5,1.5) .. (4.5,1) -- cycle ;
\fill[white] (0.6,0.9) .. controls (0.7,0.7) and (1.8,0.7) .. (1.9,0.9) .. controls (1.7,1.2) and (0.8,1.2) .. (0.6,0.9);
\fill[white] (3.5,1) circle(0.5);
\fill[blue!20] (240:0.5)++(3.5,1) arc(240:120:0.5) arc(120:240:1 and 0.5);
\draw[dashed] (3.5,1) circle(0.5);
\draw[thick] (240:0.5)++(3.5,1) arc(240:120:0.5);
\draw[thick] (240:0.5)++(3.5,1) arc(240:120:1 and 0.5);
\draw[thick, red] (240:0.5)++(3.5,1) arc(270:90:0.7 and 0.43);
\draw (0.5,1) .. controls (0.7,0.7) and (1.8,0.7) .. (2,1);
\draw (0.6,0.9) .. controls (0.8,1.2) and (1.7,1.2) .. (1.9,0.9) node[midway, above right]{};
\draw (4.5,1) .. controls (4.5,0.5) and (4,0.2) .. (3.5,0.2)	.. controls (2.5,0.2) and (1.5,0 ) .. (1,0)	.. controls (0.5,0 ) and (0,0.5 ) .. (0,1)	.. controls (0,1.5 ) and (0.5,2 ) .. (1,2)	.. controls (1.5,2 ) and (2.5,1.8) .. (3.5,1.8) .. controls (4,1.8) and (4.5,1.5) .. (4.5,1);
\end{scope}
\end{tikzpicture}
$$
To simplify notation, we write ${\rm SK}_\mathcal{V}(-) := \Free({\rm Sk}_\mathcal{V}(-))$ and still denote the action functor on free cocompletions by $\rhd\colon \mathcal{E} \boxtimes {\rm SK}_\mathcal{V}(\Sigma)\to {\rm SK}_\mathcal{V}(\Sigma)$. For $M_1$ and $M_2$ two objects of ${\rm Sk}_\mathcal{V}(\Sigma)$, one has an internal $\Hom$ object $\undHom(M_1,M_2) \in \mathcal{E}$.

\begin{Definition}
Let $\Sigma$ be a surface with boundary with a chosen thickened arc on its boundary.
The internal skein algebra $A_\Sigma := \undHom(\varnothing,\varnothing)$ is the endomorphism algebra of $\varnothing \in {\rm Sk}_\mathcal{V}(\Sigma) \subseteq {\rm SK}_\mathcal{V}(\Sigma)$ with respect to the $\mathcal{E}$-module structure. Its algebra structure is recalled in Definition~\ref{algStrOnIntHom}.
Explicitly, $A_\Sigma$ comes equipped with a natural isomorphism $\sigma \colon \Hom_{{\rm SK}_\mathcal{V}(\Sigma)}(-\rhd \varnothing, \varnothing) \tilde{\Rightarrow} \Hom_{\mathcal{E}}(-, A_\Sigma)$ between (contravariant) functors $\mathcal{E}\to {\rm Vect}_k$.
If $\mathcal{V} = \Oq\text{-}{\rm comod}^{\rm fin}$ with $\mathcal{E}=\Oq\text{-}{\rm comod}$, then $A_\Sigma$ is an $\Oq$-comodule algebra.
\end{Definition}

\begin{Remark}\label{rmkTopPtsAndASigma}
The defining properties of $A_\Sigma$ enables one to describe morphisms $V \rhd \varnothing \to \varnothing$ in ${\rm Sk}_\mathcal{V}(\Sigma)$, so where the boundary points of tangles are at the bottom, as morphisms $V \to \mathscr{S}(\Sigma)$ in $\mathcal{E}$. We would also like to describe morphisms $W\rhd \varnothing \to V \rhd \varnothing$ in ${\rm Sk}_\mathcal{V}(\Sigma)$. By duality in~$\mathcal{V}$, the functor $V\rhd-$ is right adjoint to $V^*\rhd-$, see \cite[Proposition~7.1.6]{EGNO}. So one has natural isomorphisms:
\begin{align*}
\Hom_{{\rm Sk}_\mathcal{V}(\Sigma)}(W\rhd\varnothing, V \rhd\varnothing) &\,\tilde{\to} \, \Hom_{{\rm Sk}_\mathcal{V}(\Sigma)}(V^*\rhd( W \rhd \varnothing), \varnothing) \overset{\sigma_{V^*\otimes W}}{\to} \Hom_{\mathcal{E}}(V^*\otimes W, A_\Sigma)
\\
&\, \tilde{\to} \, \Hom_{\mathcal{E}}(W,V\otimes A_\Sigma).
\end{align*}
When $\Sigma$ is connected, every object of ${\rm Sk}_\mathcal{V}(\Sigma)$ is isomorphic to one of the form $V\rhd\varnothing$, and the above natural isomorphism suggests that the algebra $A_\Sigma\in\mathcal{E}$ is enough to fully describe ${\rm SK}_\mathcal{V}(\Sigma)$.
\end{Remark}
\begin{Theorem}[{\cite[Theorem~5.14]{BBJ}}] \label{thASigmaMods}
Suppose that $\Sigma$ is connected, then there is an equivalence of categories
\begin{align*}{\rm SK}_\mathcal{V}(\Sigma) &\; \tilde{\to}\; {\rm mod}_\mathcal{E}-A_\Sigma,
\\
M &\mapsto  \undHom(\varnothing,M)
\end{align*}
between the free cocompletion of ${\rm Sk}_\mathcal{V}(\Sigma)$ and the category of right modules over $A_\Sigma$ in $\mathcal{E}$. For $M$ of the form $V \rhd \varnothing$, $V\in\mathcal{E}$, which is always the case for $M \in {\rm Sk}_\mathcal{V}(\Sigma)$, this functor is given by $V\rhd \varnothing \mapsto V \otimes A_\Sigma$.
\end{Theorem}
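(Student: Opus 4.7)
The plan is to apply the Barr--Beck monadicity theorem to the adjunction between $\mathcal{E}$ and ${\rm SK}_\mathcal{V}(\Sigma)$ whose left adjoint is $-\rhd\varnothing$ and whose right adjoint is $\undHom(\varnothing,-)$. The left adjoint is cocontinuous because the action functor $\mathcal{E} \boxtimes {\rm SK}_\mathcal{V}(\Sigma) \to {\rm SK}_\mathcal{V}(\Sigma)$ is cocontinuous in each variable (Proposition~\ref{Skmodcat} extended to free cocompletions by construction of $\boxtimes$), and the adjunction is precisely the universal property of the internal Hom object, extended from $\mathcal{V}$ to all of $\mathcal{E}$ as in Proposition~\ref{propIntHomFree}.

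Next I would identify the induced monad on $\mathcal{E}$ with the free $A_\Sigma$-module monad. The calculation already performed in Remark~\ref{rmkTopPtsAndASigma}, which uses rigidity of $\mathcal{V}$, gives a natural isomorphism
\[
\undHom(\varnothing, V \rhd \varnothing) \simeq V \otimes A_\Sigma,
\]
and unwinding the composition map of Definition~\ref{algStrOnIntHom} identifies the resulting monad multiplication with that of the free-module monad $V \mapsto V \otimes A_\Sigma$. Algebras for this monad in $\mathcal{E}$ are, by definition, right $A_\Sigma$-modules, and the Barr--Beck comparison functor is precisely $M \mapsto \undHom(\varnothing,M)$ equipped with its canonical right $A_\Sigma$-action by pre-composition with endomorphisms of $\varnothing$.

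The remaining Barr--Beck hypotheses are conservativity of $\undHom(\varnothing,-)$ and preservation of $\undHom(\varnothing,-)$-split reflexive coequalizers. The crucial input is connectedness of $\Sigma$: by Remark~\ref{rmkFunctEmbIsotOfSkCat}, any finite set of coloured points in $\Sigma$ may be isotoped into the thickened boundary arc, so every object of ${\rm Sk}_\mathcal{V}(\Sigma)$ is isomorphic to one of the form $V \rhd \varnothing$ with $V \in \mathcal{V}$, and every object of ${\rm SK}_\mathcal{V}(\Sigma)$ is a colimit of such. The family $\{V \rhd \varnothing\}_{V \in \mathcal{V}}$ therefore provides a family of compact projective generators, which through the adjunction corresponds to the analogous family $\{V \otimes A_\Sigma\}_{V \in \mathcal{V}}$ of free right $A_\Sigma$-modules in ${\rm mod}_\mathcal{E}\text{-}A_\Sigma$. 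Conservativity and the requisite coequalizer preservation then follow from this joint generation property by a routine argument.

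The main obstacle is making these Barr--Beck hypotheses precise in the $k$-linear cocomplete setting, especially verifying that a general right $A_\Sigma$-module, presented as a coequalizer of free modules, is actually hit by the comparison functor. A concrete alternative bypassing the abstract monadicity theorem is to argue directly: full faithfulness of $M \mapsto \undHom(\varnothing, M)$ on the generating family $\{V\rhd\varnothing\}_{V\in\mathcal{V}}$ is immediate from the adjunction computation above matched against the free--forgetful adjunction on the $A_\Sigma$-module side; essential surjectivity onto $\{V \otimes A_\Sigma\}_{V\in\mathcal{V}}$ is likewise immediate; and the equivalence then extends by cocontinuity to all of ${\rm SK}_\mathcal{V}(\Sigma)$, which is generated under colimits by this family.
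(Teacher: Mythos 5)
Your proposal is correct and takes essentially the same approach as the paper's: the paper also invokes Barr--Beck reconstruction (citing the reformulation in~\cite[Theorem~4.6]{BBJ}, which packages the monadicity hypotheses as ``$\varnothing$ is a progenerator''), checks that $\varnothing$ is compact projective by noting that $-\rhd\varnothing$ takes values in compact projectives, and checks generation via connectedness exactly as you do. Your closing ``direct'' argument---full faithfulness and essential surjectivity on the compact projective generators $\{V\rhd\varnothing\}_{V\in\mathcal{V}}$, extended by cocontinuity---is in fact what the paper's opening paragraph of the proof already sketches as ``the general idea''; it is a valid shortcut, but to justify the cocontinuous extension you should say explicitly that $\undHom(\varnothing,-)$ is itself cocontinuous, which follows from the compact projectivity of the $V\rhd\varnothing$ that you already invoke.
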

\begin{proof}
For the last statement, one has $\undHom(\varnothing,V\rhd\varnothing) \simeq V \otimes A_\Sigma$ by Remark \ref{rmkTopPtsAndASigma}. This is the general idea of the proof, as morphisms of $A_\Sigma$-modules from $W \otimes A_\Sigma$ to $V \otimes A_\Sigma$ are equivalent to morphisms in $\mathcal{E}$ from $W$ to $V\otimes A_\Sigma$, which are equivalent by the above Remark to morphisms from $W\rhd\varnothing$ to $V\rhd \varnothing$ in ${\rm SK}_\mathcal{V}(\Sigma)$.

For the details, one uses Barr--Beck reconstruction, or more precisely its reformulation in \cite[Theorem~4.6]{BBJ}. One has to check that $\varnothing \in {\rm SK}_\mathcal{V}(\Sigma)$ is a progenerator.
It is projective:
\[
{\rm act}_\varnothing^R= \undHom(\varnothing,-) \colon\
\begin{cases}
{\rm SK}_\mathcal{V}(\Sigma)  \to  [\mathcal{V}^{\rm op},{\rm Vect}_k]\simeq \mathcal{E},
\\
M  \mapsto  \Hom_{{\rm SK}_\mathcal{V}(\Sigma)}(-\rhd\varnothing,M)
\end{cases}
\]
is cocontinuous because $-\rhd \varnothing \colon \mathcal{V} \to {\rm Sk}_\mathcal{V}(\Sigma) \subseteq {\rm SK}_\mathcal{V}(\Sigma)$ takes values in compact projective objects.
It~is a~generator: ${\rm act}_\varnothing^R$ being faithful is equivalent to $-\rhd \varnothing\colon \mathcal{E} \to {\rm SK}_\mathcal{V}(\Sigma)$ being dominant by \cite[Remark~4.9]{BBJ}, which is the case because $-\rhd \varnothing$ is essentially surjective on ${\rm Sk}_\mathcal{V}(\Sigma)$ which generate ${\rm SK}_\mathcal{V}(\Sigma)$ under colimits.
One gets right modules over $A_\Sigma$ because we considered left module categories, see \cite[Remark~4.7]{BBJ}.
\end{proof}

\begin{Remark} We are in the same context as \cite{BBJ} and \cite{Gunningham}. In \cite[Definition~2.18]{Gunningham}, the internal skein algebra is defined similarly as $\Hom_{{\rm Sk}_\mathcal{V}(\Sigma)}( \varnothing\lhd -,\varnothing) \in \Free(\mathcal{V})$ for $\mathcal{V}$ a $k$-linear ribbon category whose unit $1_\mathcal{V}$ is simple, which is the case for $\mathcal{V}=\Oq\text{-}{\rm comod}^{\rm fin}$.

In \cite[Definition~5.3]{BBJ}, the moduli algebra $A_\Sigma$ is defined to be the endomorphism algebra of the distinguished object $\mathcal{O}_{\mathcal{E},\Sigma}$ of the factorization homology over $\Sigma$ of a presentable abelian balanced $k$-linear category $\mathcal{E}$ generated under filtered colimits by rigid objects, with respect to the $\mathcal{E}$-module structure. The factorization homology of $\mathcal{V}$ is computed by ${\rm Sk}_\mathcal{V}$, see \cite[Theorem~2.28]{Cooke}, and the factorization homology of $\mathcal{E} = \Free(\mathcal{V})$ by ${\rm SK}_\mathcal{V} = \Free({\rm Sk}_\mathcal{V})$, see \cite[Theorem~3.27]{Cooke}. The distinguished object is $\varnothing \in {\rm Sk}_\mathcal{V}(\Sigma) \subseteq {\rm SK}_\mathcal{V}(\Sigma)$, and a $k$-linear free cocompletion is always abelian and generated under filtered colimit by rigid objects.

There is one difference with our paper though: we consider a left $\mathcal{E}$-action, and \cite{BBJ,Gunningham} consider a~right $\mathcal{E}$-action, with adapted definitions of internal $\Hom$ objects. In our description, it would mean seeing the red arc on the right instead of on the left, thus ${\rm Sk}_\mathcal{V}(\mathbb{R}^2)$ acting from the right. This gives a~braided opposite product, and we need left actions to have the same product than the one on $\mathscr{S}(\Sigma)$. Right actions and more generally multiple right/left actions and how they interact will be studied in Section~\ref{SectMultiEdges}.
\end{Remark}

\section{The relation}\label{SectRelation}
We show in this section that the stated skein algebra of a surface with a single boundary edge is isomorphic to $A_\Sigma$. We consider the algebra $A_\Sigma$ from last section with $\mathcal{V} = \Oq\text{-}{\rm comod}^{\rm fin}$ and $\mathcal{E} = \Oq\text{-}{\rm comod}$ at generic $q$, and prove that $\mathscr{S}(\Sigma)\simeq A_\Sigma$ as $\Oq$-comodule algebras. Actually since $A_\Sigma$ is only defined up to canonical isomorphism one may take an equality here, so we prove that $\mathscr{S}(\Sigma)$ satisfies the defining properties of $A_\Sigma$, namely that it is the internal endomorphism algebra of the empty set in ${\rm SK}_\mathcal{V}(\Sigma)$ with respect to the $\mathcal{E}$-module structure. This result is not new and was stated in a weaker form in \cite[Theorem~4.4]{LeYu}, \cite[Theorem~9.1]{LeSikora} and \cite[Remark 2.21]{Gunningham}, namely as algebras in $\rm Vect$. However, in these references one considers right $\mathcal{E}$-actions and therefore the internal skein algebra is isomorphic to the braided opposite of the stated skein algebra. The full result can still be recovered using \cite[Theorem~5.3]{Faitg} or \cite{Korinman}, which give an isomorphism between the opposite of the stated skein algebras and Alekseev--Grosse--Schomerus-algebras, which are themselves isomorphic to internal skein algebras by \cite{BBJ}. Our approach here is more direct and uses only skein theory.

We need a natural isomorphism $\St_W\colon \Hom_{{\rm SK}_\mathcal{V}(\Sigma)}(W \rhd \varnothing,\varnothing) \tilde{\Rightarrow} \Hom_{\mathcal{E}}(W, \mathscr{S}(\Sigma))$, for $W \in \mathcal{E} = \Oq\text{-}{\rm comod}$. In the case where $W = V^{\otimes n}\in {\rm TL}$ is a tensor product of standard corepresentations, and the element on the left hand side is a tangle $\alpha$ with $n$ boundary points, we want a morphism $V^{\otimes n}\to \mathscr{S}(\Sigma)$ associated to it. This is done by setting the entries, elements of $V^{\otimes n}$, as states of the tangle $\alpha$. Recall that $V$ has basis $v_+$, $v_-$ and we identify the states + and $-$ with these elements. As~in~Remark~\ref{rmkFormActOnSS}, we allow formal linear combination of states for stated tangles. In this context, the defining relations of stated skein algebras correspond exactly to naturality conditions, see~Proposition~\ref{propStNat}.
This gives the idea of how to deal with the objects of the full subcategory ${\rm TL} \subseteq \mathcal{V}$ of objects of the form $V^{\otimes n}$, and this extends to $\mathcal{E} \simeq \Free({\rm TL})$ by Proposition~\ref{propIntHomFree}. Note that one still has an action $\rhd \colon {\rm TL} \otimes {\rm Sk}_{{\rm TL}}(\Sigma) \to {\rm Sk}_{{\rm TL}}(\Sigma)$ which is the restriction of $\rhd \colon \mathcal{V} \otimes {\rm Sk}_\mathcal{V}(\Sigma) \to {\rm Sk}_\mathcal{V}(\Sigma)$.
\begin{Definition}
Let $W = V^{\otimes n}\in {\rm TL}$ and $\alpha \in \Hom_{{\rm Sk}_{{\rm TL}}(\Sigma)}(W \rhd \varnothing,\varnothing)$. By Theorem~\ref{thmSkAsTangles}, $\alpha$~can be represented by a (linear combination of) tangle(s), still denoted by $\alpha$, with $n$ ordered boundary points near the boundary edge, which is well defined up to isotopy and Kauffman-bracket relations.

Graphically, we set
\[
\St_W\Bigg(
\begin{tikzpicture}[scale = 1, baseline = 8pt]
\node [rightymissy, minimum width = 1cm] (A) at (1.5,1) {$\alpha$};
\draw (0,0) -- ++(2,0) node[pos = 0,below left =-2pt]{\footnotesize $\partial \Sigma$} node[pos = 0.5,below]{\footnotesize $\Sigma$};
\draw (0,0) -- ++ (0,1) node[pos = 0.5,left]{\footnotesize $[0,1]$};
\draw[blue] (0.5,0) node[above left = -2pt]{\tiny 1} .. controls (0.5,1) .. (A);
\draw[olive] (1,0) node[above left = -2pt]{\tiny $\cdots$} .. controls (1,0.7) .. (A);
\draw[red] (1.5,0) node[above left = -2pt]{\tiny $n$} -- (A);
\end{tikzpicture} \Bigg) \ (v_{\varepsilon_1}\otimes \cdots \otimes v_{\varepsilon_n}) := \begin{tikzpicture}[scale = 1, baseline = 8pt]
\node [rightymissy, minimum width = 1cm] (A) at (1.5,1) {$\alpha$};
\draw (0,0) -- ++(2,0) ;
\draw[->] (0,0) -- ++ (0,1);
\draw[blue] (0,0.75) node[ left = -2pt]{\footnotesize $\varepsilon_1$} -- (A);
\draw[olive] (0,0.5) node[ above , rotate = 90]{\tiny $\cdots$} .. controls (1,0.5) .. (A);
\draw[red] (0,0.25) node[ left = -2pt]{\footnotesize $\varepsilon_n$} .. controls (1.5,0.25) .. (A);
\end{tikzpicture}\,,\qquad \varepsilon_i \in \{\pm\}.
\]
For brevity we denote this last stated tangle by $_{\vec{\varepsilon}}\,\alpha$ and $v_{\vec{\varepsilon}} = v_{\varepsilon_1}\otimes \cdots \otimes v_{\varepsilon_n}$. Note that in this figure there are two implicit sums, $\alpha$ is a linear combination of tangles and an element of $V^{\otimes n}$ is a linear combination of $v_{\vec{\varepsilon}}$'s. They will remain implicit in the following.

In the context of stated skein algebras one needs the boundary points of the tangle $\alpha$ to be above the boundary arc though they are on the bottom at its right in the context of morphisms in ${\rm Sk}_{{\rm TL}}(\Sigma)$. One wants to simply push the boundary points through the boundary edge and up above it, but without braiding the strands with one another. Therefore we use a global automorphism of $\Sigma \times [0,1]$. Consider an isotopy of the identity on $\Sigma \times [0,1]$ which is trivial far from the corner and in it pushes the bottom boundary through and up above the edge. It results in the global automorphism $\psi_{hv}$ of $\Sigma \times [0,1]$ which maps a tangle ``from skein categories'' to one ``from stated skein algebras'' and preserves the order as desired, namely the bottom points on the leftmost will end up above. We will still denote the modified tangle $\psi_{hv}(\alpha)$ by $\alpha$. Now, it only needs states to give an element of $\mathscr{S}(\Sigma)$.

We set
\[
\St_W(\alpha) := \begin{cases}
V^{\otimes n} \to \mathscr{S}(\Sigma),
\\
v_{\vec{\varepsilon}} \mapsto {}_{\vec{\varepsilon}}\,\alpha,
\end{cases}
\]
so where ${}_{\vec{\varepsilon}}\,\alpha$ is the tangle $\psi_{hv}(\alpha)$ with states $\varepsilon_1,\dots,\varepsilon_n$ from top to bottom. It is well defined because the tangle representing $\alpha$ is well defined up to isotopy and Kauffman-bracket relations, which are quotiented out in $\mathscr{S}(\Sigma)$.
\end{Definition}

\begin{Proposition}\label{propStOqmorph}
Given a tangle $\alpha$, the map $\St_W(\alpha)\colon V^{\otimes n}\to\mathscr{S}(\Sigma)$ is an $\Oq$-comodule morphism.
\end{Proposition}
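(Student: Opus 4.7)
The plan is to check the $\Oq$-comodule morphism condition directly on the basis $v_{\vec{\varepsilon}} = v_{\varepsilon_1}\otimes\cdots\otimes v_{\varepsilon_n}$ by evaluating both composites
\[
\Delta_{\mathscr{S}(\Sigma)}\circ \St_W(\alpha) \qquad \text{and} \qquad (\St_W(\alpha)\otimes \operatorname{Id})\circ \Delta_{V^{\otimes n}}
\]
and observing that they agree on the nose because both produce the same sum of stated tangles, indexed by states of the intersection with a parallel arc.

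First I would spell out the two coactions. On the $V^{\otimes n}$ side, the coaction on the standard corepresentation is $\Delta(v_\varepsilon) = \sum_\eta v_\eta \otimes {}_\eta\beta_\varepsilon$ under the isomorphism $\mathscr{S}(B)\simeq \Oq$, so the iterated coproduct gives
\[
\Delta_{V^{\otimes n}}(v_{\vec{\varepsilon}}) = \sum_{\vec{\eta}}\, v_{\vec{\eta}} \otimes \bigl({}_{\eta_1}\beta_{\varepsilon_1}\cdots{}_{\eta_n}\beta_{\varepsilon_n}\bigr).
\]
The product on the right is represented in $\mathscr{S}(B)$ by $n$ stacked horizontal strands with states $\eta_i$ on the left and $\varepsilon_i$ on the right, which is the basis element ${}_{\vec{\eta}}\beta_{\vec{\varepsilon}}$ (the strands are parallel and nested so stacking coincides with the height ordering).

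On the $\mathscr{S}(\Sigma)$ side, the coaction is the splitting morphism $\rho_c$ along an arc $c$ parallel to the boundary edge $e$, cutting off a bigon between $c$ and $e$. Applied to ${}_{\vec{\varepsilon}}\alpha$, the key geometric observation is that after the isotopy $\psi_{hv}$, the tangle $\alpha$ near $e$ consists of $n$ horizontal strands going straight into $e$; in particular $\alpha\cap c$ consists of exactly $n$ points and the piece of $\alpha$ between $c$ and $e$ is $n$ parallel horizontal strands. Summing over all lifts of states at these $n$ points, one obtains
\[
\Delta\bigl({}_{\vec{\varepsilon}}\alpha\bigr) = \sum_{\vec{\eta}}\, {}_{\vec{\eta}}\alpha \;\otimes\; {}_{\vec{\eta}}\beta_{\vec{\varepsilon}},
\]
where the bigon factor is exactly the basis element identified above.

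Comparing with $(\St_W(\alpha)\otimes \operatorname{Id})\circ \Delta_{V^{\otimes n}}(v_{\vec{\varepsilon}}) = \sum_{\vec{\eta}} {}_{\vec{\eta}}\alpha\otimes {}_{\vec{\eta}}\beta_{\vec{\varepsilon}}$ yields equality. The only delicate point, and really the main obstacle, is keeping the geometric bookkeeping straight: one must confirm that (i) the isotopy $\psi_{hv}$ does not braid the strands so that the bigon piece is genuinely a basis element ${}_{\vec{\eta}}\beta_{\vec{\varepsilon}}$ and not some linear combination obtained after reordering heights, and (ii) the conventions for the identification $\mathscr{S}(B)\simeq \Oq$ on generators (${}_+\beta_+\mapsto a$, ${}_-\beta_+\mapsto c$, etc.) give exactly the same matrix elements as the standard coaction $\Delta(v_\varepsilon) = \sum_\eta v_\eta\otimes {}_\eta\beta_\varepsilon$ with the top-to-bottom ordering used when stating $\St_W$. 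Once both conventions are matched, the identity of the two expressions, and hence $\Oq$-colinearity of $\St_W(\alpha)$, is immediate. Since all relevant relations in $\mathscr{S}(\Sigma)$ and $\mathscr{S}(B)$ are respected by $\rho_c$ (Theorem~\ref{splitmorph}), no additional compatibility check on Kauffman or boundary relations is required.
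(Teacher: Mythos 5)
Your argument matches the paper's proof: you compute both coactions on $v_{\vec{\varepsilon}}$ and observe that each produces a sum over $\vec{\eta}$ of ${}_{\vec{\eta}}\alpha$ (resp.\ $v_{\vec{\eta}}$) tensored with $\prod_i {}_{\eta_i}\beta_{\varepsilon_i}$, so the two composites agree on the nose. One small terminological slip: you call ${}_{\vec{\eta}}\beta_{\vec{\varepsilon}}$ a basis element, but for arbitrary $\vec{\eta},\vec{\varepsilon}$ it is merely the product $\prod_i {}_{\eta_i}\beta_{\varepsilon_i}$ (the paper's basis $_{\vec{\mu}}\beta_{\vec{\nu}}$ requires decreasing sequences) -- this does not affect the argument, since what matters is only that the bigon piece is $n$ parallel nested strands with these states.
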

\begin{proof}
Note that we still see $\mathscr{S}(\Sigma)$ as a right $\Oq$-comodule even though we draw the edge at the left. Its comodule structure is given by $\Delta(_{\vec{\varepsilon}}\,\alpha) = \sum_{\vec{\eta} \in \{\pm\}^n} {}_{\vec{\eta}}\alpha \otimes {}_{\vec{\eta}}\beta_{\vec{\varepsilon}}$, and here~${}_{\vec{\eta}}\beta_{\vec{\varepsilon}}$ is a~product $\prod_i {}_{\eta_i}\beta_{\varepsilon_i}$ in $\mathscr{S}(B)$. The comodule structure on $V^{\otimes n}$ is given by $\Delta(v_{\vec{\varepsilon}}) = \sum_{\vec{\eta}} v_{\vec{\eta}} \otimes \prod_i x_{\eta_i,\varepsilon_i}$, where $x_{\eta,\varepsilon}$~is the $v_\eta$ part of $\Delta(v_\varepsilon)$. Namely, $x_{+,+} = a$, $x_{+,-}=b$, $x_{-,+}=c$ and $x_{-,-}=d$. Under the isomorphism $\Oq \simeq \mathscr{S}(B)$, $x_{\eta,\varepsilon}\mapsto {}_\eta\beta_{\varepsilon}$.
 Thus $\Delta(\St_W(\alpha)(v_{\vec{\varepsilon}})) = \Delta(_{\vec{\varepsilon}}\,\alpha) = \sum_{\vec{\eta}} \St_W(\alpha)(v_{\vec{\eta}}) \otimes {}_{\vec{\eta}}\beta_{\vec{\varepsilon}} = (\St_W(\alpha)\otimes {\rm Id}_{\mathscr{S}(B)})(\Delta(v_{\vec{\varepsilon}}))$.
\end{proof}

\begin{Proposition}\label{propStNat}
The maps $\St_W \colon \Hom_{{\rm Sk}_{{\rm TL}}(\Sigma)}(W\rhd \varnothing,\varnothing) \to \Hom_{\Oq\text{-}{\rm comod}}(W, \mathscr{S}(\Sigma))$, $W \in {\rm TL}$, define a natural transformation $\St\colon \Hom_{{\rm Sk}_{{\rm TL}}(\Sigma)}(-\rhd \varnothing,\varnothing) \Rightarrow \Hom_{\mathcal{E}}(-, \mathscr{S}(\Sigma))$ between functors ${\rm TL}^{\rm op}\to {\rm Vect}_k$.
\end{Proposition}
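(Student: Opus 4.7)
The plan is to verify naturality of $\St$ by checking it on a generating set of ${\rm TL}$. By Theorem~\ref{thmSkAsTangles}, the $k$-linear monoidal category ${\rm TL}$ is generated under composition and tensor product by identity strands together with an elementary cap $\cap_{{\rm TL}}\colon V^{\otimes 2}\to \mathbf{1}$ and elementary cup $\cup_{{\rm TL}}\colon \mathbf{1}\to V^{\otimes 2}$ (a positive crossing may be added for convenience and is resolved into these via the Kauffman bracket). By bilinearity of composition and linearity of $\St_W$ in $\alpha$, it suffices to verify the naturality identity
\[
\St_W\bigl(\alpha \circ (f \rhd {\rm Id}_\varnothing)\bigr) = \St_{W'}(\alpha) \circ f
\]
for $f$ one of these elementary generators placed at two adjacent positions, and then extend by multiplicativity of composition in $f$.

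For $f = \cap_{{\rm TL}}$ inserted at positions $(i,i+1)$, the morphism $\alpha \circ (\cap_{{\rm TL}} \rhd {\rm Id}_\varnothing)$ is represented by the tangle $\alpha$ with a small cap pushed into $\Sigma$ through the left boundary edge just below the $i$-th strand. Evaluating $\St_W$ on it at $v_{\vec{\varepsilon}}$ produces a stated tangle in which this cap carries states $(\varepsilon_i,\varepsilon_{i+1})$; the first boundary skein relation in its left-edge form (Proposition~\ref{propLeftStSkRel}) collapses this local piece to the scalar ${}^{\varepsilon_i}_{\varepsilon_{i+1}}\reflectbox{$C$}$ times the stated tangle with the two strands in positions $i,i+1$ removed. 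On the other side, $\St_{W'}(\alpha) \circ \cap_{{\rm TL}}$ applied to $v_{\vec{\varepsilon}}$ equals $\St_{W'}(\alpha)$ evaluated on the same basis vector with $v_{\varepsilon_i}\otimes v_{\varepsilon_{i+1}}$ replaced by the scalar ${\rm RT}(\cap_{{\rm TL}})(v_{\varepsilon_i}\otimes v_{\varepsilon_{i+1}})$, and a direct computation using $\varphi$ from Definition~\ref{defOqcomods} yields ${\rm RT}(\cap_{{\rm TL}})(v_\mu \otimes v_\nu) = {}^\mu_\nu\reflectbox{$C$}$, so the two sides agree. The case $f = \cup_{{\rm TL}}$ is symmetric: the second boundary relation of Proposition~\ref{propLeftStSkRel} expands a cup near the left edge into a linear combination of parallel stated strands, which is precisely ${\rm RT}(\cup_{{\rm TL}})(1)$ computed via $\varphi^{-1}$. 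For a crossing inserted as $f$, the Kauffman-bracket skein relation valid in $\mathscr{S}(\Sigma)$ resolves it as $q(\text{vertical}) + q^{-1}(\text{horizontal})$, matching the action of the $R$-matrix of $\Oq$ on the basis of $V^{\otimes 2}$.

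The main obstacle is the coefficient matching on generators: one must check that the Reshetikhin--Turaev evaluation of elementary cups, caps, and crossings on the basis $\{v_+,v_-\}$ agrees exactly with the scalars appearing in the boundary skein and Kauffman-bracket relations of $\mathscr{S}(\Sigma)$. This is arranged precisely by the non-standard coribbon functional (see the Remark after the definition of $\Oq$, attributed to~\cite{Tingley}) together with the specific $\varphi$ of Definition~\ref{defOqcomods}. Once the matching is established on the three types of generators, naturality of $\St$ as a transformation of functors ${\rm TL}^{\rm op}\to {\rm Vect}_k$ follows by linearity of each $\St_W$ and functoriality of composition in~${\rm TL}$.
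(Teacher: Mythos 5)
Your proof is essentially the same as the paper's: both reduce naturality to the generators of ${\rm TL}$ (caps and cups, under composition and juxtaposition), push the cap/cup through the left boundary edge, and match the resulting boundary skein relation coefficients of $\mathscr{S}(\Sigma)$ against the explicit Reshetikhin--Turaev evaluations of $\cap = \lcap\circ(\varphi\otimes{\rm Id})$ and $\cup = ({\rm Id}\otimes\varphi^{-1})\circ\lcup$. The only cosmetic difference is that you also mention crossings as a possible extra generator before noting they reduce via the Kauffman bracket; the paper omits this because ${\rm TL}$ is already generated by identities, caps and cups alone, so the crossing case is redundant. Your sketch correctly identifies the two coefficient checks (the left cap relation ${}^\mu_\nu\reflectbox{$C$}$ and the left cup expansion) as the content of the argument, and correctly flags that the non-standard coribbon element and the specific $\varphi$ are what make these checks go through; you would of course want to spell out the four values of ${\rm RT}(\cap)(v_\mu\otimes v_\nu)$ and the expansion of $\cup(1)$ as the paper does, but the strategy and the decomposition are identical.
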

\begin{proof}
For $g \in \Hom_{{\rm TL}}(V^{\otimes n},V^{\otimes m})$, $\alpha \in \Hom_{{\rm Sk}_{{\rm TL}}(\Sigma)}(V^{\otimes m}\rhd \varnothing,\varnothing)$ and $v_{\vec{\varepsilon}} \in V^{\otimes n}$
, one needs to check that $\St_{V^{\otimes m}}(\alpha)(g(v_{\vec{\varepsilon}})) = \St_{V^{\otimes n}}(\alpha \circ (g\rhd {\rm Id}_\varnothing))(v_{\vec{\varepsilon}})$. Morphisms of ${\rm TL}$ are generated under composition and juxtaposition by identities, caps and cups, so one only needs to prove the result for these last two.
We will need some explicit computations of these caps and cups. Recall from Remark \ref{rmkUnorTanOq}, or \cite[Theorem~4.2]{Tingley}, that to do so one uses the isomorphism
\[
\varphi \colon\ \begin{cases}
V \to V^*,
\\
v_+ \mapsto -q^{\frac{5}{2}} v_-^*,
\\ v_- \mapsto q^{\frac{1}{2}}v_+^*\end{cases}
\]
 of Definition~\ref{defOqcomods} and sets $\cap = \lcap \circ (\varphi \otimes {\rm Id}_V)$ and $\cup = ({\rm Id}_V \otimes \varphi^{-1})\circ \lcup$, where $\lcap$ and $\lcup$ are the usual ${\rm ev}$ and ${\rm coev}$ in ${\rm Vect}_k^{\rm fin}$.

Let $g =\ $\begin{tikzpicture}[xscale = 0.18, yscale = 0.25, baseline = 0pt]
\draw (0,0) -- ++(0,1) ;
\draw (1,0) -- ++(0,1) ;
\draw (2,0) -- ++(0,1) ;
\draw (3,0) arc(180:0:0.5 and 0.75);
\draw (5,0) -- ++(0,1) ;
\draw (6,0)-- ++(0,1) ;
\draw (7,0) -- ++(0,1) ;
\end{tikzpicture}$\ = {\rm Id}_V^{\otimes k} \otimes \cap \otimes {\rm Id}_V^{\otimes n-k}\colon V^{\otimes n+2} \to V^{\otimes n}$, $\alpha \in \Hom_{{\rm Sk}_{{\rm TL}}(\Sigma)}(V^{\otimes n}\rhd \varnothing,\varnothing)$ and $v=(v_{\varepsilon_1}\otimes \cdots \otimes v_{\varepsilon_k})\otimes v_{\mu}\otimes v_{\nu} \otimes (v_{\varepsilon_{k+1}}\otimes \cdots\otimes v_{\varepsilon_{n}}) \in V^{\otimes n+2}$. We want to compare
\begin{gather*}
\St_{V^{\otimes n}}(\alpha)(g(v)) = \St_{V^{\otimes n}}(\alpha)(v_{\vec{\varepsilon}}\ .\cap(v_{\mu}\otimes v_{\nu})) =\begin{tikzpicture}[scale = 1, baseline = 8pt]
\node [rightymissy, minimum width = 1cm] (A) at (1.5,1) {$\alpha$};
\draw (0,0) -- ++(2,0) ;
\draw (0,0) -- ++ (0,1);
\draw (0,0.75) node[ left = -2pt]{\footnotesize $\varepsilon_1$} -- (A);
\draw (0,0.5) node[ above , rotate = 90]{\tiny $\cdots$} .. controls (1,0.5) .. (A);
\draw (0,0.25) node[ left = -2pt]{\footnotesize $\varepsilon_n$} .. controls (1.5,0.25) .. (A);
\end{tikzpicture}\ .\cap(v_{\mu}\otimes v_{\nu})
\end{gather*}
and
\begin{gather*}
\St_{V^{\otimes n+2}}(\alpha \circ (g\rhd {\rm Id}_\varnothing))(v)=\St_{V^{\otimes n+2}}\Bigg(\ \begin{tikzpicture}[scale = 0.7, baseline = 8pt]
\node [rightymissy, minimum width = 1cm] (A) at (2.5,1.5) {$\alpha$};
\draw (0,0) -- ++(3,0);
\draw (0,0) -- ++ (0,1.5);
\draw(0.5,0) .. controls (0.5,1.5) .. (A);
\draw(1,0) .. controls (1,1.2) .. (A);
\draw (1.5,0) arc(180:0:0.25 and 0.5);
\draw(2.5,0) -- (A);
\end{tikzpicture}
\Bigg)(v) = \begin{tikzpicture}[scale = 0.8, baseline = 8pt]
\node [rightymissy, minimum width = 1cm] (A) at (2.5,1.5) {$\alpha$};
\draw (0,0) -- ++(3,0) ;
\draw (0,0) -- ++ (0,1.5);
\draw (0,1.25) node[ left = -2pt]{\footnotesize $\varepsilon_1$} -- (A);
\draw (0,1) node[ above , rotate = 90]{\tiny $\cdots$} .. controls (1,1) .. (A);
\draw (0,0.75) node[ left = -2pt]{\footnotesize $\mu$} arc(90:-90:0.5 and 0.125) node[ left = -2pt]{\footnotesize $\nu$} ;
\draw (0,0.25) node[ left = -2pt]{\footnotesize $\varepsilon_n$} .. controls (1.5,0.25) .. (A);
\end{tikzpicture}
\\ \hphantom{\St_{V^{\otimes n+2}}(\alpha \circ (g\rhd {\rm Id}_\varnothing))(v)}
{}=\begin{tikzpicture}[scale = 1, baseline = 8pt]
\node [rightymissy, minimum width = 1cm] (A) at (1.5,1) {$\alpha$};
\draw (0,0) -- ++(2,0) ;
\draw (0,0) -- ++ (0,1);
\draw (0,0.75) node[ left = -2pt]{\footnotesize $\varepsilon_1$} -- (A);
\draw (0,0.5) node[ above , rotate = 90]{\tiny $\cdots$} .. controls (1,0.5) .. (A);
\draw (0,0.25) node[ left = -2pt]{\footnotesize $\varepsilon_n$} .. controls (1.5,0.25) .. (A);
\end{tikzpicture}. {}^{\mu}_{\nu}\text{\reflectbox{$C$}}.
\end{gather*}
One simply needs to check that the coefficient ${}^{\mu}_{\nu}\reflectbox{$C$}$ from the left boundary skein relations of Proposition~\ref{propLeftStSkRel} coincides with $\cap(v_{\mu}\otimes v_{\nu})$ and indeed $\cap(v_+ \otimes v_+) = {\rm ev}\big({-}q^{\frac{5}{2}}v_-^* \otimes v_+\big) = 0={}^+_+\reflectbox{$C$}$, $\cap(v_- \otimes v_-) = {\rm ev}\big(q^{\frac{1}{2}}v_+^* \otimes v_-\big) = 0={}^-_-\reflectbox{$C$}$, $\cap(v_+ \otimes v_-) = {\rm ev}\big({-}q^{\frac{5}{2}}v_-^* \otimes v_-\big) = -q^{\frac{5}{2}}={}^+_-\reflectbox{$C$}$ and $\cap(v_- \otimes v_+) = {\rm ev}\big(q^{\frac{1}{2}}v_+^* \otimes v_+\big) = q^{\frac{1}{2}}={}^-_+\reflectbox{$C$}$.

Now let $g =\ $\begin{tikzpicture}[xscale = 0.18, yscale = -0.25, baseline =-7pt]
\draw (0,0) -- ++(0,1) ;
\draw (1,0) -- ++(0,1) ;
\draw (2,0) -- ++(0,1) ;
\draw (3,0) arc(180:0:0.5 and 0.75);
\draw (5,0) -- ++(0,1) ;
\draw (6,0)-- ++(0,1) ;
\draw (7,0) -- ++(0,1) ;
\end{tikzpicture}$\ = {\rm Id}_V^{\otimes k} \otimes \cup \otimes {\rm Id}_V^{\otimes n-k}\colon V^{\otimes n} \to V^{\otimes n+2}$, $\alpha \in \Hom_{{\rm Sk}_{{\rm TL}}(\Sigma)}\big(V^{\otimes n+2}\rhd \varnothing,\varnothing\big)$ and~$v=v_{\varepsilon_1}\otimes \cdots \otimes v_{\varepsilon_{n}} \in V^{\otimes n}$. One can directly compute $\cup (1) = \big({\rm Id}_V \otimes \varphi^{-1}\big)\circ
\mathop{\rm coev}(1) = \big({\rm Id}_V \otimes \varphi^{-1}\big) (v_- \otimes v_-^* + v_+ \otimes v_+^*) = -q^{-\frac{5}{2}} v_-\otimes v_+ + q^{-\frac{1}{2}} v_+ \otimes v_-$.
 We want to compare
\begin{gather*}
{\rm St}_{V^{\otimes n+2}}(\alpha)(g(v)) = {\rm St}_{V^{\otimes n+2}}(\alpha)((v_{\varepsilon_1} \otimes\cdots v_{\varepsilon_k}) \otimes  \big({-}q^{-\frac{5}{2}} v_- \otimes  v_+  +  q^{-\frac{1}{2}} v_+  \otimes   v_-\big) \\
\hphantom{\St_{V^{\otimes n+2}}(\alpha)(g(v))=}
\otimes (v_{\varepsilon_{k+1}} \otimes  \cdots v_{\varepsilon_{n}}))
\\  \hphantom{\St_{V^{\otimes n+2}}(\alpha)(g(v))}
{}=-q^{-\frac{5}{2}}\begin{tikzpicture}[scale = 1, baseline = 8pt]
\node [rightymissy, minimum width = 1cm] (A) at (2.5,1.5) {$\alpha$};
\draw (0,0) -- ++(3,0) ;
\draw (0,0) -- ++ (0,1.5);
\draw (0,1.25) node[ left = -2pt]{\footnotesize $\varepsilon_1$} -- (A);
\draw (0,1) node[ above , rotate = 90]{\tiny $\cdots$} .. controls (1,1) .. (A);
\draw (0,0.75) node[ left = -2pt]{\footnotesize $-$}.. controls (1,0.75) .. (A);
\draw (0,0.5) node[ left = -2pt]{\footnotesize $+$} .. controls (1,0.5) .. (A);
\draw (0,0.25) node[ left = -2pt]{\footnotesize $\varepsilon_n$} .. controls (1.5,0.25) .. (A);
\end{tikzpicture}+q^{-\frac{1}{2}}\begin{tikzpicture}[scale = 1, baseline = 8pt]
\node [rightymissy, minimum width = 1cm] (A) at (2.5,1.5) {$\alpha$};
\draw (0,0) -- ++(3,0) ;
\draw (0,0) -- ++ (0,1.5);
\draw (0,1.25) node[ left = -2pt]{\footnotesize $\varepsilon_1$} -- (A);
\draw (0,1) node[ above , rotate = 90]{\tiny $\cdots$} .. controls (1,1) .. (A);
\draw (0,0.75) node[ left = -2pt]{\footnotesize $+$}.. controls (1,0.75) .. (A);
\draw (0,0.5) node[ left = -2pt]{\footnotesize $-$} .. controls (1,0.5) .. (A);
\draw (0,0.25) node[ left = -2pt]{\footnotesize $\varepsilon_n$} .. controls (1.5,0.25) .. (A);
\end{tikzpicture}
\end{gather*}
and
\begin{gather*}
\St_{V^{\otimes n}}(\alpha \circ (g\rhd {\rm Id}_\varnothing))(v)=\St_{V^{\otimes n}}\Bigg(\ \begin{tikzpicture}[yscale = 0.7, baseline = 8pt]
\node [rightymissy, minimum width = 1cm] (A) at (2.5,1.5) {$\alpha$};
\draw (0,0) -- ++(3,0);
\draw (0,0) -- ++ (0,1.5);
\draw(0.5,0) .. controls (0.5,1.5) .. (A);
\draw(1,0) .. controls (1,1.2) .. (A);
\draw(1.5,0.75) .. controls (1.5,1.1) .. (A);
\draw (1.5,0.75) arc(180:0:0.25 and -0.5) .. controls (2,0.9)..(A);
\draw(2.5,0) -- (A);
\end{tikzpicture}
\Bigg)(v) = \begin{tikzpicture}[scale = 1, baseline = 8pt]
\node [rightymissy, minimum width = 1cm] (A) at (2.5,1.5) {$\alpha$};
\draw (0,0) -- ++(3,0) ;
\draw (0,0) -- ++ (0,1.5);
\draw (0,1.25) node[ left = -2pt]{\footnotesize $\varepsilon_1$} -- (A);
\draw (0,1) node[ above , rotate = 90]{\tiny $\cdots$} .. controls (1,1) .. (A);
\draw (0.75,0.75) .. controls (1,0.75) .. (A);
\draw (0.75,0.75) arc(90:270:0.25 and 0.125) .. controls (1,0.5) .. (A);
\draw (0,0.25) node[ left = -2pt]{\footnotesize $\varepsilon_n$} .. controls (1.5,0.25) .. (A);
\end{tikzpicture}.
\end{gather*}
They are equal by the last left boundary skein relation of Proposition~\ref{propLeftStSkRel}.
\end{proof}

Recall from Proposition~\ref{propIntHomFree} and the discussion below that the internal endomorphism object of~$\varnothing$ with respect to the $\mathcal{E}$-action is an object $X$ equipped with an isomorphism $\Hom_{{\rm Sk}_{\rm TL}(\Sigma)}(-\rhd \varnothing,\varnothing)\allowbreak \tilde{\Rightarrow} \Hom_{\mathcal{E}}(-, X)$ in $[{\rm TL}^{\rm op},{\rm Vect}_k]$

\begin{Theorem}\label{thmRelation}
The natural transformation $\St$ is a natural isomorphism and exhibits $\mathscr{S}(\Sigma)$ as the internal endomorphism object of the empty set. Namely one can take $A_\Sigma = \mathscr{S}(\Sigma)$ as $\Oq$-comodule.
\end{Theorem}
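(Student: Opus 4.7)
By Proposition \ref{propIntHomFree} it suffices to prove that each $\St_W$ is a bijection for $W$ in the full subcategory ${\rm TL}\subset \mathcal{V}$, since ${\rm TL}$ generates $\mathcal{E}$ under colimits. Naturality (Proposition \ref{propStNat}) and the semisimplicity of ${\rm TL}$ further reduce the task to showing that
\[ \St_{V^{\otimes n}}\colon \Hom_{{\rm Sk}_{\rm TL}(\Sigma)}\bigl(V^{\otimes n}\rhd\varnothing, \varnothing\bigr) \longrightarrow \Hom_{\mathcal{E}}\bigl(V^{\otimes n}, \mathscr{S}(\Sigma)\bigr) \]
is a $k$-linear bijection for every $n\geq 0$. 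Once this is verified, Proposition \ref{propStOqmorph} ensures that the codomain is the correct space of $\Oq$-comodule maps, and the uniqueness of internal $\Hom$ objects up to canonical isomorphism identifies $\mathscr{S}(\Sigma)$ with $A_\Sigma$ as $\Oq$-comodule.

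For surjectivity, I would argue that an $\Oq$-comodule morphism $f\colon V^{\otimes n} \to \mathscr{S}(\Sigma)$ is determined by the family of values $f(v_{\vec\varepsilon}) \in \mathscr{S}(\Sigma)$, and that the equivariance identity
\[ \Delta(f(v_{\vec\varepsilon})) = \sum_{\vec\eta} f(v_{\vec\eta}) \otimes \prod_i {}_{\eta_i}\beta_{\varepsilon_i} \]
is precisely the transformation law obeyed by the $2^n$ state-assignments of a single stated tangle with $n$ boundary points on the edge $e$. Expanding each $f(v_{\vec\varepsilon})$ in a basis of $\mathscr{S}(\Sigma)$ by stated tangles and comparing with the $n$-fold bigon coaction, the equivariance forces the underlying (unstated) tangles that appear to be shared across all $\vec\varepsilon$ and to carry exactly $n$ boundary points over $e$. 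Taking this common unstated data modulo Kauffman-bracket relations yields a class $\alpha \in \Hom_{{\rm Sk}_{\rm TL}(\Sigma)}(V^{\otimes n}\rhd\varnothing,\varnothing)$ with $\St_{V^{\otimes n}}(\alpha) = f$.

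For injectivity, suppose $\St_{V^{\otimes n}}(\alpha) = 0$, so every ${}_{\vec\varepsilon}\alpha$ vanishes in $\mathscr{S}(\Sigma)$. The defining relations of $\mathscr{S}(\Sigma)$ split into Kauffman-bracket relations, which are already present in ${\rm Sk}_{\rm TL}(\Sigma)$ and create no new identifications in the hom-space, and boundary skein relations. The computation in Proposition \ref{propStNat} shows that the boundary relations correspond, under $\St$, precisely to the naturality of $\St$ with respect to the cap and cup morphisms of ${\rm TL}$; equivalently, any such relation identifying combinations of states of $\alpha$ in $\mathscr{S}(\Sigma)$ lifts to a Kauffman-relation identity in ${\rm Sk}_{\rm TL}(\Sigma)$ after reinterpreting the relevant half-arc as a ${\rm TL}$-morphism applied to a tangle with fewer boundary points. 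By Theorem \ref{thmSkAsTangles} this forces $\alpha = 0$ in the skein hom-space.

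The main obstacle is the surjectivity step: it demands a concrete procedure to extract an underlying tangle from an abstract $\Oq$-equivariant family in $\mathscr{S}(\Sigma)$. Two reasonable routes suggest themselves. The first is to exhibit a basis of $\mathscr{S}(\Sigma)$ adapted to its $\Oq$-isotypic decomposition (using that $\mathcal{V}$ is semisimple and that every stated tangle with $n$ boundary points lies in the sum of isotypes appearing in $V^{\otimes n}$), and then match basis coefficients by direct computation. The second is to use the bigon-excision Theorem \ref{thmExcOfSS} to realize $\mathscr{S}(\Sigma)$ as a Hochschild invariant of $\mathscr{S}(\Sigma) \otimes \mathscr{S}(B)$, and then identify $\Hom_\mathcal{E}(V^{\otimes n}, \mathscr{S}(\Sigma))$ with pairings against matrix coefficients in $\mathscr{S}(B) \simeq \Oq$, which recovers tangles with prescribed boundary behavior.
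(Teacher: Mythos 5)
The reduction to the Temperley--Lieb subcategory, the appeal to Proposition~\ref{propStNat}, and the decision to treat each simple summand of $V^{\otimes n}$ separately are all aligned with the paper's proof. The gap is in the surjectivity step, exactly where you flag difficulty, and your proposed fix does not quite work as stated. You claim that $\Oq$-equivariance forces the values $f(v_{\vec\varepsilon})$ to be expressible in terms of a \emph{common} underlying tangle with \emph{exactly $n$ boundary points} over the edge. This is not true: a comodule map $f\colon V^{\otimes n}\to\mathscr{S}(\Sigma)$ can, for instance, factor through the projection to a small simple summand $W_i \subseteq V^{\otimes n}$ and then land in a $W_i$-isotype of $\mathscr{S}(\Sigma)$ whose natural representatives are stated tangles with $n_i \neq n$ boundary points (e.g.\ an invariant realised by a closed tangle, so $n_i = 0$, appearing in the $V_0$-summand of $V^{\otimes 2}$). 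Equivariance does not by itself pad such a representative with extra boundary arcs.

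The paper's construction of $\St_W^{-1}$ handles exactly this: for each simple summand $W_i$, pick a highest-weight vector $w_i$, choose any stated-tangle representative $a_i$ of $f_i(w_i)$ with underlying tangle $\alpha_i$ on $n_i = |\partial\alpha_i|$ boundary points and states $\vec{\varepsilon}_i$, and let the assignment $w_i \mapsto v_{\vec{\varepsilon}_i}$ extend (via the $\Uq$-action of Proposition~\ref{propOqcomodUqmod}) to an $\Oq$-comodule map $g_i\colon W_i \to V^{\otimes n_i}$. Then $\St_W^{-1}(f_i) := \alpha_i \circ (g_i \rhd \mathrm{Id}_\varnothing)$, composed with the projection $W \twoheadrightarrow W_i$. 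The crucial device you are missing is this ${\rm TL}$-morphism $g_i$, which absorbs the mismatch between $n$ and $n_i$; its existence and uniqueness follow from $W_i$ being simple, and well-definedness of the whole construction (independence of the choice of $a_i$) is what Proposition~\ref{propStNat} is used for, since two representatives differ by boundary skein relations which correspond exactly to precomposition by ${\rm TL}$-morphisms. Your injectivity discussion is also unnecessary in this formulation: once $\St_W^{-1}$ is well-defined, a short naturality computation shows it is a genuine two-sided inverse, so bijectivity comes for free. Of your two suggested routes, the first (isotypic bases and coefficient-matching) is in the right spirit but would still need the $g_i$ step to be made precise; the second (Hochschild/bigon-excision) is a genuinely different route which the paper deliberately avoids, as it wants to give a direct skein-theoretic proof not resting on excision.
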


\begin{proof}
We exhibit an inverse to $\St$. Let $W=V^{\otimes n}\in {\rm TL}$, which decomposes as a direct sum of simples $W_i$. Let $f\colon W\to \mathscr{S}(\Sigma)$ be a morphism in $\mathcal{E}$, and denote by $f_i$ its restriction on $W_i$. We~want a morphism $\St_W^{-1}(f) \in \Hom_{{\rm Sk}_{{\rm TL}}(\Sigma)}(W\rhd \varnothing,\varnothing)$, which is equivalent to a collection of morphisms ${\rm St}_W^{-1}(f_i) \in \Hom_{{\rm Sk}_{\mathcal{V}}(\Sigma)}(W_i\rhd \varnothing,\varnothing)$. Each $f_i$ is determined by its value on a single element $w_i\in W_i$ (pick a highest weight element for example), it extends to all $W_i$ by applying the $\Uq$-action (recall from Proposition~\ref{propOqcomodUqmod} that $\Oq$-comodules correspond to $\Uq$-modules). Choose any stated tangle~$a_i$ representing the element $f_i(w_i) \in \mathscr{S}(\Sigma)$. Denote by $\alpha_i$ its underlying tangle, $n_i=\abss{\partial \alpha_i}$ its number of boundary points and $\vec{\varepsilon}_i$ its states. This representative is well defined up to the boundary skein relations, the usual skein relations and isotopy. The assignment $w_i \mapsto v_{\vec{\varepsilon}_i}$ extends to a unique $ \Oq$-morphism $g_i\colon W_i \to V^{\otimes n_i}$ by applying the $\Uq$-action. We then set $\St_W^{-1}(f_i) = \alpha_i\circ (g_i \rhd {\rm Id}_\varnothing)$. Note that here~$\alpha_i$ denotes the tangle~$\alpha_i$ seen as a morphism in ${\rm Sk}_\mathcal{V}(\Sigma)$, so we actually mean $\psi_{hv}^{-1}(\alpha_i)$, the same tangle but with boundary points at the bottom instead of the left. We have to check that this definition does not depend on the representative $a_i$. Usual skein relations and isotopy do not change~$\alpha_i$ seen as a~morphism in the skein category. The boundary skein relations are equivalent to naturality using Proposition~\ref{propStNat}. Namely, another representative $a'_i$ has to be of the form $\alpha'_i= \alpha_i \circ (g \rhd {\rm Id}_\varnothing)$, for some $g \in {\rm TL}$, with states ${\vec{\varepsilon}_i}^{\ '}$ such that $g({\vec{\varepsilon}_i}^{\ '})=\vec{\varepsilon}_i$. Therefore the $g'_i$ for $a'_i$ will be such that $g_i = g \circ g'_i$. Then we simply check that $\alpha_i\circ (g_i \rhd {\rm Id}_\varnothing) = \alpha_i\circ (g \rhd {\rm Id}_\varnothing) \circ (g'_i \rhd {\rm Id}_\varnothing) = \alpha'_i\circ (g'_i \rhd {\rm Id}_\varnothing)$, and ${\rm St}_W^{-1}(f_i)$ is well defined.

For simplicity we assume that $\St_W^{-1}(f_i)$ and $g_i$ are actually defined on all $W$ but are 0 except on~$W_i$, namely we precompose by the projection $W \twoheadrightarrow W_i$, and thus we set $\St_W^{-1}(f) = \sum_i \St_W^{-1}(f_i)$.

It is now easy to check that $\St_W^{-1}$ is the inverse of $\St_W$. For $v_{\vec{\varepsilon}} \in W$, suppose $v_{\vec{\varepsilon}}\in W_{i_0} \subseteq W$ lies in a simple, or decompose it in the $W_i$'s, and write $v_{\vec{\varepsilon}} = X\cdot w_{i_0}$, $X \in \Uq$. One has $g_{i_0}(v_{\vec{\varepsilon}})= X\cdot v_{\vec{\varepsilon}_{i_0}}$ and for $j\neq {i_0},\ g_j(v_{\vec{\varepsilon}})=0$. Thus:
\begin{align*}
\St_W\big(\St_W^{-1}(f)\big)(v_{\vec{\varepsilon}}) &= \St_W\bigg(\sum_i \alpha_i\circ (g_i \rhd {\rm Id}_\varnothing)\bigg)(v_{\vec{\varepsilon}}) = \sum_i \St_W(\alpha_i\circ (g_i \rhd {\rm Id}_\varnothing))(v_{\vec{\varepsilon}})
\\
&\overset{{\rm Proposition~\ref{propStNat}}}{=} \sum_i \St_W(\alpha_i)(g_i(v_{\vec{\varepsilon}}))
 = \St_W(\alpha_{i_0})(X\cdot v_{\vec{\varepsilon}_{i_0}})
 \\&
\overset{{\rm Proposition~\ref{propStOqmorph}}}{=} X\cdot \St_W(\alpha_{i_0})(v_{\vec{\varepsilon}_{i_0}})= X\cdot a_{i_0}= X \cdot f_{i_0}(w_{i_0})\\
& = f_{i_0}(v_{\vec{\varepsilon}})= f(v_{\vec{\varepsilon}}).
\end{align*}
Symmetrically, let $\alpha \in \Hom_{{\rm Sk}_{{\rm TL}}(\Sigma)}(W \rhd \varnothing,\varnothing)$ and $v_{\vec{\varepsilon}} \in W$. Set $f=\St_W(\alpha)\colon V^{\otimes \abss{\partial \alpha}} \to \mathscr{S}(\Sigma)$, in the definition of $\St_W^{-1}(f)$ one has $\alpha_i = \alpha$ and $v_{\vec{\varepsilon}_i}=w_i$, so $g_i$ is the inclusion $W_i\hookrightarrow W$. If $v_{\vec{\varepsilon}} \in W_i \subseteq W$, one has $\St_W^{-1}(\St_W(\alpha))=\sum_i \alpha\circ (g_i\rhd {\rm Id}_\varnothing) = \alpha\circ ({\rm Id}_W \rhd {\rm Id}_\varnothing) = \alpha$.
\end{proof}

\begin{Proposition}\label{propProdsOnSS}
The algebra structure inherited from the internal endomorphism object structure on~$\mathscr{S}(\Sigma)$ coincides with its usual algebra structure. Namely $A_\Sigma =\mathscr{S}(\Sigma)$ as $\Oq$-comodule algebras.
\end{Proposition}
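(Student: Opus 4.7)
My plan is to unpack the defining property of the internal endomorphism product via the natural isomorphism $\St$ and match it geometrically with vertical superposition in $\mathscr{S}(\Sigma)$, so that the two algebra structures agree on a system of generators.

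By Definition~\ref{algStrOnIntHom} combined with the naturality of $\St$ and the functoriality of $\rhd$, for any morphisms $f \colon W_1 \to \mathscr{S}(\Sigma)$ and $g \colon W_2 \to \mathscr{S}(\Sigma)$ in $\mathcal{E}$ one has
\[
\St^{-1}(m \circ (f \otimes g)) \;=\; \St^{-1}(f) \circ (\mathrm{Id}_{W_1} \rhd \St^{-1}(g))
\]
as morphisms $(W_1 \otimes W_2) \rhd \varnothing \to \varnothing$ in ${\rm SK}_\mathcal{V}(\Sigma)$. Since ${\rm TL} \hookrightarrow \mathcal{E}$ generates $\mathcal{E}$ under colimits and each $\St_W$ is an isomorphism by Theorem~\ref{thmRelation}, it suffices to verify the agreement for $W_1 = V^{\otimes n}$ and $W_2 = V^{\otimes m}$, with $\alpha = \St^{-1}(f)$ and $\beta = \St^{-1}(g)$ represented by tangles, on basis states $v_{\vec{\varepsilon}} \otimes v_{\vec{\mu}}$.

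Next, I would realize the composite $\alpha \circ (\mathrm{Id}_{V^{\otimes n}} \rhd \beta)$ as an explicit tangle in $\Sigma \times [0,1]$ using the monoidal convention of Remark~\ref{Skmonoidal}: the two factors of $V^{\otimes n} \otimes V^{\otimes m}$ are placed respectively in the front and back thirds of the boundary collar, so the lower half of time contains $\beta$ acting on the back $V^{\otimes m}$ strands together with $n$ identity strands running straight through in front, while the upper half contains $\alpha$ acting on those front $V^{\otimes n}$ strands. Applying the global ambient isotopy $\psi_{hv}$ pushes every bottom boundary point through the corner onto the boundary edge in a height-preserving way: the front $V^{\otimes n}$ points (closer to $\partial \Sigma$) land above the back $V^{\otimes m}$ points, each group inheriting the internal order prescribed by $\St$.

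After inserting the states $v_{\vec{\varepsilon}} \otimes v_{\vec{\mu}}$, the resulting stated tangle decomposes into an upper region carrying $\alpha$ with states $\vec{\varepsilon}$ and a lower region carrying $\beta$ with states $\vec{\mu}$, separated by a horizontal slab on which only the identity strands of $\mathrm{Id}_{V^{\otimes n}}$ appear. A straightforward isotopy that collapses this slab identifies the stated tangle with the vertical superposition $\St(\alpha)(v_{\vec{\varepsilon}}) \cdot \St(\beta)(v_{\vec{\mu}})$, which is by definition the product in $\mathscr{S}(\Sigma)$. Unitality is immediate, as $\St^{-1}$ of the algebra unit $k \to A_\Sigma$ is $\mathrm{Id}_\varnothing$, corresponding to the empty tangle. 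The main obstacle lies in the bookkeeping of Steps two and three: pinning down a consistent embedding convention for the iterated module action and checking that the composite of $\psi_{hv}$ with the slab-collapse isotopy introduces no spurious braiding or reordering of boundary points. Once this geometric identification is confirmed, the $\Oq$-comodule structure is already matched by Theorem~\ref{thmRelation}, and the claim that $\mathscr{S}(\Sigma) = A_\Sigma$ as $\Oq$-comodule algebras follows.
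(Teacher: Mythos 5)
Your proof is correct, and it takes a cleaner route than the paper's while resting on the same geometric core.

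The paper's proof makes sense of the product by first computing the evaluation map $\mathrm{ev}_{\varnothing,\varnothing} = \St^{-1}(\mathrm{Id}_{\mathscr{S}(\Sigma)})$ explicitly, using the decomposition $\mathscr{S}(\Sigma) = \bigoplus_{\alpha\in\mathcal{B}^+}\Uq\cdot\alpha$ from \cite[Theorem~4.6(b)]{Cos}: it writes $\mathrm{ev}$ as a direct sum $\bigoplus_\alpha \alpha\circ(g_\alpha\rhd\mathrm{Id}_\varnothing)$, then computes the composition map $c$ as a double sum, applies $\St$, and uses naturality to kill all but one term. Your version instead isolates the clean categorical identity $\St^{-1}(m\circ(f\otimes g)) = \St^{-1}(f)\circ(\mathrm{Id}_{W_1}\rhd\St^{-1}(g))$, which is a direct consequence of the definition of the internal composition map together with naturality of $\St$ and functoriality of $\rhd$ (the only subtlety being that the intermediate $\mathrm{ev}$ lives in ${\rm SK}_\mathcal{V}(\Sigma)$, i.e., one must use the cocontinuous extension of $\rhd$ and $\St$ from Proposition~\ref{propIntHomFree}). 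That identity lets you work entirely with $W_1, W_2 \in {\rm TL}$ and never touch the explicit simple decomposition of $\mathscr{S}(\Sigma)$, which is a genuine bookkeeping gain. Both proofs then reduce to the same geometric observation: $\St(\alpha\circ(\mathrm{Id}_{V^{\otimes n}}\rhd\beta))(v_{\vec{\varepsilon}}\otimes v_{\vec{\mu}})$ is the stated tangle $a$ stacked over $b$, and both ultimately rely on the fact that $\psi_{hv}$ introduces no spurious braiding when pushing the two tiers of boundary points through the collar. You flag that last point as "the main obstacle" without fully closing it, but the paper's proof does no more than draw the corresponding picture, so you are not missing anything that the paper itself supplies; it would be worth making the no-braiding claim slightly more explicit by noting that the $n$ identity strands sit strictly in front of (closer to $\partial\Sigma$ than) the strands absorbed by $\beta$, so $\psi_{hv}$ keeps them unlinked.
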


\begin{proof}
Recall that the product on $\mathscr{S}(\Sigma)$ is given by stacking the left tangle $a$ above the right one~$b$, and the product on $A_\Sigma$ is defined by evaluation and composition maps on internal $\Hom$ objects, in~Definition~\ref{algStrOnIntHom}. Graphically, the stated tangles $a$ and $b$, which we see as morphisms $\alpha$ and $\beta$ in~${\rm Sk}_\mathcal{V}(\Sigma)$ with prescribed inputs $v_{\vec{\varepsilon}}$ and $v_{\vec{\eta}}$, map to the morphism $\alpha \circ ({\rm Id}_{V^{\otimes \abss{\partial \alpha}}}\rhd \beta)$ in~${\rm Sk}_\mathcal{V}(\Sigma)$ with prescribed input $v_{\vec{\varepsilon}} \otimes v_{\vec{\eta}}$, which we see as the stated tangle $a$ above $b$:
\[
\begin{array}{ccc}
\begin{tikzpicture}[scale = 1, baseline = 8pt]
\node [rightymissy, minimum width = 1cm] (A) at (1.5,1) {$\alpha$};
\draw (0,0) -- ++(2,0) ;
\draw (0,0) -- ++ (0,1);
\draw (0,0.75) node[ left = -2pt]{\footnotesize $\varepsilon_1$} -- (A);
\draw (0,0.5) node[ above , rotate = 90]{\tiny $\cdots$} .. controls (1,0.5) .. (A);
\draw (0,0.25) node[ left = -2pt]{\footnotesize $\varepsilon_n$} .. controls (1.5,0.25) .. (A);
\end{tikzpicture} , \begin{tikzpicture}[scale = 1, baseline = 8pt]
\node [rightymissy, minimum width = 1cm] (A) at (1.5,1) {$\beta$};
\draw (0,0) -- ++(2,0) ;
\draw (0,0) -- ++ (0,1);
\draw (0,0.75) node[ left = -2pt]{\footnotesize $\eta_1$} -- (A);
\draw (0,0.5) node[ above , rotate = 90]{\tiny $\cdots$} .. controls (1,0.5) .. (A);
\draw (0,0.25) node[ left = -2pt]{\footnotesize $\eta_m$} .. controls (1.5,0.25) .. (A);
\end{tikzpicture}
&
&
\begin{tikzpicture}[baseline = 10pt]
\draw (0,0) -- ++(4,0) ;
\draw (0,0) -- ++ (0,2);
\begin{scope}[yshift = 1cm]
\node [rightymissy, minimum width = 3cm] (A) at (2.5,1) {$\alpha$};
\draw (0,0.75) node[ left = -2pt]{\footnotesize $\varepsilon_1$} -- (A);
\draw (0,0.5) node[ above , rotate = 90]{\tiny $\cdots$} .. controls (1,0.5) .. (A);
\draw (0,0.25) node[ left = -2pt]{\footnotesize $\varepsilon_n$} .. controls (1.5,0.25) .. (A);
\end{scope}
\begin{scope}[xshift = 2cm]
\node [rightymissy, minimum width = 1cm] (A) at (1.5,1) {$\beta$};
\draw (-2,0.75) node[ left = -2pt]{\footnotesize $\eta_1$} -- (A);
\draw (-2,0.5) node[ above , rotate = 90]{\tiny $\cdots$} .. controls (1,0.5) .. (A);
\draw (-2,0.25) node[ left = -2pt]{\footnotesize $\eta_m$} .. controls (1.5,0.25) .. (A);
\end{scope}
\end{tikzpicture} \hspace{1.8cm}
\\
\updownarrow & & \updownarrow \\
\begin{tikzpicture}[scale = 1, baseline = 8pt]
\node [rightymissy, minimum width = 1cm] (A) at (1.5,1) {$\alpha$};
\draw (0,0) -- ++(2,0);
\draw (0,0) -- ++ (0,1);
\draw (0.5,0) node[above left = -2pt]{\tiny 1} .. controls (0.5,1) .. (A);
\draw (1,0) node[above left = -2pt]{\tiny $\cdots$} .. controls (1,0.7) .. (A);
\draw (1.5,0) node[above left = -2pt]{\tiny $n$} -- (A);
\end{tikzpicture},\ v_{\vec{\varepsilon}} , \
\begin{tikzpicture}[scale = 1, baseline = 8pt]
\node [rightymissy, minimum width = 1cm] (A) at (1.5,1) {$\beta$};
\draw (0,0) -- ++(2,0);
\draw (0,0) -- ++ (0,1);
\draw (0.5,0) node[above left = -2pt]{\tiny 1} .. controls (0.5,1) .. (A);
\draw (1,0) node[above left = -2pt]{\tiny $\cdots$} .. controls (1,0.6) .. (A);
\draw (1.5,0) node[above left = -2pt]{\tiny $m$} -- (A);
\end{tikzpicture},\ v_{\vec{\eta}}
& \quad \longmapsto
&
\begin{tikzpicture}[baseline = 10pt]
\draw (0,0) -- ++(4,0) ;
\draw (0,0) -- ++ (0,2);
\begin{scope}[yshift = 1cm]
\node [rightymissy, minimum width = 3cm] (A) at (2.5,1) {$\alpha$};
\draw (0.5,-1) node[above left = -2pt]{\tiny 1} .. controls (0.5,1) .. (A);
\draw (1,-1) node[above left = -2pt]{\tiny $\cdots$} .. controls (1,0.7) .. (A);
\draw (1.5,-1) node[above left = -2pt]{\tiny $n$} .. controls (1.5,0.6) .. (A);
\end{scope}
\begin{scope}[xshift = 2cm]
\node [rightymissy, minimum width = 1cm] (A) at (1.5,1) {$\beta$};
\draw (0.5,0) node[above left = -2pt]{\tiny 1} .. controls (0.5,1) .. (A);
\draw (1,0) node[above left = -2pt]{\tiny $\cdots$} .. controls (1,0.6) .. (A);
\draw (1.5,0) node[above left = -2pt]{\tiny $m$} -- (A);
\end{scope}
\end{tikzpicture} ,\ v_{\vec{\varepsilon}} \otimes v_{\vec{\eta}}.
\end{array}
\]
The evaluation map ${\rm ev}_{\varnothing,\varnothing}\colon \mathscr{S}(\Sigma)\rhd \varnothing\to\varnothing$ is the image under $\St^{-1}$ of ${\rm Id}_{\mathscr{S}(\Sigma)}$. We have not constructed~$\St^{-1}$ on all $\mathcal{E}$ above, but only on ${\rm TL}$, and it extends by cocontinuity in Proposition~ \ref{propIntHomFree}. The comodule~$\mathscr{S}(\Sigma)$ decomposes as simples as $\mathscr{S}(\Sigma) = \bigoplus_{\alpha \in \mathcal{B}^+} \Uq\cdot \alpha$ where $\mathcal{B}^+$ is the set of $\mathfrak{o}$-ordered simple stated tangles with only + states, see \cite[Theorem~4.6(b)]{Cos}. These stated tangles with only + states are simply a way to represent canonically a tangle without state information, and again in the following we will see $\alpha$ as a morphism in $\Hom_{{\rm Sk}_\mathcal{V}(\Sigma)}\big(V^{\otimes \abss{\partial \alpha}},\varnothing\big)$, which is actually $\psi_{hv}^{-1}(\alpha)$.

 We denote by $W_\alpha = \Uq\cdot \alpha$ and $g_\alpha\colon W_\alpha \to V^{\otimes \abss{\partial \alpha}}$ the inclusion mapping $\alpha$ to its states $v_{\overrightarrow{+\cdots +}}$. Then:
 \[
 {\rm ev}_{\varnothing,\varnothing} =\St^{-1}({\rm Id}_{\mathscr{S}(\Sigma)}) = \oplus_{\alpha \in \mathcal{B}^+} \St^{-1}(W_\alpha \hookrightarrow \mathscr{S}(\Sigma)) = \oplus_{\alpha \in \mathcal{B}^+}\alpha \circ (g_\alpha \rhd {\rm Id}_\varnothing).
 \]
The composition map $c \colon \mathscr{S}(\Sigma) \otimes \mathscr{S}(\Sigma) \to \mathscr{S}(\Sigma)$ is the image under $St$ of the morphism ${\rm ev}_{\varnothing,\varnothing} \circ ({\rm Id}_{\mathscr{S}(\Sigma)} \rhd {\rm ev}_{\varnothing,\varnothing}) \colon (\mathscr{S}(\Sigma) \otimes \mathscr{S}(\Sigma))\rhd \varnothing \to \mathscr{S}(\Sigma) \rhd \varnothing\to\varnothing$. This morphism is the double sum:
\begin{gather*}
\oplus_{\alpha \in \mathcal{B}^+}\oplus_{\beta \in \mathcal{B}^+}(\alpha \circ (g_\alpha \rhd {\rm Id}_\varnothing))\circ ({\rm Id}_{\mathscr{S}(\Sigma)} \rhd (\beta \circ (g_\beta \rhd {\rm Id}_\varnothing)))
\\ \qquad
{}=\oplus_{\alpha \in \mathcal{B}^+}\oplus_{\beta \in \mathcal{B}^+}\alpha \circ ({\rm Id}_{V^{\otimes \abss{\partial \alpha}}}\rhd \beta) \circ (g_\alpha \rhd g_\beta \rhd {\rm Id}_\varnothing) .
\end{gather*}
The product is obtained by applying $\St$ to this morphism. For $a,b \in \mathscr{S}(\Sigma)$, write $a=X\cdot \alpha$ and $b=Y\cdot \beta$ with $X,Y \in \Uq$ and $\alpha,\beta \in \mathcal{B}^+$. Thus $a$ has states $v_{\vec{\varepsilon}} = X \cdot v_{\overrightarrow{+\cdots +}}$ and $b$ has states $v_{\vec{\eta}}=Y \cdot v_{\overrightarrow{+\cdots +}}$. By naturality:
\begin{align*}
c(a\otimes b) &:= \St_{\mathscr{S}(\Sigma)\otimes \mathscr{S}(\Sigma)}({\rm ev}_{\varnothing,\varnothing} \circ ({\rm Id}_{\mathscr{S}(\Sigma)} \rhd {\rm ev}_{\varnothing,\varnothing}))(a\otimes b)
\\
&= \oplus_{\alpha' \in \mathcal{B}^+}\oplus_{\beta' \in \mathcal{B}^+} \St(\alpha' \circ ({\rm Id}_{V^{\otimes \abss{\partial \alpha'}}}\rhd \beta'))((g_{\alpha'} \otimes g_{\beta'})(a \otimes b))
\\
& = \St(\alpha \circ ({\rm Id}_{V^{\otimes \abss{\partial \alpha}}}\rhd \beta))((g_\alpha \otimes g_\beta)(a \otimes b)) = \St(\alpha \circ ({\rm Id}_{V^{\otimes \abss{\partial \alpha}}}\rhd \beta))(v_{\vec{\varepsilon}} \otimes v_{\vec{\eta}})
\end{align*}
which is precisely the usual product of $a$ and $b$ in $\mathscr{S}(\Sigma)$.
\end{proof}

\section{Multi-edges}\label{SectMultiEdges}
We define internal skein algebras for surfaces with more than one boundary, and possibly left or right boundary edges. We show they are isomorphic to stated skein algebras when $\mathcal{V} = \Oqcomfin$, and re-prove their excision properties using excision properties of skein categories.

\subsection{Right internal skein algebras}
In order to extend the definition of internal skein algebras to surfaces with multiple boundary edges, we would need a notion of left and right action to be able to glue surfaces together, such that internal skein algebras satisfy excision properties, just like stated skein algebras. One subtlety though is that one is only allowed to talk about right $\Oq$-comodules in the context of internal skein algebras, to stay in the category $\mathcal{E}$ (as opposed to the stated skein algebra context).

\begin{Definition}[{\cite[Section~3.2]{Cooke}}]
Let $\Sigma$ be a surface with a chosen boundary interval, which we see at the right of the surface. One can make a construction similar to Definition~\ref{defLeftActSkR2} to have a right action functor $\lhd \colon {\rm Sk}_\mathcal{V}(\Sigma) \otimes {\rm Sk}_\mathcal{V}\big(\mathbb{R}^2\big)\to {\rm Sk}_\mathcal{V}(\Sigma)$. It differs the one $\rhd$ of Definition~\ref{defLeftActSkR2} only by rotating the disk by 180 degrees.

The right moduli algebra $A_\Sigma^R$ of \cite[Section~5.2]{BBJ}, or right internal skein algebra of \cite{Gunningham}, is the internal endomorphism algebra of the empty set in ${\rm Sk}_\mathcal{V}(\Sigma)$ with respect to this ${\rm Sk}_\mathcal{V}\big(\mathbb{R}^2\big)^{\otimes\text{-}{\rm op}}$-module structure.
\end{Definition}
$$
\begin{tikzpicture}[yscale = 0.8, xscale = 0.8]
\fill[gray!10] (4.5,1) .. controls (4.5,0.5) and (4,0.2) .. (3.5,0.2)	.. controls (2.5,0.2) and (1.5,0 ) .. (1,0)	.. controls (0.5,0 ) and (0,0.5 ) .. (0,1)	.. controls (0,1.5 ) and (0.5,2 ) .. (1,2)	.. controls (1.5,2 ) and (2.5,1.8) .. (3.5,1.8) .. controls (4,1.8) and (4.5,1.5) .. (4.5,1) -- cycle ;
\fill[white] (0.6,0.9) .. controls (0.7,0.7) and (1.8,0.7) .. (1.9,0.9) .. controls (1.7,1.2) and (0.8,1.2) .. (0.6,0.9);
\fill[white] (3.5,1) circle(0.5);
\draw[dashed] (3.5,1) circle(0.5);
\draw[thick, red] (240:0.5)++(3.5,1) arc(240:120:0.5);
\draw (0.5,1) .. controls (0.7,0.7) and (1.8,0.7) .. (2,1);
\draw (0.6,0.9) .. controls (0.8,1.2) and (1.7,1.2) .. (1.9,0.9) node[midway, above right]{};
\draw (4.5,1) .. controls (4.5,0.5) and (4,0.2) .. (3.5,0.2)	.. controls (2.5,0.2) and (1.5,0 ) .. (1,0)	.. controls (0.5,0) and (0,0.5 ) .. (0,1)	.. controls (0,1.5 ) and (0.5,2 ) .. (1,2)	.. controls (1.5,2) and (2.5,1.8) .. (3.5,1.8) .. controls (4,1.8) and (4.5,1.5) .. (4.5,1);
\node (sq) at (4.75,1){$\sqcup$}; \draw[->, thick] (4,0) -- ++(0,-1) node[midway, right]{};
\fill[blue!20](240:0.5)++(5.5,1) arc(240:120:0.5) -- ++(0.5,0) arc(60:-60:0.5) -- ++(-0.5,0);
\draw[thick] (240:0.5)++(5.5,1) arc(240:120:0.5);
\draw[thick] (-60:0.5)++(5.5,1) arc(-60:60:0.5);
\begin{scope}[yshift = -3cm, xshift = 1cm]
\fill[gray!10] (4.5,1) .. controls (4.5,0.5) and (4,0.2) .. (3.5,0.2)	.. controls (2.5,0.2) and (1.5,0 ) .. (1,0)	.. controls (0.5,0 ) and (0,0.5 ) .. (0,1)	.. controls (0,1.5 ) and (0.5,2 ) .. (1,2)	.. controls (1.5,2 ) and (2.5,1.8) .. (3.5,1.8) .. controls (4,1.8) and (4.5,1.5) .. (4.5,1) -- cycle ;
\fill[white] (0.6,0.9) .. controls (0.7,0.7) and (1.8,0.7) .. (1.9,0.9) .. controls (1.7,1.2) and (0.8,1.2) .. (0.6,0.9);
\fill[white] (3.5,1) circle(0.5);
\fill[blue!20] (240:0.5)++(3.5,1) arc(240:120:0.5) arc(120:240:1 and 0.5);
\draw[dashed] (3.5,1) circle(0.5);
\draw[thick] (240:0.5)++(3.5,1) arc(240:120:0.5);
\draw[thick] (240:0.5)++(3.5,1) arc(240:120:1 and 0.5);
\draw[thick, red] (240:0.5)++(3.5,1) arc(270:90:0.7 and 0.43);
\draw (0.5,1) .. controls (0.7,0.7) and (1.8,0.7) .. (2,1);
\draw (0.6,0.9) .. controls (0.8,1.2) and (1.7,1.2) .. (1.9,0.9) node[midway, above right]{};
\draw (4.5,1) .. controls (4.5,0.5) and (4,0.2) .. (3.5,0.2)	.. controls (2.5,0.2) and (1.5,0 ) .. (1,0)	.. controls (0.5,0 ) and (0,0.5 ) .. (0,1)	.. controls (0,1.5 ) and (0.5,2 ) .. (1,2)	.. controls (1.5,2 ) and (2.5,1.8) .. (3.5,1.8) .. controls (4,1.8) and (4.5,1.5) .. (4.5,1);
\end{scope}
\end{tikzpicture}
$$
\begin{Definition}
Denote by $rot$ the diffeomorphism of the disk given by $180^\circ$ rotation. By Remark \ref{rmkFunctEmbIsotOfSkCat} it induces an automorphism $(-)^\halft := {\rm Sk}_\mathcal{V}({\rm rot})\colon {\rm Sk}_\mathcal{V}\big(\mathbb{R}^2\big) \to {\rm Sk}_\mathcal{V}\big(\mathbb{R}^2\big)$ which squares to the identity. For $X \in {\rm Sk}_\mathcal{V}\big(\mathbb{R}^2\big)$, we call $X^\halft := {\rm Sk}_\mathcal{V}({\rm rot})(X)$ the half-twisted $X$. One easily checks that $(-)^\halft$ is anti-monoidal, namely $(X \otimes Y)^\halft = Y^\halft \otimes X^\halft$, because $rot$ reverses left-right order.
The diffeomorphism $rot$ is isotopic to the identity by rotating from 0 to $180^\circ$. This isotopy induces a natural isomorphism $\halft\colon {\rm Id}_{{\rm Sk}_\mathcal{V}(\mathbb{R}^2)} \tilde{\Rightarrow} (-)^\halft$ called the half twist, which squares to the twist (the $360^\circ$ rotation).
\end{Definition}

Remember from Remark \ref{rmkFunctEmbIsotOfSkCat} that $\halft$ is given on $n$ blackboard framed points on the real axis by~$n$ parallelly half-twisted vertical strands, namely drawn on the half twisted ribbon \halftwist. Naturality, namely $\halft_W \circ f \circ \halft_V^{-1} = f^\halft$, expresses the fact that one can untwist a top half twist and a bottom anti-half-twist by half twisting the middle. For $f= \halft_V$ one gets $\halft_{V^\halft} = \halft_V^\halft$. Note too that
\[
\halft_{V\otimes W} = (\halft_W \otimes \halft_V) \circ c_{V,W}\qquad\text{by}\quad
\begin{tikzpicture}[scale = 0.5, baseline = 10pt]
\draw[thick] (-0.1,0) .. controls (-0.1,1) and (0.1,1) .. (0.1,2);\draw[thick] (1,0) .. controls (1,1) and (-1,1) .. (-1,2) node[fill, color= white, pos = 0.5, circle, scale = 0.6]{};
\draw[thick] (0.1,0) .. controls (0.1,1) and (-0.1,1) .. (-0.1,2) node[fill, color= white, pos = 0.5, circle, scale = 0.3]{};
\fill[gray!20] (-1,0) .. controls (-1,1) and (1,1) ..(1,2)-- (0.1,2) .. controls (0.1,1) and (-0.1,1) .. (-0.1,0) -- (-1,0);
\fill[gray!70] (1,0) .. controls (1,1) and (-1,1) ..(-1,2)-- (-0.1,2) .. controls (-0.1,1) and (0.1,1) .. (0.1,0) -- (1,0);
\draw (-1,0) .. controls (-1,1) and (1,1) .. (1,2) ;
\end{tikzpicture} = \begin{tikzpicture}[scale = 0.5, baseline = 10pt]
\begin{scope}[yscale = 0.5]
\draw[thick] (1,0) .. controls (1,1) and (-0.1,1) ..(-0.1,2)node[fill, color= white, pos = 0.5, circle, scale = 0.3]{};
\draw[thick] (-1,2).. controls (-1,1) and (0.1,1) .. (0.1,0)node[fill, color= white, pos = 0.5, circle, scale = 0.3]{};
\fill[gray!70] (1,0) .. controls (1,1) and (-0.1,1) ..(-0.1,2)-- (-1,2) .. controls (-1,1) and (0.1,1) .. (0.1,0) -- (1,0);
\draw[thick] (-1,0) .. controls (-1,1) and (0.1,1) .. (0.1,2);
\draw[thick] (1,2) .. controls (1,1) and (-0.1,1) .. (-0.1,0);
\fill[gray!20] (-1,0) .. controls (-1,1) and (0.1,1) .. (0.1,2)-- (1,2) .. controls (1,1) and (-0.1,1) .. (-0.1,0) -- (-1,0);
\end{scope}
\begin{scope}[yshift = 0.99cm, xshift = 0.1cm, xscale = 0.9, yscale = 0.5]
\draw[thick] (1,0) .. controls (1,1) and (0,1) .. (0,2) node[fill, color= white, pos = 0.5, circle, scale = 0.5]{};
\fill[gray!20] (0,0) .. controls (0,1) and (1,1) ..(1,2)-- (0,2) .. controls (0,1) and (1,1) .. (1,0) -- (0,0);
\draw (0,0) .. controls (0,1) and (1,1) .. (1,2);
\end{scope}
\begin{scope}[yshift = 0.99cm, xshift = -1cm, xscale = 0.9, yscale = 0.5]
\draw[thick] (1,0) .. controls (1,1) and (0,1) .. (0,2) node[fill, color= white, pos = 0.5, circle, scale = 0.5]{};
\fill[gray!70] (0,0) .. controls (0,1) and (1,1) ..(1,2)-- (0,2) .. controls (0,1) and (1,1) .. (1,0) -- (0,0);
\draw (0,0) .. controls (0,1) and (1,1) .. (1,2);
\end{scope}
\end{tikzpicture}\,.
\]
Now, one simply has $\rhd = \lhd \circ \operatorname{fl} \circ ((-)^\halft\otimes {\rm Id}_{{\rm Sk}_\mathcal{V}(\Sigma)})$ and $\lhd \circ \operatorname{fl} = \rhd \circ ((-)^\halft\otimes {\rm Id}_{{\rm Sk}_\mathcal{V}(\Sigma)})$, where~$\operatorname{fl}$ is the flip of tensors. And indeed a left action turns into a right action under an anti-monoidal functor. Moreover, the natural isomorphism $\halft \colon {\rm Id}_{{\rm Sk}_\mathcal{V}(\mathbb{R}^2)} \tilde{\Rightarrow} (-)^\halft$ gives a natural isomorphism $\halft \rhd - := \rhd \circ (\halft\otimes {\rm Id}_{{\rm Sk}_\mathcal{V}(\Sigma)})\colon \rhd \tilde{\Rightarrow} \lhd\circ \operatorname{fl}$.

\begin{Remark}\label{rmkASigmaRnaive}
One can give an explicit relation between left and right internal skein algebras. Internal skein algebras are only defined up to isomorphism, so this description is just one choice. Actually, we will give another one below. Consider the internal skein algebra~$A_\Sigma$ defined as in Section~\ref{SectASigma} by seeing the red arc at the left, with a natural isomorphism $\sigma \colon \Hom_{{\rm Sk}_\mathcal{V}(\Sigma)}(-\rhd \varnothing, \varnothing) \tilde{\Rightarrow} \Hom_{\mathcal{E}}(-, A_\Sigma)$. Then one has natural isomorphisms
\begin{gather*}
\Hom_{{\rm Sk}_\mathcal{V}(\Sigma)}(\varnothing \lhd V,\varnothing) = \Hom_{{\rm Sk}_\mathcal{V}(\Sigma)}\!\big(V^\halft\rhd\varnothing,\varnothing\big) \overset{\sigma_{V^\halft}}\to \Hom_\mathcal{E}\!\big(V^\halft,A_\Sigma\big) \overset{(-)^\halft}\to \Hom_\mathcal{E}\!\big(V,A_\Sigma^\halft\big).
\end{gather*}
Namely, $A_\Sigma^R \simeq A_\Sigma^\halft$ as object of ${\rm SK}_\mathcal{V}\big(\mathbb{R}^2\big)$, but with natural isomorphism $(\sigma_{(-)^\halft})^\halft$.
\end{Remark}

\begin{Remark}
Note that the half twist is defined on ${\rm Sk}_\mathcal{V}\big(\mathbb{R}^2\big)$ and not on $\mathcal{V}$, which is its full subcategory of objects with only one coloured point, with blackboard framing. The half twist will map such an object to a point with anti-blackboard framing, so it does not stabilise $\mathcal{V}$. The equivalence of categories ${\rm Sk}_\mathcal{V}\big(\mathbb{R}^2\big)\simeq \mathcal{V}$ preserves properties of the half twist only up to natural isomorphism, and depends on the choice of a quasi-inverse of the inclusion. The one described in Remark \ref{rmkEqCatSkR2VV} will map the half twist on ${\rm Sk}_\mathcal{V}\big(\mathbb{R}^2\big)$ to the identity on $\mathcal{V}$, but if we had chosen to restore the framing counter-clockwise it would map it to the full twist. In $\mathcal{V} = \Oq\text{-}{\rm comod}^{\rm fin}$ an actual half twist exists, see~\cite{SnyderTingley}, and we will study it below. However, in general we will prefer a construction that uses the half twist only on the disk inserted in ${\rm Sk}_\mathcal{V}(\Sigma)$, where it is well defined, and not on $\mathcal{V}$. In particular, the above description $A_\Sigma^R \simeq A_\Sigma^\halft$ holds in ${\rm SK}_\mathcal{V}\big(\mathbb{R}^2\big)$ but has an unclear meaning in $\mathcal{E}$ (it depends on the choice of some quasi-inverses).
\end{Remark}
\begin{Definition}
There is another explicit relation between left and right internal skein algebras using $\halft\rhd -$ to relate the right action to the left one. Set $\sigma^R =\sigma \circ (\halft \rhd \varnothing)\colon \Hom_{{\rm Sk}_\mathcal{V}(\Sigma)}(\varnothing\lhd-, \varnothing) \allowbreak\tilde{\Rightarrow} \Hom_{\mathcal{E}}(-, A_\Sigma)$. What we mean is that for $V \in \mathcal{V}$ and $\alpha \in \Hom_{{\rm Sk}_\mathcal{V}(\Sigma)}(\varnothing \lhd V, \varnothing)$, we have $\alpha \circ \allowbreak (\halft_V\rhd {\rm Id}_\varnothing)\in \Hom_{{\rm Sk}_\mathcal{V}(\Sigma)}(V\rhd \varnothing, \varnothing)$, and $\sigma^R_V(\alpha) := \sigma_V(\alpha \circ (\halft_V \rhd {\rm Id}_\varnothing))$.
\end{Definition}
\begin{Proposition}\label{propASigRightBrOpp}
The natural isomorphism $\sigma^R$ exhibits $A_\Sigma$ as the right internal skein algebra. The algebra structure $m^R\colon A_\Sigma \otimes A_\Sigma \to A_\Sigma$ inherited from this right internal endomorphism object structure differs from the left one $m\colon A_\Sigma \otimes A_\Sigma \to A_\Sigma$ by a braiding: $m^R = m \circ c_{A_\Sigma \otimes A_\Sigma}$.

Namely, the right internal skein algebras introduced in {\rm \cite{Gunningham}} and {\rm \cite{BBJ}} are the braided opposites of the left ones introduced in Section~$\ref{SectASigma}$.
\end{Proposition}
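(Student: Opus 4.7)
The first assertion that $\sigma^R$ exhibits $A_\Sigma$ as the right internal skein algebra is essentially formal. I would verify directly that $\sigma^R$ is a natural isomorphism by factoring it as precomposition with $\halft_V \rhd {\rm Id}_\varnothing$, which is natural in $V$ because the half twist $\halft\colon {\rm Id}\Rightarrow(-)^\halft$ is natural, followed by the natural isomorphism $\sigma$. Invertibility of both factors gives invertibility of $\sigma^R$. By the universal property recalled in Definition~\ref{algStrOnIntHom}, $(A_\Sigma,\sigma^R)$ is then automatically an internal endomorphism object of $\varnothing$ for the right $\mathcal{E}$-module structure, so $A_\Sigma^R = A_\Sigma$ as objects of $\mathcal{E}$.

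For the comparison of algebra structures, write $A := A_\Sigma$ for brevity. The right evaluation ${\rm ev}^R\colon \varnothing \lhd A \to \varnothing$ is characterized by $\sigma^R_A({\rm ev}^R) = {\rm Id}_A$, which by the definition of $\sigma^R$ and injectivity of $\sigma_A$ unfolds immediately to
\[
{\rm ev}^R \circ (\halft_A \rhd {\rm Id}_\varnothing) = {\rm ev}.
\]
The right multiplication is then $m^R = \sigma^R_{A\otimes A}\bigl({\rm ev}^R \circ ({\rm ev}^R \lhd {\rm Id}_A)\bigr)$, which by the definition of $\sigma^R$ equals $\sigma_{A\otimes A}\bigl({\rm ev}^R \circ ({\rm ev}^R \lhd {\rm Id}_A) \circ (\halft_{A\otimes A} \rhd {\rm Id}_\varnothing)\bigr)$. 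The crux of the proof is to rewrite $\halft_{A \otimes A}$ using the identity $\halft_{A\otimes A} = (\halft_A \otimes \halft_A) \circ c_{A,A}$ recalled above, then to propagate the resulting $(\halft_A \otimes \halft_A) \rhd {\rm Id}_\varnothing$ through the iterated action identification $(\varnothing \lhd A) \lhd A \simeq (A^\halft \otimes A^\halft) \rhd \varnothing$ so that it splits into two individual $(\halft_A \rhd {\rm Id})$'s, one in front of each ${\rm ev}^R$. Applying the relation ${\rm ev}^R \circ (\halft_A \rhd {\rm Id}_\varnothing) = {\rm ev}$ twice converts both ${\rm ev}^R$'s into ${\rm ev}$'s and yields
\[
m^R = \sigma_{A\otimes A}\bigl({\rm ev}\circ ({\rm Id}_A \rhd {\rm ev}) \circ (c_{A,A} \rhd {\rm Id}_\varnothing)\bigr).
\]
By naturality of $\sigma$ in its argument applied to $c_{A,A}\colon A\otimes A \to A\otimes A$, this equals $\sigma_{A\otimes A}({\rm ev} \circ ({\rm Id}_A \rhd {\rm ev})) \circ c_{A,A} = m \circ c_{A,A}$, which is precisely the braided opposite product.

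The main technical obstacle is the bookkeeping in the middle step. Specifically, one must verify that under the comparison $\lhd \circ \operatorname{fl} = \rhd \circ ((-)^\halft \otimes {\rm Id})$, the morphism $(\halft_A \otimes \halft_A) \rhd {\rm Id}_\varnothing$ on $(A^\halft \otimes A^\halft) \rhd \varnothing$ really does split componentwise to act in front of each of the two ${\rm ev}^R$'s; this requires tracking the anti-monoidality $(V \otimes W)^\halft = W^\halft \otimes V^\halft$ against the module-category associator identifying $(\varnothing \lhd A)\lhd A$ with $\varnothing \lhd (A \otimes A)$. Once this is done, the braiding $c_{A,A}$ emerges exactly from the discrepancy between $\halft$ on a tensor product and the tensor of the two individual half twists, which is the geometric origin of the braided-opposite product.
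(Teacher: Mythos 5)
Your proof is correct and takes essentially the same route as the paper. The paper's proof also shows naturality of $\sigma^R$ by a direct computation with $\halft \rhd -$, derives ${\rm ev}^R = {\rm ev} \circ \bigl(\halft_{A_\Sigma}^{-1} \rhd {\rm Id}_\varnothing\bigr)$, and then expands $m^R$ as $\sigma_{A_\Sigma\otimes A_\Sigma}\bigl({\rm ev}\circ({\rm Id}_{A_\Sigma}\rhd{\rm ev})\circ(\halft_{A_\Sigma}^{-1}\otimes\halft_{A_\Sigma}^{-1}\rhd{\rm Id}_\varnothing)\circ(\halft_{A_\Sigma\otimes A_\Sigma}\rhd{\rm Id}_\varnothing)\bigr)$, collapsing $(\halft_{A_\Sigma}^{-1}\otimes\halft_{A_\Sigma}^{-1})\circ\halft_{A_\Sigma\otimes A_\Sigma}$ to $c_{A_\Sigma\otimes A_\Sigma}$ exactly as you do; the bookkeeping you flag as the main obstacle is handled there at the same implicit level, since it boils down to functoriality of the module action and the module coherences. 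The one thing you omit is the check that the units of $m$ and $m^R$ agree; the paper closes with $\sigma^R_{1_\mathcal{V}}({\rm Id}_\varnothing)=\sigma_{1_\mathcal{V}}({\rm Id}_\varnothing\circ(\halft_{1_\mathcal{V}}\rhd{\rm Id}_\varnothing))=\sigma_{1_\mathcal{V}}({\rm Id}_\varnothing)$, a triviality you should include since the claim is about algebra, not just multiplication, structures.
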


\begin{proof}
The $\sigma^R_V$'s form a natural isomorphism: let $f\in\Hom_\mathcal{V}(V,W)$ and $\alpha\in \Hom_{{\rm Sk}_\mathcal{V}(\Sigma)}(\varnothing \lhd W, \varnothing)$, one checks:
\begin{align*}
\sigma_V^R(\alpha \circ ({\rm Id}_\varnothing \lhd f)) &= \sigma_V^R\big(\alpha \circ \big(f^\halft \rhd {\rm Id}_\varnothing\big)\big) := \sigma_V\big(\alpha \circ \big(f^\halft \rhd {\rm Id}_\varnothing\big) \circ (\halft_V \rhd {\rm Id}_\varnothing)\big)
\\
&= \sigma_V(\alpha \circ (\halft_W \rhd {\rm Id}_\varnothing) \circ (f \rhd {\rm Id}_\varnothing)) = \sigma_W^R(\alpha) \circ f
\end{align*}
by using naturality of $\sigma$ and of $\halft\rhd -$. For $f \in \Hom_\mathcal{E}(V,A_\Sigma)$ one has $\big(\sigma^R_V\big)^{-1}(f) = \sigma_V^{-1}(f) \circ (\halft_V^{-1} \rhd {\rm Id}_\varnothing)$.
Therefore $\big(A_\Sigma,\sigma^R\big)$ is the internal endomorphism object of the empty set in ${\rm SK}_\mathcal{V}(\Sigma)$ with respect to the right ${\rm SK}_\mathcal{V}\big(\mathbb{R}^2\big)$-action by
 Proposition~\ref{propIntHomFree}. We now study its algebra structure.
The evaluation map is
\begin{align*}
{\rm ev}^R :&= \big(\sigma^R_{A_\Sigma}\big)^{-1}({\rm Id}_{A_\Sigma}) = \sigma_{A_\Sigma}^{-1}({\rm Id}_{A_\Sigma}) \circ \big(\halft_{A_\Sigma}^{-1} \rhd {\rm Id}_\varnothing\big)
\\
&={\rm ev} \circ \big(\halft_{A_\Sigma}^{-1} \rhd {\rm Id}_\varnothing\big) \in \Hom_{{\rm SK}_\mathcal{V}(\Sigma)}(\varnothing \lhd A_\Sigma, \varnothing).
\end{align*}
The product, or composition map, is:
\begin{align*}
m^R :&= \sigma^R_{A_\Sigma \otimes A_\Sigma}\big({\rm ev}^R\circ \big({\rm ev}^R \lhd {\rm Id}_{A_\Sigma}\big)\big)
\\
&= \sigma_{A_\Sigma \otimes A_\Sigma}\big({\rm ev} \circ \big(\halft_{A_\Sigma}^{-1} \rhd {\rm Id}_\varnothing\big) \circ {\rm Id}_{A_\Sigma^\halft}\rhd\big({\rm ev} \circ \big(\halft_{A_\Sigma}^{-1} \rhd {\rm Id}_\varnothing\big)\big)\circ (\halft_{A_\Sigma \otimes A_\Sigma}\rhd {\rm Id}_\varnothing)\big)
\\
 &=\sigma_{A_\Sigma \otimes A_\Sigma}\big({\rm ev} \circ ({\rm Id}_{A_\Sigma}\rhd {\rm ev}) \circ \big(\halft_{A_\Sigma}^{-1}\otimes\halft_{A_\Sigma}^{-1} \rhd {\rm Id}_\varnothing\big)\circ (\halft_{A_\Sigma \otimes A_\Sigma}\rhd {\rm Id}_\varnothing)\big)
\\
&=\sigma_{A_\Sigma \otimes A_\Sigma}({\rm ev} \circ ({\rm Id}_{A_\Sigma}\rhd {\rm ev}) \circ (c_{A_\Sigma \otimes A_\Sigma} \rhd {\rm Id}_\varnothing))
= m \circ c_{A_\Sigma \otimes A_\Sigma}.
\end{align*}
The units $\sigma^R_{1_\mathcal{V}}({\rm Id}_\varnothing) := \sigma_{1_\mathcal{V}}({\rm Id}_\varnothing \circ (\halft_{1_\mathcal{V}} \rhd {\rm Id}_\varnothing))= \sigma_{1_\mathcal{V}}({\rm Id}_\varnothing)$ coincide.
\end{proof}

\begin{Remark}\label{rmkLinkTwoAsigmaR}
In ${\rm SK}_\mathcal{V}\big(\mathbb{R}^2\big)$, one has $\sigma^R_V(\alpha) := \sigma_V(\alpha \circ (\halft_V \rhd {\rm Id}_\varnothing)) = \sigma_{V^\halft}(\alpha) \circ \halft_V$. If one post-composes with $\halft_{A_\Sigma^\halft}^{-1}$ this is exactly $(\sigma_{V^\halft})^\halft$. Hence the two descriptions of right internal skein algebras we gave, $\big(A_\Sigma^\halft,{\sigma_{(-)^\halft}}^\halft\big)$ in Remark \ref{rmkASigmaRnaive} and $(A_\Sigma,\sigma \circ (\halft\rhd-))$ in Proposition~\ref{propASigRightBrOpp}, are isomorphic (as they should) by $\halft_{A_\Sigma^\halft}^{-1} \colon A_\Sigma \to A_\Sigma^\halft$. The product on $A_\Sigma^\halft$ is given by
\begin{align*}
\halft_{A_\Sigma^\halft}^{-1} \circ m^R \circ \halft_{A_\Sigma^\halft} \otimes \halft_{A_\Sigma^\halft} & = \halft_{A_\Sigma^\halft}^{-1} \circ m \circ c_{A_\Sigma \otimes A_\Sigma} \circ \halft_{A_\Sigma^\halft} \otimes \halft_{A_\Sigma^\halft} \\
& = \halft_{A_\Sigma^\halft}^{-1} \circ m \circ \halft_{A_\Sigma \otimes A_\Sigma} = m^\halft.
\end{align*}
\end{Remark}

\subsection{Multi-edges internal skein algebras} We extend the definition of internal skein algebras to the multi-edge context, and define them as internal endomorphism algebras of the empty set in the skein category with multiple boundary actions, as expected. We check that they still describe skein categories well-enough.
\begin{Definition}\label{defMultiEdgeASigma}
Let $\mathfrak{S}$ be a marked surface with $n$ boundary edges labelled either as left (numbe\-red~1 to $k$) or as right (numbered $k+1$ to $n$) edges. Each left (resp. right) boundary edge induces a left ${\rm Sk}_\mathcal{V}\big(\mathbb{R}^2\big)$-action (resp.~left ${\rm Sk}_\mathcal{V}\big(\mathbb{R}^2\big)^{\otimes\text{-}{\rm op}}$-action) on ${\rm Sk}_\mathcal{V}(\mathfrak{S})$, which all commute, so one has a left ${\rm Sk}_\mathcal{V}\big(\mathbb{R}^2\big)^{\otimes k}\otimes \big({\rm Sk}_\mathcal{V}\big(\mathbb{R}^2\big)^{\otimes\text{-}{\rm op}}\big)^{\otimes n-k}$-action $\bmrhd$ on ${\rm Sk}_\mathcal{V}(\mathfrak{S})$. We denote its components by $\rhd_i$ or $\lhd_i\circ \operatorname{fl}\colon {\rm Sk}_\mathcal{V}\big(\mathbb{R}^2\big) \otimes {\rm Sk}_\mathcal{V}(\mathfrak{S})\to {\rm Sk}_\mathcal{V}(\mathfrak{S})$, $1\leq i\leq n$, though we forget the indices when they are understood. When there are missing components they will be implicitly filled by $1_\mathcal{V}$. We may also write $(V_1 , \dots , V_k){\rhd} \varnothing \lhd (V_{k+1},\dots,V_n)$ instead of $(V_1 , \dots , V_n)\bmrhd \varnothing$.

The internal skein algebra $A_\mathfrak{S}$ is the internal endomorphism object of the empty set in ${\rm Sk}_\mathcal{V}(\mathfrak{S})$ with respect to the ${\rm Sk}_\mathcal{V}\big(\mathbb{R}^2\big)^{\otimes k}\otimes ({\rm Sk}_\mathcal{V}\big(\mathbb{R}^2\big)^{\otimes\text{-}{\rm op}})^{\otimes n-k}$-action. It is an algebra object in $\mathcal{E}^{\boxtimes n} \simeq \Free\big(\mathcal{V}^{\otimes n}\big) \simeq \Free\big({\rm Sk}_\mathcal{V}\big(\mathbb{R}^2\big)^{\otimes n}\big)$, where $\mathcal{E}^{\boxtimes n}$ has opposite tensor products on the last $n-k$ components. We denote this monoidal structure by $\bmotimes$. We denote by $\otimes_i$ the tensor product on coordinate $i$, and adopt the same convention as with $\bmrhd$ filling with missing $1_\mathcal{V}$'s and eventually writing $(V_1 , \dots , V_k)\otimes W \otimes (V_{k+1},\dots,V_n)$ instead of $(V_1 , \dots , V_n)\bmotimes W$.
Explicitly, $A_\mathfrak{S}$ comes equipped with natural isomorphisms $\sigma_{\vec{V}}\colon \Hom_{{\rm Sk}_\mathcal{V}(\mathfrak{S})}(\vec{V} \bmrhd \varnothing, \varnothing) \tilde{\Rightarrow} \Hom_{\mathcal{E}^{\boxtimes n}}(\vec{V} , A_\mathfrak{S})$ for \mbox{$\vec{V} = (V_1,\dots,V_n) \in \mathcal{V}^{\otimes n}$}, between functors $(\mathcal{V}^{\rm op})^{\otimes n}\allowbreak\to {\rm Vect}_k$.
\end{Definition}

\begin{Remark}
Objects and morphisms of tensor product of categories (e.g., $\mathcal{V}^{\otimes n}$) are sometimes denoted by tensor product of objects and morphisms (e.g., $V_1 \otimes \cdots \otimes V_n$ and $f_1 \otimes \cdots \otimes f_n$). To avoid confusion with the monoidal structures on the categories (e.g., $\otimes$ on $\mathcal{V}$), we prefer to use commas (e.g., $(V_1,\dots,V_n)$ and $(f_1,\dots,f_n)$).
\end{Remark}

\begin{Remark}
A legitimate worry about this extended definition of internal skein algebras is that when one has multiple boundary actions on a same connected component one cannot keep track of where did an object come from. Namely for $V \in \mathcal{V}$ and $c_1$, $c_2$ two boundary edges on a same connected component of $\mathfrak{S}$, one has an isomorphism $V\rhd_1\varnothing \to V\rhd_2\varnothing$ which sounds surprising because $(V,1_\mathcal{V})$ and $(1_\mathcal{V},V)$ are hardly isomorphic in $\mathcal{V}^{\otimes 2}$. The internal skein algebra actually keeps track of such identifications, and one has an isomorphism $(V,1_\mathcal{V})\bmotimes A_\mathfrak{S} \to(1_\mathcal{V},V)\bmotimes A_\mathfrak{S}$.
\end{Remark}

Note that the above definition makes sense for $n=0$, where we want endomorphisms of the empty set in ${\rm Sk}_\mathcal{V}(\mathfrak{S})$ to be described by morphisms $k\to A_\mathfrak{S}$ in $\mathcal{E}^{\boxtimes 0} = {\rm Vect}_k$. For $\mathcal{V}=\Oqcomfin$ one gets $A_\mathfrak{S} = \mathring{\mathscr{S}}(\mathfrak{S})$ is the usual skein algebra. It is no longer true, however, that all objects of ${\rm Sk}_\mathcal{V}(\mathfrak{S})$ are described as modules over $A_\mathfrak{S}$, because the (trivial) action of $\mathcal{E}^{\boxtimes 0}$ on $\varnothing$ is no longer dominant.

\begin{Definition}
Let $\mathfrak{S}$ be a marked surface and $\mathcal{V}$ a ribbon category. The reduced skein category ${\rm Sk}_\mathcal{V}^{\rm red}(\mathfrak{S})$ is the full subcategory of ${\rm Sk}_\mathcal{V}(\mathfrak{S})$ spanned by objects of the form $\vec{V}\bmrhd \varnothing$, namely in the image of the action of ${\rm Sk}_\mathcal{V}\big(\mathbb{R}^2\big)^{\otimes n}$ on the empty set.
It is equivalent to ${\rm Sk}_\mathcal{V}(\mathfrak{S})$ if~$\mathfrak{S}$ has at least one boundary edge per connected component.
\end{Definition}

\begin{Remark}\label{rmkTopPtsMultiEdge}
One can still apply Remark \ref{rmkTopPtsAndASigma}, slightly modified because for right edges the left adjoint of $-\lhd V$ is given by acting by the left dual $-\lhd {}^*V$. For $\vec{V}=(V_1 ,\dots , V_n) \in \mathcal{V}^{\otimes n}$ write $\vec{V}^* = ({V_i}^*$ or $^*V_i)_{1\leq i\leq n}$ with right duals for left edges and left duals for right edges. Then, as the notation suggests, $\vec{V}^*$ is the left dual of $\vec{V}$ in $\mathcal{V}^{\otimes n}$ for the monoidal structure $\bmotimes$, and $\vec{V}\bmrhd -$ has left adjoint $\vec{V}^*\bmrhd -$. For $\vec{W}\in\mathcal{V}^{\otimes n}$, one has natural isomorphisms:
\begin{align*}
\Hom_{{\rm Sk}_\mathcal{V}(\mathfrak{S})}\big(\vec{W}\bmrhd \varnothing, \vec{V} \bmrhd \varnothing\big)
&\quad \simeq \quad\Hom_{{\rm Sk}_\mathcal{V}(\mathfrak{S})}\big(\vec{V}^*\bmrhd (\vec{W}\bmrhd \varnothing), \varnothing\big)
\\
& \overset{\sigma_{\vec{V}^* \bmotimes \vec{W}}}\to \Hom_{\mathcal{E}^{\boxtimes n}}\big(\vec{V}^* \bmotimes \vec{W} , A_\mathfrak{S}\big)\ \simeq\ \Hom_{\mathcal{E}^{\boxtimes n}}\big( \vec{W} ,\vec{V} \bmotimes A_\mathfrak{S}\big).
\end{align*}
\end{Remark}

\begin{Theorem}\label{thmReducedSkModASig}
The free cocompletion of the reduced skein category ${\rm SK}_\mathcal{V}^{\rm red}(\mathfrak{S})$ is equivalent to the category of right $A_\mathfrak{S}$-modules in $\mathcal{E}^{\boxtimes n}$, with monoidal structure $\bmotimes$, by
\begin{align*}
{\rm SK}_\mathcal{V}^{\rm red}(\mathfrak{S}) &\ \tilde{\to}\ {\rm mod}_\mathcal{E}-A_\mathfrak{S},
\\
M &\mapsto \undHom(\varnothing,M).
\end{align*}
For $M$ of the form $\vec{V}\bmrhd\varnothing$, one has $\undHom (\varnothing,\vec{V}\bmrhd \varnothing)\simeq\vec{V}\bmotimes A_\mathfrak{S}$.
\end{Theorem}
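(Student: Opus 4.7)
The plan is to adapt the proof of Theorem~\ref{thASigmaMods} (which is \cite[Theorem~5.14]{BBJ}) to the multi-edge context, applying the Barr--Beck reconstruction reformulated in \cite[Theorem~4.6]{BBJ}, but with the action category $\mathcal{E}^{\boxtimes n}$ equipped with the monoidal structure $\bmotimes$ that is reversed on right-edge coordinates.

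First I would establish the explicit description $\undHom(\varnothing,\vec{V}\bmrhd\varnothing) \simeq \vec{V}\bmotimes A_\mathfrak{S}$ on objects coming from the action. This is essentially a unpacking of Remark~\ref{rmkTopPtsMultiEdge}: using that $\vec{V}\bmrhd-$ is left adjoint to $\vec{V}^*\bmrhd-$ in $\mathcal{V}^{\otimes n}$ for the monoidal structure $\bmotimes$ (right edges contributing left duals instead of right duals), one gets a chain of natural isomorphisms $\Hom_{{\rm SK}_\mathcal{V}(\mathfrak{S})}(\vec{W}\bmrhd\varnothing,\vec{V}\bmrhd\varnothing) \simeq \Hom_{\mathcal{E}^{\boxtimes n}}(\vec{W},\vec{V}\bmotimes A_\mathfrak{S})$ natural in $\vec{W}$, which identifies $\vec{V}\bmotimes A_\mathfrak{S}$ with the claimed internal $\Hom$.

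Next I would check the two hypotheses of \cite[Theorem~4.6]{BBJ} for the pointed $\mathcal{E}^{\boxtimes n}$-module category $({\rm SK}_\mathcal{V}^{\rm red}(\mathfrak{S}),\varnothing)$. For projectivity, one needs the functor ${\rm act}_\varnothing^R=\undHom(\varnothing,-)\colon {\rm SK}_\mathcal{V}^{\rm red}(\mathfrak{S})\to \mathcal{E}^{\boxtimes n}$ to be cocontinuous; this follows because $-\bmrhd\varnothing\colon \mathcal{V}^{\otimes n}\to {\rm Sk}_\mathcal{V}^{\rm red}(\mathfrak{S})\subseteq {\rm SK}_\mathcal{V}^{\rm red}(\mathfrak{S})$ lands in compact projectives (each $\vec{V}\bmrhd\varnothing$ is an object of the small skein category, hence compact projective in its free cocompletion by Remark~\ref{rmkFreeIsPresheaves}), exactly as in Theorem~\ref{thASigmaMods}. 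For generativity, by \cite[Remark 4.9]{BBJ} it suffices that $-\bmrhd\varnothing\colon \mathcal{E}^{\boxtimes n}\to {\rm SK}_\mathcal{V}^{\rm red}(\mathfrak{S})$ be dominant; but by the very definition of the reduced skein category this functor is essentially surjective on a set of generators, so it is dominant.

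The conclusion of Barr--Beck then gives an equivalence between ${\rm SK}_\mathcal{V}^{\rm red}(\mathfrak{S})$ and modules over $\undEnd(\varnothing)=A_\mathfrak{S}$ in $\mathcal{E}^{\boxtimes n}$. One gets right modules rather than left modules because we work with left $\mathcal{E}^{\boxtimes n}$-actions, see \cite[Remark 4.7]{BBJ}. The main thing to be careful about---and the step I expect to require the most attention---is tracking the opposite monoidal structure $\bmotimes$ on right-edge coordinates: the left action by a right edge is really a left action of $\mathcal{E}^{\otimes\text{-}{\rm op}}$, so the resulting algebra object lives in $\mathcal{E}^{\boxtimes n}$ with the twisted tensor product $\bmotimes$, and accordingly the braided-opposite algebra structure from Proposition~\ref{propASigRightBrOpp} appears on the corresponding coordinates of $A_\mathfrak{S}$. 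Once the correct monoidal conventions are fixed, everything else is a direct application of the single-edge argument.
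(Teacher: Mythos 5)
Your proposal is correct and follows essentially the same route as the paper: both apply the Barr--Beck reconstruction of \cite[Theorem~4.6]{BBJ} to the pointed module category $({\rm SK}_\mathcal{V}^{\rm red}(\mathfrak{S}),\varnothing)$, verify projectivity by cocontinuity of ${\rm act}_\varnothing^R$ and generativity by dominance of $-\bmrhd\varnothing$ (which holds by definition of the reduced skein category), and deduce the explicit description $\undHom(\varnothing,\vec{V}\bmrhd\varnothing)\simeq\vec{V}\bmotimes A_\mathfrak{S}$ from Remark~\ref{rmkTopPtsMultiEdge}. Your extra paragraph flagging the $\bmotimes$ conventions and the braided-opposite structure on right-edge coordinates is careful bookkeeping that the paper leaves mostly implicit, but it does not change the argument.
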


\begin{proof}
We follow the proof of Theorem~\ref{thASigmaMods}, namely we use \cite[Theorem~4.6]{BBJ} on $\varnothing \in {\rm SK}_\mathcal{V}^{\rm red}(\mathfrak{S})$. It~is projective by the same arguments and $-\rhd \varnothing\colon \mathcal{E}^{\boxtimes n} \to {\rm SK}_\mathcal{V}^{\rm red}(\mathfrak{S})$ is dominant by construction so ${\rm act}_\varnothing^R$ is faithful and $\varnothing$ is a generator. For the last statement, one has $\undHom(\varnothing,\vec{V}\bmrhd\varnothing) \simeq \vec{V} \bmotimes A_\mathfrak{S}$ by Remark~\ref{rmkTopPtsMultiEdge}.
\end{proof}

\subsection{Relation for multiple left edges} Let $\mathcal{V} = \Oq\text{-}{\rm comod}^{\rm fin}$, $\mathcal{E} = \Oq\text{-}{\rm comod} \simeq \Free(\mathcal{V})$ and $\mathfrak{S}$ be a marked surface with all boundary edges labelled left. We show that $A_\mathfrak{S} \simeq \mathscr{S}(\mathfrak{S})$ as $\Oq^{\otimes n}$-comodule-algebras.
\begin{Proposition} There is an equivalence of categories $\mathcal{E}^{\boxtimes n}\simeq \Oq^{\otimes n}\text{-}{\rm comod}$.
\end{Proposition}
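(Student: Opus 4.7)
The plan is to assemble three ingredients: symmetric monoidality of the free cocompletion functor, a description of finite-dimensional comodules over a tensor product of cosemisimple Hopf algebras, and an iterated application of Proposition~\ref{propOqFree}.

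First, since $\Free \colon {\rm Cat}_k \to {\rm Cocomp}_k$ is symmetric monoidal (as noted just before Proposition~\ref{propIntHomFree}), I would write
\[
\mathcal{E}^{\boxtimes n} = \Free(\mathcal{V})^{\boxtimes n} \simeq \Free\big(\mathcal{V}^{\otimes n}\big).
\]
This reduces the problem to identifying $\Free\big(\mathcal{V}^{\otimes n}\big)$ with $\Oq^{\otimes n}\text{-}{\rm comod}$.

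Second, I would identify $\mathcal{V}^{\otimes n}$ with $\Oq^{\otimes n}\text{-}{\rm comod}^{\rm fin}$. Recall from Proposition~\ref{propOqSemiSimple} that the simples of $\Oqcom$ are the $V_m$, all finite-dimensional. At generic $q$, any $\big(\Oq^{\otimes n}\big)$-comodule structure on a finite-dimensional vector space $W$ amounts to $n$ commuting $\Oq$-comodule structures; decomposing $W$ as a direct sum of simples under the first factor, then iterating, one sees that the simples of $\Oq^{\otimes n}\text{-}{\rm comod}^{\rm fin}$ are exactly the external tensor products $V_{m_1}\boxtimes\cdots\boxtimes V_{m_n}$, and that this category is semisimple. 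These simples and the formula for morphism spaces between their direct sums match, by the definition of the Kelly tensor product recalled in Section~\ref{SectPreliminaries}, the simples and $\Hom$-spaces in $\mathcal{V}^{\otimes n}$, producing the desired equivalence $\mathcal{V}^{\otimes n}\simeq \Oq^{\otimes n}\text{-}{\rm comod}^{\rm fin}$.

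Third, exactly the argument of Proposition~\ref{propOqFree} applies to $\Oq^{\otimes n}$ in place of $\Oq$: semisimplicity with finite-dimensional simples forces every object to be a direct sum of simples, and any colimit in the finite part is a direct sum, so the inclusion $\Oq^{\otimes n}\text{-}{\rm comod}^{\rm fin}\hookrightarrow \Oq^{\otimes n}\text{-}{\rm comod}$ is a free cocompletion. Combining the three steps yields
\[
\mathcal{E}^{\boxtimes n}\simeq \Free\big(\mathcal{V}^{\otimes n}\big)\simeq \Free\big(\Oq^{\otimes n}\text{-}{\rm comod}^{\rm fin}\big)\simeq \Oq^{\otimes n}\text{-}{\rm comod}.
\]

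The main obstacle is the second step, namely showing that every simple $\Oq^{\otimes n}$-comodule is an external tensor product of simple $\Oq$-comodules (so that no ``mixed'' simples appear). The cleanest route is to use cosemisimplicity of $\Oq$ at generic $q$ together with the fact that the matrix coefficients of the $V_{m_1}\boxtimes\cdots\boxtimes V_{m_n}$ span $\Oq^{\otimes n}$, so the Peter--Weyl-type decomposition of $\Oq^{\otimes n}$ as a corepresentation of itself exhausts all simples.
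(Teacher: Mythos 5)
Your argument is correct in spirit and essentially matches the paper's (which also rests on semisimplicity of $\Oq^{\otimes n}\text{-}{\rm comod}$ with simples the external tensor products of simple $\Oq$-comodules, and then takes the cocontinuous extension of the external tensor product functor $\mathcal{V}^{\otimes n}\to\Oq^{\otimes n}\text{-}{\rm comod}$). One step is overstated as written, though. With the paper's convention for $\otimes$ on ${\rm Cat}_k$ (recalled at the start of Section~\ref{SectPreliminaries}), $\mathcal{V}^{\otimes n}$ has objects the literal $n$-tuples $(V_1,\dots,V_n)$ and does not even have direct sums: there is no object playing the role of $(V_1,W_1)\oplus(V_2,W_2)$ in $\mathcal{V}^{\otimes 2}$. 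So the claimed equivalence $\mathcal{V}^{\otimes n}\simeq \Oq^{\otimes n}\text{-}{\rm comod}^{\rm fin}$ in your second step is not literally true; what is true is that the external tensor product functor $\mathcal{V}^{\otimes n}\to \Oq^{\otimes n}\text{-}{\rm comod}^{\rm fin}$ is fully faithful (which your matching of $\Hom$-spaces on simples establishes) and dense, in the sense that every finite-dimensional $\Oq^{\otimes n}$-comodule is a finite direct sum of objects in the image. Since $k$-linear presheaves preserve finite biproducts, this is enough to conclude $\Free(\mathcal{V}^{\otimes n})\simeq\Free\big(\Oq^{\otimes n}\text{-}{\rm comod}^{\rm fin}\big)$, and then your steps 1 and 3 finish the argument exactly as you describe. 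So the conclusion stands; just replace the claimed equivalence of small categories by full faithfulness plus density.
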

\begin{proof}
The category $\Oq^{\otimes n}\text{-}{\rm comod}$ is semi-simple with simples tensor products of simples $\Oq$-comodules. It implies that the cocontinuous extension of $\otimes^n\colon \mathcal{V}^{\otimes n}\to \Oq^{\otimes n}$-comod to $\mathcal{E}^{\boxtimes n}\to \Oq^{\otimes n}\text{-}{\rm comod}$ is an equivalence.
\end{proof}

In particular $\Free\big({\rm TL}^{\otimes n}\big)\simeq \Free({\rm TL})^{\boxtimes n}\simeq \mathcal{E}^{\boxtimes n} \simeq \Oq^{\otimes n}\text{-}{\rm comod}$.
\begin{Theorem}\label{thmRelationMulti}
Let $\mathfrak{S}$ be a marked surface with all boundary edges labelled left, then $A_\mathfrak{S}\simeq \mathscr{S}(\mathfrak{S})$ as $\Oq^{\otimes n}$-comodule-algebras.
\end{Theorem}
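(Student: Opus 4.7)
The plan is to adapt each step of Section~\ref{SectRelation} to the multi-edge setting, where the essential observation is that the various boundary edges behave independently and locally. First, I would generalise the construction of $\St$ as follows. For $\vec{W} = (V^{\otimes n_1}, \dots, V^{\otimes n_n}) \in {\rm TL}^{\otimes n}$ and $\alpha \in \Hom_{{\rm Sk}_{{\rm TL}}(\mathfrak{S})}(\vec{W} \bmrhd \varnothing, \varnothing)$ represented by a linear combination of tangles with $n_i$ boundary points near the $i$-th edge, define
\[
\St_{\vec{W}}(\alpha)\colon \vec{W} \to \mathscr{S}(\mathfrak{S}), \qquad v_{\vec{\varepsilon}_1} \otimes \cdots \otimes v_{\vec{\varepsilon}_n} \mapsto {}_{\vec{\varepsilon}_1, \dots, \vec{\varepsilon}_n}\,\alpha,
\]
where on the right hand side we apply a global automorphism $\psi_{hv}$ (one per edge) pushing each bunch of boundary points above the corresponding boundary arc while preserving their height order, then assign the given states.

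Second, I would verify $\St_{\vec{W}}(\alpha)$ is an $\Oq^{\otimes n}$-comodule morphism. This follows edge-by-edge from the computation of Proposition~\ref{propStOqmorph}, because the $\Oq$-coaction on $\mathscr{S}(\mathfrak{S})$ associated with an edge $e_i$ acts only on the states of boundary points near $e_i$, leaving the other edges untouched. Third, naturality of the collection $\{\St_{\vec{W}}\}$ over ${\rm TL}^{\otimes n}$ reduces, by decomposing a morphism of ${\rm TL}^{\otimes n}$ as a composition of morphisms of the form $({\rm Id}, \dots, g, \dots, {\rm Id})$ acting on a single tensorand, to the one-edge case handled by Proposition~\ref{propStNat}. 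Since caps and cups inserted near a given edge are local moves (interpretable via the left boundary skein relations of Proposition~\ref{propLeftStSkRel}), this step goes through with the same verification.

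Fourth, to exhibit $\St$ as a natural isomorphism, I would invert it exactly as in the proof of Theorem~\ref{thmRelation}. Namely, decompose $\mathscr{S}(\mathfrak{S})$ into simple $\Oq^{\otimes n}$-comodules; each simple summand $W_i$ is a sub-comodule generated (under the $\Uq^{\otimes n}$-action, using Proposition~\ref{propOqcomodUqmod} componentwise) by a representative element of the canonical $\mathfrak{o}$-ordered basis with only $+$ states, cf.~\cite[Theorem~4.6(b)]{Cos}. Choosing such a representative $a_i$ and its underlying tangle $\alpha_i$, one obtains a well-defined inverse by the same argument — the boundary relations on each edge correspond to naturality under the edge's local action, and usual Kauffman relations are already quotiented in ${\rm Sk}_\mathcal{V}(\mathfrak{S})$.

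Finally, the compatibility of products follows as in Proposition~\ref{propProdsOnSS}: the multiplication on $\mathscr{S}(\mathfrak{S})$ is vertical superposition in the $[0,1]$ direction, which commutes with all the edge actions, while the algebra structure on $A_\mathfrak{S}$ comes from composition of the evaluation map, which is obtained as a direct sum over the canonical basis. The main technical nuisance will be bookkeeping: matching the $n$-fold product of coactions and the asymmetric identifications $\psi_{hv}$ at each edge with the global composition in ${\rm Sk}_\mathcal{V}(\mathfrak{S})$. But there is no new conceptual ingredient; everything reduces locally to the single-edge analysis, since the multi-edge splitting morphism of Theorem~\ref{splitmorph} is compatible with disjoint ideal arcs, and the ${\rm Sk}_\mathcal{V}(\mathbb{R}^2)$-actions coming from distinct edges commute in ${\rm Sk}_\mathcal{V}(\mathfrak{S})$.
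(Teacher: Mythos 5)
Your proposal matches the paper's proof essentially step for step: define $\St_{\vec W}$ coordinate-by-coordinate on ${\rm TL}^{\otimes n}$ by pushing each packet of endpoints through its own edge, check comodule-morphism and naturality edge-by-edge reducing to Propositions~\ref{propStOqmorph} and~\ref{propStNat}, invert by the same simple-decomposition trick as Theorem~\ref{thmRelation}, and verify the product as in Proposition~\ref{propProdsOnSS}. One minor slip: in your fourth step you speak of decomposing $\mathscr{S}(\mathfrak{S})$ into simples with $\mathfrak{o}$-ordered basis elements (which is the decomposition used in Proposition~\ref{propProdsOnSS} for the product), whereas to build $\St^{-1}_W(f)$ one should decompose the \emph{source} $W\in {\rm TL}^{\otimes n}$ into simples $W_i$ and use $f_i(w_i)$ to pick a representative tangle; since you defer to ``exactly as in Theorem~\ref{thmRelation}'' this is only a phrasing issue, not a gap.
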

\begin{proof}
We give a natural isomorphism $\St$ exhibiting $\mathscr{S}(\mathfrak{S})$ as the internal endomorphism object of $\varnothing\in {\rm SK}_\mathcal{V}(\mathfrak{S})$ with respect to the $\Oq^{\otimes n}\text{-}{\rm comod}$-module structure. For $X \in \Oq^{\otimes n}\text{-}{\rm comod}$, we want ${\rm St}_X\colon \Hom_{{\rm SK}_\mathcal{V}(\mathfrak{S})}(X \rhd \varnothing,\varnothing) \to \Hom_{\Oq^{\otimes n}\text{-}{\rm comod}}(X,\mathscr{S}(\mathfrak{S}))$. Let $X \in {\rm TL}^{\otimes n}$ and $\alpha \in \Hom_{{\rm Sk}_\mathcal{V}(\mathfrak{S})}(X \rhd \varnothing,\varnothing)$ represented by a tangle, we set
\[
\St_X(\alpha)\colon \begin{cases} X \to \mathscr{S}(\mathfrak{S}),
\\
v_{\vec{\varepsilon}_1}^1 \otimes \cdots \otimes v_{\vec{\varepsilon}_n}^n \mapsto {}_{\vec{\varepsilon}_1\cdots \vec{\varepsilon}_n}\alpha, \end{cases}
\]
where ${}_{\vec{\varepsilon}_1\cdots \vec{\varepsilon}_n}\alpha$ is the tangle $\alpha$ with endpoints pushed over the boundary edges and states the~$\varepsilon_j^i$ over the $i$-th edge and in $j$-th position from top to bottom, as in Section~\ref{SectRelation} but with more than one edge.
It~is an~$\Oq^{\otimes n}$-comodule morphism because it is an $\Oq$-comodule morphism on each coordinate by the same calculations as in Proposition~\ref{propStOqmorph}. It is natural in~${\rm TL}^{\otimes n}$ because it is a natural in each coordinate by the same calculations as in Proposition~\ref{propStNat}.

It is an isomorphism by the same arguments as in Theorem~\ref{thmRelation}. Namely, let $W = V_1\otimes \cdots \otimes V_n \in {\rm TL}^{\otimes n}$ and $f\colon W \to \mathscr{S}(\mathfrak{S})$, split $W = \oplus_i W_i$ and $f = \oplus f_i$ with $W_i = V_{i,1}\otimes\cdots\otimes V_{i,n}$ simple and choose $w_i \in W_i\smallsetminus\{0\}$. Choose $a_i$ representing $f_i(w_i)$ and denote by $\alpha_i$ its underlying tangle and $\vec{\varepsilon}_{i,1},\dots,\vec{\varepsilon}_{i,n}$ its states. Include $W_i \overset{g_i}\hookrightarrow V^{\otimes n_{i,1}}\otimes\cdots\otimes V^{\otimes n_{i,n}}$ by mapping $w_i$ to $v_{\vec{\varepsilon}_{i,1}}\otimes\cdots\otimes v_{\vec{\varepsilon}_{i,n}}$, and set ${\rm St}_{W_i}^{-1}(f_i) = \alpha_i \circ (g_i \bmrhd {\rm Id}_\varnothing) \in \Hom_{{\rm Sk}_\mathcal{V}(\mathfrak{S})}(W_i\bmrhd \varnothing,\varnothing)$. Then the inverse of $\St$ is given by $\St_W^{-1}(f) = \oplus_i \St_{W_i}^{-1}(f_i) \in \Hom_{{\rm Sk}_\mathcal{V}(\mathfrak{S})}(W\bmrhd \varnothing,\varnothing)$ and does not depend on the choice of representative.

As in Proposition~\ref{propProdsOnSS}, because every boundary edge of $\mathfrak{S}$ is labelled left, the product inherited from the internal endomorphism object structure is still given by $\alpha$ with prescribed inputs $v_{\vec{\varepsilon}_{i,1}}\otimes\cdots\otimes v_{\vec{\varepsilon}_{i,n}}$ times $\beta$ with prescribed inputs $v_{\vec{\eta}_{i,1}}\otimes\cdots\otimes v_{\vec{\eta}_{i,n}}$ equals $\alpha \circ ({\rm Id} \bmrhd \beta)$ with prescribed inputs $(v_{\vec{\varepsilon}_{i,1}}\otimes v_{\vec{\eta}_{i,1}})\otimes\cdots\otimes (v_{\vec{\varepsilon}_{i,n}}\otimes v_{\vec{\eta}_{i,n}})$ which is the usual product on $\mathscr{S}(\mathfrak{S})$.
\end{proof}

\subsection[The half twist on O\_\{q\textasciicircum{}2\}(SL\_2)-comod]{The half twist on $\boldsymbol{\Oq\text{-}{\rm comod}}$}

In the last subsection, we only allowed left ${\rm Sk}_\mathcal{V}\big(\mathbb{R}^2\big)$-actions. We study here how to change from left to right actions using the half twist in the case $\mathcal{V} = \Oqcomfin$.
\begin{Remark}
As we saw, a half twist on $\mathcal{V}$ is usually not necessary to the general study of internal skein algebras, but it is needed to relate them to stated skein algebras when there are right edges, and to mirror their excision properties.
When one sees a boundary edge at the right instead of the left, it has very different effects on both sides. For stated skein algebras, it does not change the vector space, nor the algebra structure, but switches the right comodule structure to a left one using $\operatorname{rot}_*$. For internal skein algebras, it does not change the vector space, one keeps right comodules ($A_\Sigma^R$ is still an object of $\mathcal{E}$) though slightly changed: it is half-twisted, and the algebra structure is opposed. To~make both sides agree, one needs to switch the comodule structure of the internal skein algebra while taking the opposite of its algebra structure. This is done very naturally by using $S$. Using the other known comparisons, one gets that the half twist on $\Oqcom$ should be the difference between switching the comodule structure using $\operatorname{rot}_*$ and switching it using $S$. This is Proposition~\ref{prophtDiffLl}, but we give a more complete and algebraic definition below.
\end{Remark}

\begin{Definition}[{\cite[Section~4.1]{SnyderTingley}}, for categories of comodules]
A half-coribbon Hopf algebra is a~coribbon Hopf algebra $H$ equipped with a half-coribbon functional, i.e., a map $t \colon H \to k$ such that:
\begin{enumerate}\itemsep=0pt
\item[$(1)$] $t$ is invertible by convolution: $\exists\ t^{-1} \colon H \to k$ such that $t(a_{(1)})t^{-1}(a_{(2)}) = t^{-1}(a_{(1)})t(a_{(2)}) = \varepsilon(a)$,

\item[$(2)$] $t$ squares to the twist: $t(a_{(1)})t(a_{(2)}) = \theta(a)$,

\item[$(3)$] compatibility with product: $t(a.b) = t(b_{(1)}) t(a_{(1)})R(a_{(2)}\otimes b_{(2)})$.
\end{enumerate}
\end{Definition}
\begin{Definition}
The half-coribbon functional induces a half twist $\halft$ on the category $H\text{-}{\rm comod}$ by $\halft_V\colon V \overset{\Delta}{\to} V \otimes H \overset{{\rm Id} \otimes t}{\longrightarrow} V$. It is an isomorphism of vector space with $\halft_V^{-1} \colon V \overset{\Delta}{\to} V \otimes H \overset{{\rm Id} \otimes t^{-1}}{\longrightarrow} V$. The half-coribbon functional is not supposed to be central though, and this means that $\halft_V$ is not an $H$-comodule morphism. There is a unique comodule structure on (the target) $V$ that makes it a~comodule morphism, namely $\Delta^\halft := (\halft_V \otimes {\rm Id}_H) \circ \Delta \circ \halft_V^{-1}$. We denote by $V^\halft$ the vector space $V$ equipped with the coaction $\Delta^\halft$. Now, $\halft_V \colon V \to V^\halft$ is an isomorphism of $H$-comodules.

One has a functor $(-)^\halft\colon H\text{-}{\rm comod} \to H\text{-}{\rm comod}$ which sends an object $V$ to the half-twisted~$V^\halft$ and a morphism $f\colon V\to W$ to $f^\halft = \halft_W \circ f \circ \halft_V^{-1}\colon V^\halft \to W^\halft$. It is defined so that $\halft\colon {\rm Id} \Rightarrow (-)^\halft$ is a natural isomorphism.
\end{Definition}
As maps of vector spaces, one simply has
\begin{align*}
f^\halft &= ({\rm Id}_W \otimes t) \circ \Delta_W \circ f \circ \big({\rm Id}_V \otimes t^{-1}\big) \circ \Delta_V \\
&= ({\rm Id}_W \otimes t) \circ \Delta_W \circ\big({\rm Id}_W \otimes t^{-1}\big) \circ (f \otimes {\rm Id}_H) \circ \Delta_V
\\
& = ({\rm Id}_W \otimes t) \circ \Delta_W \circ\big({\rm Id}_W \otimes t^{-1}\big) \circ \Delta_W \circ f= \halft_W \circ \halft_W^{-1} \circ f = f.
\end{align*}
In particular $\halft_{V^\halft}= \halft_V^\halft = \halft_V$. The square of $t$ is $\theta$ so $\halft_{V^\halft}\circ\halft_V = \theta_V$, and $\theta$ is central so $(V^\halft)^\halft = V$, and $(-)^\halft \circ (-)^\halft = {\rm Id}$.
Regarding the monoidal structure, let $V$ and $W$ be two $H$-comodules. Remember that the braiding is defined as
\[
c_{V,W} \colon V \otimes W \overset{R_{24}\circ (\Delta_V \otimes \Delta_W)}\longrightarrow V \otimes W \overset{\operatorname{fl}}\to W \otimes V.
\]
 The third condition gives that $\halft_{V \otimes W} = \halft_V \otimes \halft_W \circ (\operatorname{fl} \circ c_{V,W})$ so $\operatorname{fl}\circ\halft_{V \otimes W} = \halft_W \otimes \halft_V \circ c_{V,W}$. In particular, $\operatorname{fl}\colon (V\otimes W)^\halft \to W^\halft \otimes V^\halft$ is an $H$-comodule isomorphism.

\begin{Definition}\sloppy
For $\mathcal{V}$ a ribbon category, let $\halft\text{-}{\rm Ribbon}_\mathcal{V}$ be the full subcategory of ${\rm Ribbon}_\mathcal{V}\big(\mathbb{R}^2\big)$ spanned by objects of the form $[n] \subseteq \mathbb{R}^2$ but now allowing either blackboard or anti-blackboard framing for every point. This subcategory is still ribbon and is stable by the functor $(-)^\halft$, and equipped with $\halft\colon {\rm Id} \Rightarrow (-)^\halft$. It also contains the category ${\rm Ribbon}_\mathcal{V}$ of blackboard framed points.
\end{Definition}

\begin{Theorem}[{\cite[Theorem~4.11]{SnyderTingley}}] Let $H$ be a half-coribbon Hopf algebra and $V$ a finite-dimensional comodule. There is a unique monoidal functor $\halft\text{-}{\rm RT}_V\colon \halft\text{-}{\rm Ribbon}_{H\text{-}{\rm comod}^{\rm fin}} \to H\text{-}{\rm comod}^{\rm fin}$ extending ${\rm RT}_V$ and commuting with both $(-)^\halft$ and $\halft$, so preserving the ``half-ribbon structure''.
\end{Theorem}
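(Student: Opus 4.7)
The plan is to reduce the construction to the classical Reshetikhin--Turaev functor by systematically ``flattening'' anti-blackboard framings using the half-twist natural isomorphism $\halft$. On objects, the required commutation with $(-)^\halft$ forces a point with anti-blackboard framing coloured by $W$ to be sent to $W^\halft$, while blackboard-framed points are handled by the ordinary $\mathrm{RT}_V$. This already determines the functor on all objects and on the source and target of every morphism.

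On morphisms, the idea is that any ribbon graph $T$ in $\halft\text{-}\mathrm{Ribbon}_{H\text{-}{\rm comod}^{\rm fin}}$ can be isotoped so that its anti-blackboard framings are concentrated in tiny collars near the top and bottom boundaries, where they can be converted to blackboard framing by inserting the ribbon half-twist. This produces a standard ribbon graph $T'$ in $\mathrm{Ribbon}_{H\text{-}{\rm comod}^{\rm fin}}$, and one defines $\halft\text{-}\mathrm{RT}_V(T)$ to be $\mathrm{RT}_V(T')$ pre- and post-composed with appropriate $\halft$'s on the anti-framed endpoints so that source and target agree. The three axioms of a half-coribbon functional are designed exactly so that this assignment is well-defined: convolution-invertibility of $t$ lets one slide $\halft$'s across a graph in either direction, the squaring condition ensures that a full twist introduced by two stacked half-twists matches the ribbon twist $\theta$, and compatibility with the product -- equivalently the identity $\operatorname{fl}\circ\halft_{V\otimes W}=(\halft_W\otimes\halft_V)\circ c_{V,W}$ -- governs the interaction of half-framings with the braiding.

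Monoidality drops out from this last identity, and uniqueness is immediate: values on blackboard-framed ribbon graphs are forced by extending $\mathrm{RT}_V$, the value of $\halft$ itself is forced by commutation with the half-twist in the target, and every other morphism is generated from these under composition and tensor product. The main obstacle is the well-definedness check: two different flattenings of $T$ differ by a finite sequence of local moves (cancelling full twists, sliding a $\halft$ past a crossing, past a cup or cap, or past a coupon), and one must verify that the three half-coribbon axioms together with the naturality of $\halft$ suffice to equate their images under $\mathrm{RT}_V$. This amounts to a Reidemeister-style local calculus for half-framings: each individual move reduces to one of the axioms or to naturality, and the substance of the proof lies in the combinatorial bookkeeping needed to cover every case uniformly.
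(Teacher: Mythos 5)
This theorem is not proved in the paper: it is recalled and cited from \cite{SnyderTingley}, so there is no ``paper's own proof'' to compare your proposal against. What follows is therefore an assessment of your sketch on its own terms.

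Your strategy is the natural one, and as far as I can tell it matches the intent of the cited result: send anti-blackboard points coloured by $W$ to $W^\halft$ (forced by commutation with $(-)^\halft$), flatten a ribbon graph by inserting ribbon half-twists in collars at the anti-framed endpoints, apply the ordinary ${\rm RT}_V$ to the flattened graph, and reinsert $\halft$'s on the target side so that sources and targets match. Uniqueness is, as you say, essentially automatic: values on blackboard-framed graphs are determined by the extension condition, the image of $\halft$ is forced by commutation with the half-twist, and these generate. However, the proposal is a sketch rather than a proof, and the gap is exactly the one you flag yourself: the well-definedness check is the entire substance, and you defer it to ``combinatorial bookkeeping''. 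Two further points should be made explicit rather than left implicit. First, the direction of the flattening half-twist (clockwise versus counterclockwise) is a genuine choice; the squaring condition $t\star t=\theta$ only controls the discrepancy between the two choices (a full twist), so the definition has to commit to one. Second, compatibility of the flattening with juxtaposition is controlled by $\operatorname{fl}\circ\halft_{V\otimes W}=(\halft_W\otimes\halft_V)\circ c_{V,W}$, which you name, but the sliding of coupons past half-twists relies on the naturality of $\halft$ (which follows from $\halft_V=({\rm Id}_V\otimes t)\circ\Delta_V$ but is not one of the three axioms as you listed them). Neither of these seems to derail the argument, but a complete proof needs to enumerate the local moves (cancelling a $\halft$ against a $\halft^{-1}$, sliding past cups/caps, past crossings, past coupons) and verify each one reduces to an axiom, naturality, or the classical ${\rm RT}_V$ relations, rather than assert that they do.
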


\begin{Remark}
This half-twisted Reshetikhin--Turaev functor also gives an equivalence of categories ${\rm Sk}_\mathcal{V}\big(\mathbb{R}^2\big) \simeq \mathcal{V}$ but this time with much nicer properties regarding the half twist. In the usual Reshetikhin--Turaev functor one only prescribes where to send points with blackboard framing and well-placed on the real line. For framed points not of this form, one has to choose an isomorphism with one of these, like in Remark~\ref{rmkEqCatSkR2VV}, but these choices are quite arbitrary. Then the half twist sends blackboard framed points to anti-blackboard framed points which are re-identified with blackboard framed points via these arbitrary isomorphisms. With the half-twisted Reshetikhin--Turaev functor one also prescribes where to send anti-blackboard framed points, so one controls closely what happens with the half twist, namely the half twist on ${\rm Sk}_\mathcal{V}(\mathbb{R}^2)$ is mapped to the half twist on~$\mathcal{V}$.

Note however that unlike on ${\rm Sk}_\mathcal{V}\big(\mathbb{R}^2\big)$, the half twist on $\mathcal{V}$ is not strictly anti-monoidal (indeed it is the identity on underlying vector spaces) but $(X\otimes Y)^\halft \simeq Y^\halft \otimes X^\halft$ is simply given by the flip of tensors. A bit like the $R$-matrix, the half twist gives the difference between the monoidal structure on $H\text{-}{\rm comod}$ and the symmetric one on~${\rm Vect}_k$. Formally, this error lies in the fact that the inclusion of $\mathcal{V}$ in ${\rm Sk}_\mathcal{V}\big(\mathbb{R}^2\big)$ is only monoidal up to natural isomorphism, and this isomorphism, given in Remark~\ref{rmkInclVtoSkMonoid}, maps by the half twist to the flip of tensors.
\end{Remark}

In the case of $\Oq$, we can define a half-coribbon functional on the generators by $t \left( \begin{smallmatrix} a & b \\ c & d \end{smallmatrix} \right) = \left( \begin{smallmatrix} 0 & -q^{\frac{5}{2}} \\ q^{\frac{1}{2}} & 0 \end{smallmatrix}\right)$. This tells in particular how the half twist acts on the standard corepresentation $V$, namely $\halft_V(v_+) = q^{\frac{1}{2}} v_- = C(-)^{-1} v_-$ and $\halft_V(v_-) = -q^{\frac{5}{2}} v_+ = C(+)^{-1} v_+$. For states $\eta \in \{\pm\}$, we will write $\halft_V(\eta) = -\eta .C(-\eta)^{-1}$.
Note that \cite{SnyderTingley} introduce another half-coribbon element corresponding to the matrix $\left( \begin{smallmatrix} 0 & q^{\frac{3}{2}} \\ -q^{\frac{3}{2}} & 0 \end{smallmatrix} \right)$, but our choice is imposed by conventions from stated skein algebras.

To define it on all $\Oq$ we prefer a geometric description on $\mathscr{S}(B)$. We would like to give the same definition as for the coribbon functional $\theta$ with a half twist instead of a full twist, but in the definition of stated skein algebras one only allows upward-framed boundary points, which would map to downward-framed points after the half twist. Still, we know how to do the ``global'' half twist on many strands (without twisting the framing), and we only need to add a~``local'' half twist on each strand, which are implicitly coloured by~$V$, on which we know how the half twist acts.

\begin{Proposition}
The coribbon Hopf algebra $\mathscr{S}(B)$ is half-coribbon with half-coribbon functional
\[
t\raisebox{-3pt}{$\left(\rule{0pt}{24pt}\right.$}\!\!
\begin{tikzpicture}[scale = 0.7, baseline = 0pt]
\draw (0,-1) node{\small $\bullet$} arc(-90:90:1) node[near end, sloped]{\small $<$} node{\small $\bullet$};
\draw (0,-1) arc(-90:-270:1) node[near end, sloped]{\small $>$};
\draw[thick] (-0.94,-0.3)node[below left]{\small $\varepsilon_n$} -- (0.94,-0.3)node[below right]{\small $\eta_m$};
\draw[thick] (-1,0) node[yshift=2pt,left]{\footnotesize $\vdots$} -- (1,0)node[yshift=2pt,right]{\footnotesize $\vdots$};
\draw[thick] (-0.94,0.3) node[above left]{\small $\varepsilon_1$} -- (0.94,0.3)node[above right]{\small $\eta_1$};
\node[circle, fill=white, draw = black, scale = 1.2, inner sep = 1.5pt] (X) at (0,0){$\alpha$};
\end{tikzpicture}\!\!
\raisebox{-3pt}{$\left.\rule{0pt}{24pt}\right)$}
:= \varepsilon\raisebox{-3pt}{$\left(\rule{0pt}{24pt}\right.$}\!\!
\begin{tikzpicture}[scale = 0.7, baseline = 0pt]
\draw (0,-1) node{\small $\bullet$} arc(-90:90:1) node[near end, sloped]{\small $<$} node{\small $\bullet$};
\draw (0,-1) arc(-90:-270:1) node[near end, sloped]{\small $>$};
\draw[thick] (-0.94,-0.3)node[below left]{\small $\varepsilon_n$} -- (0.3,-0.3) .. controls (0.5,-0.3) and (0.74,0.3).. (0.94,0.3)node[above right]{\small $\halft_V(\eta_m)$} node[pos = 0.5, circle, fill = white, scale = 0.66]{};
\draw[thick] (-1,0) node[yshift=2pt, left]{\footnotesize $\vdots$} -- (1,0)node[yshift=2pt,right=5pt]{\footnotesize $\vdots$} node[pos = 0.81, circle, fill = white, scale = 0.46]{};
\draw[thick] (-0.94,0.3) node[above left]{\small $\varepsilon_1$} -- (0.3,0.3) .. controls (0.5,0.3) and (0.74,-0.3).. (0.94,-0.3)node[below right]{\small $\halft_V(\eta_1)$};
\node[circle, fill=white, draw = black, scale = 1.2, inner sep = 1.5pt] (X) at (-0.2,0){$\alpha$};
\end{tikzpicture}\!\!
\raisebox{-3pt}{$\left.\rule{0pt}{24pt}\right)$}
=
 \varepsilon\raisebox{-3pt}{$\left(\rule{0pt}{24pt}\right.$}\!\!
 \begin{tikzpicture}[scale = 0.7, baseline = 0pt, rotate = 180]
\draw (0,-1) node{\small $\bullet$} arc(-90:90:1) node[near start, sloped]{\small $>$} node{\small $\bullet$};
\draw (0,-1) arc(-90:-270:1) node[near start, sloped]{\small $<$};
\draw[thick] (-0.94,-0.3)node[above right]{\small $\eta_1$} -- (0.3,-0.3) .. controls (0.5,-0.3) and (0.74,0.3).. (0.94,0.3)node[below left]{\small $-\varepsilon_1.C(\varepsilon_1)^{-1}$} node[pos = 0.5, circle, fill = white, scale = 0.66]{};
\draw[thick] (-1,0) node[yshift=2pt, right]{\footnotesize $\vdots$} -- (1,0)node[yshift=2pt,left=10pt]{\footnotesize $\vdots$} node[pos = 0.81, circle, fill = white, scale = 0.46]{};
\draw[thick] (-0.94,0.3) node[below right]{\small $\eta_m$} -- (0.3,0.3) .. controls (0.5,0.3) and (0.74,-0.3).. (0.94,-0.3)node[above left]{\small $-\varepsilon_n.C(\varepsilon_n)^{-1}$};
\node[circle, fill=white, draw = black, scale = 1.2, inner sep = 1.5pt] (X) at (-0.2,0){$\alpha$};
\end{tikzpicture}\!\!
\raisebox{-3pt}{$\left.\rule{0pt}{24pt}\right)$}\!.
\]
By Remark $\ref{rmkFormActOnSS}$, $\halft_{\mathscr{S}(B)}(\alpha) = ({\rm Id} \otimes t)\circ\Delta(\alpha)$ is the stated tangle represented in the middle, and by a~left version of Remark $\ref{rmkFormActOnSS}$, $(t \otimes {\rm Id})\circ\Delta(\alpha)$ is the stated tangle represented in the right.
\end{Proposition}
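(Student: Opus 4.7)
The plan is to verify the three half-coribbon axioms by interpreting them as geometric identities on stated tangle diagrams in $\mathscr{S}(B)$, exploiting the fact that the defining formula for $t$ literally implements a half twist on the boundary strands.

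First, I would check that $t$ evaluates correctly on the matrix generators, recovering $t\left(\begin{smallmatrix} a & b \\ c & d \end{smallmatrix}\right) = \left(\begin{smallmatrix} 0 & -q^{5/2} \\ q^{1/2} & 0 \end{smallmatrix}\right)$. Applying the geometric formula to ${}_\mu\beta_\nu$ produces a single horizontal strand whose right endpoint is rotated to the opposite side and relabelled $\halft_V(\nu) = -\nu\cdot C(-\nu)^{-1}$. Evaluating $\varepsilon$ via the boundary skein relations then leaves the matrix coefficients claimed above. This both shows $t$ is non-trivial and confirms it agrees on generators with the half-coribbon element one expects from the Snyder--Tingley conventions (up to the sign choice fixed by Definition \ref{defOqcomods}).

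Next, I would define $t^{-1}$ by the mirror diagram, i.e., the same formula with $\halft_V$ replaced by $\halft_V^{-1}$ and with the crossings reversed. Condition (1) then follows geometrically: the convolution $t \ast t^{-1}$ splits $\alpha$ along the vertical arc and stacks the two half-twist diagrams, after which the half twist and its inverse cancel by an isotopy internal to the bigon; the counit of the surviving tangle is $\varepsilon(\alpha)$. The same argument in the opposite order gives $t^{-1} \ast t = \varepsilon$. Condition (2) is similarly direct: stacking two copies of the half-twist diagram produces exactly one full twist per strand, and the resulting diagram matches the defining formula for the coribbon functional $\theta$ on $\mathscr{S}(B)$ from the earlier example. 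In particular one recovers $\theta = t \ast t$.

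The main obstacle is condition (3), $t(a \cdot b) = t(b_{(1)}) t(a_{(1)}) R(a_{(2)} \otimes b_{(2)})$. Geometrically, $t(a \cdot b)$ is the counit of the diagram obtained by stacking $\alpha$ above $\beta$ inside the bigon and then half-twisting all strands globally, whereas the right-hand side, after expanding both splittings $\Delta(a)$ and $\Delta(b)$ and substituting the geometric expressions for $t$ and $R$ via Remark \ref{rmkFormActOnSS}, is the counit of a diagram in which $\alpha$ and $\beta$ have each been duplicated, one copy of each carries an individual half twist, and the remaining two copies are crossed by the co-$R$-matrix braiding. My plan is to start from the right-hand diagram and, using only isotopies inside the bigon together with skein relations, slide the braiding upward through the two half twists until it is absorbed into a single global half twist of the full stack, thereby producing the left-hand diagram. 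The key pictorial identity underlying this manipulation is the categorical relation $\halft_{V \otimes W} = (\halft_W \otimes \halft_V) \circ c_{V,W}$ recalled above, lifted to the skein level by colouring every strand with $V$ and using Theorem \ref{thmSkAsTangles} to justify that isotopy plus Kauffman relations suffice. Once this pictorial equality is established, applying $\varepsilon$ to both sides yields (3).
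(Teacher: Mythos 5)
Your geometric plan for verifying conditions (1), (2), (3) is broadly the same strategy the paper uses: interpret each axiom via Remark~\ref{rmkFormActOnSS}, stack the half-twist pictures, and cancel or merge twists by isotopy inside the bigon (your sliding argument for (3) runs in the reverse direction of the paper's, which instead splits the global picture using $\varepsilon = (\varepsilon\otimes\varepsilon)\circ\Delta$, but either direction would do).

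The genuine gap is that you never show $t$ is \emph{well-defined} as a linear functional on $\mathscr{S}(B)$. The defining formula for $t$ is given in terms of a chosen stated tangle diagram representing $\alpha$, and two diagrams related by a boundary skein relation must be sent to the same scalar. The middle picture in the statement manifestly respects the boundary relations at the \emph{left} edge (those relations are untouched by a half twist applied at the right), but it is not at all obvious that it respects the \emph{right} boundary relations, since the half twist and the $\halft_V$-relabelling intervene exactly there. Symmetrically, the right-hand picture manifestly respects the right boundary relations but not obviously the left ones. The paper's proof therefore opens by showing the two pictures compute the same scalar --- via $\varepsilon=\varepsilon\circ S$ and $\varepsilon=\varepsilon\circ\operatorname{rot}_*$ --- which is precisely what makes $t$ well-defined. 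Your proposal skips this entirely. Checking $t$ on the matrix generators ${}_\mu\beta_\nu$ does not repair the gap: $t$ is not an algebra morphism (that is the whole point of condition (3)), so knowing its value on generators neither determines $t$ nor verifies that the diagrammatic formula descends through the boundary relations. Without the well-definedness step, the remainder of the argument is verifying axioms for a map that has not yet been shown to exist.

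A secondary imprecision: you invoke Theorem~\ref{thmSkAsTangles} to say ``isotopy plus Kauffman relations suffice,'' but that theorem concerns morphisms in ${\rm Sk}_\mathcal{V}(\Sigma)$, not elements of the stated skein algebra $\mathscr{S}(B)$, which carries the extra boundary relations. Any pictorial manipulation on $\mathscr{S}(B)$ must also be checked against those, which again points back to the missing well-definedness discussion.
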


\begin{proof} These two formulations prove that $t$ is well defined on $\mathscr{S}(B)$ as it respects the boundary relations on the left edge by the first and on the right one by the second. So we begin by proving that these two formulations actually coincide. Let $\beta \in \mathscr{S}(B)$, then
\begin{gather*}
 \varepsilon\raisebox{-3pt}{$\left(\rule{0pt}{24pt}\right.$}\!\!
 \begin{tikzpicture}[scale = 0.7, baseline = 0pt]
\draw (0,-1) node{\small $\bullet$} arc(-90:90:1) node[near end, sloped]{\small $<$} node{\small $\bullet$};
\draw (0,-1) arc(-90:-270:1) node[near end, sloped]{\small $>$};
\draw[thick] (-0.94,-0.3)node[below left]{\small $\varepsilon_n$} -- (0.3,-0.3) .. controls (0.5,-0.3) and (0.74,0.3).. (0.94,0.3)node[above right]{\small $-\eta_m.C(-\eta_m)^{-1}$} node[pos = 0.5, circle, fill = white, scale = 0.66]{};
\draw[thick] (-1,0) node[yshift=2pt, left]{\footnotesize $\vdots$} -- (1,0)node[yshift=2pt,right=5pt]{\footnotesize $\vdots$} node[pos = 0.81, circle, fill = white, scale = 0.46]{};
\draw[thick] (-0.94,0.3) node[above left]{\small $\varepsilon_1$} -- (0.3,0.3) .. controls (0.5,0.3) and (0.74,-0.3).. (0.94,-0.3)node[below right]{\small $-\eta_1.C(-\eta_1)^{-1}$};
\node[circle, fill=white, draw = black, scale = 1.2, inner sep = 1.5pt] (X) at (-0.2,0){$\beta$};
\end{tikzpicture}\!\!
\raisebox{-3pt}{$\left.\rule{0pt}{24pt}\right)$}
\overset{\varepsilon = \varepsilon \circ S}=
 \varepsilon\raisebox{-3pt}{$\left(\rule{0pt}{24pt}\right.$}\!\!
 \begin{tikzpicture}[scale = 0.7, baseline = 0pt, rotate = 180]
\draw (0,-1) node{\small $\bullet$} arc(-90:90:1) node[near start, sloped]{\small $>$} node{\small $\bullet$};
\draw (0,-1) arc(-90:-270:1) node[near start, sloped]{\small $<$};
\draw[thick] (-0.94,-0.3)node[above right]{\small $-\varepsilon_n.C(\varepsilon_n)^{-1}$} -- (0.3,-0.3) .. controls (0.5,-0.3) and (0.74,0.3).. (0.94,0.3)node[below left]{\small $\eta_m.C(-\eta_m)^{-1}.C(-\eta_m)$} node[pos = 0.5, circle, fill = white, scale = 0.66]{};
\draw[thick] (-1,0) node[yshift=2pt, right=10pt]{\footnotesize $\vdots$} -- (1,0)node[yshift=2pt,left=10pt]{\footnotesize $\vdots$} node[pos = 0.81, circle, fill = white, scale = 0.46]{};
\draw[thick] (-0.94,0.3) node[below right]{\small $-\varepsilon_1.C(\varepsilon_1)^{-1}$} -- (0.3,0.3) .. controls (0.5,0.3) and (0.74,-0.3).. (0.94,-0.3)node[above left]{\small $\eta_1.C(-\eta_1)^{-1}.C(-\eta_1)$};
\node[circle, fill=white, draw = black, scale = 1.2, inner sep = 1.5pt, rotate = 180] (X) at (-0.2,0){$\beta$};
\end{tikzpicture}\!\!
\raisebox{-3pt}{$\left.\rule{0pt}{24pt}\right)$}
\\ \qquad
{}=
 \varepsilon \raisebox{-3pt}{$\left(\rule{0pt}{24pt}\right.$}\!\!
 \begin{tikzpicture}[scale = 0.7, baseline = 0pt]
\draw (0,-1) node{\small $\bullet$} arc(-90:90:1) node[near start, sloped]{\small $<$} node{\small $\bullet$};
\draw (0,-1) arc(-90:-270:1) node[near start, sloped]{\small $>$};
\draw[thick] (-0.94,-0.3)node[below left]{\small $\eta_1$} -- (0.3,-0.3) .. controls (0.5,-0.3) and (0.74,0.3).. (0.94,0.3)node[above right]{\small $-\varepsilon_1.C(\varepsilon_1)^{-1}$} node[pos = 0.5, circle, fill = white, scale = 0.66]{};
\draw[thick] (-1,0) node[yshift=2pt, left]{\footnotesize $\vdots$} -- (1,0)node[yshift=2pt,right=5pt]{\footnotesize $\vdots$} node[pos = 0.81, circle, fill = white, scale = 0.46]{};
\draw[thick] (-0.94,0.3) node[above left]{\small $\eta_m$} -- (0.3,0.3) .. controls (0.5,0.3) and (0.74,-0.3).. (0.94,-0.3)node[below right]{\small $-\varepsilon_n.C(\varepsilon_n)^{-1}$};
\node[circle, fill=white, draw = black, scale = 1.2, inner sep = 1.5pt, rotate = 180] (X) at (-0.2,0){$\beta$};
\end{tikzpicture}\!\!
\raisebox{-3pt}{$\left.\rule{0pt}{24pt}\right)$}
\overset{\varepsilon = \varepsilon \circ \operatorname{rot}_*}
= \varepsilon\raisebox{-3pt}{$\left(\rule{0pt}{24pt}\right.$}\!\!
\begin{tikzpicture}[scale = 0.7, baseline = 0pt, rotate = 180]
\draw (0,-1) node{\small $\bullet$} arc(-90:90:1) node[near start, sloped]{\small $>$} node{\small $\bullet$};
\draw (0,-1) arc(-90:-270:1) node[near start, sloped]{\small $<$};
\draw[thick] (-0.94,-0.3)node[above right]{\small $\eta_1$} -- (0.3,-0.3) .. controls (0.5,-0.3) and (0.74,0.3).. (0.94,0.3)node[below left]{\small $-\varepsilon_1.C(\varepsilon_1)^{-1}$} node[pos = 0.5, circle, fill = white, scale = 0.66]{};
\draw[thick] (-1,0) node[yshift=2pt, right]{\footnotesize $\vdots$} -- (1,0)node[yshift=2pt,left=10pt]{\footnotesize $\vdots$} node[pos = 0.81, circle, fill = white, scale = 0.46]{};
\draw[thick] (-0.94,0.3) node[below right]{\small $\eta_m$} -- (0.3,0.3) .. controls (0.5,0.3) and (0.74,-0.3).. (0.94,-0.3)node[above left]{\small $-\varepsilon_n.C(\varepsilon_n)^{-1}$};
\node[circle, fill=white, draw = black, scale = 1.2, inner sep = 1.5pt] (X) at (-0.2,0){$\beta$};
\end{tikzpicture}\!\!
\raisebox{-3pt}{$\left.\rule{0pt}{24pt}\right)$}\!,
\end{gather*}
where the second equality is only a change of picture representation, not of stated tangles, coming from switching the orientation of the edges, see \cite[Section~3.5]{BonahonWong}.

The convolution inverse $t^{-1}$ of $t$ is obtained the same way as the middle term but with the inverse half twist and $\halft_V^{-1}$ on states. Indeed by Remark \ref{rmkFormActOnSS}, $\big({\rm Id} \otimes t^{-1}\big) \circ \Delta(\alpha)$ is $\alpha$ with an inverse half twist at the right and $\halft_V^{-1}$ on right states. Thus $\big(t\otimes t^{-1}\big)\circ \Delta$ is the counit of $\alpha$ with an inverse half twist and a half twist at the right, and $\halft_V\circ\halft_V^{-1}$ on right states, namely the counit of $\alpha$. Similarly, $\big(t^{-1}\otimes t\big)\circ\Delta=\varepsilon$.

One directly checks that $\halft_V \circ \halft_V = \theta_V = -q^3 {\rm Id}_V$ on the standard corepresentation. Then $({\rm Id}\otimes t)\circ \Delta (\alpha)$ is $\alpha$ with a half twist at the right and $\halft_V$ on right states, and $(t\otimes t)\circ \Delta (\alpha)$ is the counit of $\alpha$ with a full twist (without framing twist) at the right and $\theta_V$ on right states. This is exactly the full twist by separating the unframed full twist and the full twists on framings:
\begin{align*}
\theta(\alpha) & =\varepsilon\raisebox{-3pt}{$\left(\rule{0pt}{24pt}\right.$}\!\! \begin{tikzpicture}[scale = 0.7, baseline = 0pt]
\draw (0,-1) node{\small $\bullet$} arc(-90:90:1) node[near end, sloped]{\small $<$} node{\small $\bullet$};
\draw (0,-1) arc(-90:-270:1) node[near end, sloped]{\small $>$};
\node[circle, draw = black, scale = 0.85, inner sep = 1.5pt] (X) at (-0.1,-0.3){$\alpha$};
\draw[thick, double] (-0.94,-0.3) -- (X) -- (0.2,-0.3) .. controls (0.5,-0.3) and (0.8,0.4).. (0.5,0.4) node[pos = 0.37, circle, fill = white, scale = 0.46]{};
\draw[thick, double] (0.5,0.4).. controls (0.2,0.4) and (0.5,-0.3) .. (0.8,-0.3) -- (0.94,-0.3);
\end{tikzpicture}\!\!
\raisebox{-3pt}{$\left.\rule{0pt}{24pt}\right)$}
 \overset{\varepsilon = (\varepsilon\otimes \varepsilon)\circ \Delta}=
\varepsilon\raisebox{-3pt}{$\left(\rule{0pt}{24pt}\right.$}\!\!
\begin{tikzpicture}[scale = 0.7, baseline = 0pt]
\draw (0,-1) node{\small $\bullet$} arc(-90:90:1) node[near end, sloped]{\small $<$} node{\small $\bullet$};
\draw (0,-1) arc(-90:-270:1) node[near end, sloped]{\small $>$};
\draw[thick] (-0.94,-0.3) -- (-0.1,-0.3) .. controls (0.15,-0.3) and (0.15,0.3).. (0.4,0.3);
\draw[thick] (0.4,-0.3) .. controls (0.65,-0.3) and (0.65,0.3).. (0.9,0.3) -- (0.94,0.3)node[above right]{\small $\nu_1$};
\draw[thick] (-1,0) -- (0,0) -- (1,0)node[yshift=2pt,right]{\footnotesize $\vdots$} node[pos = 0.15, circle, fill = white, scale = 0.46]{} node[pos = 0.65, circle, fill = white, scale = 0.46]{};
\draw[thick] (-0.94,0.3) -- (-0.1,0.3) .. controls (0.15,0.3) and (0.15,-0.3).. (0.4,-0.3);
\draw[thick] (0.4,0.3) .. controls (0.65,0.3) and (0.65,-0.3).. (0.9,-0.3) -- (0.94,-0.3)node[below right]{\small $\nu_m$};
\node[rectangle, fill=white, draw = black, minimum height = 0.7cm, inner sep = 1.5pt] (X) at (-0.5,0){$\alpha$};
\end{tikzpicture}\!\!
\raisebox{-3pt}{$\left.\rule{0pt}{24pt}\right)$}\, .
\varepsilon\raisebox{-3pt}{$\left(\rule{0pt}{24pt}\right.$}\!\!
\begin{tikzpicture}[scale = 0.7, baseline = 0pt]
\draw (0,-1) node{\small $\bullet$} arc(-90:90:1) node[near end, sloped]{\small $<$} node{\small $\bullet$};
\draw (0,-1) arc(-90:-270:1) node[near end, sloped]{\small $>$};
\draw (-0.94,-0.3)node[below left]{\small $\nu_m$} -- (0.94,-0.3) node[pos = 0.3, circle, fill = white, scale = 0.46]{} node[pos = 0.3, yshift = 2.2pt, scale = 0.5, rotate = -90]{$\unortwist$} node[below right]{\small $\eta_m$};
\draw (-1,0) node[yshift=2pt,left]{\footnotesize $\vdots$} -- (1,0) node[pos = 0.5, circle, fill = white, scale = 0.46]{} node[yshift=2pt,right]{\footnotesize $\vdots$} node[pos = 0.5, yshift = 2.2pt, scale = 0.5, rotate = -90]{$\unortwist$};
\draw (-0.94,0.3)node[above left]{\small $\nu_1$} -- (0.94,0.3) node[pos = 0.7, circle, fill = white, scale = 0.46]{} node[above right]{\small $\eta_1$} node[pos = 0.7, yshift = 2.2pt, scale = 0.5, rotate = -90]{$\unortwist$};
\end{tikzpicture}\!\!
\raisebox{-3pt}{$\left.\rule{0pt}{24pt}\right)$}\\
& = \varepsilon\raisebox{-3pt}{$\left(\rule{0pt}{24pt}\right.$}\!\!
\begin{tikzpicture}[scale = 0.7, baseline = 0pt]
\draw (0,-1) node{\small $\bullet$} arc(-90:90:1) node[near end, sloped]{\small $<$} node{\small $\bullet$};
\draw (0,-1) arc(-90:-270:1) node[near end, sloped]{\small $>$};
\draw[thick] (-0.94,-0.3) -- (-0.1,-0.3) .. controls (0.15,-0.3) and (0.15,0.3).. (0.4,0.3);
\draw[thick] (0.4,-0.3) .. controls (0.65,-0.3) and (0.65,0.3).. (0.9,0.3) -- (0.94,0.3)node[above right]{\small $-q^3\eta_1$};
\draw[thick] (-1,0) -- (0,0) -- (1,0)node[yshift=2pt,right]{\footnotesize $\vdots$} node[pos = 0.15, circle, fill = white, scale = 0.46]{} node[pos = 0.65, circle, fill = white, scale = 0.46]{};
\draw[thick] (-0.94,0.3) -- (-0.1,0.3) .. controls (0.15,0.3) and (0.15,-0.3).. (0.4,-0.3);
\draw[thick] (0.4,0.3) .. controls (0.65,0.3) and (0.65,-0.3).. (0.9,-0.3) -- (0.94,-0.3)node[below right]{\small $-q^3\eta_m$};
\node[rectangle, fill=white, draw = black, minimum height = 0.7cm, inner sep = 1.5pt] (X) at (-0.5,0){$\alpha$};
\end{tikzpicture}\!\!
\raisebox{-3pt}{$\left.\rule{0pt}{24pt}\right)$}\!.
\end{align*}
Finally,
\begin{align*}
t(\alpha.\beta) = \varepsilon\raisebox{-3pt}{$\left(\rule{0pt}{24pt}\right.$}\!\!
\begin{tikzpicture}[scale = 0.7, baseline = 0pt, rotate = 180]
\draw (0,-1) node{\small $\bullet$} arc(-90:90:1) node[near start, sloped]{\small $>$} node{\small $\bullet$};
\draw (0,-1) arc(-90:-270:1) node[near start, sloped]{\small $<$};
\draw[thick] (-0.9,-0.45) -- (-0.4,-0.45) .. controls (0.3,-0.45) and (0.3,0.45).. (0.9,0.45)node[left]{\small $-\overleftarrow{\varepsilon}.C(\vec{\varepsilon})^{-1}$};
\draw[line width = 3pt, white] (-0.94,-0.3) -- (-0.4,-0.3) .. controls (0.3,-0.3) and (0.3,0.3).. (0.94,0.3);
\draw[thick] (-0.94,-0.3) -- (-0.4,-0.3) .. controls (0.3,-0.3) and (0.3,0.3).. (0.94,0.3);
\draw[line width = 3pt, white] (-0.94,0.3) -- (-0.4,0.3) .. controls (0.3,0.3) and (0.3,-0.3).. (0.94,-0.3);
\draw[thick] (-0.94,0.3) -- (-0.4,0.3) .. controls (0.3,0.3) and (0.3,-0.3).. (0.94,-0.3);
\draw[line width = 3pt, white] (-0.9,0.45) -- (-0.4,0.45) .. controls (0.3,0.45) and (0.3,-0.45).. (0.9,-0.45);
\draw[thick] (-0.9,0.45) -- (-0.4,0.45) .. controls (0.3,0.45) and (0.3,-0.45).. (0.9,-0.45)node[left]{\small $-\overleftarrow{\eta}.C(\vec{\eta})^{-1}$};
\node[circle, fill=white, draw = black, scale = 0.8, inner sep = 1.5pt] (X) at (-0.5,0.37){$\beta$};
\node[circle, fill=white, draw = black, scale = 1, inner sep = 1.5pt] (X) at (-0.5,-0.37){$\alpha$};
\end{tikzpicture}\!\!
\raisebox{-3pt}{$\left.\rule{0pt}{24pt}\right)$}
&\overset{\varepsilon = (\varepsilon\otimes \varepsilon)\circ \Delta}
= \varepsilon\raisebox{-3pt}{$\left(\rule{0pt}{24pt}\right.$}\!\!
\begin{tikzpicture}[scale = 0.7, baseline = 0pt]
\draw (0,-1) node{\small $\bullet$} arc(-90:90:1) node[near end, sloped]{\small $<$} node{\small $\bullet$};
\draw (0,-1) arc(-90:-270:1) node[near end, sloped]{\small $>$};
\draw[thick] (-0.9,-0.45) node[left]{\small $-\overleftarrow{\varepsilon}.C(\vec{\varepsilon})^{-1}$} -- (-0.4,-0.45) .. controls (0.3,-0.45) and (0.3,-0.3).. (0.9,-0.3);
\draw[line width = 3pt, white] (-0.94,-0.3) -- (-0.4,-0.3) .. controls (0.3,-0.3) and (0.3,-0.45).. (0.94,-0.45);
\draw[thick] (-0.94,-0.3) -- (-0.4,-0.3) .. controls (0.3,-0.3) and (0.3,-0.45).. (0.94,-0.45)node[right]{\small $\vec{\nu}$};
\draw[thick] (-0.94,0.3) -- (-0.4,0.3) .. controls (0.3,0.3) and (0.3,0.45).. (0.94,0.45)node[right]{\small $\vec{\mu}$};
\draw[line width = 3pt, white] (-0.9,0.45) -- (-0.4,0.45) .. controls (0.3,0.45) and (0.3,0.3).. (0.9,0.3);
\draw[thick] (-0.9,0.45) node[left]{\small $-\overleftarrow{\eta}.C(\vec{\eta})^{-1}$} -- (-0.4,0.45) .. controls (0.3,0.45) and (0.3,0.3).. (0.9,0.3);
\end{tikzpicture}\!\!
\raisebox{-3pt}{$\left.\rule{0pt}{24pt}\right)$}
. \varepsilon \raisebox{-3pt}{$\left(\rule{0pt}{24pt}\right.$}\!\!
\begin{tikzpicture}[scale = 0.7, baseline = 0pt, rotate = 180]
\draw (0,-1) node{\small $\bullet$} arc(-90:90:1) node[near start, sloped]{\small $>$} node{\small $\bullet$};
\draw (0,-1) arc(-90:-270:1) node[near start, sloped]{\small $<$};
\draw[thick] (-0.9,-0.45) -- (-0.4,-0.45) .. controls (0.3,-0.45) and (0.3,0.3).. (0.9,0.3);
\draw[thick] (-0.94,-0.3) -- (-0.4,-0.3) .. controls (0.3,-0.3) and (0.3,0.45).. (0.94,0.45)node[left]{\small $\vec{\nu}$};
\draw[line width = 3pt, white] (-0.94,0.3) -- (-0.4,0.3) .. controls (0.3,0.3) and (0.3,-0.45).. (0.94,-0.45);
\draw[thick] (-0.94,0.3) -- (-0.4,0.3) .. controls (0.3,0.3) and (0.3,-0.45).. (0.94,-0.45)node[left]{\small $\vec{\mu}$};
\draw[line width = 3pt, white] (-0.9,0.45) -- (-0.4,0.45) .. controls (0.3,0.45) and (0.3,-0.3).. (0.9,-0.3);
\draw[thick] (-0.9,0.45) -- (-0.4,0.45) .. controls (0.3,0.45) and (0.3,-0.3).. (0.9,-0.3);
\node[circle, fill=white, draw = black, scale = 0.8, inner sep = 1.5pt] (X) at (-0.5,0.37){$\beta$};
\node[circle, fill=white, draw = black, scale = 1, inner sep = 1.5pt] (X) at (-0.5,-0.37){$\alpha$};
\end{tikzpicture}\!\!
\raisebox{-3pt}{$\left.\rule{0pt}{24pt}\right)$}
\\
& \overset{\varepsilon\circ m = \varepsilon\otimes \varepsilon}= t(\beta_{(1)}) .t(\alpha_{(1)}).R(\alpha_{(2)}\otimes \beta_{(2)}).
\tag*{\qed}
\end{align*}
\renewcommand{\qed}{}
\end{proof}
\begin{Definition}
Let $\mathfrak{S}$ be a marked surface and $e$ a boundary edge, with orientation induced by the one of $\mathfrak{S}$. The inversion along the edge $e$ is the morphism of $k$-vector-spaces ${\rm inv}_e\colon \mathscr{S}(\mathfrak{S})\to\mathscr{S}(\mathfrak{S})$ given on a stated tangle $\alpha$ by ordering the heights according to the orientation of $e$, then switching height order vertically, then taking opposite states and some coefficients, namely:
\[
{\rm inv}_e\raisebox{-3pt}{$\left(\rule{0pt}{24pt}\right.$}
\begin{tikzpicture}[scale = 0.7, baseline = 0pt]
\draw (0,-1) node{\small $\bullet$} arc(-90:90:1) node[near end, sloped]{\small $<$} node{\small $\bullet$};
\draw (0,-1) -- (-1,-1);
\draw (0,1) -- (-1,1);
\draw[thick] (-1,-0.3) -- (0.94,-0.3)node[below right]{\small $\eta_m$};
\draw[thick] (-1,0) -- (1,0)node[yshift=2pt,right]{\footnotesize $\vdots$};
\draw[thick] (-1,0.3) -- (0.94,0.3)node[above right]{\small $\eta_1$};
\node[fill=white, minimum width = 1cm, minimum height = 0.7cm, leftymissy, inner sep = 1.5pt] (X) at (-0.5,0){$\alpha$};
\end{tikzpicture}\!\!
\raisebox{-3pt}{$\left.\rule{0pt}{24pt}\right)$}
:= \begin{tikzpicture}[scale = 0.7, baseline = 0pt]
\draw (0,-1) node{\small $\bullet$} arc(-90:90:1) node[near end, sloped]{\small $>$} node{\small $\bullet$};
\draw (0,-1) -- (-1,-1);
\draw (0,1) -- (-1,1);
\draw[thick] (-1,-0.3) -- (0.94,-0.3)node[below right]{\small $-\eta_m.C(\eta_m)$};
\draw[thick] (-1,0) -- (1,0)node[yshift=2pt,right=10pt]{\footnotesize $\vdots$};
\draw[thick] (-1,0.3) -- (0.94,0.3)node[above right]{\small $-\eta_1.C(\eta_1)$};
\node[fill=white, minimum width = 1cm, minimum height = 0.7cm, leftymissy, inner sep = 1.5pt] (X) at (-0.5,0){$\alpha$};
\end{tikzpicture} = \begin{tikzpicture}[scale = 0.7, baseline = 0pt]
\draw (0,-1) node{\small $\bullet$} arc(-90:90:1) node[near end, sloped]{\small $<$} node{\small $\bullet$};
\draw (0,-1) -- (-1,-1);
\draw (0,1) -- (-1,1);
\draw[thick] (-0.94,0.3) -- (0.3,0.3) .. controls (0.5,0.3) and (0.74,-0.3).. (0.94,-0.3)node[below right]{\small $\halft_V^{-1}(\eta_1)$}node[pos = 0.5, circle, fill = white, scale = 0.66]{};
\draw[thick] (-1,0) -- (1,0)node[yshift=2pt,right=5pt]{\footnotesize $\vdots$} node[pos = 0.81, circle, fill = white, scale = 0.46]{};
\draw[thick] (-0.94,-0.3) -- (0.3,-0.3) .. controls (0.5,-0.3) and (0.74,0.3).. (0.94,0.3)node[above right]{\small $\halft_V^{-1}(\eta_m)$} ;
\node[fill=white, minimum width = 1cm, minimum height = 0.7cm, leftymissy, inner sep = 1.5pt] (X) at (-0.5,0){$\alpha$};
\end{tikzpicture}.
\]
It is well-defined by \cite[Proposition~2.7]{Cos}. Note that ${\rm inv}_e$ is neither an algebra morphism nor a~comodule morphism.
\end{Definition}

\begin{Proposition}
Let $e_r$ be the right edge of the bigon and $\alpha \in \mathscr{S}(B)$, then $t(\alpha) = \varepsilon \circ {\rm inv}^{-1}_{e_r}(\alpha)$.
In~particular, by Remark $\ref{rmkFormActOnSS}$, for $\mathfrak{S}$ a marked surface with a right edge~$e$, the half twist acts on~$\mathscr{S}(\mathfrak{S})$ as~${\rm inv}_{e}^{-1}$.
\end{Proposition}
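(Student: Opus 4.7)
The plan is to compare the geometric formula defining the half-coribbon functional $t$ with the third (rightmost) depiction of ${\rm inv}_{e_r}(\alpha)$ given in the definition of ${\rm inv}_e$. That depiction exhibits ${\rm inv}_{e_r}(\alpha)$ as the tangle $\alpha$ decorated near the right edge by an \emph{anti-}half-twist on the strands and by $\halft_V^{-1}$ applied to each of the right states. Inverting this operation geometrically just reverses both decorations: ${\rm inv}_{e_r}^{-1}(\alpha)$ is $\alpha$ decorated near the right edge by a half-twist and by $\halft_V$ on the right states.

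Now I would observe that this is exactly the tangle appearing inside the counit in the middle formula of the definition of $t$ in the preceding proposition. So once this geometric identification is checked, applying $\varepsilon$ to both sides gives $\varepsilon \circ {\rm inv}_{e_r}^{-1}(\alpha) = t(\alpha)$ immediately. The only verification needed is that inverting the assignment $\eta \mapsto \halft_V^{-1}(\eta)$ on states is the same as $\halft_V$, and that inverting the pictorial anti-half-twist gives the pictorial half-twist; both follow from $\halft$ being a natural isomorphism on ${\rm Sk}_\mathcal{V}(\mathbb{R}^2)$ together with the fact that ${\rm inv}_{e_r}$ is an involution up to the scalar coefficients $C(\eta)$, which are precisely the coefficients of $\halft_V$ read on the basis $v_\pm$.

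For the ``In particular'' statement, I would use Remark~\ref{rmkFormActOnSS}: the action of the half twist on $\mathscr{S}(\mathfrak{S})$ through the right-edge coaction is $\halft_{\mathscr{S}(\mathfrak{S})} = ({\rm Id} \otimes t) \circ \Delta$, and unpacking the remark for the specific form $t$ identified above shows that this operation inserts a half-twist into the tangle near $e$ and applies $\halft_V$ to the right states -- which by the first part of the proof is precisely ${\rm inv}_e^{-1}$ on $\mathscr{S}(\mathfrak{S})$.

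The main (mild) obstacle is bookkeeping: checking rigorously that the three pictures given for ${\rm inv}_e(\alpha)$ do give the claimed formula for ${\rm inv}_e^{-1}$, and in particular that the scalar factors $-\eta.C(\eta)$ versus $-\eta.C(-\eta)^{-1}$ line up correctly with the matrix of $\halft_V^{\pm 1}$ on $v_\pm$. This is a small computation of two cases ($\eta = +$ and $\eta = -$) using $\halft_V(v_+) = q^{\frac{1}{2}} v_-$ and $\halft_V(v_-) = -q^{\frac{5}{2}} v_+$, matched against $C(+) = -q^{-\frac{5}{2}}$ and $C(-) = q^{-\frac{1}{2}}$; once this is done everything else is a direct translation between two descriptions of the same local picture.
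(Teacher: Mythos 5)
Your proposal is correct and is essentially the paper's own argument, just read in the opposite direction: the paper verifies the equivalent identity $t \circ {\rm inv}_{e_r} = \varepsilon$ by applying $t$ to the third picture for ${\rm inv}_{e_r}(\alpha)$ and letting the half-twist/anti-half-twist and $\halft_V/\halft_V^{-1}$ cancel, while you instead invert the picture for ${\rm inv}_{e_r}$ first and recognize the resulting diagram as the one in the definition of $t$. The paper's formulation is marginally cleaner in that one never needs to describe ${\rm inv}_{e_r}^{-1}$ explicitly, but the cancellation being checked is the same. One small caveat: your phrase ``${\rm inv}_{e_r}$ is an involution up to the scalar coefficients $C(\eta)$'' is not really accurate (${\rm inv}_{e_r}^2$ multiplies each strand by $\theta_V^{-1} = -q^{-3}$, so it is not an involution up to a single overall scalar), and in any case it is not what the argument needs; what is needed is simply that the two decorations (opposite crossing, $\halft_V$ on states) are the inverse of the two decorations defining ${\rm inv}_{e_r}$, which you do observe. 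The ``In particular'' step via Remark~\ref{rmkFormActOnSS} is exactly as in the paper.
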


\begin{proof}
Indeed,
\begin{gather*}
t\circ {\rm inv}_{e_r} (\alpha)=t
\raisebox{-3pt}{$\left(\rule{0pt}{24pt}\right.$}\!\!
\begin{tikzpicture}[scale = 0.7, baseline = 0pt]
\draw (0,-1) node{\small $\bullet$} arc(-90:90:1) node[near end, sloped]{\small $<$} node{\small $\bullet$};
\draw (0,-1) arc(-90:-270:1) node[near end, sloped]{\small $>$};
\draw[thick] (-0.94,0.3) -- (0.3,0.3) .. controls (0.5,0.3) and (0.74,-0.3).. (0.94,-0.3)node[below right]{\small $\halft_V^{-1}(\eta_1)$}node[pos = 0.5, circle, fill = white, scale = 0.66]{};
\draw[thick] (-1,0) -- (1,0)node[yshift=2pt,right=5pt]{\footnotesize $\vdots$} node[pos = 0.81, circle, fill = white, scale = 0.46]{};
\draw[thick] (-0.94,-0.3) -- (0.3,-0.3) .. controls (0.5,-0.3) and (0.74,0.3).. (0.94,0.3)node[above right]{\small $\halft_V^{-1}(\eta_m)$} ;
\node[fill=white, draw, circle, scale = 1.2, inner sep = 1.5pt] (X) at (-0.3,0){$\alpha$};
\end{tikzpicture}\!\!
\raisebox{-3pt}{$\left.\rule{0pt}{24pt}\right)$}
=\varepsilon \raisebox{-3pt}{$\left(\rule{0pt}{24pt}\right.$}\!\!
\begin{tikzpicture}[scale = 0.7, baseline = 0pt]
\draw (0,-1) node{\small $\bullet$} arc(-90:90:1) node[near end, sloped]{\small $<$} node{\small $\bullet$};
\draw (0,-1) arc(-90:-270:1) node[near end, sloped]{\small $>$};
\draw[thick] (-0.94,0.3) -- (-0.1,0.3) .. controls (0.15,0.3) and (0.15,-0.3).. (0.4,-0.3);
\draw[thick] (0.4,-0.3) .. controls (0.65,-0.3) and (0.65,0.3).. (0.9,0.3) -- (0.94,0.3)node[above right]{\small $\halft_V\circ\halft_V^{-1} (\eta_1)$};
\draw[thick] (-1,0) -- (0,0) -- (1,0)node[yshift=2pt,right]{\footnotesize $\vdots$} node[pos = 0.15, circle, fill = white, scale = 0.46]{} node[pos = 0.65, circle, fill = white, scale = 0.46]{};
\draw[thick] (-0.94,-0.3) -- (-0.1,-0.3) .. controls (0.15,-0.3) and (0.15,0.3).. (0.4,0.3);
\draw[thick] (0.4,0.3) .. controls (0.65,0.3) and (0.65,-0.3).. (0.9,-0.3) -- (0.94,-0.3)node[below right]{\small $\halft_V\circ\halft_V^{-1} (\eta_m)$};
\node[rectangle, fill=white, draw = black, minimum height = 0.7cm, inner sep = 1.5pt] (X) at (-0.5,0){$\alpha$};
\end{tikzpicture}\!\!
\raisebox{-3pt}{$\left.\rule{0pt}{24pt}\right)$} = \varepsilon(\alpha).
\tag*{\qed}
\end{gather*}
\renewcommand{\qed}{}
\end{proof}

\begin{Remark}
In \cite[Section~3.4]{Cos} the counit is defined as $\varepsilon = i_* \circ {\rm inv}_{e_r}$, where $i$ is the inclusion of the bigon in the monogon. Surprisingly enough, the half coribbon functional is then $t = i_*$ and is simpler to write. This suggests that there is a half twist built in the construction of stated skein algebras. We~claim this comes from the passage from right to left comodule structure.
\end{Remark}
If $A$ is a right comodule over a Hopf algebra $H$, it is naturally a left comodule with $\Delta^L = \operatorname{fl} \circ ({\rm Id}_A \otimes S) \circ \Delta$, and we will consider these two as the ``same'' comodule. Indeed, one has an isomorphism of categories $(-)^L\colon H\text{-}{\rm comod} \to {\rm comod}\text{-}H$ which is the identity on vector spaces, switches the action as above, and is the identity on morphisms. However, when passing from right to left edges -- and comodule structures -- on stated skein algebras, one uses another way to see a right comodule $A$ as a left, namely with $\Delta^l = \operatorname{fl} \circ ({\rm Id}_A \otimes \operatorname{rot}_*) \circ \Delta$. Again one has $(-)^l\colon H\text{-}{\rm comod} \to comod\text{-}H$ with the identity on morphisms. So we have two functors $(-)^L$ and $(-)^l$ and we claim that the difference between them is precisely a half twist:

\begin{Proposition}\label{prophtDiffLl}
One has $(-)^l = (-)^L \circ (-)^\halft$ and $(-)^L = (-)^l \circ (-)^\halft$. Equivalently the map $\halft_A^l \colon A^l \to A^L$ is an isomorphism of left $\Oq$-comodules.
\end{Proposition}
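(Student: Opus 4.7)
The two equations in the proposition are equivalent because $(-)^\halft$ is an involution on right $\Oq$-comodules (since $\halft^2 = \theta$ acts as the identity on comodule structures), so I would focus on $(-)^l = (-)^L \circ (-)^\halft$. For each right $\Oq$-comodule $A$ this amounts to saying that the left coactions $\Delta^l_A$ on $A$ and $\Delta^L_{A^\halft}$ on $A^\halft$ coincide as coactions on the common underlying vector space. Equivalently, I would show that the linear map $\halft_A \colon A \to A$ intertwines $\Delta^l_A$ and $\Delta^L_A$, hence defines a morphism of left comodules $A^l \to A^L$; it is then invertible as a linear map by convolution-invertibility of $t$.

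The approach is to verify the intertwining condition $\Delta^L_A \circ \halft_A = ({\rm Id} \otimes \halft_A) \circ \Delta^l_A$ directly. Unfolding both sides in Sweedler notation $\Delta_A(a) = a^{(0)} \otimes a^{(1)}$, and using coassociativity to write $\Delta^{(3)}_A(a) = a^{(0)} \otimes a^{(1)} \otimes a^{(2)}$, a routine computation yields
\[
\Delta^L_A(\halft_A(a)) = S(a^{(1)}) t(a^{(2)}) \otimes a^{(0)}, \qquad ({\rm Id} \otimes \halft_A) \Delta^l_A(a) = t(a^{(1)}) \operatorname{rot}_*(a^{(2)}) \otimes a^{(0)}.
\]
Since both sides depend on $A$ only through its right coaction, by naturality it suffices to verify the resulting identity in the universal case $A = \Oq$ with the right regular coaction, where (after contracting with an arbitrary functional on the $a^{(0)}$-slot) it reduces to the convolution equality
\[
S \star t = t \star \operatorname{rot}_* \colon \Oq \to \Oq.
\]

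I would then check this identity on the four generators $a, b, c, d$ by direct substitution using the explicit formulas for $S$, $\operatorname{rot}_*$ and $t$ on $\Oq$; this is a short calculation (for instance, at $x = a$ both sides evaluate to $-q^{5/2} b$, and the three other generators are analogous). To propagate from the generators to the whole of $\Oq$, which is the main subtlety since none of $S$, $t$, $\operatorname{rot}_*$ is a Hopf algebra morphism individually, the cleanest route is the geometric one: via the identification $t = i_*$ (inclusion of the bigon into the monogon) and the diagrammatic descriptions of $S$ and $\operatorname{rot}_*$ on stated tangles, the convolution identity becomes a tangle identity in the monogon that can be verified by an isotopy together with the explicit action of $\varphi, \varphi^{-1}$ on boundary states.

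The hard part will be this extension from generators, since the three ingredients $S$, $t$, $\operatorname{rot}_*$ have different multiplicative compatibilities (condition~$(3)$ of the half-coribbon introduces an $R$-matrix term, so $S\star t$ and $t \star \operatorname{rot}_*$ are not naively (anti-)algebra maps). An alternative would be to check the intertwining identity on the standard corepresentation $V$ directly -- where both sides are $2\times 2$ matrix computations using $\halft_V(v_+) = q^{1/2} v_-$ and $\halft_V(v_-) = -q^{5/2} v_+$ -- and then extend to all simples $V_n \subseteq V^{\otimes n}$, hence to all of $\Oqcom$ by semisimplicity, using $\halft_{V \otimes W} = (\halft_V \otimes \halft_W) \circ (\operatorname{fl} \circ c_{V,W})$ together with the analogous behaviour of $S$ and $\operatorname{rot}_*$ under tensor products; combining these, the intertwining identity for $V \otimes W$ reduces to the ones for $V$ and $W$ and structural properties of the coribbon structure.
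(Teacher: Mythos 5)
Your reduction is exactly the paper's: unwinding both coactions gives the convolution identity $S(\beta_{(1)})\,t(\beta_{(2)}) = t(\beta_{(1)})\,\operatorname{rot}_*(\beta_{(2)})$ on $\Oq\simeq\mathscr{S}(B)$, and the proposition follows from it. Where your proposal wavers is in verifying this identity, and your instinct that the ``geometric route'' is cleanest is the right one: the paper proves the identity for \emph{all} $\beta\in\mathscr{S}(B)$ at once by a single picture, applying the diagrammatic description of $S$ (rotate by $180^\circ$, negate states, multiply by $C$-coefficients) to the tangle representing $\halft_{\mathscr{S}(B)}(\beta)=(\operatorname{Id}\otimes t)\Delta(\beta)$, then re-drawing by reversing the orientations of the two boundary edges and factoring out a $\operatorname{rot}_*$, which leaves precisely $(t\otimes\operatorname{Id})\Delta(\beta)$. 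Since the argument is uniform in $\beta$, the ``extension from generators'' step you flag as the main subtlety simply never arises. That subtlety is real, though, and your alternative route (check on $V$, then extend via tensor powers and semisimplicity) would run into it head-on: $(-)^l$ is strictly monoidal because $\operatorname{rot}_*$ is an algebra morphism, whereas $(-)^L$ and $(-)^\halft$ are anti-monoidal only up to a flip, and the half-coribbon compatibility $t(ab)=t(b_{(1)})t(a_{(1)})R(a_{(2)}\otimes b_{(2)})$ introduces an $R$-matrix term that must cancel against those flips. So the intertwining identity does not pass to $V\otimes W$ mechanically; you would need to track this cancellation carefully. One minor mismatch with the paper: it carries out the pictorial check entirely inside the bigon $\mathscr{S}(B)$ using the explicit half-twist tangle for $t$, rather than passing to the monogon via $t=i_*$ as you suggest.
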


\begin{proof}
All these functors are the identity on morphisms and only change the comodule structure. The map $\halft_A^l\colon A^l \to \big(A^\halft\big)^l$ is just $\halft_A$ as a map of vector spaces and is an isomorphism of vector spaces. The comodule structure on $\big(A^\halft\big)^l$ is the unique so that $\halft_A^l$ is a~comodule morphism. We~show that it is a comodule morphism $A^l\to A^L$, and hence that $A^L = \big(A^\halft\big)^l$. Let $a \in A$, one compares $\Delta^L \circ \halft_A (a) = \Delta^L(a_{(1)} \otimes t(a_{(2)})) = Sa_{(2)}.t(a_{(3)}) \otimes a_{(1)}$ and $\big({\rm Id} \otimes \halft_A\big) \circ \Delta^l = ({\rm Id} \otimes \halft_A)(\operatorname{rot}_*(a_{(2)})\otimes a_{(1)}) = \operatorname{rot}_*(a_{(3)}).t(a_{(2)}) \otimes a_{(1)}$. We show directly that for $\beta \in \mathscr{S}(B)$, $S\beta_{(1)}.t(\beta_{(2)}) = \operatorname{rot}_*(\beta_{(2)}).t(\beta_{(1)})$:
\begin{gather*}
S \circ ({\rm Id} \otimes t) \circ \Delta (\beta)
\\ \qquad
{}=S\raisebox{-3pt}{$\left(\rule{0pt}{24pt}\right.$}\!\!
\begin{tikzpicture}[scale = 0.7, baseline = 0pt]
\draw (0,-1) node{\small $\bullet$} arc(-90:90:1) node[near end, sloped]{\small $<$} node{\small $\bullet$};
\draw (0,-1) arc(-90:-270:1) node[near end, sloped]{\small $>$};
\draw[thick] (-0.94,-0.3)node[below left]{\small $\varepsilon_n$} -- (0.3,-0.3) .. controls (0.5,-0.3) and (0.74,0.3).. (0.94,0.3)node[above right]{\small $-\eta_m.C(-\eta_m)^{-1}$} node[pos = 0.5, circle, fill = white, scale = 0.66]{};
\draw[thick] (-1,0) node[yshift=2pt, left]{\footnotesize $\vdots$} -- (1,0)node[yshift=2pt,right=5pt]{\footnotesize $\vdots$} node[pos = 0.81, circle, fill = white, scale = 0.46]{};
\draw[thick] (-0.94,0.3) node[above left]{\small $\varepsilon_1$} -- (0.3,0.3) .. controls (0.5,0.3) and (0.74,-0.3).. (0.94,-0.3)node[below right]{\small $-\eta_1.C(-\eta_1)^{-1}$};
\node[circle, fill=white, draw = black, scale = 1.2, inner sep = 1.5pt] (X) at (-0.2,0){$\beta$};
\end{tikzpicture}\!\!\!
\raisebox{-3pt}{$\left.\rule{0pt}{24pt}\right)$}
= \raisebox{-3pt}{$\left(\rule{0pt}{24pt}\right.$}\!\!
\begin{tikzpicture}[scale = 0.7, baseline = 0pt, rotate = 180]
\draw (0,-1) node{\small $\bullet$} arc(-90:90:1) node[near start, sloped]{\small $>$} node{\small $\bullet$};
\draw (0,-1) arc(-90:-270:1) node[near start, sloped]{\small $<$};
\draw[thick] (-0.94,-0.3)node[above right]{\small $-\varepsilon_n.C(\varepsilon_n)^{-1}$} -- (0.3,-0.3) .. controls (0.5,-0.3) and (0.74,0.3).. (0.94,0.3)node[below left]{\small $\eta_m.C(-\eta_m)^{-1}.C(-\eta_m)$} node[pos = 0.5, circle, fill = white, scale = 0.66]{};
\draw[thick] (-1,0) node[yshift=2pt, right=10pt]{\footnotesize $\vdots$} -- (1,0)node[yshift=2pt,left=10pt]{\footnotesize $\vdots$} node[pos = 0.81, circle, fill = white, scale = 0.46]{};
\draw[thick] (-0.94,0.3) node[below right]{\small $-\varepsilon_1.C(\varepsilon_1)^{-1}$} -- (0.3,0.3) .. controls (0.5,0.3) and (0.74,-0.3).. (0.94,-0.3)node[above left]{\small $\eta_1.C(-\eta_1)^{-1}.C(-\eta_1)$};
\node[circle, fill=white, draw = black, scale = 1.2, inner sep = 1.5pt, rotate = 180] (X) at (-0.2,0){$\beta$};
\end{tikzpicture}\!\!\!
\raisebox{-3pt}{$\left.\rule{0pt}{24pt}\right)$}
\\ \qquad
{}=\raisebox{-3pt}{$\left(\rule{0pt}{24pt}\right.$}\!\!
\begin{tikzpicture}[scale = 0.7, baseline = 0pt]
\draw (0,-1) node{\small $\bullet$} arc(-90:90:1) node[near start, sloped]{\small $<$} node{\small $\bullet$};
\draw (0,-1) arc(-90:-270:1) node[near start, sloped]{\small $>$};
\draw[thick] (-0.94,-0.3)node[below left]{\small $\eta_1$} -- (0.3,-0.3) .. controls (0.5,-0.3) and (0.74,0.3).. (0.94,0.3)node[above right]{\small $-\varepsilon_1.C(\varepsilon_1)^{-1}$} node[pos = 0.5, circle, fill = white, scale = 0.66]{};
\draw[thick] (-1,0) node[yshift=2pt, left]{\footnotesize $\vdots$} -- (1,0)node[yshift=2pt,right=5pt]{\footnotesize $\vdots$} node[pos = 0.81, circle, fill = white, scale = 0.46]{};
\draw[thick] (-0.94,0.3) node[above left]{\small $\eta_m$} -- (0.3,0.3) .. controls (0.5,0.3) and (0.74,-0.3).. (0.94,-0.3)node[below right]{\small $-\varepsilon_n.C(\varepsilon_n)^{-1}$};
\node[circle, fill=white, draw = black, scale = 1.2, inner sep = 1.5pt, rotate = 180] (X) at (-0.2,0){$\beta$};
\end{tikzpicture}\!\!
\raisebox{-3pt}{$\left.\rule{0pt}{24pt}\right)$}
\overset{\operatorname{rot}_*^2={\rm Id}}=
\operatorname{rot}_* \raisebox{-3pt}{$\left(\rule{0pt}{24pt}\right.$}\!\!
 \begin{tikzpicture}[scale = 0.7, baseline = 0pt, rotate = 180]
\draw (0,-1) node{\small $\bullet$} arc(-90:90:1) node[near start, sloped]{\small $>$} node{\small $\bullet$};
\draw (0,-1) arc(-90:-270:1) node[near start, sloped]{\small $<$};
\draw[thick] (-0.94,-0.3)node[above right]{\small $\eta_1$} -- (0.3,-0.3) .. controls (0.5,-0.3) and (0.74,0.3).. (0.94,0.3)node[below left]{\small $-\varepsilon_1.C(\varepsilon_1)^{-1}$} node[pos = 0.5, circle, fill = white, scale = 0.66]{};
\draw[thick] (-1,0) node[yshift=2pt, right]{\footnotesize $\vdots$} -- (1,0)node[yshift=2pt,left=10pt]{\footnotesize $\vdots$} node[pos = 0.81, circle, fill = white, scale = 0.46]{};
\draw[thick] (-0.94,0.3) node[below right]{\small $\eta_m$} -- (0.3,0.3) .. controls (0.5,0.3) and (0.74,-0.3).. (0.94,-0.3)node[above left]{\small $-\varepsilon_n.C(\varepsilon_n)^{-1}$};
\node[circle, fill=white, draw = black, scale = 1.2, inner sep = 1.5pt] (X) at (-0.2,0){$\beta$};
\end{tikzpicture}\!\!
\raisebox{-3pt}{$\left.\rule{0pt}{24pt}\right)$}
\\ \qquad
{}= \operatorname{rot}_* \circ (t \otimes {\rm Id}) \circ \Delta(\beta).
\tag*{\qed}
\end{gather*}
\renewcommand{\qed}{}
\end{proof}

\begin{Proposition}\label{propInvHHO}
Given two right $\Oq$-comodules $A$ and $B$ one has $ (A \otimes B)^{{\rm inv}} = HH^0\big(A^L \otimes B\big) \allowbreak= HH^0\big(\big(A^\halft\big)^l \otimes B\big)$.
\end{Proposition}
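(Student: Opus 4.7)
The plan is to treat the two equalities separately, with the second one being essentially formal and the first requiring a short unpacking of definitions.

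For the second equality $HH^0(A^L \otimes B) = HH^0\bigl((A^\halft)^l \otimes B\bigr)$, the key observation is that Proposition~\ref{prophtDiffLl} gives the identity $(-)^L = (-)^l \circ (-)^\halft$ of functors $\Oq\text{-}{\rm comod} \to {\rm comod}\text{-}\Oq$. Applied to $A$, this says $A^L$ and $(A^\halft)^l$ are literally the same left $\Oq$-comodule: the underlying vector space is $A$ in both cases, and the left coactions coincide. Consequently, the two $(\Oq,\Oq)$-bicomodules $A^L \otimes B$ and $(A^\halft)^l \otimes B$ are identical (same space, same left coaction from the first factor, same right coaction from the second factor), and so are their $0$-th Hochschild cohomologies.

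For the first equality $(A \otimes B)^{\rm inv} = HH^0(A^L \otimes B)$, I would unpack the definition of the invariant subspace attached to the gluing of a right edge of the first surface to a left edge of the second. The invariance condition expresses that the two coactions, one coming from the right edge of $A$ (a right coaction $\Delta_A \colon A \to A \otimes \Oq$) and the other from the left edge of $B$ (a left coaction $\Delta_B^l \colon B \to \Oq \otimes B$ obtained by converting $B$'s right edge coaction via $\operatorname{rot}_*$ or equivalently $S$ up to half twist), must agree. The natural conversion that identifies these two coactions on a common side, so that one can write an equality, is precisely to view $A$ as a left comodule via $A^L$: then the equality $\Delta_1(x) = \operatorname{fl} \circ \Delta_2(x)$ on $A^L \otimes B$ is exactly the invariance condition. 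This is a direct verification, identifying $(A\otimes B)^{\rm inv}$ with the cotensor product $A \square_{\Oq} B = HH^0(A^L \otimes B)$.

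The main obstacle I anticipate is bookkeeping: keeping track of which functor ($(-)^L$, $(-)^l$, or $(-)^\halft$) must be applied to which factor, since the internal-skein-algebra convention treats right edges via the half twist while the stated-skein-algebra/Hopf-algebraic convention uses the antipode. The content of the proposition, and more broadly of this subsection, is that these two bookkeeping conventions agree thanks to Proposition~\ref{prophtDiffLl}; once this is set up carefully, both equalities are short.
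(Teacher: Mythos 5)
Your proposal is correct and follows the same route as the paper: the second equality is exactly an application of Proposition~\ref{prophtDiffLl} (since $(-)^L = (-)^l\circ(-)^\halft$ makes $A^L$ and $(A^\halft)^l$ the same left comodule, hence the same bicomodule $A^L\otimes B = (A^\halft)^l\otimes B$), and the first is a direct Hopf-algebra computation identifying the invariants of $A\otimes B$ with the cotensor/Hochschild condition on $A^L\otimes B$, which is precisely what the paper invokes.
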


\begin{proof}
The first equality is true in any Hopf algebra by a direct computation, and the second is just the above proposition.
\end{proof}

\subsection{The general relation}
We can now express the full correspondence between stated skein algebras and internal skein algebras, with both right and left boundary edges.
On a single edge, by Remark~\ref{rmkASigmaRnaive} one gets $A_\Sigma^R = A_\Sigma^\halft = \mathscr{S}(\Sigma)^\halft$ equipped with the natural isomorphism $(\sigma_{(-)^\halft})^\halft$, using the half twist in $\Oqcom$. More precisely, for $\alpha \in \Hom_{{\rm Sk}_\mathcal{V}(\mathfrak{S})}(\varnothing\lhd V,\varnothing)$ one has
\[
\sigma_{V^\halft}(\alpha)^\halft = \halft_{A_\Sigma^\halft}^{-1} \circ \sigma_{V^\halft}(\alpha) \circ \halft_V = \halft_{A_\Sigma^\halft}^{-1} \circ \sigma_{V}(\alpha \circ (\halft_V \rhd\varnothing)) = \halft_{A_\Sigma^\halft}^{-1} \circ \sigma^R_{V}(\alpha).
\]
As in Remark \ref{rmkLinkTwoAsigmaR} its algebra structure is
\begin{align*}
\halft_{A_\Sigma^\halft}^{-1} \circ m^R \circ \halft_{A_\Sigma^\halft} \otimes \halft_{A_\Sigma^\halft} & = \halft_{A_\Sigma^\halft}^{-1} \circ m \circ c_{A_\Sigma \otimes A_\Sigma} \circ \halft_{A_\Sigma^\halft} \otimes \halft_{A_\Sigma^\halft} \\
& = \halft_{A_\Sigma^\halft}^{-1} \circ m \circ \operatorname{fl} \circ \halft_{A_\Sigma \otimes A_\Sigma} = m^\halft \circ \operatorname{fl},
\end{align*}
so $m^{\rm op}$ as maps of vector spaces.

Note that because $S$ is an anti-algebra morphism, the functor $(-)^L$ is (almost strictly) anti-monoidal (like the half twist, it is the identity on vector spaces but is anti-monoidal on the comodule structure) namely $\operatorname{fl}\colon (V \otimes W)^L \to W^L \otimes V^L$ is an isomorphism of left $H$-comodules. Thus a right $H$-comodule algebra $A$ induces a left $H$-comodule algebra $A^L$ with $A^L \otimes A^L \overset{\operatorname{fl}}\simeq (A\otimes A)^L \overset{m}\to A^L$, namely with product $m^{\rm op}$.
When one has multi-edges one can switch the $i$-th right $\Oq$-comodule structure to a left using either $S$ or $\operatorname{rot}_*$ and we denote the associated functors by $(-)^{L_i}$ and $(-)^{l_i}$, one can take opposite product on the $i$-th coordinate which we denote by $m^{{\rm op}_i}$, and there are half twists on each coordinates, which we denote by $\halft_i$.

\begin{Theorem}\label{thmRelationMultiAndRight}
Let $\mathfrak{S}$ be a marked surface with $n$ boundary edges labelled either as left $($num\-be\-red~$1$ to~$k)$ or as right $($numbered $k+1$ to $n)$ edges. There is an isomorphism of $\big(\Oq^{\otimes k},\allowbreak \Oq^{\otimes n-k}\big)$-bicomodules algebras $A_{\mathfrak{S}}^{L_{k+1},\dots,L_n} \simeq \mathscr{S}(\mathfrak{S})$.
\end{Theorem}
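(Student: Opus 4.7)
The plan is to reduce to Theorem~\ref{thmRelationMulti} (the all-left-edge case) by systematically converting each right edge into a left one using the half twist, and simultaneously identifying the corresponding passage $(-)^l \leadsto (-)^L$ on the stated skein side via Proposition~\ref{prophtDiffLl}. First, I would construct the natural isomorphism
\[
\St_X \colon \Hom_{{\rm SK}_\mathcal{V}(\mathfrak{S})}(X \bmrhd \varnothing, \varnothing) \tilde{\to} \Hom_{\mathcal{E}^{\boxtimes n}}\bigl(X, A_{\mathfrak{S}}^{L_{k+1},\dots,L_n}{}^{\text{(inverted)}}\bigr),
\]
where the target is computed in the version of $\mathcal{E}^{\boxtimes n}$ in which all coordinates are right $\Oq$-comodules, by essentially the same geometric recipe as in Theorem~\ref{thmRelationMulti}: a tangle $\alpha$ together with input states $v_{\vec\varepsilon_1}\otimes\cdots\otimes v_{\vec\varepsilon_n}$ is sent to the stated tangle $\alpha$ with those states imposed along the respective edges. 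The only modification occurs on right edges: there we use the natural isomorphism $\sigma^R = \sigma \circ (\halft \rhd -)$ of Proposition~\ref{propASigRightBrOpp}, which amounts to inserting a half twist on every strand ending on a right edge before reading off its state.

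Second, I would check that $\St_X$ is a morphism of $(\Oq^{\otimes k},\Oq^{\otimes n-k})$-bicomodules. Coordinate by coordinate, the left edges reproduce Proposition~\ref{propStOqmorph} verbatim. For each right edge, pushing a tangle endpoint through the boundary interval on the stated skein side is naturally governed by $\operatorname{rot}_*$, i.e.\ by $(-)^l$, whereas internal skein algebras know only about $(-)^L$. Proposition~\ref{prophtDiffLl} identifies the discrepancy $(-)^l$ vs $(-)^L$ with precisely one half twist per coordinate, which is exactly the half twist inserted in the definition of $\St_X$ via $\sigma^R$. Naturality on morphisms in ${\rm TL}^{\otimes n}$ is then checked edge by edge as in Proposition~\ref{propStNat}; the half twists commute with caps and cups by the half-coribbon axioms, so the boundary skein relations on a right edge correspond to naturality with respect to $\halft$-modified caps and cups.

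Third, I would verify the algebra morphism property. On the internal side, Proposition~\ref{propASigRightBrOpp} shows that the algebra structure on $A_\mathfrak{S}$ produced by right actions is the braided opposite on the corresponding coordinates; since the functor $(-)^L$ is anti-monoidal via the antipode (as recalled just before the theorem), applying $(-)^{L_{k+1},\dots,L_n}$ flips the product on each right coordinate and thus cancels the braided opposite, yielding the ordinary product on each factor. On the stated skein side, the product is the usual vertical stacking on every coordinate. Matching the two under $\St$ then reduces, via the same cocontinuous extension argument as in the proof of Theorem~\ref{thmRelationMulti}, to checking the identity on tangles with prescribed boundary states, which is a direct computation once the half twists are in place.

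The main obstacle is bookkeeping: one must track, edge by edge, that the half twist geometrically inserted on right edges on the internal side matches exactly the compositional discrepancy $(-)^L = (-)^l \circ (-)^\halft$ on the stated side, and that after these matchings are done the braided-opposite product becomes the ordinary product under the anti-monoidal $(-)^L$. Apart from this careful accounting, all ingredients are in place: the all-left-edge case (Theorem~\ref{thmRelationMulti}), the half-twist description of right internal skein algebras (Proposition~\ref{propASigRightBrOpp}, Remark~\ref{rmkLinkTwoAsigmaR}), the geometric description of the half twist on $\mathscr{S}(B)$ as $\varepsilon \circ \operatorname{inv}_{e_r}^{-1}$, and the equality $(-)^L = (-)^l \circ (-)^\halft$ (Proposition~\ref{prophtDiffLl}).
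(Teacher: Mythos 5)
Your plan follows essentially the same route as the paper: reduce to the all-left case (Theorem~\ref{thmRelationMulti}), convert each right edge via the half twist (Remark~\ref{rmkASigmaRnaive} / Proposition~\ref{propASigRightBrOpp}), and match the $(-)^l$ versus $(-)^L$ discrepancy on the stated-skein side via Proposition~\ref{prophtDiffLl}; the paper's proof just packages this as a compact algebraic composition rather than rebuilding the geometric natural isomorphism edge by edge. One point in your Step 3 deserves care: you say that applying $(-)^{L_{k+1},\dots,L_n}$ "cancels the braided opposite," but $(-)^L$ is anti-monoidal in the \emph{strict} (vector-space) sense, so by itself it only produces the strict opposite $m^{\rm op}$. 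The cancellation works because the half-twist conjugation of Remark~\ref{rmkLinkTwoAsigmaR} converts the braided opposite $m^R = m\circ c$ (on $A_\Sigma$ with $\sigma^R$) into $m^\halft\circ\operatorname{fl}$, which equals $m^{\rm op}$ \emph{as a map of vector spaces} since the half twist on $\Oq$-comodules is the identity on underlying vector spaces; only after that conversion is the strict opposite from $(-)^L$ a genuine inverse. Since you do cite Remark~\ref{rmkLinkTwoAsigmaR} among your ingredients, this is a clarification of the bookkeeping you already flag rather than a gap; but it is precisely the step where the argument would fail if one tried to cancel $m^{\rm bop}$ directly against $(m)^{\rm op}$ without interposing the half twist.
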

\begin{proof}
To avoid confusion we denote the stated skein algebra of the marked surface $\mathfrak{S}$ by $\mathscr{S}^R(\mathfrak{S})$ when it is seen as a right $\Oq^{\otimes n}$-comodule and by~$\mathscr{S}(\mathfrak{S})= \mathscr{S}^R(\mathfrak{S})^{l_{k+1},\dots,l_n}$ when it is seen as an $\big(\Oq^{\otimes k},\Oq^{\otimes n-k}\big)$-bicomodule. We denote by $m$ its product, which is the same in both cases.
By Theorem~\ref{thmRelationMulti}, $\mathscr{S}^R(\mathfrak{S})$ is the internal skein algebra of $\mathfrak{S}$ with every edge labelled as left, and then by Remark \ref{rmkASigmaRnaive} on coordinates $k+1,\dots,n$ one may take $A_\mathfrak{S} := \mathscr{S}^R(\mathfrak{S})^{\halft_{k+1},\dots,\halft_n}$ as algebra object in $\mathcal{E}^{\boxtimes n}$ with skew monoidal structure ${\bmotimes}$. The algebra structure on $\mathscr{S}^R(\mathfrak{S})^{\halft_{k+1},\dots,\halft_n}$ is $m^{{\rm op}_{k+1},\dots,{\rm op}_n}$.
Thus by Proposition~\ref{prophtDiffLl},
\[
A_\mathfrak{S}^{L_{k+1},\dots,L_n} := \big(\mathscr{S}^R(\mathfrak{S})^{\halft_{k+1},\dots,\halft_n}\big)^{L_{k+1},\dots,L_n} = \mathscr{S}^R(\mathfrak{S})^{l_{k+1},\dots,l_n}=\mathscr{S}(\mathfrak{S})
\]
 as $\big(\Oq^{\otimes k},\Oq^{\otimes n-k}\big)$-bi\-co\-modu\-les, and the algebra structure on \[
 \big(\mathscr{S}^R(\mathfrak{S})^{\halft_{k+1},\dots,\halft_n}\big)^{L_{k+1},\dots,L_n} \qquad \text{is} \qquad (m^{{\rm op}_{k+1},\dots,{\rm op}_n})^{{\rm op}_{k+1},\dots,{\rm op}_n}=m.\tag*{\qed}
 \]\renewcommand{\qed}{}
\end{proof}

\begin{Remark}
A nice miracle with stated skein algebras is that the quantum group $\Oq$, which is used to define the tangle invariants used to define stated skein algebras, is re-obtained as the stated skein algebra of the bigon. One can see why this should be true in internal skein algebras. By~Defi\-ni\-tion~\ref{defMultiEdgeASigma}, the internal skein algebra of the bigon is an object $A_B \in \Oq^{\otimes 2}\text{-}{\rm comod}$ together with a~natural isomorphism
\begin{align*}
\Hom_{{\rm SK}(\mathbb{R}^2)}((X,Y)\rhd\varnothing,\varnothing) & = \Hom_{\Oqcom}(X\otimes Y,k) \\
& \ \tilde{\to} \Hom_{\Oq^{\otimes 2}\text{-}{\rm comod}}(X \otimes Y,A_B)
\end{align*}
 for $X,Y\in\Oqcom$. We set $A_B = \Oq$ with usual first right comodule structure $\Delta_1=\Delta$ and with second comodule strucure its left one switched using $L_2^{-1}$ namely $\Delta_2=\operatorname{fl}\circ\big(S^{-1}\otimes {\rm Id}\big)\circ \Delta$. The demanded isomorphism is given by $f\mapsto \tilde{f}$ where \[
 \tilde{f}(x\otimes y)= x_{(2)}. f(x_{(1)}\otimes y) = S(y_{(2)}). f(x\otimes y_{(1)}).
 \]
  Its inverse is given by $\tilde{f}\mapsto \varepsilon \circ \tilde{f}$.
\end{Remark}

\begin{Remark}\label{rmkPbComRightActLeft}
Despite this theorem, it is still annoying that in the simplest case one wants to see the boundary at the right for stated skein algebras and at the left for internal skein algebras. This should be solvable by considering the category of left (instead of right) $\Oq$-comodules as coefficients, so it is a minor issue.
\end{Remark}

\subsection{Excision properties of multi-edges internal skein algebras} Let $\mathfrak{S}_1 \hookleftarrow C \hookrightarrow \mathfrak{S}_2$ be a right and a left thick embeddings in two marked surfaces and $\mathfrak{S}$ their collar gluing. Namely, $C$ embeds as a sequence of $k$ right boundary edges $\vec{c}_1$ of $\partial \mathfrak{S}_1$ and as $k$ left boundary edges $\vec{c}_2$ of $\partial \mathfrak{S}_2$, and $\mathfrak{S}$ is the gluing $\mathfrak{S}_1 \cup_{\vec{c}_1 = \vec{c}_2} \mathfrak{S}_2$. We show how to compute~$A_\mathfrak{S}$ from $A_{\mathfrak{S}_1}$ and $A_{\mathfrak{S}_2}$.
The general idea goes as follows. In the case where $\mathfrak{S}_1$ and $\mathfrak{S}_2$ both have a single boundary edge, so $k=1$, one wants to describe endomorphisms $\alpha$ of the empty set in ${\rm Sk}_\mathcal{V}(\mathfrak{S})$. By Corollary \ref{corrMorphSkRecoll} they are described by a morphism $\alpha_1\colon \varnothing \to \varnothing \lhd V$ in ${\rm Sk}_\mathcal{V}(\mathfrak{S}_1)$ and a morphism $\alpha_2\colon V \rhd \varnothing \to \varnothing$ in ${\rm Sk}_\mathcal{V}(\mathfrak{S}_2)$, linked by an isomorphism $\iota_V \colon \varnothing \lhd V \to V \rhd \varnothing$, which is just a slanted skein crossing over $C$ in ${\rm Sk}_\mathcal{V}(\mathfrak{S})$, see the idea of proof of Theorem~\ref{thExcSkCat}. One can reconstruct $\alpha$ as $\alpha = ({\rm Id}_\varnothing , \alpha_2) \circ \iota_V \circ (\alpha_1 , {\rm Id}_\varnothing)$. The morphisms $\alpha_{1,2}$ are well defined up to balancing, namely naturality of $\iota$. Now, by definition of $A_{\mathfrak{S}_1}$ and $A_{\mathfrak{S}_2}$, they are described by some $f_1 \in \Hom_{\mathcal{E}}(1_\mathcal{V},A_{\mathfrak{S}_1}\otimes V)$ and $f_2\in\Hom_\mathcal{E}(V,A_{\mathfrak{S}_2})$. Composing them mimicking the reconstruction of $\alpha$ gives a morphism $f=({\rm Id}_{A_{\mathfrak{S}_1}}\otimes f_2)\circ f_1\colon 1_\mathcal{V} \to A_{\mathfrak{S}_1}\otimes A_{\mathfrak{S}_2}$, namely an invariant inside $A_{\mathfrak{S}_1}\otimes A_{\mathfrak{S}_2}$. This suggests $A_\mathfrak{S} \simeq (A_{\mathfrak{S}_1}\otimes A_{\mathfrak{S}_2})^{{\rm inv}}$, which we will prove below.
Now we need to define what we mean by invariants of a tensor product in any ribbon category~$\mathcal{V}$.
\begin{Definition} \label{defInvRibCat}
Let $\mathcal{V}$ be a ribbon category, $\mathcal{E} = \Free(\mathcal{V})$ and $n \geq 2$. For $1\leq i <j \leq n-1$ we denote the tensor product of coordinates $i$ and $j$ by
\[
\otimes_{i,j}\colon\ \begin{cases}
\mathcal{V}^{\otimes n}  \to  \mathcal{V}^{\otimes n-1},
\\
(V_1,\dots,V_n)  \mapsto  (V_1,\dots, V_{i-1}, V_i \otimes V_j, V_{i+1},\dots, V_{j-1},V_{j+1},\dots,V_n).\end{cases}
\]
For $\vec{k}_1<\vec{k}_2$ two sequences of $k$ distinct indices we denote the tensor product of coordinates~$\vec{k}_1$ with coordinates~$\vec{k}_2$ by $\otimes_{\vec{k}_1,\vec{k}_2} \colon \mathcal{V}^{\otimes n}\to \mathcal{V}^{\otimes n-k}$. It extends to $\otimes_{\vec{k}_1,\vec{k}_2}\colon \mathcal{E}^{\boxtimes n} \to \mathcal{E}^{\boxtimes n-k}$ by cocontinuity.
We~denote the unit on $i$-th coordinate by
\[
\eta_i \colon \begin{cases}
\mathcal{V}^{\otimes n-2} \to \mathcal{V}^{\otimes n-1},
\\
\big(V_1,\dots, \overset{\vee}{V_i},\dots, \overset{\vee}{V_j},\dots,V_n\big) \mapsto\big(V_1,\dots,1_\mathcal{V},\dots,\overset{\vee}{V_j},\dots,V_n\big)
\end{cases}
\]
and the unit on $\vec{k}_1$-th coordinates as $\eta_{\vec{k}_1}\colon \mathcal{V}^{\otimes n-2k}\to \mathcal{V}^{\otimes n-k}$.

Let $X \in \mathcal{E}^{\boxtimes n}$, we want to define its $\big(\vec{k}_1,\vec{k}_2\big)$-invariants $X^{{\rm inv}_{\vec{k}_1,\vec{k}_2}} \in \mathcal{E}^{\boxtimes n-2k}$. One only needs to describe morphisms from any $\vec{V} \in \mathcal{V}^{\otimes n-2k}$ to it. We set
\[
X^{{\rm inv}_{\vec{k}_1,\vec{k}_2}}\big(\vec{V}\big) = \Hom_{\mathcal{E}^{\boxtimes n-2k}}\big(\vec{V},X^{{\rm inv}_{\vec{k}_1,\vec{k}_2}}\big) := \Hom_{\mathcal{E}^{\boxtimes n-k}}\big(\eta_{\vec{k}_1}(\vec{V}),\otimes_{\vec{k}_1,\vec{k}_2}(X)\big).
\]
 For $X \in \mathcal{E}^{\boxtimes n_1}\boxtimes \mathcal{E}^{\boxtimes \vec{k}_1}$ and $Y \in \mathcal{E}^{\boxtimes \vec{k}_2}\boxtimes \mathcal{E}^{\boxtimes n_2}$ we write $X \otimes_{\vec{k}_1,\vec{k}_2} Y := \otimes_{\vec{k}_1,\vec{k}_2}(X,Y)$. Then
 \begin{align*}
(X,Y)^{{\rm inv}_{\vec{k}_1,\vec{k}_2}}((V_{\vec{n}_1},V_{\vec{n}_2})) :&= \Hom_{\mathcal{E}^{\boxtimes n_1+n_2+k}}\big((V_{\vec{n}_1},1_{\mathcal{V}^{\otimes k}},V_{\vec{n}_2}),X\otimes_{\vec{k}_1,\vec{k}_2}Y\big)
\\
&= \Hom_{\mathcal{E}^{\boxtimes n_1+n_2+k}}\big((V_{\vec{n}_1},1_{\mathcal{V}^{\otimes k}})\otimes_{\vec{k}_1,\vec{k}_2}(1_{\mathcal{V}^{\otimes k}} ,V_{\vec{n}_2}),X\otimes_{\vec{k}_1,\vec{k}_2}Y\big).
 \end{align*}
\end{Definition}

For $\mathcal{V} = \Oqcomfin$ we get a notion of invariants for bicomodules $(X_i,X_j)\in \mathcal{V}^{\otimes 2}$ where we first ``merge'' the two comodule structures (by the product, in the definition of the tensor product) and then take invariants in the usual sense, namely maps $k \to X_i \otimes X_j$.
\begin{Theorem}\label{thmExcisionAFS}
Let $\mathfrak{S}_1$ be a marked surface with $n_1+k$ boundary edges with a sequence of $k$ right boundary edges $\vec{c}_1$ $($numbered $\vec{k}_1=\{n_1+1,\dots,n_1+k\})$ and $\mathfrak{S}_2$ a marked surface with $n_2+k$ boundary edges with a sequence of $k$ left boundary edges $\vec{c}_2$ $($numbered $\vec{k}_2=\{ n_1+k+1,\dots,n_1+2k\})$. We write $\vec{n}_1 = \{1,\dots,n_1\}$ and $\vec{n}_2 = \{n_1+2k+1,\dots,n_1+2k+n_2\}$ the indices of the other edges of $\mathfrak{S}_1$ and~$\mathfrak{S}_2$. Let $\mathfrak{S} = \mathfrak{S}_1 \cup_{\vec{c}_1=\vec{c}_2}\mathfrak{S}_2$, then one has an isomorphism $A_\mathfrak{S} \simeq (A_{\mathfrak{S}_1}, A_{\mathfrak{S}_2})^{{\rm inv}_{\vec{k}_1,\vec{k}_2}}$ in $\mathcal{E}^{\boxtimes n_1+n_2}$.
Note that one has two thick embeddings $\mathfrak{S}_1 \hookleftarrow C \hookrightarrow \mathfrak{S}_2$ where $C = \sqcup^k (0,1)$ and $\mathfrak{S}$ is their collar gluing.
\end{Theorem}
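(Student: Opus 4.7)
The plan is to verify that $(A_{\mathfrak{S}_1},A_{\mathfrak{S}_2})^{{\rm inv}_{\vec{k}_1,\vec{k}_2}}$ satisfies the defining universal property of $A_\mathfrak{S}$ from Definition~\ref{defMultiEdgeASigma}. Concretely, for every $\vec{V}=(\vec{V}_{\vec{n}_1},\vec{V}_{\vec{n}_2})\in\mathcal{V}^{\otimes n_1+n_2}$ I will construct a natural isomorphism
\[
\Hom_{{\rm Sk}_\mathcal{V}(\mathfrak{S})}\bigl(\vec{V}\bmrhd\varnothing,\varnothing\bigr)\;\tilde{\to}\;\Hom_{\mathcal{E}^{\boxtimes n_1+n_2}}\!\bigl(\vec{V},(A_{\mathfrak{S}_1},A_{\mathfrak{S}_2})^{{\rm inv}_{\vec{k}_1,\vec{k}_2}}\bigr)
\]
and then check compatibility with the algebra structures.

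First I would invoke the cutting decomposition of Corollary~\ref{corrMorphSkRecoll}, applied $k$ times, one per component of $C$, to write every morphism $\alpha$ on the left as a (linear combination of) balanced pair
\[
\alpha_1\in\Hom_{{\rm Sk}_\mathcal{V}(\mathfrak{S}_1)}\bigl(\vec{V}_{\vec{n}_1}\bmrhd\varnothing,\varnothing\lhd\vec{W}\bigr),\qquad
\alpha_2\in\Hom_{{\rm Sk}_\mathcal{V}(\mathfrak{S}_2)}\bigl(\vec{W}\rhd\vec{V}_{\vec{n}_2}\bmrhd\varnothing,\varnothing\bigr),
\]
for some $\vec{W}\in{\rm Sk}_\mathcal{V}(\mathbb{R}^2)^{\otimes k}$, uniquely determined up to sliding morphisms on $\vec{W}$ across the balancing isomorphism $\iota$ of Theorem~\ref{thExcSkCat}.

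Next, using the internal-$\Hom$ descriptions of $A_{\mathfrak{S}_1}$ and $A_{\mathfrak{S}_2}$ from Definition~\ref{defMultiEdgeASigma} together with Remark~\ref{rmkTopPtsMultiEdge}, each piece translates to a morphism in $\mathcal{E}^{\boxtimes}$,
\[
f_1\in\Hom\!\bigl(\vec{V}_{\vec{n}_1},\,\vec{W}\bmotimes A_{\mathfrak{S}_1}\bigr),\qquad
f_2\in\Hom\!\bigl(\vec{W}\bmotimes\vec{V}_{\vec{n}_2},\,A_{\mathfrak{S}_2}\bigr).
\]
Composing $f_1$ and $f_2$ in the $\vec{W}$-coordinates and recognising the balancing relation as extranaturality in $\vec{W}$, the co-Yoneda lemma identifies the set of balanced pairs with $\Hom_{\mathcal{E}^{\boxtimes n_1+n_2+k}}\!\bigl((\vec{V}_{\vec{n}_1},1_{\mathcal{V}^{\otimes k}},\vec{V}_{\vec{n}_2}),\,A_{\mathfrak{S}_1}\otimes_{\vec{k}_1,\vec{k}_2}A_{\mathfrak{S}_2}\bigr)$, which is exactly the hom-set into $(A_{\mathfrak{S}_1},A_{\mathfrak{S}_2})^{{\rm inv}_{\vec{k}_1,\vec{k}_2}}$ by Definition~\ref{defInvRibCat}. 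Naturality in $\vec{V}$ is built into the constructions.

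The main obstacle I expect is the orientation bookkeeping on the glued edges. Since $\vec{c}_1$ consists of right edges of $\mathfrak{S}_1$ while $\vec{c}_2$ consists of left edges of $\mathfrak{S}_2$, passing $\alpha_1$ through the internal-$\Hom$ bijection requires trading its right action $\lhd$ for a left action, which introduces the half-twist natural isomorphism $\halft\bmrhd-$ of Proposition~\ref{propASigRightBrOpp}. I will have to check carefully that the resulting half-twists on the two sides cancel, matching precisely the balancing $\iota$ supplied by Corollary~\ref{corrMorphSkRecoll}. Once this matching is verified, compatibility of the algebra structures is essentially automatic: stacking tangles $\alpha,\beta$ on $\mathfrak{S}\times(0,1)$ stacks them simultaneously on $\mathfrak{S}_1\times(0,1)$ and $\mathfrak{S}_2\times(0,1)$, so the evaluation and composition maps on $A_\mathfrak{S}$ factor through the corresponding maps on $A_{\mathfrak{S}_1}$ and $A_{\mathfrak{S}_2}$, yielding the product on $A_{\mathfrak{S}_1}\otimes_{\vec{k}_1,\vec{k}_2}A_{\mathfrak{S}_2}$ restricted to invariants.
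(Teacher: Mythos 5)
Your overall strategy is exactly the paper's: decompose via Corollary~\ref{corrMorphSkRecoll}, translate each piece through the defining natural isomorphism of the corresponding internal skein algebra, and identify the result with a hom into $(A_{\mathfrak{S}_1},A_{\mathfrak{S}_2})^{{\rm inv}_{\vec{k}_1,\vec{k}_2}}$. However, your anticipated ``main obstacle'' --- cancelling half-twists introduced by trading $\lhd$ for $\rhd$ --- does not actually arise. By Definition~\ref{defMultiEdgeASigma}, a right edge contributes a left ${\rm Sk}_\mathcal{V}\big(\mathbb{R}^2\big)^{\otimes\text{-}{\rm op}}$-action \emph{directly}, and $A_{\mathfrak{S}_1}$ is the internal endomorphism object with respect to the resulting mixed action $\bmrhd$; so $\sigma_1$ already takes in $\alpha_1\in\Hom_{{\rm Sk}_\mathcal{V}(\mathfrak{S}_1)}(V_{\vec{n}_1}\bmrhd\varnothing,\varnothing\lhd_{\vec{k}_1}V_{\vec{k}})$ with no half-twist being inserted or cancelled. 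The half-twist of Proposition~\ref{propASigRightBrOpp} is only needed when one wants to \emph{compare} a right-edge internal skein algebra to an all-left one (e.g.\ when matching against stated skein algebras in Theorem~\ref{thmRelationMultiAndRight}); inside this proof everything stays in the mixed-action formalism and the balancing $\iota$ is the plain collar-gluing isomorphism from Theorem~\ref{thExcSkCat}. Your ``co-Yoneda'' step is the right idea but is where the real work sits: the paper makes it precise by observing that $\eta_{\vec{k}_1}(\vec V)$ is compact projective in $\mathcal{E}^{\boxtimes n_1+n_2+k}$, so a map into $A_{\mathfrak{S}_1}\otimes_{\vec{k}_1,\vec{k}_2}A_{\mathfrak{S}_2}=\colim_{i,j}\vec A_{1,i}\otimes_{\vec{k}_1,\vec{k}_2}\vec A_{2,j}$ factors through a finite stage; there it splits coordinate-wise in $\mathcal{V}^{\otimes}$, and one checks that the remaining ambiguity is exactly the balancing relation. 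You should spell that out, and also note that the theorem as stated only asserts an isomorphism in $\mathcal{E}^{\boxtimes n_1+n_2}$, so the algebra-compatibility paragraph, while plausible, is not required.
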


\begin{proof}
We describe a natural isomorphism between functors $\mathcal{V}^{\otimes n_1+n_2} \to {\rm Vect}_k$
\begin{gather*}
\begin{split}
\sigma\colon\ \Hom_{{\rm Sk}_\mathcal{V}(\mathfrak{S})}(-\bmrhd\varnothing,\varnothing) & \ \tilde{\Rightarrow}\ \Hom_{\mathcal{E}^{\boxtimes n_1+n_2}}\big({-},(A_{\mathfrak{S}_1}, A_{\mathfrak{S}_2})^{{\rm inv}_{\vec{k}_1,\vec{k}_2}}\big)
\\
:&=\Hom_{\mathcal{E}^{\boxtimes n_1+n_2+k}}\big(\eta_{\vec{k}_1}(-),A_{\mathfrak{S}_1} \otimes_{\vec{k}_1,\vec{k}_2} A_{\mathfrak{S}_2}\big).
\end{split}
\end{gather*}
We write $(\sigma_1)_{(V_{\vec{n}_1},V_{\vec{k}_1})}\colon \Hom_{{\rm Sk}_\mathcal{V}(\mathfrak{S}_1)}(V_{\vec{n}_1}\bmrhd \varnothing,\varnothing\lhd V_{\vec{k}_1}) \,\tilde{\Rightarrow}\, \Hom_{\mathcal{E}^{\boxtimes n_1+k}}((V_{\vec{n}_1},1_{\mathcal{V}^{\otimes k}}), A_{\mathfrak{S}_1}\otimes_{\vec{k}_1} V_{\vec{k}_1})$ for $V_{\vec{n}_1} \in \mathcal{V}^{\otimes n_1}$ and $V_{\vec{k}_1} \in \mathcal{V}^{\otimes k}$, obtained from the defining natural isomorphism of $A_{\mathfrak{S}_1}$ by Remark \ref{rmkTopPtsMultiEdge}. We~write $\sigma_2\colon \Hom_{{\rm Sk}_\mathcal{V}(\mathfrak{S}_2)}(-\bmrhd\varnothing,\varnothing) \,\tilde{\Rightarrow}\, \Hom_{\mathcal{E}^{\boxtimes n_2+k}}(-,A_{\mathfrak{S}_2})$ the defining natural isomorphism of $A_{\mathfrak{S}_2}$.

\medskip\noindent
\textbf{Step 1} (decomposition in $\mathfrak{S}$). Let $ \vec{V}=(V_{\vec{n}_1},V_{\vec{n}_2}) \in \mathcal{V}^{\otimes n_1+n_2}$ and $\alpha \in \Hom_{{\rm Sk}_\mathcal{V}(\mathfrak{S})}\big(\vec{V}\bmrhd\varnothing,\varnothing\big)$ a~morphism from $V_{\vec{n}_1}\bmrhd\varnothing$ in $\mathfrak{S}_1$ and~$V_{\vec{n}_2} \bmrhd \varnothing$ in~$\mathfrak{S}_2$ to the empty set in~$\mathfrak{S}$. By Corollary \ref{corrMorphSkRecoll}, $\alpha$ decomposes into a~pair $(\alpha_1,\alpha_2)$ as
\[
\alpha = ({\rm Id}_\varnothing , \alpha_2) \circ \iota_{\varnothing,V_{\vec{k}},V_{\vec{n}_2}\bmrhd\varnothing} \circ (\alpha_1 , {\rm Id}_{V_{\vec{n}_2}\bmrhd\varnothing})
\]
with $\alpha_1 \in \Hom_{{\rm Sk}_\mathcal{V}(\mathfrak{S}_1)}(V_{\vec{n}_1} \bmrhd \varnothing,\varnothing\lhd_{\vec{k}_1} V_{\vec{k}})$ and $\alpha_2\in\Hom_{{\rm Sk}_\mathcal{V}(\mathfrak{S}_2)}(V_{\vec{k}}\rhd_{\vec{k}_2}(V_{\vec{n}_2}\bmrhd\varnothing),\varnothing)$ for some $V_{\vec{k}} \in {\rm Sk}_\mathcal{V}(C \times (0,1)) \simeq \mathcal{V}^{\otimes k}$, with an implicit sum. This decomposition is unique up to balancing, namely if $\alpha_2$ can be written $\beta_2 \circ (\gamma\rhd {\rm Id}_{V_{\vec{n}_2}\rhd\varnothing})$, with $\beta_2\in\Hom_{{\rm Sk}_\mathcal{V}(\mathfrak{S}_2)}(W_{\vec{k}}\rhd (V_{\vec{n}_2} \bmrhd \varnothing),\varnothing)$ and $\gamma\in\Hom_{{\rm Sk}_\mathcal{V}(C\times (0,1))}(V_{\vec{k}},W_{\vec{k}})$ for some $W_{\vec{k}}\in {\rm Sk}_\mathcal{V}(C\times(0,1))$, then: \[(\alpha_1,\beta_2\circ (\gamma\rhd {\rm Id}_{V_{\vec{n}_2}\rhd\varnothing})) \sim (({\rm Id}_\varnothing\lhd\gamma) \circ \alpha_1, \beta_2).\]
$$
\begin{tikzpicture}[baseline = 2cm, xscale = 1.2, yscale = 3]
\fill[gray!20] (4.5,0.5) -- (3.5,0.5)-- (3.5,1.5) -- (4.5,1.5)--cycle;
\draw (4.5,0.5) -- (3.5,0.5)	.. controls (3,0.5 ) and (3,0.45 ) .. (2.5,0.4) coordinate[pos = 0.5] (EM);
\draw (4.5,0.5)	.. controls (5,0.5 ) and (5,0.45 ) .. (5.5,0.4)coordinate[pos = 0.5] (ENbel);
\draw (4.5,1.5) -- (3.5,1.5)	.. controls (3,1.5 ) and (3,1.55 ) .. (2.5,1.6);
\draw (5.5,1.6)	.. controls (5,1.55 ) and (5,1.5 ) .. (4.5,1.5);
\draw[dashed] (2.5,0.88) -- (5.5,0.88) coordinate[pos = 0.33] (LA) coordinate[pos = 0.833] (ENmidbel);
\draw[dashed] (2.5,1.12) -- (5.5,1.12) coordinate[pos = 0.66] (RA) coordinate[pos = 0.95] (ENmidtop);
\node[minimum width=1.2cm, leftymissy, outer sep = 0pt] (a1) at (3,0.65){$\alpha_1$};
\node[minimum width=1.2cm, rightymissy, outer sep = 0pt] (a2) at (5,1.35){$\alpha_2$};
\draw (EM) node{\small $\bullet$} node[below]{\small $V_{\vec{n}_1} \bmrhd\varnothing$} -- (a1);
\draw (ENbel) node{\small $\bullet$} node[below]{\small $V_{\vec{n}_2} \bmrhd\varnothing$} -- (ENmidbel) -- (ENmidtop) -- (a2);
\draw (a1)-- (LA) node{\tiny $\bullet$} node[below right = -1pt]{\small $\lhd V_{\vec{k}}$} -- (RA) node[midway, above left = -3pt]{\small $\iota$} node{\tiny $\bullet$} node[above left = -1pt]{\small $V_{\vec{k}} \rhd$} -- (a2);
\end{tikzpicture}
$$
\medskip\noindent
\textbf{Step 2} \big(re-composition in $\mathbb{R}^2$\big). The morphism $\alpha_1$ is described by a morphism \[
f_1 = (\sigma_1)_{(V_{\vec{n}_1},V_{\vec{k}})}(\alpha_1) \in \Hom_{\mathcal{E}^{\boxtimes n_1+k}}((V_{\vec{n}_1},1_{\mathcal{V}^{\otimes k}}), A_{\mathfrak{S}_1}\otimes_{\vec{k}_1} V_{\vec{k}})$ and $\alpha_2
\]
is described by \[
f_2=(\sigma_2)_{(V_{\vec{k}},V_{\vec{n}_2})}(\alpha_2)\in \Hom_{\mathcal{E}^{\boxtimes n_2+k}}((V_{\vec{k}},V_{\vec{n}_2}), A_{\mathfrak{S}_2}).
\]
These morphisms are well-defined (depend only on $\alpha$) up to balancing, namely if $\alpha_2=\beta_2 \circ (\gamma\rhd {\rm Id}_{V_{\vec{n}_2}\rhd\varnothing})$ with~$\beta_2$ described by $g_2 = \sigma_2(\beta_2)\in\Hom_{\mathcal{E}^{\boxtimes n_2+k}}((W_{\vec{k}},V_{\vec{n}_2}), A_{\mathfrak{S}_2})$, then by naturality of~$\sigma_1$ and $\sigma_2$, $f_2 = g_2 \circ (\gamma,{\rm Id}_{V_{\vec{n}_2}})$ and the above relation becomes $(f_1, g_2 \circ (\gamma,{\rm Id}_{V_{\vec{n}_2}})) \sim (({\rm Id}_{A_{\mathfrak{S}_1}}\otimes_{\vec{k}_1}\gamma) \circ f_1,g_2)$.
$$
\begin{tikzpicture}[baseline = 2cm, xscale = 1.2, yscale = 3]
\fill[gray!20] (4.5,0.5) -- (3.5,0.5)-- (3.5,1.5) -- (4.5,1.5)--cycle;
\draw (4.5,0.5) -- (3.5,0.5)	.. controls (3,0.5 ) and (3,0.5 ) .. (2.5,0.5) coordinate[pos = 0.5] (EM);
\draw (4.5,0.5)	.. controls (5,0.5 ) and (5,0.5 ) .. (5.5,0.5)coordinate[pos = 0.5] (ENbel);
\draw (4.5,1.5) -- (3.5,1.5)	.. controls (3,1.5 ) and (3,1.5 ) .. (2.5,1.5) coordinate[pos = 0.5] (EMtop);
\draw (5.5,1.5)	.. controls (5,1.5 ) and (5,1.5 ) .. (4.5,1.5) coordinate[pos = 0.5] (ENtop);
\draw[dashed] (2.5,0.92) -- (5.5,0.92) coordinate[pos = 0.33] (LA) coordinate[pos = 0.833] (ENmidbel) coordinate[pos = 0.167] (EMmidbel);
\draw[dashed] (2.5,1.08) -- (5.5,1.08) coordinate[pos = 0.66] (RA) coordinate[pos = 0.95] (ENmidtop);
\node[minimum width=1cm, draw, rectangle, outer sep = 0pt] (a1) at (3,0.7){$f_1$};
\node[minimum width=1cm, draw, rectangle, outer sep = 0pt] (a2) at (5,1.3){$f_2$};
\draw (EM) node{\small $\bullet$} node[below]{\small $V_{\vec{n}_1}$} -- (a1);
\draw (ENbel) node{\small $\bullet$} node[below]{\small $V_{\vec{n}_2}$} -- (ENmidbel) -- (ENmidtop) -- (a2);
\draw (a1)-- (LA) node{\tiny $\bullet$} node[below right = -1pt]{\small $\otimes_{\vec{k}_1} V_{\vec{k}}$} -- (RA) node{\tiny $\bullet$} node[above left = -1pt]{\small $V_{\vec{k}} \otimes_{\vec{k}_2}$} -- (a2);
\draw (a1) -- (EMmidbel)node{\tiny $\bullet$} node[below left = -3pt]{\small $A_{\mathfrak{S}_1}$} -- (EMtop) node{\small $\bullet$} node[above]{\small $A_{\mathfrak{S}_1}$};
\draw (a2) -- (ENtop) node{\small $\bullet$} node[above]{\small $A_{\mathfrak{S}_2}$};
\node[above] at (4,1.5) {\small $\otimes_{\vec{k}_1,\vec{k}_2}$};
\node[below=3pt] at (4,0.5) {\small $1_{\mathcal{V}^{\otimes k}}$};
\end{tikzpicture}
$$
Thus the map{\samepage
\begin{gather*}
\sigma_{\vec{V}}(\alpha) := ({\rm Id}_{A_{\mathfrak{S}_1}} \otimes_{\vec{k}_1,\vec{k}_2} f_2) \circ (f_1 , {\rm Id}_{V_{\vec{n}_2}})\in\Hom_{\mathcal{E}^{\boxtimes n_1+n_2+k}}((V_{\vec{n}_1},1_{\mathcal{V}^{\otimes k}},V_{\vec{n}_2}),A_{\mathfrak{S}_1}\otimes_{\vec{k}_1,\vec{k}_2}A_{\mathfrak{S}_2})
\end{gather*}
 is well defined, because this relation is killed.}

\medskip \noindent
\textbf{Step 3} (naturalilty). Naturality is quite obvious from the picture: one can insert morphisms from below. For $g_1 \in \Hom_{\mathcal{E}^{\boxtimes n_1}}(W_{\vec{n}_1},V_{\vec{n}_1})$ and $g_2 \in \Hom_{\mathcal{E}^{\boxtimes n_2}}(W_{\vec{n}_2},V_{\vec{n}_2})$, by naturality of~$\sigma_1$ and~$\sigma_2$, one has $\sigma_1(\alpha_1 \circ (g_1 \bmrhd {\rm Id}_\varnothing)) = f_1 \circ g_1$ and $\sigma_2(\alpha_2 \circ (g_2\bmrhd {\rm Id}_\varnothing)) = f_2 \circ g_2$. Now $\alpha \circ ((g_1,g_2)\bmrhd {\rm Id}_\varnothing)$ splits in Step~1 as $({\rm Id}_\varnothing , \alpha_2) \circ \iota_{\varnothing,V_{\vec{k}},V_{\vec{n}_2}\bmrhd\varnothing} \circ (\alpha_1 , {\rm Id}_{V_{\vec{n}_2}\bmrhd\varnothing}) \circ (g_1 \bmrhd {\rm Id}_\varnothing,g_2\bmrhd {\rm Id}_\varnothing) = ({\rm Id}_\varnothing , \alpha_2\circ (g_2\bmrhd {\rm Id}_\varnothing)) \circ \iota_{\varnothing,V_{\vec{k}},W_{\vec{n}_2}\bmrhd\varnothing} \circ (\alpha_1\circ (g_1\bmrhd {\rm Id}_\varnothing), {\rm Id}_{W_{\vec{n}_2}\bmrhd\varnothing})$. Thus \[\sigma(\alpha \circ ((g_1,g_2)\bmrhd {\rm Id}_\varnothing)) = ({\rm Id}_{A_{\mathfrak{S}_1}} \otimes_{\vec{k}_1,\vec{k}_2} (f_2\circ g_2)) \circ ((f_1\circ g_1) , {\rm Id}_{W_{\vec{n}_2}}) = \sigma(\alpha) \circ (g_1,g_2).\]
We now construct an inverse to $\sigma$ by the same steps in reverse order.

\medskip \noindent
\textbf{Step $\boldsymbol{2^{-1}}$} \big(decomposition in $\mathbb{R}^2$\big). We want to decompose a morphism \[
f\in\Hom_{\mathcal{E}^{\boxtimes n_1+n_2+k}}\big(\eta_{\vec{k}_1}\big(\vec{V}\big),A_{\mathfrak{S}_1}\otimes_{\vec{k}_1,\vec{k}_2}A_{\mathfrak{S}_2}\big)\qquad \text{as}\quad
f = \big({\rm Id}_{A_{\mathfrak{S}_1}} \otimes_{\vec{k}_1,\vec{k}_2} f_2\big) \circ \big(f_1 , {\rm Id}_{V_{\vec{n}_2}}\big)
\]
with $f_1\in\Hom_{\mathcal{E}^{\boxtimes n_1+k}}((V_{\vec{n}_1},1_{\mathcal{V}^{\otimes k}}), A_{\mathfrak{S}_1}\otimes_{\vec{k}_1} V_{\vec{k}})$ and $f_2\in\Hom_{\mathcal{E}^{\boxtimes n_2+k}}((V_{\vec{k}},V_{\vec{n}_2}), A_{\mathfrak{S}_2})$.

This is easy in $\mathcal{V}^{\otimes n_1+n_2+k}$, as all maps split on each coordinates. For $\vec{A}_1=(A_{\vec{n}_1},A_{\vec{k}_1}) \in \mathcal{V}^{\otimes n_1+k}$ and $\vec{A}_2=(A_{\vec{k}_2},A_{\vec{n}_2}) \in \mathcal{V}^{\otimes n_2+k}$, a morphism $f \in \Hom_{\mathcal{V}^{\otimes n_1+n_2+k}}((V_{\vec{n}_1},1_{\mathcal{V}^{\otimes k}},V_{\vec{n}_2}),\allowbreak \vec{A}_1 \otimes_{\vec{k}_1,\vec{k}_2} \vec{A}_2)$ is, up to a linear combination, of the form $(g_{\vec{n}_1},g_{\vec{k}_1},g_{\vec{n}_2})$ with $g_{\vec{n}_1}\colon V_{\vec{n}_1} \to A_{\vec{n}_1}$, $g_{\vec{k}_1}\colon 1_{\mathcal{V}^{\otimes k}} \to A_{\vec{k}_1}\otimes_{\vec{k}_1,\vec{k}_2} A_{\vec{k_2}}$ and $g_{\vec{n}_2}\colon V_{\vec{n}_2}\to A_{\vec{n}_2}$.
Then, set $V_{\vec{k}} = A_{\vec{k}_2}$, $f_1 = g_{\vec{n}_1} \otimes g_{\vec{k}_1} \in \Hom_{\mathcal{V}^{\otimes n_1+k}}((V_{\vec{n}_1},1_{\mathcal{V}^{\otimes k}}), \allowbreak \vec{A}_1\otimes_{\vec{k}_1} V_{\vec{k}})$ and $f_2 = {\rm Id}_{V_{\vec{k}}} \otimes g_{\vec{n}_2} \in \Hom_{\mathcal{V}^{\otimes n_2+k}}((V_{\vec{k}},V_{\vec{n}_2}),\vec{A}_2)$, one has $f= ({\rm Id}_{\vec{A}_1} \otimes_{\vec{k}_1,\vec{k}_2} f_2) \circ (f_1 , {\rm Id}_{V_{\vec{n}_2}})$. This decomposition is unique up to balancing, if $f= ({\rm Id}_{\vec{A}_1} \otimes_{\vec{k}_1,\vec{k}_2} f_2') \circ (f_1' , {\rm Id}_{V_{\vec{n}_2}})$ one can split~$f_2'$, which has to coincide with~$f_2$ on~$\vec{n}_2$ coordinates, and is some $\gamma\colon W_{\vec{k}} \to A_{\vec{k}_2}$ on~$\vec{k}_2$ coordinates (which are now~$\vec{k}_1$ coordinates after the $\otimes_{\vec{k}_1,\vec{k}_2}$). Similarly, $f_1'$ coincides with~$f_1$ on~$\vec{n}_1$ coordinates, and is some $\delta\colon 1_\mathcal{V} \to A_{\vec{k}_1}\otimes W_{\vec{k}}$ on $\vec{k}_1$ coordinates. On~$\vec{k}_1$ coordinates one has $({\rm Id}_{A_{\vec{k}_1}}\otimes_{\vec{k}_1}\gamma) \circ \delta = g_{\vec{k}_1}$, so the only relation is $((-,({\rm Id}_{A_{\vec{k}_1}}\otimes_{\vec{k}_1}\gamma) \circ \delta),({\rm Id}_{V_{\vec{k}}},-))\sim ((-,\delta),(\gamma,-))$.

Now, $A_{\mathfrak{S}_1}$ and $A_{\mathfrak{S}_2}$ are not objects of $\mathcal{V}^{\otimes n_1+k}$ and $\mathcal{V}^{\otimes n_2+k}$, but are obtained as canonical colimits of such objects, $A_{\mathfrak{S}_1} = \colim_i \vec{A}_{1,i}$ and $A_{\mathfrak{S}_2} = \colim_j \vec{A}_{2,j}$, so $A_{\mathfrak{S}_1}\otimes_{\vec{k}_1,\vec{k}_2}A_{\mathfrak{S}_2} = \colim_{i,j} \vec{A}_{1,i} \otimes_{\vec{k}_1,\vec{k}_2} \vec{A}_{2,j}$ by cocontinuity.
The object $\eta_{\vec{k}_1}\big(\vec{V}\big)=\big(V_{\vec{n}_1},1_{\mathcal{V}^{\otimes k}},V_{\vec{n}_2}\big)$ is compact projective in $\mathcal{E}^{\boxtimes n_1+n_2+k}$ therefore
\begin{gather*}
\Hom_{\mathcal{E}^{\boxtimes n_1+n_2+k}}\big(\eta_{\vec{k}_1}\big(\vec{V}\big),A_{\mathfrak{S}_1}\otimes_{\vec{k}_1,\vec{k}_2}A_{\mathfrak{S}_2}\big) \\
\qquad{} = \colim_{i,j} \Hom_{\mathcal{V}^{\otimes n_1+n_2+k}}\big(\eta_{\vec{k}_1}\big(\vec{V}\big),\vec{A}_{1,i} \otimes_{\vec{k}_1,\vec{k}_2} \vec{A}_{2,j}\big).
\end{gather*}
So a~morphism $f \in \Hom_{\mathcal{E}^{\boxtimes n_1+n_2+k}}\big(\eta_{\vec{k}_1}\big(\vec{V}\big),A_{\mathfrak{S}_1}\otimes_{\vec{k}_1,\vec{k}_2}A_{\mathfrak{S}_2}\big)$ factorises through a single (actually, a linear combination of) $\vec{A}_{1,i} \otimes_{\vec{k}_1,\vec{k}_2} \vec{A}_{2,j}$ as
\[
f \colon\ \eta_{\vec{k}_1}\big(\vec{V}\big) \overset{f_{i,j}}\to \vec{A}_{1,i} \otimes_{\vec{k}_1,\vec{k}_2} \vec{A}_{2,j} \overset{{\rm can}_{1,i} \otimes_{\vec{k}_1,\vec{k}_2} {\rm can}_{2,j}}\longrightarrow A_{\mathfrak{S}_1}\otimes_{\vec{k}_1,\vec{k}_2}A_{\mathfrak{S}_2}.
\]
There it splits as $f_{i,j}= \big({\rm Id}_{\vec{A}_{1,i}} \otimes_{\vec{k}_1,\vec{k}_2} f_2\big) \circ (f_1 , {\rm Id}_{V_{\vec{n}_2}})$, and
\[
f = ({\rm Id}_{A_{\mathfrak{S}_1}} \otimes_{\vec{k}_1,\vec{k}_2} ({\rm can}_{2,j} \circ f_2)) \circ (({\rm can}_{1,i}\otimes_{\vec{k}_1} {\rm Id}_{V_{\vec{k}}}) \circ f_1 , {\rm Id}_{V_{\vec{n}_2}}).
\]
This $f_{i,j}$ is unique up to the relations in the above colimit, namely for $h_1 \otimes_{\vec{k}_1,\vec{k}_2} h_2 \colon \vec{A}_{1,i} \otimes_{\vec{k}_1,\vec{k}_2} \vec{A}_{2,j} \to \vec{A}_{1,i'} \otimes_{\vec{k}_1,\vec{k}_2} \vec{A}_{2,j'}$ over $A_{\mathfrak{S}_1}\otimes_{\vec{k}_1,\vec{k}_2}A_{\mathfrak{S}_2}$ one has $f_{i',j'} = (h_1 \otimes_{\vec{k}_1,\vec{k}_2} h_2) \circ f_{i,j}$ and $f ={\rm can}_{1,i'} \otimes_{\vec{k}_1,\vec{k}_2} {\rm can}_{2,j'} \circ f_{i',j'}$. Split $h_2$ as $(h_{\vec{k}_2}, h_{\vec{n}_2})$, then $f$ decomposes through $f_{i',j'}$ as
\begin{gather*}
\begin{split}
& f = ({\rm Id}_{A_{\mathfrak{S}_1}} \otimes_{\vec{k}_1,\vec{k}_2} ({\rm can}_{2,j'} \circ ({\rm Id}_{{V_{\vec{k}}}'} , h_{\vec{n}_2})\circ f_2)) \\
& \hphantom{f =}{}
\circ (({\rm can}_{1,i'}\otimes_{\vec{k}_1} {\rm Id}_{{V_{\vec{k}}}'}) \circ (h_1 \otimes_{\vec{k}_1} h_{\vec{k}_2})\circ f_1 , {\rm Id}_{V_{\vec{n}_2}}).
\end{split}
\end{gather*}
Set $F_1 = ({\rm can}_{1,i}\otimes_{\vec{k}_1} {\rm Id}_{V_{\vec{k}}}) \circ f_1 = (({\rm can}_{1,i'}\circ h_1)\otimes_{\vec{k}_1} {\rm Id}_{V_{\vec{k}}}) \circ f_1$ and $F_2 = {\rm can}_{2,j'}\circ ({\rm Id}_{{V_{\vec{k}}}'} , h_{\vec{n}_2}) \circ f_2 = {\rm can}_{2,j'}\circ ({\rm Id}_{{V_{\vec{k}}}'} , h_{\vec{n}_2} \circ g_{\vec{n}_2})$. The first decomposition was
\[
(F_1,{\rm can}_{2,j} \circ f_2)= (F_1,{\rm can}_{2,j'}\circ (h_{\vec{k}_2}, h_{\vec{n}_2}) \circ ({\rm Id}_{V_{\vec{k}}}, g_{\vec{n}_2})) = (F_1,F_2 \circ (h_{\vec{k}_2}, {\rm Id}_{V_{\vec{n}_2}}))
\]
and the second is
\[
(({\rm can}_{1,i'}\otimes_{\vec{k}_1} {\rm Id}_{{V_{\vec{k}}}'})\circ (h_1 \otimes_{\vec{k}_1} h_{\vec{k}_2}) \circ f_1,F_2) = (({\rm Id}_{A_{\mathfrak{S}_1}}\otimes_{\vec{k}_1} h_{\vec{k}_2})\circ F_1,F_2)
\]
so the only relation is
\[
(F_1,F_2\circ (h_{\vec{k}_2}, {\rm Id}_{V_{\vec{n}_2}}))\sim (({\rm Id}_{A_{\mathfrak{S}_1}}\otimes_{\vec{k}_1} h_{\vec{k}_2}) \circ F_1,F_2).
\]

\medskip\noindent
\textbf{Step $\boldsymbol{1^{-1}}$} (re-composition in $\mathfrak{S}$). We decompose $f$ using last step as $f = ({\rm Id}_{A_{\mathfrak{S}_1}} \otimes_{\vec{k}_1,\vec{k}_2} f_2) \circ (f_1 , {\rm Id}_{V_{\vec{n}_2}})$ with $f_1\in\Hom_{\mathcal{E}^{\boxtimes n_1+k}}((V_{\vec{n}_1},1_{\mathcal{V}^{\otimes k}}), A_{\mathfrak{S}_1}\otimes_{\vec{k}_1} V_{\vec{k}})$ and $f_2\in\Hom_{\mathcal{E}^{\boxtimes n_2+k}}((V_{\vec{k}},V_{\vec{n}_2}), A_{\mathfrak{S}_2})$. They are described by morphisms $\alpha_1 := (\sigma_1)^{-1}_{(V_{\vec{n}_1},V_{\vec{k}})}(f_1) \in \Hom_{{\rm Sk}_\mathcal{V}(\mathfrak{S}_1)}(V_{\vec{n}_1} \bmrhd \varnothing,\varnothing\lhd_{\vec{k}_1} V_{\vec{k}})$ and $\alpha_2 := (\sigma_2)_{(V_{\vec{k}},V_{\vec{n}_2})}^{-1}(f_2)\in\Hom_{{\rm Sk}_\mathcal{V}(\mathfrak{S}_2)}(V_{\vec{k}}\rhd_{\vec{k}_2}(V_{\vec{n}_2}\bmrhd\varnothing),\varnothing)$. The above relation becomes $(\alpha_1,\beta_2\circ (\gamma\rhd {\rm Id}_{V_{\vec{n}_2}\rhd\varnothing})) \sim (({\rm Id}_\varnothing\lhd\gamma) \circ \alpha_1, \beta_2)$. So the morphism
\[
\sigma_{\vec{V}}^{-1}(f) := ({\rm Id}_\varnothing , \alpha_2) \circ \iota_{\varnothing,V_{\vec{k}},V_{\vec{n}_2}\bmrhd\varnothing} \circ (\alpha_1 , {\rm Id}_{V_{\vec{n}_2}\bmrhd\varnothing})
\]
is well defined, because this relation is killed.

\medskip\noindent
\textbf{Step 4} (isomorphism). One easily checks that $\sigma^{-1}$ defined this way is an inverse to $\sigma$. Let $\alpha\in \Hom_{{\rm Sk}_\mathcal{V}(\mathfrak{S})}\big(\vec{V}\bmrhd\varnothing,\varnothing\big)$ that decomposes as $\alpha= ({\rm Id}_\varnothing , \alpha_2) \circ \iota_{\varnothing,V_{\vec{k}},V_{\vec{n}_2}\bmrhd\varnothing} \circ (\alpha_1 , {\rm Id}_{V_{\vec{n}_2}\bmrhd\varnothing})$, then $\sigma_{\vec{V}}(\alpha) := ({\rm Id}_{A_{\mathfrak{S}_1}} \otimes_{\vec{k}_1,\vec{k}_2} f_2) \circ (f_1 , {\rm Id}_{V_{\vec{n}_2}})$ is already decomposed with $\alpha_1 = (\sigma_1)^{-1}_{(V_{\vec{n}_1},V_{\vec{k}})}(f_1)$ and $\alpha_2 = (\sigma_2)_{(V_{\vec{k}},V_{\vec{n}_2})}^{-1}(f_2)$, so
\[
\sigma_{\vec{V}}^{-1}(\sigma_{\vec{V}}(\alpha)) := ({\rm Id}_\varnothing , \alpha_2) \circ \iota_{\varnothing,V_{\vec{k}},V_{\vec{n}_2}\bmrhd\varnothing} \circ (\alpha_1 , {\rm Id}_{V_{\vec{n}_2}\bmrhd\varnothing}) = \alpha.
\]
Similarly, let $f \in \Hom_{\mathcal{E}^{\boxtimes n_1+n_2+k}}\big(\eta_{\vec{k}_1}\big(\vec{V}\big),A_{\mathfrak{S}_1}\otimes_{\vec{k}_1,\vec{k}_2}A_{\mathfrak{S}_2}\big)$ that decomposes as $f=({\rm Id}_{A_{\mathfrak{S}_1}} \otimes_{\vec{k}_1,\vec{k}_2} f_2) \circ (f_1 , {\rm Id}_{V_{\vec{n}_2}})$ then $\sigma_{\vec{V}}^{-1}(f) := ({\rm Id}_\varnothing , \alpha_2) \circ \iota_{\varnothing,V_{\vec{k}},V_{\vec{n}_2}\bmrhd\varnothing} \circ (\alpha_1 , {\rm Id}_{V_{\vec{n}_2}\bmrhd\varnothing})$ is already decomposed with $f_1 = (\sigma_1)_{(V_{\vec{n}_1},V_{\vec{k}})}(\alpha_1)$ and $f_2 = (\sigma_2)_{(V_{\vec{k}},V_{\vec{n}_2})}(\alpha_2)$, so
\[
\sigma_{\vec{V}}\big(\sigma_{\vec{V}}^{-1}(f)\big) := ({\rm Id}_{A_{\mathfrak{S}_1}} \otimes_{\vec{k}_1,\vec{k}_2} f_2) \circ (f_1 , {\rm Id}_{V_{\vec{n}_2}})=f
\]
and $\sigma^{-1}$ is indeed an inverse to $\sigma$.
\end{proof}

\begin{Remark}
When $\mathcal{V} = \Oqcomfin$, with $A_\mathfrak{S} \simeq \mathscr{S}(\mathfrak{S})$, one obtains the same excision properties as in Theorem~\ref{thmExcOfSS}. One uses repeatedly Theorem~\ref{thmExcOfSS} on $\mathfrak{S}_1\sqcup\mathfrak{S}_2$ on each couple of boundary edges to glue. This gives $\mathscr{S}(\mathfrak{S}) \simeq HH^0_{\vec{k}_1,\vec{k_2}}(\mathscr{S}(\mathfrak{S}_1)\otimes \mathscr{S}(\mathfrak{S}_2)):= \{ x \in \mathscr{S}(\mathfrak{S}_1) \otimes \mathscr{S}(\mathfrak{S}_2)\ /\ \forall 1\leq i\leq k,\ \Delta_{n_1+i}(x) = \operatorname{fl}\circ \Delta^l_{n_1+n_2+k+i}(x)\}$. By Proposition~\ref{propInvHHO} on all couples of edges to glue one gets $HH^0_{\vec{k}_1,\vec{k_2}}((\mathscr{S}(\mathfrak{S}_1), \mathscr{S}(\mathfrak{S}_2)))= \big(\mathscr{S}^R(\mathfrak{S}_1)^{\halft_{\vec{k}_1}} , \mathscr{S}(\mathfrak{S}_2)\big)^{{\rm inv}_{\vec{k}_1,\vec{k_2}}}$. By Theorem~\ref{thmRelationMultiAndRight} $\mathscr{S}^R(\mathfrak{S}_1)^{\halft_{\vec{k}_1}}$ is the internal skein algebra of~$\mathfrak{S}_1$, and one obtains exactly the formulation of Theorem~\ref{thmExcisionAFS}.

Note that we described how to glue two surfaces along many edges at once and Theorem~\ref{thmExcOfSS} describes how to glue only two edges but possibly of the same surface. The two forms of excision are equivalent, in one way by applying it repeatedly as above and in the other way by gluing a~bigon to the two edges of the surface that one wants to glue together.
\end{Remark}

\begin{Remark}\label{rmkChoiceCutting}
This remark answers a natural question arising at the sight of the cutting property of stated skein algebras: why is it not a coevaluation one sees on newly created states when one cuts along an ideal arc? Indeed in the definition one uses $\sum_{\vec{\mu}} v_{\vec{\mu}}\otimes v_{\vec{\mu}}$ though the coevaluation would give $\mathop{\rm coev}(1)= \sum_{\vec{\mu}} v_{\vec{\mu}} \otimes v_{\vec{\mu}}^* \overset{{\rm Id} \otimes \varphi^{-1}}\longmapsto \sum_{\vec{\mu}} v_{\vec{\mu}} \otimes v_{-\overleftarrow{\mu}} C(-\vec{\mu})$ in particular matching~$+$ states to~$-$ states. The answer is that it is indeed given by a coevaluation, but the stated skein algebra of the surface at the right is not the good object: one must take its half-twisted version. Then the half twist re-exchanges~$+$ signs to~$-$ signs and kills the coefficients appearing. In particular we see that there has been a choice in the way the splitting morphism of stated skein algebras is defined, and that this choice seems to determine both the half twist and the identification~$V \simeq V^*$. This is to be put in light with the unicity of stated skein coefficients proved in \cite[Section~3.4]{TTQLe}.
\end{Remark}

\begin{Remark}
Internal skein algebras are defined for any ribbon category $\mathcal{V}$, and coincide with stated skein algebras when $\mathcal{V} = \Oqcomfin$.
Stated skein algebras for ${\rm SL}_n$ were very recently introduced in \cite{LeSikora}, and one can expect to prove they coincide with internal skein algebras for $\mathcal{V} = \mathcal{O}_{q^n}({\rm SL}_n)\text{-}{\rm comod}^{\rm fin}$ for generic $q$ with the very same proof. The authors actually showed it for surfaces with a single boundary interval using excision properties with respect to gluing patterns from both theories. The constructions and arguments of this paper work more generally with any semisimple coribbon Hopf algebras $H$, using the equivalence $H\text{-}{\rm comod} \simeq \Free(H\text{-}{\rm comod}^{\rm fin})$, and are actually \cite{Gunningham}'s candidate for the generalisation of stated skein algebras. The results of this section show that this generalisation extends to multiple markings, and that one obtains excision properties immediately.

Internal skein algebras are defined more generally in \cite{BBJ} for any $E_2$-algebra $\mathcal{A} \in Pr$ under the name moduli algebras, and the skein-theoretic description holds for $\mathcal{A} = \Free(\mathcal{V})$. As both moduli algebras and stated skein algebras can be defined integrally, or at roots of unity, it would be very interesting to understand how they compare in greater generality. So far, there is no skein-theoretic description of the factorization homology used in the construction of moduli algebras, but it seems credible that with extra work one could rewrite this whole theory in these integral or non-semisimple contexts.
\end{Remark}

\subsection*{Acknowledgements}
I would like to thank warmly my three advisors: Francesco Costantino for his guidance throughout the discovery of this subject and the editing of this article; Joan Bellier-Mill\`es for all the time he spent in explanations and David Jordan for very helpful conversations and comments. I~am grateful to Patrick Kinnear for his remarks and advice. I would also like to thank the anonymous referees for their exceptionally detailed feedback. This research took place in the Institut Math\'ematique de Toulouse and was supported by the \'Ecole Normale Sup\'erieure de Lyon.\looseness=-1


\pdfbookmark[1]{References}{ref}
\LastPageEnding

\end{document}